\newlist{compitem}{itemize}{4}
\setlist[compitem,1]{nolistsep,label=$\bullet$}
\colorlet{link}{red!60!black}
\newcommand{\labeltext}[3][]{%
    \@bsphack%
    \csname phantomsection\endcsname
    \def\tst{#1}%
    \def\labelmarkup{\textcolor{link}}
    \def\refmarkup{}%
    \ifx\tst\empty\def\@currentlabel{\refmarkup{#2}}{\label{#3}}%
    \else\def\@currentlabel{\refmarkup{#1}}{\label{#3}}\fi%
    \@esphack%
    \labelmarkup{#2}
}
\newcommand{\Ec}[1]{\mathbb{E} \left[#1\right]}
\newcommand{\Pp}[1]{\mathbb{P} \left(#1\right)}
\newcommand{\Pndt}[1]{\mathbb{P}_{n,\delta,T} \left(#1\right)}
\newcommand{\Ppp}[2]{\mathbb{P}_{#1} \left(#2\right)}
\newcommand{\Epc}[2]{\mathbb{E}_{#1} \left[#2\right]}
\newcommand{\Ecsq}[2]{\mathbb{E} \left[#1\mathrel{}\middle|\mathrel{}#2\right]}
\newcommand{\Epcsq}[3]{\mathbb{E}_{#1} \left[#2\mathrel{}\middle|\mathrel{}#3\right]}
\newcommand{\Ppsq}[2]{\mathbb{P} \left(#1\mathrel{}\middle|\mathrel{}#2\right)}
\newcommand{\Ppsqndt}[2]{\mathbb{P}_{n,\delta,T} \left(#1\mathrel{}\middle|\mathrel{}#2\right)}
\newcommand{\Ecp}[2]{\mathbb{E}_{#1} \left[#2\right]}
\newcommand{\intervalle}[4]{\mathopen{#1}#2
                            \mathclose{}\mathpunct{},#3
                            \mathclose{#4}}
\newcommand{\intervalleff}[2]{\intervalle{[}{#1}{#2}{]}}
\newcommand{\intervalleof}[2]{\intervalle{(}{#1}{#2}{]}}
\newcommand{\intervallefo}[2]{\intervalle{[}{#1}{#2}{)}}
\newcommand{\intervalleoo}[2]{\intervalle{(}{#1}{#2}{)}}
\newcommand{\intervalleentier}[2]{\intervalle\llbracket{#1}{#2}
                               \rrbracket}
\newcommand{\enstq}[2]{\left\lbrace#1\mathrel{}:\mathrel{}#2\right\rbrace}
\newcommand{\ind}[1]{\mathbbm{1}_{\lbrace #1 \rbrace}}
\DeclareMathOperator{\haut}{ht}
\newcommand{\cL}{\mathcal{L}}
\newcommand{\bL}{\mathbb{L}}
\newcommand{\T}{\mathcal{T}} 
\newcommand{\Height}{\mathsf{Height}} 
\newcommand{\R}{\mathbb{R}} 
\renewcommand{\P}{{\mathbb{P}}} 
\newcommand{\E}{{\mathbb{E}}}
\newcommand{\Z}{{\mathbb{Z}}}
\newcommand{\N}{{\mathbb{N}}}
\newcommand{\e}{{\varepsilon}}
\newcommand{\Xb}{\mathrm{\mathbf{x}}} 
\newcommand{\Sb}{\mathrm{\mathbf{s}}} 
\newcommand{\X}{\mathrm x} 
\newcommand{\XXb}{\mathrm{\mathbf{X}}}
\newcommand{\XX}{\mathrm X} 
\newcommand{\cvloi}[1][n]{\enskip\mathop{\longrightarrow}^{(d)}_{#1 \to \infty}\enskip}
\newmdtheoremenv{theorem}{Theorem}[section]
\newmdtheoremenv{proposition}[theorem]{Proposition}
\newtheorem{lemma}[theorem]{Lemma}
\newtheorem{corollary}[theorem]{Corollary}
\DeclareSymbolFont{extraup}{U}{zavm}{m}{n}
\DeclareMathSymbol{\vardspade}{\mathalpha}{extraup}{81}
\DeclareMathSymbol{\varheart}{\mathalpha}{extraup}{86}
\DeclareMathSymbol{\vardiamond}{\mathalpha}{extraup}{87}
\DeclareMathSymbol{\varclub}{\mathalpha}{extraup}{84}
\renewcommand*{\@fnsymbol}[1]{\ensuremath{\ifcase#1\or  \vardspade \or \varheart \or \vardiamond\or \varclub \or \bigstar \or
   \mathsection\or \mathparagraph\or \|\or **\or \dagger\dagger   \or \ddagger\ddagger \else\@ctrerr\fi}}
\author{
\qquad  
Emmanuel Kammerer 
\thanks{CMAP, \'Ecole polytechnique, Institut Polytechnique de Paris, 91120 Palaiseau, France, \textsf{emmanuel.kammerer@polytechnique.edu}
}
 \qquad  
Igor Kortchemski 
\thanks{CNRS \& DMA, École normale supérieure, PSL University, 75005 Paris, France, \textsf{igor.kortchemski@math.cnrs.fr}
} 
\qquad
Delphin Sénizergues 
\thanks{MODAL'X, UMR CNRS 9023, Universit\'e Paris Nanterre, 92000 Nanterre,  France, \textsf{dsenizer@parisnanterre.fr}
} \qquad  
}
\begin{document}
\title{The height of the infection tree}
\date{}
\maketitle

\begin{abstract}
We are interested in the geometry of the ``infection tree'' in a stochastic SIR (Susceptible-Infectious-Recovered) model, starting with a single infectious individual. 
This tree is constructed by drawing an edge between two individuals when one infects the other. 
We focus on the regime where the infectious period before recovery follows an exponential distribution with rate $1$, and infections occur at a rate  
$\lambda_{n} \sim \frac{\lambda}{n}$ where $n$ is the initial number of healthy individuals with $\lambda>1$. 
We show that provided that the infection does not quickly die out, the height of the infection tree is asymptotically $\kappa(\lambda) \log n$ as $n \rightarrow \infty$, where $\kappa(\lambda)$ is a continuous function in $\lambda$ that undergoes a second-order phase transition at $\lambda_{c}\simeq 1.8038$. 
Our main tools include a connection with the model of uniform attachment trees with freezing and the application of martingale techniques to control profiles of random trees. 
\end{abstract}

\begin{figure}[!ht]
\centering
\includegraphics[width=0.48\linewidth]{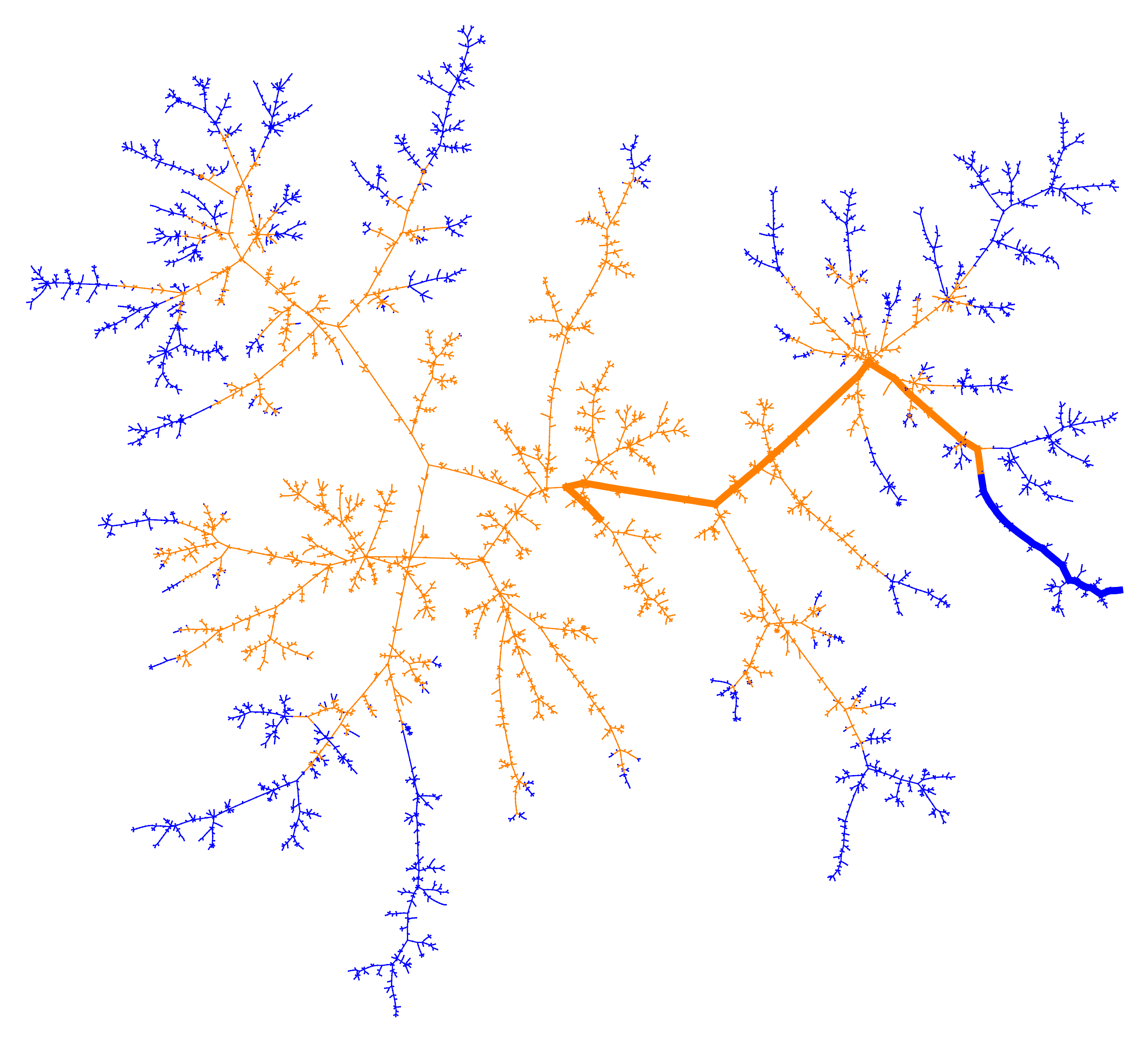}%
\quad  \includegraphics[width=0.48\linewidth]{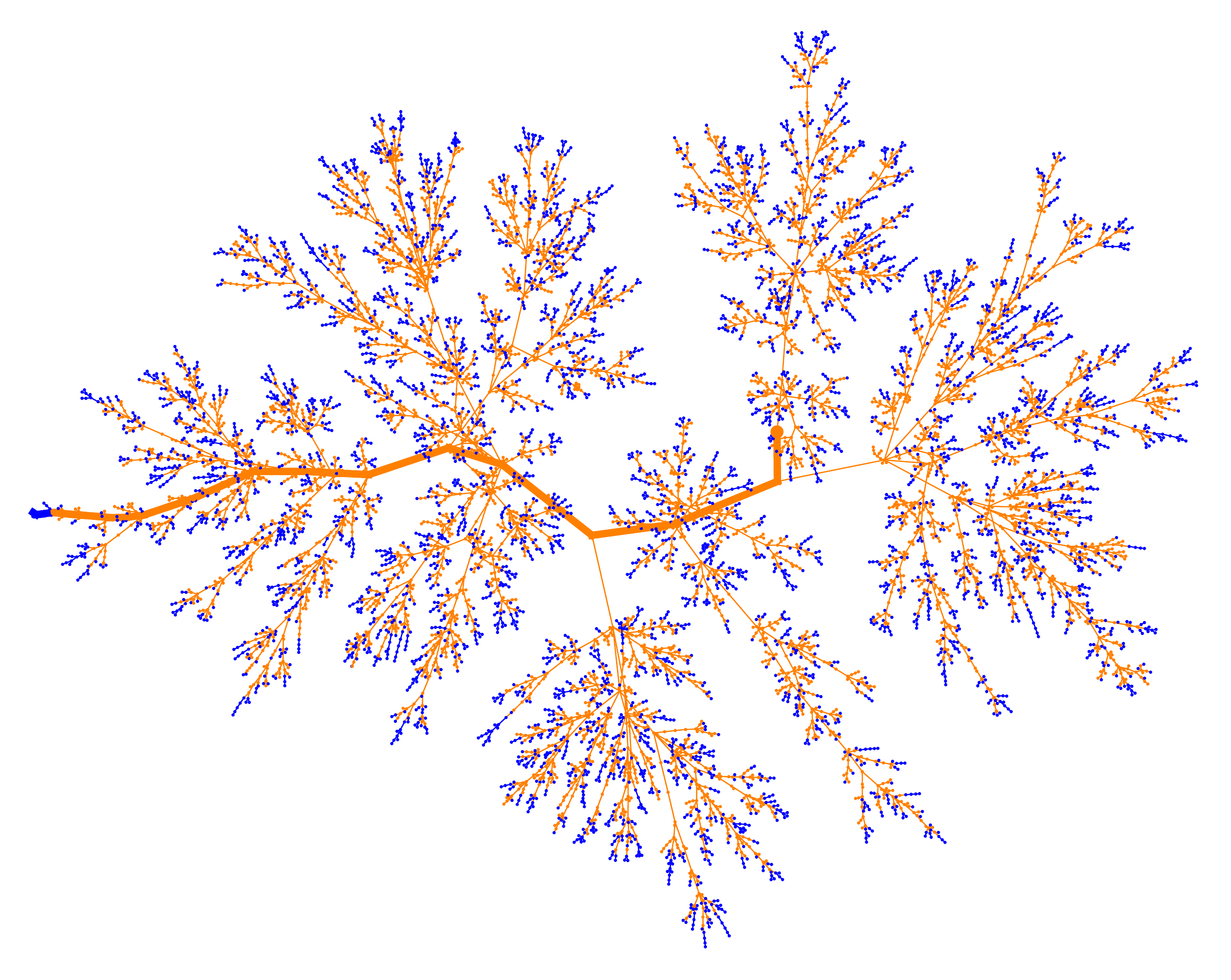} 
\caption{\label{fig:ssimus_intro}Simulations of  large   infection trees for $\lambda=1.1$ (left) and $\lambda=5$ (right). The trees both have  $10000$ vertices, and the orange vertices represent  the first half of the vertices (in order of appearance). The bold path is the shortest path from the root  to the vertex furthest away from the root. In the first case, the orange and blue trees both macroscopically contribute  to the length of this path, while in the second case only the orange tree macroscopically contributes.
}
\end{figure}

\pagebreak

\tableofcontents

\section{Introduction}

Many growth processes that involve real-world networks, such as the spread of disease in a human population, the proliferation
of rumors on social media, the spread of computer viruses on computer networks, and the
development of social structures among individuals, can be modeled using random graphs. 
We are interested here in the stochastic SIR (Susceptible-Infectious-Recovered) model, which is a classical model for the evolution of epidemics  (for background on stochastic  epidemic models, see \cite{AB12,BPBLST19}). 
It has been extensively studied in multiple directions, we mention some of them in connection with random graphs:  fluid limit for the density processes 
of an 
SIR dynamics on a complete graph \cite[Sec.~5.5]{AB12}, study of the SIR epidemic on a random graph with given degrees \cite{JLW14}. 

 In this work, we focus on the so-called \emph{infection tree}, obtained by keeping track of the infections in a SIR process on a complete graph  where an edge is present between two individuals if one has infected the other. 
 In the context of contact-tracing this tree (sometimes also called 
 ``transmission tree") is natural to consider \cite{KFH06,CSBV16}, yet to the best of our knowledge  its mathematical study has only been first considered very recently in \cite{BBKK23+}. 
  
Denote by $\mathcal{T}^{n}$ the random infection tree obtained with $1$ infectious individual (called ``patient zero'') and $n$ susceptible individuals, where infectious individuals recover at rate $1$ and where infections occur at a rate $\lambda_{n} \sim \frac{\lambda}{n}$, see Section~\ref{ssec:SIR} for a formal definition. 
We let $\Height(\T^{n})$ be the maximal graph distance between patient zero and any other vertex of $ \mathcal{T}^{n}$. 
To state our main limit theorem concerning  $\Height(\T^{n})$  we need to introduce some notation.
  
Let $W$ be the principal branch of the Lambert function, which satisfies $W(x) e^{W(x)}=x$ for $x \geq -\frac{1}{e}$.  
Observe that $W(x)<0$ when $-\frac{1}{e} \leq x<0$. 
For all $z>0$ and $\lambda>1$,
set
\begin{equation}
\label{eq:param}f_{\lambda}(z)\coloneqq  1+ \frac \lambda{\lambda-1}(e^{z}-1-z e^{z}), \quad  z_{\lambda}\coloneqq\inf \{t >0: f_{\lambda}(t)=0\},
 \quad m_{\lambda}\coloneqq-W(-\lambda e^{-\lambda}) \in (0,1).
\end{equation}
 The quantity $z_{\lambda}$ is well-defined since $f_{\lambda}(0)=1$ and $f_{\lambda}$ is decreasing on $\R_{+}$ (since $f'_{\lambda}(z)=- \frac{\lambda}{\lambda-1} ze^{z}$).
 Using the definition of $W$, it is a simple matter to check that $z_{\lambda}=1+W(-\frac{1}{e \lambda})$. 
 Finally, let $\lambda_{c}$ be the unique value of $\lambda >1$ that solves the equation
\begin{equation}\label{eq:lc}
m_{\lambda}=e^{-z_{\lambda}}.
\end{equation}  
 The existence and uniqueness of $\lambda_{c}$ will be justified later (see Proposition~\ref{prop:lc}). 
 Numerically, $\lambda_{c}\simeq~ 1.8038$.

  \begin{theorem}
  \label{thm:main}
Assume that $\lambda_n \sim\frac{\lambda}{n}$ as $n\rightarrow \infty$ for some $\lambda> 1$. 
Let $\mathcal{B}$ be a Bernoulli random variable with parameter $1-\frac{1}{\lambda}$. Then
\begin{equation*}
\frac{\Height(\T^{n})}{\log n}  \quad \mathop{\longrightarrow}^{(d)}_{n \rightarrow \infty} \quad \kappa(\lambda) \cdot \mathcal{B},
\qquad 
\text{where}
\qquad
	\kappa(\lambda)= 
\begin{cases}
	\left(   \frac{\lambda}{(\lambda-1)m_{\lambda}}+  \frac{f_{\lambda}(-\log m_{\lambda})}{-\log m_{\lambda}}   \right) & \textrm{if } \lambda \leq \lambda_{c},\\
	\frac{\lambda}{\lambda-1} e^{z_{\lambda}} &  \textrm{if } \lambda \geq \lambda_{c}. 
\end{cases}
\end{equation*}
\end{theorem}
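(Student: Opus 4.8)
The plan is to transfer the problem to the model of uniform attachment trees with freezing, to read off the leading order of its generation profile from a first‑moment computation (giving the upper bound), and to upgrade this to a matching lower bound using martingale control of the profile together with a second‑moment argument. Let us first dispose of the factor $\mathcal B$. As long as the number of already‑infected individuals is $o(n)$, the number of infectious ones evolves like a continuous‑time branching process with birth rate $\lambda$ and death rate $1$, which goes extinct with probability $1/\lambda$; on that event $\T^n$ has $O(1)$ vertices and $\Height(\T^n)=O(1)=o(\log n)$. It therefore suffices to show that, conditionally on the complementary ``large epidemic'' event (of asymptotic probability $1-\tfrac1\lambda$), $\Height(\T^n)/\log n\to\kappa(\lambda)$ in probability, and we work on that event from now on.

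Using the connection with uniform attachment with freezing, realise $\T^n$ as the genealogical tree of a process adding vertices one at a time — each new vertex grafted onto a uniformly chosen currently active vertex — and freezing, between two additions, a random number of active vertices, the probability that the next elementary event is a freezing being $\approx\tfrac{1}{1+\lambda(1-u)}$ when a proportion $u$ of the individuals has been infected. Embedding in continuous time and rescaling time so that each active particle reproduces at rate $1$, one obtains a Markov branching process in which active particles branch at rate $1$ and freeze at rate $\mu(\tau)=\tfrac{1}{\lambda(1-u(\tau))}$, the proportion $u(\tau)$ being deterministic in the limit by a fluid‑limit/law‑of‑large‑numbers for the epidemic. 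In this time scale the epidemic consists of an early phase of length $\tau_1=\tfrac{\lambda}{\lambda-1}\log n+O(1)$, during which $u\approx0$, $\mu\approx\tfrac1\lambda$ and the active population grows from $O(1)$ to order $n$; a bulk phase of length $O(1)$ (negligible for the height); and a final phase of length $\tfrac{m_\lambda}{1-m_\lambda}\log n+O(1)$, during which $\mu\approx\tfrac1{m_\lambda}$ and the active population shrinks back to $O(1)$. Writing $\tilde Z_h(\tau),Z_h(\tau)$ for the number of active, resp.\ of all, vertices at height $h$ at time $\tau$, and $\Lambda(\tau)=\int_0^\tau\mu$, the forward equations give
\[
\mathbb{E}[\tilde Z_h(\tau)]=e^{-\Lambda(\tau)}\frac{\tau^h}{h!},\qquad \mathbb{E}[Z_h(T)]=\int_0^{T}e^{-\Lambda(\tau')}\frac{(\tau')^{h-1}}{(h-1)!}\,d\tau',
\]
with $T$ the total duration. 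Inserting the piecewise form of $\mu$ and evaluating the integral by the Laplace method yields $\mathbb{E}[Z_h(T)]\to0$ as soon as $h\ge(\kappa(\lambda)+\varepsilon)\log n$. For $\lambda\ge\lambda_c$ the dominant contribution comes from vertices born in the early phase, with threshold $e^{z_\lambda}\tau_1=\tfrac{\lambda}{\lambda-1}e^{z_\lambda}\log n$ reached only by the $O(1)$ vertices of the extreme front. For $\lambda<\lambda_c$ the dominant contribution comes from vertices born in the final phase, as descendants of the vertices present at height $g_A:=\tfrac{\lambda}{(\lambda-1)m_\lambda}\log n=\tfrac{\tau_1}{m_\lambda}$ at the end of the early phase; a short computation gives $\mathbb{E}[\tilde Z_{g_A}(\tau_1)]=n^{f_\lambda(-\log m_\lambda)+o(1)}$, and optimising over the moment inside the final phase at which the deepest such descendant is born adds exactly $g_C:=\tfrac{f_\lambda(-\log m_\lambda)}{-\log m_\lambda}\log n$ further generations, so that the threshold is $g_A+g_C=\kappa(\lambda)\log n$. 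This is how the two pieces of the formula for $\kappa$ arise; the phase transition at $\lambda_c$ is precisely the point where $f_\lambda(-\log m_\lambda)$ changes sign, equivalently where the target height $g_A$ leaves the range $[0,e^{z_\lambda}\tau_1]$ of heights realised by the early phase. By Markov's inequality this already gives $\Height(\T^n)\le(\kappa(\lambda)+\varepsilon)\log n$ with high probability.

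For the matching lower bound one must show the profile does not drop far below its mean. For every parameter $\theta$, the additive functional $\sum_{v\ \mathrm{active}}\theta^{\mathrm{ht}(v)}$, renormalised by the predictable product of its conditional growth factors, is a martingale; its almost sure limit is a.s.\ positive on the survival event and pins down the leading order of the whole profile. Using these martingales one obtains $\tilde Z_{g_A}(\tau_1)\ge n^{f_\lambda(-\log m_\lambda)-\varepsilon}$ with high probability; then, since at the start of the final phase the subtrees dangling below those $\approx n^{f_\lambda(-\log m_\lambda)}$ vertices behave like nearly independent branching processes with reproduction rate $1$ and freezing rate $\approx\tfrac1{m_\lambda}$, a second‑moment argument shows that at least one of them reaches relative height $\ge g_C-\varepsilon\log n$ before the epidemic ends. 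For $\lambda\ge\lambda_c$ the same toolkit shows directly that the early phase already produces a vertex at height $\ge(\tfrac{\lambda}{\lambda-1}e^{z_\lambda}-\varepsilon)\log n$. Combining the two bounds finishes the proof.

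The main difficulty is the lower bound. It requires, first, martingale control of the profile $\tilde Z_h$ at heights $h$ deep inside — not merely at the edge of — its support, where the mean is a genuine power $n^{f_\lambda(-\log m_\lambda)}$ of $n$; and second, making precise the near‑independence of the $\approx n^{f_\lambda(-\log m_\lambda)}$ ``late'' subtrees, which in reality interact through the shared pool of active vertices and through the global fluid constraint — either by a coupling dominating them by genuinely independent branching processes with slightly perturbed rates, or by a truncated second‑moment estimate on $Z_h(T)$ that discards the atypical histories of the active population. Underlying all of this, one must control the fluctuations of $u(\tau)$, hence of $\mu(\tau)$ and $\Lambda(\tau)$, uniformly over the whole $\Theta(\log n)$‑long time interval, and in particular across the two transitions between phases, which is what makes the argument technical.
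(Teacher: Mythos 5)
Your proposal is correct in outline and follows essentially the same route as the paper: the early-phase profile $n^{f_\lambda(x)}$ of active vertices at height $\gamma e^{x}\log n$ controlled by normalized Laplace-transform martingales, the dangling subtrees sandwiched between independent Bienaym\'e/branching processes with mean close to $m_\lambda$, and the variational formula $\sup_{0\le z\le z_\lambda}\bigl(\gamma e^{z}+f_\lambda(z)/(-\log m_\lambda)\bigr)$ whose maximizer moves from $-\log m_\lambda$ to $z_\lambda$ exactly at $\lambda_c$. The only real difference is presentational (a continuous-time embedding in place of the paper's discrete-time chain $(H^n_k,I^n_k)$), and you correctly identify the genuine technical work — uniform martingale control of the profile deep inside its support, and the coupling that makes the outgrowths independent — which is what the paper carries out in Sections 3 and 4.
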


Using the explicit expressions of $z_{\lambda}$ and $m_{\lambda}$, the expression for $\kappa(\lambda)$ can be alternatively be written as
\begin{equation}
\label{eq:kappa}
\kappa(\lambda)=\begin{cases}
 \left(  \frac{\lambda}{(1-\lambda) W \left(  -  \lambda e^{-\lambda}\right)}+  \frac{f_{\lambda}\left(-\log (-W(-\lambda e^{-\lambda}))\right)}{-\log (-W(-\lambda e^{-\lambda}))}   \right) & \textrm{if } \lambda \leq \lambda_{c},\\
 \frac{1}{(1-\lambda) W \left(  - \frac{1}{e \lambda}\right)} &  \textrm{if } \lambda \geq \lambda_{c}. 
\end{cases}
\end{equation}
It is not difficult to check that ${f_{\lambda}(-\log m_{\lambda})}=0$ for $\lambda=\lambda_{c}$, so that the two limiting quantities coincide at $\lambda=\lambda_{c}$. Further, their derivatives coincide at $\lambda=\lambda_{c}$ as well, but not their second order derivatives: the height of the infection tree thus undergoes a {second}-order phase transition at $\lambda_{c}$.

\begin{figure}[!ht]
\label{fig:plot}
\centering
\includegraphics[scale=0.7]{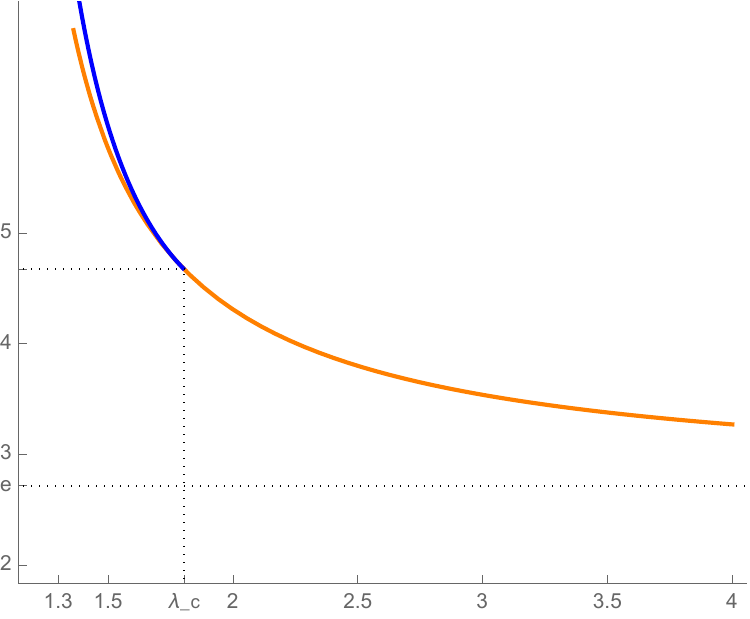}%
\caption{In orange, plot of the function appearing in the limit of $\Height(\widehat{\T}^{n})/\log n$, where $\widehat{\T}^{n}$ is the infection tree after the early stages of the epidemic and before the late stages of the epidemic (this is the orange tree in Figure~\ref{fig:ssimus_intro}). 
The function $\kappa$ appearing in \eqref{eq:kappa} is the blue curve for $\lambda \leq \lambda_{c}$ and the orange curve for $\lambda \geq \lambda_{c}$: when $\lambda<\lambda_{c}$, the late stages of the epidemic have an influence on the total height of the infection tree, but not when $\lambda>\lambda_{c}$.
}
\end{figure}

The reason why we focus on the regime $\lambda_n \sim\frac{\lambda}{n}$ for some $\lambda> 1$ is that it is the  remaining delicate case which was not covered in \cite[Theorem~23 \& 24]{BBKK23+}. 
Indeed, when $\lambda \leq 1$ the infection tree converges locally in distribution towards a finite Bienaymé tree, while in the case $\lambda_n \gg \frac{1}{n}$ we have  ${\Height(\T^{n})}/{\log n} \rightarrow  e$ in probability. 
Informally speaking, in the latter case $\T^{n}$ behaves {``as''} 
a random recursive tree with $n$ vertices, which corresponds to the case where there is no recovery.

Several further comments are in order. 
At the very early stages of the epidemic, the infection tree roughly 
grows like a Bienaymé random tree with geometric offspring distribution with parameter $\frac{1}{1+\lambda}$, which has a probability $1-\frac{1}{\lambda}$ of survival. 
This explains the presence of $ \mathcal{B}$. 
Let us explain the intuition behind the phase transition (this will be made precise later). 
After the early stages of the epidemic and before the late stages of the epidemic (i.e.\ when the population contains a positive fraction of infectious individuals as well as a positive fraction of healthy individuals), the height of the infection tree is of order $\frac{\lambda}{\lambda-1} e^{z_{\lambda}}\log n$. 
Between this moment and the end of the epidemic, the infection process will continue from each of the active vertices in the tree, resulting in additional subtrees hanging off of those vertices in the final tree $\mathcal{T}^n$. 
It turns out that these outgrowths macroscopically contribute to the total height of the infection tree when $\lambda>\lambda_{c}$, but not when $\lambda<\lambda_{c}$, see Figure~\ref{fig:ssimus_intro} for an illustration.

 More precisely, the intuition behind Theorem~\ref{thm:main} is the following: 
 When $\lambda>1$, with high probability either the epidemic dies out quickly (this corresponds to $\mathcal{B}=0$), 
 or it dies out after $\simeq t_{\lambda} n$ steps for a certain $t_{\lambda}>0$ (this corresponds to $ \mathcal{B}=1$).  
 For $\delta>0$ small enough, denote by $ \mathcal{T}^{n}_{\delta}$ the infection tree after $\lfloor (t_{\lambda}-\delta) n \rfloor$ steps of infection or recovery, conditionally given the fact that the epidemic has not died out yet. 
 Then, setting $\gamma=\frac{\lambda}{\lambda-1}$, with high probability:
 \begin{enumerate}[label=(\roman*)]
 \item The tree $\mathcal{T}^{n}_{\delta}$ has height of order $\gamma e^{z_{\lambda}} \log n$ and
 for any $z\in \intervalleoo{0}{z_{\lambda}}$
 there are of order $n^{f_{\lambda}(z)}$ active vertices at height $\gamma e^{z} \log n$.
 \item  For $\delta>0$ small, the outgrowths in $\mathcal{T}^n$ hanging off of each vertex that was active in $\mathcal{T}^{n}_{\delta}$
 are roughly independent subcritical Bienaymé trees with geometric offspring distribution with mean $m_{\lambda}<1$.
 \item The height of a forest of $n^{f_{\lambda}(z)}$ such Bienaymé trees is of order $\frac{f_{\lambda}(z)}{-\log m_{\lambda}}\log n$.
 \item It follows that the height of $ \mathcal{T}^{n}$ is of order
 \begin{align*}  \sup_{0 \leq z \leq z_{\lambda}}  \left( \gamma  e^{z}+ \frac{f_{\lambda}(z)}{-\log m_{\lambda}} \right) \log n,
 \end{align*} 
 which entails the desired result (the supremum in the above display is reached at $z=z_{\lambda}$ 
  if and only if $\lambda  \geq \lambda_{c}$, see Proposition~\ref{prop:lc}). 
 \end{enumerate}
 
 The main technical challenge is to prove step (i), whose rigorous statement can be found in Theorem~\ref{thm:profil} stated in Section~\ref{subsec:profile of the infection tree} below.  
 This result ensures that in the tree $ \mathcal{T}^{n}_{\delta}$, at height $\gamma e^{z} \log n \in \N$, there are roughly
 \begin{align*}
 	\frac{n^{f_\lambda(z)}}{\sqrt{\log n}} \cdot e^{O_\P(1)}
 \end{align*}
active vertices, simultaneously for $z\in \intervalleoo{0}{z_{\lambda}}$. 
 In order to get access to the profile of $\mathcal{T}^{n}_{\delta}$ we rely on a strategy that has proved its efficacy in the context of growing random trees (binary search tree \cite{CDJ01,CKMR05}, uniform recursive tree \cite{KMS17}, plane-oriented recursive tree \cite{KMS17}, weighted recursive trees \cite{Sen21} and others): 
 we {first} study the behaviour of the Laplace transform of the profile with the help of appropriately defined martingales indexed by $z\in \mathbb{C}$, {then we} prove that for $z$ in a given domain of the complex plane they converge in $L^p$ (following the ideas introduced by Biggins \cite{Big92} in the context of the branching random walk), and then finally we use this control on the Laplace transform to recover the profile using a Fourier inversion {argument}.
 Let us mention in particular the work \cite{KMS17}, which gives a very strong control, known as the Edgeworth expansion, on the profile of a vast family of growing trees. 
 
 {Unfortunately}, {the results from \cite{KMS17}} are not directly applicable here. 
 Indeed, 
 contrary to all the models cited above, 
 our sequence $(\mathcal{T}^{n}_{\delta})_{n\geq 1}$ is not itself growing as $n$ changes and each tree $\mathcal{T}^{n}_{\delta}$ is rather defined through its own process of growing trees, 
 on its own probability space. 
 This creates additional difficulties, which we circumvent using various couplings.

\paragraph{Outline.} In Section~\ref{sec:tree} we recall the definition of the model of uniform attachment with freezing and explain why  the infection tree is a uniform attachment tree with freezing. Section~\ref{sec:height} contains the proof of our main Theorem~\ref{thm:main}, assuming a limit theorem for the profile of the infection tree whose proof is the content of Section~\ref{sec:profile}. Section~\ref{sec:polya} and Section~\ref{sec:lambert} contain several technical results, the first one concerning time-dependent P\'olya urns and the second one concerning bounds for the Lambert function.

\paragraph{Acknowledgments.} We thank \'Etienne Bellin and Arthur Blanc-Renaudie for stimulating discussions at early stages of this work. E.K.\@ acknowledges the support of the ERC consolidator grant 101087572 ``SuPerGRandMa''.

  \section{The infection tree is a uniform attachment tree with freezing}
  \label{sec:tree}
  
 In this section, we define the infection tree and describe it as a uniform attachment tree with freezing. We also provide for future use a table of notation (Table \ref{tab:notation}).
  \begin{table}[htbp]\caption{Table of the main notation and symbols.}
  	\centering
  	\begin{tabular}{c c p{12cm} }
  		\toprule
  		$\N$ && the set $\{1,2,3,\dots\}$ of all positive integers\\
  		\hline
  		$\Xb = (\X_n)_{n\in\N}$ && a sequence of elements of $\{-1,1\}$\\
  		$(\T_n(\Xb))_{n\ge 0}$ && the sequence of uniform attachment trees with freezing built from $\Xb$\\
  		\hline
  		$\tau_n$ && the number of steps made when the epidemic ceases starting with $n$ susceptible individuals\\
  		$(\T^n_k)_{0 \le k \le \tau_n}$ && the infection tree after $k$ steps\\
  		$\T^n = \T^n_{\tau^n}$ && the infection tree when the epidemic ceases\\
  		$(H^n_k,I^n_k)_{k\ge 0}$ &&Markov chain of susceptible and infectious individuals defined in \eqref{eq:transitions}\\
  		\hline
  		$f_\lambda(z)$ && $ 1 +(\lambda/(\lambda-1))(e^z-1-ze^z)$\\
  		$W$ && the principal branch of the Lambert function\\
  		$m_\lambda$ &&$  -W(-\lambda e^{-\lambda})$\\
  		$z_\lambda$ && $ \inf\{t>0  :  f_\lambda(t) = 0\} = 1+ W(-1/(e\lambda))$\\
  		$g_\lambda(t)$&& $ (1/\lambda)W(\lambda e^\lambda e^{-\lambda t})$\\
  		$t_\lambda$ && $  \inf\{t\ge 0  : 2-2g_\lambda(t)-t =  0\}$\\
  		$\gamma$&&$\lambda/(\lambda-1)$\\
  		\hline
  		$\mathbb{A}_{k}^{n}(h)$&&$\#\{\text{active vertices at height $h$ at time $k$ of $\mathcal{T}^{n}_{k}$}\}$\\
  		$\bL_k^n(h)$ &&${\mathbb{A}_{k}^{n}(h)}/{I_k^n}$\\
  		\hline
  		$\Height(\T)$ && height of a tree $\T$\\
  		$\haut(v)$ && height of a vertex $v$\\
  		\bottomrule
  	\end{tabular}
  	\label{tab:notation}
  	\end{table}

\subsection{Uniform attachment with freezing} \label{subsec:uniform attachment with freezing}

Let $\Xb=(\X_i)_{i \geq 1}\in \{-1,+1\}^\N$. 
In what follows, it will be useful to define more generally a sequence of random forests (i.e.~sequences of trees) built by uniform attachment with freezing. Such forests will be made of  rooted and vertex-labelled trees. The label of a vertex is either  ``$f$'' if it is frozen, or  ``$a$'' if it is still active.

\paragraph{\labeltext[1]{Algorithm 1.}{algo1}} For an integer $r \geq 1$:
\begin{compitem}
	\item Start with a forest $ \mathcal{F}^{r}_{0}=(\T^{1}_0(\Xb), \ldots, \T^{r}_{0}(\Xb))$ of $r$ trees which are all made of a single root vertex labelled $a$.
	\item For every $n \geq 1$, if $\mathcal{F}^r_{n-1}(\Xb)$ has no vertices labelled $a$, then set $\mathcal{F}^r_n(\Xb) \coloneqq \mathcal{F}^r_{n-1}(\Xb)$. Otherwise let $V_n$ be a random uniform active vertex of $\mathcal{F}^r_{n-1}(\Xb)$, chosen independently from the previous ones. Then:
	\begin{compitem}
		\item[--] If $\X_n=-1$, build $\mathcal{F}^r_n(\Xb)$ from $\mathcal{F}^r_{n-1}(\Xb)$  
		by replacing the label $a$ of $V_n$ with the label $f$;
		\item[--] If $\X_n=1$, build $\mathcal{F}^r_n(\Xb)$ from $\mathcal{F}^r_{n-1}(\Xb)$ by adding an edge between $V_n$ and a new vertex labeled $a$. 
	\end{compitem}
\end{compitem}

When    $\Xb=(\X_i)_{1 \leq i  \leq n}\in \{-1,+1\}^n$ has finite length, we  build $(\mathcal{F}^r_k(\Xb))_{0 \leq k \leq n}$ in the same way, and set $\mathcal{F}^r_{k}(\Xb)\coloneqq \mathcal{F}^r_{n}(\Xb)$ for $k > n$. 
We set $\mathcal{F}^r_{\infty}(\Xb)\coloneqq\lim_{n \rightarrow \infty} \mathcal{F}^r_{n}(\Xb)$, where the limit makes sense since the sequence $(\mathcal{F}_{n}(\Xb))_{n \geq 0}$ is weakly increasing.  

For every $n \in \Z_{+} \cup \{+\infty\} $, we denote by  $(\T^{1}_n(\Xb), \ldots, \T^{r}_{n}(\Xb))$ the $r$ trees of the forest $\mathcal{F}^{r}_{n}$. 
When $r=1$ and $n \in \Z_{+}\cup \{+\infty\}$, to simplify notation, we write $\T_{n}(\Xb)$ for the only  tree  $\T^{1}_{n}(\Xb)$ of $ \mathcal{F}^{1}_{n}(\Xb)$.

In the sequel, for all $s \in (0,1]$, we denote by $\mathsf{G}(s)$ a random variable with geometric  law on $\Z_{+}$ with parameter $s$, with law given by $\P(\mathsf{G}(s)=k)=s (1-s)^{k}$ for $k \geq 0$. 
By abuse of notation, we will use the symbol $\mathsf{G}(s)$ to denote the law of this random variable.

\subsection{The infection tree of a SIR epidemic}
\label{ssec:SIR}

Here we formally define the SIR epidemic process together with its infection tree, and explain the connection with uniform attachment trees with freezing.
 
 We assume that initially there is $1$ infectious individual and $n$ susceptible individuals. 
 The {duration of the} infectious periods of different infectious individuals are i.i.d.\ exponential random variables of parameter~$1$. 
 During its infectious period, an infectious individual {comes into contact} with any other given individual {at a set of times distributed as} a time-homogeneous Poisson process with intensity $\lambda_{n}$. 
 At such a time of contact, if the 
 other individual was susceptible, then it becomes infectious and is immediately able to infect other individuals. 
 An individual is considered \emph{removed} once its infectious period is over, and is then immune to new infections, playing no further part in the epidemic spread. 
 The epidemic ceases as soon as there are no more infectious individuals present in the population. 
 All Poisson processes are assumed to be independent of each other; they are also independent {of the duration} of infectious periods.

We call a \emph{step} of the process an event where either a susceptible individual becomes infectious, or where an individual's infectious period terminates. 
Denote by $\tau_{n}$  the number of steps made when the epidemic ceases. 
For $0 \leq k \leq \tau_{n}$, let $\mathcal{T}^{n}_{k}$ be the infection tree after $k$ steps, in which the vertices are individuals and where an edge is present between two individuals if one has infected the other {at some point during the process}.  
We are interested in the shape of the full infection tree $\T^n\coloneqq\mathcal{T}^{n}_{\tau_{n}}$  when the epidemic ceases.

The  connection with uniform attachment trees with freezing is established by first  choosing the sequence $\Xb \in \{-1,+1\}^\N$ appropriately at random. Specifically, let $(H^{n}_{k}, I^{n}_{k})_{k \geq 0}$ be a Markov chain with initial state $(H^{n}_{0},I^{n}_{0})=(n,1)$ and  transition probabilities given by
\begin{equation}
\label{eq:transitions}(H^{n}_{k+1},I^{n}_{k+1}) = \begin{cases}
(H^{n}_{k}-1,I^{n}_{k}+1)  &\textrm{with probability } \frac{\lambda_{n} H^{n}_{k}}{1+ \lambda_{n} H^{n}_{k}}\\
 (H^{n}_{k},I^{n}_{k}-1)  &\textrm{with probability } \frac{1}{1+\lambda_{n} H^{n}_{k}}
 \end{cases}
\end{equation}
with {set} $ \{(k,0) : 0 \leq k \leq n\}$ {of} absorbing states.  
Observe that the number of susceptible individuals and the number of infectious individuals in the SIR epidemic evolve according to this Markov chain. 
Then define the random sequence $\XXb^{n}=(\XX^{n}_{i})_{1 \leq i \leq \tau'_{n}}$ of $\pm 1$ as follows:  let $\tau'_{n}$ be the absorption time of the Markov chain, and for $1 \leq i \leq \tau'_{n}$ set $\XX^{n}_{i}=I^{n}_{i}-I^{n}_{i-1}$.

Then by construction, it is clear that
\begin{align}\label{eq couplage freezing}
(\mathcal{T}^{n}_{k})_{0 \leq k \leq \tau_{n}}  \quad \mathop{=}^{(d)}  \quad  (\mathcal{T}_{k}(\XXb^n))_{ 0 \leq k \leq \tau'_{n}}. 
\end{align}
The above equality also holds in terms of labeled trees: the active vertices correspond to the infectious individuals and the frozen vertices to the ``removed'' individuals. 
In the sequel, we will often implicitly make this identification.

\subsection{Coupling uniform attachment trees with freezing and  Bienyamé trees}

We 
{construct below}
a coupling between uniform attachment trees with freezing and  Bienyamé trees with geometric offspring distribution. 
We first introduce some notation.

Let $\XXb=(\XX_{k})_{k \geq 1}$ be a sequence of $ \{\pm1\}$-valued random variables.   For all $k \in \N$, for all $\X_1, \ldots, \X_{k-1}\in \{\pm1\}$ {for which $\P(\XX_1=\X_1, \ldots, \XX_{k-1}= \X_{k-1})>0$,}  set 
  \begin{equation*}
   	r_{k}(\X_1, \ldots, \X_{k-1})\coloneqq\Ppsq{\XX_{k}=-1}{\XX_1=\X_1, \ldots, \XX_{k-1}= \X_{k-1}}.
  \end{equation*}
  	For every $r \geq 1$, set $\tau_{r}(\XXb)= \inf \{ n \geq  1 : \XX_{1}+ \cdots+\XX_{n}=-r \} \in \N \cup \{+\infty\}$.

\begin{lemma}
	\label{lem:couplage}
	Let $N \geq 1$ be an integer. 
	Let $p,q \in (\frac{1}{2},1)$ with $p \leq q$. 
	Let $ \mathcal{E}$ be the event defined {as}
\begin{equation*}
	\mathcal{E}\coloneqq \{ \forall k  \in {\intervalleentier{1}{\tau_{N}(\XXb)-1}, \ } 
p \leq r_{k}(\XX_1, \ldots, \XX_{k-1}) \leq q \}, 
\end{equation*}
with the convention  $\intervalleentier{1}{\tau_{N}(\XXb)-1}= \N$ when  $\tau_{N}(X)=\infty$.
The following assertions hold.
\begin{enumerate}[label=(\roman*)]
		\item\label{it:lem:couplage couplage} We can couple $\XXb$ and $ \mathcal{F}_{\infty}^N(\XXb)$ with two  families of finite  trees $ (\underline{ \mathcal{T}}^{i})_{1 \leq i \leq N}$ and $(\overline{\mathcal{T}}^{i})_{1 \leq  i \leq  N}$, such  that    $ (\underline{ \mathcal{T}}^{i})_{1 \leq i \leq N}$ are i.i.d.~Bienaymé trees  with  offspring distribution $\mathsf{G}(q)$ 
		  and $(\overline{ \mathcal{T}}^{i})_{1 \leq i \leq N}$ are i.i.d.~Bienaymé trees with offspring distribution 
		$\mathsf{G}(p)$  
		and  such that on the event $ \mathcal{E}$ we have $\underline{ \mathcal{T}}^{i} \subset \mathcal{T}^{i}_{\infty}(\XXb) \subset \overline{\mathcal{T}}^{i}$ for every $1 \leq i \leq N$.
		\item\label{it:lem:couplage queue hauteur} There exists a constant $C>0$ depending only on $p$ and $q$ such that for every $1 \leq i \leq N$ and $h \geq 0$,
		\begin{align*}  \frac{1}{C} \left(  \frac{1}{q}-1 \right)^{h}  \leq\P \left(\Height\left(\underline{\mathcal{T}}^{i}\right) \geq h  \right) \leq \P \left(\Height\left(\overline{\mathcal{T}}^{i}\right) \geq h  \right) \leq  C \left( \frac{1}{p}-1\right)^{h} .
		\end{align*} 
	\end{enumerate}
\end{lemma}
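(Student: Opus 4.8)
\textbf{Proof plan for Lemma~\ref{lem:couplage}.}

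The plan is to build the coupling step by step along the construction of Algorithm~1, realizing the geometric Bienaymé trees through the same randomness that drives the freezing process. For part \ref{it:lem:couplage couplage}, the key observation is that in Algorithm~1, whenever an active vertex $V_n$ is selected, the decision of whether it freezes (if $\X_n=-1$) or gives birth to a new active child (if $\X_n=1$) is governed by $\XX_n$, whose conditional law given the past is a Bernoulli with parameter $r_n(\XX_1,\dots,\XX_{n-1})$. I would use a standard monotone coupling of Bernoulli variables: on an auxiliary probability space, let $(U_n)_{n\ge 1}$ be i.i.d.\ uniform on $[0,1]$, and declare a ``$p$-freeze'' at step $n$ if $U_n\le p$, a ``$q$-freeze'' if $U_n\le q$, and an ``$r$-freeze'' if $U_n\le r_n(\XX_1,\dots,\XX_{n-1})$. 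On the event $\mathcal E$ we have $p\le r_n\le q$ for all relevant $n$, so a $p$-freeze forces an $r$-freeze, which forces a $q$-freeze. The idea is then to run three coupled versions of the attachment process sharing the vertex-selection randomness $(V_n)$ and the uniforms $(U_n)$: the $q$-version freezes the least often, hence produces the \emph{largest} trees $\overline{\mathcal T}^i$, and the $p$-version freezes the most often, producing the \emph{smallest} trees $\underline{\mathcal T}^i$; the true process $\mathcal T^i_\infty(\XXb)$ is sandwiched in between on $\mathcal E$.

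The point that needs care is that in the $p$- and $q$-versions, the freezing decisions are genuine i.i.d.\ fair-ish coin flips independent of everything else (since $p$ and $q$ are constants), so those two processes are exactly instances of Algorithm~1 run with i.i.d.\ $\pm1$ inputs with $\P(\XX=-1)=p$ (resp.\ $q$). It is a classical and easy fact — which I would either cite from the uniform-attachment-with-freezing literature or prove in a line — that uniform attachment with freezing driven by i.i.d.\ $\mathsf{Bernoulli}(s)$ freezing produces, for each initial root, an independent Bienaymé tree with offspring distribution $\mathsf{G}(s)$: indeed, a vertex, once created and active, is selected repeatedly (a.s.\ it is eventually selected, unless the active set dies out, but with $s<\tfrac12$ the forest is a.s.\ finite and every active vertex is eventually selected), and each selection either freezes it (prob.\ $s$) or adds a child (prob.\ $1-s$), so the number of children is $\mathsf{G}(s)$, and by the recursive structure of the algorithm these counts are independent across vertices. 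The subtlety is precisely that one needs the active set to die out a.s.\ so that every active vertex gets ``resolved'' into a finite offspring count; this is where $p,q>\tfrac12$ is used, and it also guarantees $\tau_N(\XXb)<\infty$ in the $p$- and $q$-versions so the trees $\underline{\mathcal T}^i,\overline{\mathcal T}^i$ are finite. For the containment $\underline{\mathcal T}^i\subset\mathcal T^i_\infty\subset\overline{\mathcal T}^i$ on $\mathcal E$: since all three processes use the same $(V_n,U_n)$, an edge added in the $p$-version is added in the true version and in the $q$-version at the same step, by the monotonicity of the freeze thresholds — one should argue inductively that the active sets are nested, so that whenever the $p$-version has an active vertex available to be selected, so do the other two, keeping the coupling consistent.

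For part \ref{it:lem:couplage queue hauteur}, the middle inequality $\P(\Height(\underline{\mathcal T}^i)\ge h)\le\P(\Height(\overline{\mathcal T}^i)\ge h)$ is immediate from the coupling in \ref{it:lem:couplage couplage} (or just from stochastic domination $\mathsf{G}(q)\preceq\mathsf{G}(p)$ when $p\le q$). The two outer inequalities are standard extinction-probability estimates for subcritical Bienaymé–Galton–Watson trees: with offspring mean $\mu_s=(1-s)/s<1$ for $s>\tfrac12$, one has $\P(\Height(\mathcal T)\ge h)=\P(Z_h\ge 1)\le\E[Z_h]=\mu_s^h$, giving the upper bound with $C=1$ and $s=p$; for the matching lower bound one uses the well-known fact that for a subcritical GW tree the survival probability of generation $h$ satisfies $\P(Z_h\ge1)\sim c\,\mu_s^h$ as $h\to\infty$ (e.g.\ via the generating-function iteration $f^{(h)}$ and the fact that $f'(1)=\mu_s$, or directly for the geometric offspring law where $f$ is an explicit Möbius function and $f^{(h)}$ can be computed in closed form), hence $\P(Z_h\ge1)\ge\tfrac1C\mu_s^h$ for a constant $C$ depending only on $s$, applied with $s=q$. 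Since $p,q$ range over $[\tfrac12+\epsilon$-type sets only through the statement's quantifiers and the constant is allowed to depend on $p,q$, one simply takes the worst of the two. I expect the main obstacle to be the bookkeeping in the three-way coupling of part \ref{it:lem:couplage couplage} — in particular making the nesting of active vertex sets precise so that the shared selection variables $(V_n)$ can be used consistently across the three processes even though they have different numbers of vertices — rather than any of the probabilistic estimates, which are routine.
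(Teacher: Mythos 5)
Your overall strategy is the same as the paper's: a three-way coupling driven by shared uniforms $(U_n)$, with the monotone ordering $p \le r_n \le q$ of the freeze thresholds on $\mathcal{E}$ producing nested trees, the constant-threshold versions being identified as forests of independent $\mathsf{G}(p)$- and $\mathsf{G}(q)$-Bienaymé trees (the paper quotes Theorem~2 of \cite{BBKK23+} for this step), and part~\ref{it:lem:couplage queue hauteur} following from height-tail estimates for subcritical Galton--Watson trees with geometric offspring (the paper uses the closed-form expression from Harris; your first-moment upper bound together with the $\P(Z_h\ge 1)\asymp \mu^h$ lower bound is an equivalent route).

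There is, however, one genuine gap, and it sits exactly where you defer to ``bookkeeping'': how the three processes can share the vertex-selection randomness when their active sets differ. A single draw $V_n$ cannot be uniform on the active set of $\overline{\mathcal{T}_n}$, of $\mathcal{T}_n$ and of $\underline{\mathcal{T}_n}$ simultaneously, and the naive fix of selecting, in each tree, the active vertex of the same rank $\lceil U\cdot \#\mathcal{A}\rceil$ destroys the nesting, since different vertices would then be frozen in the different trees. The paper's resolution is a rejection-sampling time change: at every step the vertex $\mathcal{V}_n$ is drawn uniformly among the active vertices of the \emph{largest} tree $\overline{\mathcal{T}_n}$; the middle (resp.\ smallest) process performs an action only at those steps where $\mathcal{V}_n$ is present and active in it, and one keeps track of the accepted steps via a time change $\sigma(n)$. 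One must then verify --- and this induction, culminating in \eqref{eq:probas}, is the bulk of the paper's proof --- that conditionally on its past the accepted subsequence of selections is uniform on the active set of the middle tree, so that the time-changed middle process has exactly the law of $(\mathcal{T}_k(\XXb))_{k\ge 0}$. An auxiliary flag is also needed to decouple the processes once the condition $p\le r_k\le q$ first fails, so that the $p$- and $q$-driven trees keep their unconditioned Bienaymé laws. Without this mechanism (or an equivalent one) your plan does not yet produce a valid coupling.
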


  \begin{proof}
  To simplify notation and to avoid unnecessary details, we prove the result for $N=1$.

  	Write $ \mathcal{A}_{T}$ for the set of all active vertices of a tree $T$. 
Below 
  	we build by induction a sequence of trees $(\underline{\mathcal{T}_{n}}, \mathcal{T}_{n}, \overline{\mathcal{T}_{n}} )_{n \geq 0}$, a sequence $\widetilde{\XXb}=(\widetilde{\XX}_k)_{k\ge 1}$  and a {non-decreasing} sequence of integers $(\sigma(n))_{n \geq 0}$  such that 
if we define
	\begin{equation*}
	\widetilde{\mathcal{E}} \coloneqq \{ \forall k  \in \intervalleentier{1}{\tau_{{1}}(\widetilde{\XXb})-1}
	, \  p \leq r_{k}(\widetilde{\XX}_1, \ldots, \widetilde{\XX}_{k-1}) \leq q \},
	\end{equation*}
then the following properties hold:
  	\begin{enumerate}[label=(\alph*)]
			  		\item\label{it:seq}  The two sequences $\widetilde{\XXb}$ and  ${\XXb}$ have the same law.
  		\item\label{it:tree sandiwch2} For every $n \geq0$ we have $ \underline{\mathcal{T}_{n}} \subset   \overline{\mathcal{T}_{n}}$ and $ \mathcal{A}_{\underline{\mathcal{T}_{n}}} \subset  \mathcal{A}_{\overline{\mathcal{T}_{n}}}$.
		  		\item\label{it:tree sandiwch} On the event $\widetilde{\mathcal{E}}$ for every $n \geq0$ we have $ \underline{\mathcal{T}_{n}} \subset \mathcal{T}_{n} \subset  \overline{\mathcal{T}_{n}}$ and $ \mathcal{A}_{\underline{\mathcal{T}_{n}}} \subset \mathcal{A}_{\mathcal{T}_{n}} \subset  \mathcal{A}_{\overline{\mathcal{T}_{n}}}$.

  		\item\label{it:non-decreasing trees} The sequences  $( \underline{\mathcal{T}_{n}})_{n \geq 0}$,  $( {\mathcal{T}_{n}})_{n \geq 0}$ and  $( \overline{\mathcal{T}_{n}})_{n \geq 0}$ are non-decreasing, so their limits $\underline{\T_\infty}$, $\T_\infty$ and $\overline{\T_\infty}$ are well defined.
  		\item\label{it:upper tree is Bienaymé} The tree $\overline{\T_\infty}$ has the law of a Bienaymé tree with offspring distribution $\mathsf{G}(p)$. 
  		\item\label{it:middle tree has the right law} We have $\sigma(n) \rightarrow\infty$ as $n \to \infty$ and $(\widetilde{\XX}_k,\mathcal{T}_{\sigma^{-1}(k)})_{k \geq 1}$ has the same law as $(\XX_{k},\mathcal{T}_{k}({\XXb}))_{k \geq 1}$, where $\sigma^{-1}(k)=\inf \{n \geq 0 : \sigma(n) \geq k\}$. This entails that $(\widetilde{\XXb},\T_\infty)$ has the same law as $(\XXb,\mathcal{T}_{\infty}({\XXb}))$.
  		\item\label{it:lower tree is Bienaymé} The tree $\underline{\T_\infty}$ has the law of a Bienaymé tree with offspring distribution $\mathsf{G}(q)$. 
  	\end{enumerate}
  	Point~\ref{it:lem:couplage couplage} {will then follow from the above properties}: by \ref{it:tree sandiwch} and \ref{it:non-decreasing trees}, the trees $\underline{\T_\infty}$, $\T_\infty$ and $\overline{\T_\infty}$ are constructed on the same probability space in such a way that on the event $\widetilde{\mathcal{E}}$ we have $\underline{\T_\infty}\subset \T_\infty \subset \overline{\T_\infty}$; by \ref{it:upper tree is Bienaymé}, \ref{it:middle tree has the right law}, \ref{it:lower tree is Bienaymé}, those trees have the desired distributions. 
  
  		Let us now focus on proving properties \ref{it:seq} through \ref{it:lower tree is Bienaymé}.
		  	Along with the trees, the sequence $\widetilde{\XXb}$ and the sequence $\sigma$, the construction will build an auxiliary  sequence $(C_{n})_{n \geq 0}$ of  $\{0,1\}$-valued random variables (which, roughly speaking, allows to monitor {whether} the condition $  p \leq r_{k}(\widetilde{\XX}_1, \ldots, \widetilde{\XX}_{k-1}) \leq q$ holds).
  	
  	To start with, $\underline{\mathcal{T}_{0}}, \mathcal{T}_{0}, \overline{\mathcal{T}_{0}}$ are all made of a {single} active vertex, $\sigma(0)=0$, $C_{0}=1$ and $(\widetilde{\XX}_k)_{1\leq k\leq \sigma(0)}$ is then just the empty sequence. 
  	Let $(U_{n})_{n \geq 1}$ be a sequence of i.i.d.\ uniform random variables on $[0,1]$.  
  	For $n \geq0$, assuming that $( \underline{\mathcal{T}_{m}},\mathcal{T}_{m}, \overline{\mathcal{T}_{m}} )_{0 \leq m \leq n}$ and $(\sigma(m))_{0 \leq m \leq n}$ and $(\widetilde{\XX}_k)_{1\leq k\leq \sigma(n)}$ have been constructed, we proceed as follows. 

  	\begin{itemize}[label=--]
		\item[(I)]If $C_{n}=1$ and $r_{\sigma(n)+1} (\widetilde{\XX}_1, \ldots, \widetilde{\XX}_{\sigma(n)}) \in  [p,q]$, set $C_{n+1}=1$ and build $( \underline{\mathcal{T}_{n+1}},\mathcal{T}_{n+1}, \overline{\mathcal{T}_{n+1}} )$ as follows:
  	  	\begin{itemize}
			\item[(A)] If $\overline{\mathcal{T}_{n}}$  has at least one active vertex, choose an active vertex $\mathcal{V}_{n}$ of $\overline{\mathcal{T}_{n}} $  uniformly at random, independently of all other choices.
  		Then build $( \underline{\mathcal{T}_{n+1}},\mathcal{T}_{n+1}, \overline{\mathcal{T}_{n+1}} )$ as follows:
  		\begin{itemize}[label=--]
  			\item[$(\alpha)$] If $ U_{n+1} <p$: freeze $ \mathcal{V}_{n}$ in  $\overline{\mathcal{T}_{n}} $;
  			
  			If $ U_{n+1}  \geq p$: attach a new active vertex to  $ \mathcal{V}_{n}$ in $\overline{\mathcal{T}_{n}} $;
  			
			  			\item[$(\beta)$]  If  $ \mathcal{V}_{n }$ is present and active in $ \underline{\mathcal{T}_{n}}$:
  			
  			\quad If $ U_{n+1} <q$: freeze $ \mathcal{V}_{n}$ in  $\underline{\mathcal{T}_{n}} $;
  			
  			\quad If $ U_{n+1}  \geq q$: attach a new active vertex to  $ \mathcal{V}_{n}$ in $\underline{\mathcal{T}_{n}} $;

  			\item[$(\gamma)$] If   $ \mathcal{V}_{n}$ is not present or not active  in $ \mathcal{T}_{n}$: set $\sigma(n+1)\coloneqq\sigma(n)$;
  			
  			If  $ \mathcal{V}_{n }$ is present and active  in $ \mathcal{T}_{n}$: set $\sigma(n+1)\coloneqq\sigma(n)+1$,
  			  $\widetilde{\XX}_{\sigma(n)+1}\coloneqq2 \mathbbm{1}_{\{U_{n+1} \geq  r_{\sigma(n)+1} (\widetilde{\XX}_1, \ldots, \widetilde{\XX}_{\sigma(n)})\}}-1$ and  perform the following actions:
  			
  			\quad If $  U_{n+1} < r_{\sigma(n)+1} (\widetilde{\XX}_1, \ldots, \widetilde{\XX}_{\sigma(n)})$:  freeze $ \mathcal{V}_{n}$ in $ \mathcal{T}_{n}$;
  			
  			\quad If $U_{n+1} \geq r_{\sigma(n)+1}(\widetilde{\XX}_1, \ldots, \widetilde{\XX}_{\sigma(n)})  $: attach a new active vertex to $ \mathcal{V}_{n}$  in $ \mathcal{T}_{n}$.
  			
  		\end{itemize}
  		\item[(B)]If $\overline{\mathcal{T}_{n}}$  has no active vertices, set  $( \underline{\mathcal{T}_{n+1}},\mathcal{T}_{n+1}, \overline{\mathcal{T}_{n+1}} )\coloneqq ( \underline{\mathcal{T}_{n}},\mathcal{T}_{n}, \overline{\mathcal{T}_{n}} )$
  		and $\sigma(n+1)\coloneqq\sigma(n)+1$
  		and  $\widetilde{\XX}_{\sigma(n)+1}\coloneqq2 \mathbbm{1}_{\{U_{n+1} \geq  r_{\sigma(n)+1} (\widetilde{\XX}_1, \ldots, \widetilde{\XX}_{\sigma(n)})\}}-1$.
		\end{itemize}
			\item[(II)] Otherwise set $C_{n+1}=0$ and build $( \underline{\mathcal{T}_{n+1}},\mathcal{T}_{n+1}, \overline{\mathcal{T}_{n+1}} )$ as follows:
			  	  	\begin{itemize}
			\item[(A)]  If $\overline{\mathcal{T}_{n}}$  has no active vertices, set  $( \underline{\mathcal{T}_{n+1}}, \overline{\mathcal{T}_{n+1}} )\coloneqq ( \underline{\mathcal{T}_{n}}, \overline{\mathcal{T}_{n}} )$. Otherwise, choose an active vertex $\mathcal{V}_{n}$ of $\overline{\mathcal{T}_{n}} $  uniformly at random, independently of all other choices.
  		Then build $( \underline{\mathcal{T}_{n+1}}, \overline{\mathcal{T}_{n+1}} )$ as follows:
  		\begin{itemize}[label=--]
  			\item[$(\alpha)$] If $ U_{n+1} <p$: freeze $ \mathcal{V}_{n}$ in  $\overline{\mathcal{T}_{n}} $;
  			
  			If $ U_{n+1}  \geq p$: attach a new active vertex to  $ \mathcal{V}_{n}$ in $\overline{\mathcal{T}_{n}} $;
  			  			
  			\item[$(\beta)$]  If  $ \mathcal{V}_{n }$ is present and active in $ \underline{\mathcal{T}_{n}}$:
  			
  			\quad If $ U_{n+1} <q$: freeze $ \mathcal{V}_{n}$ in  $\underline{\mathcal{T}_{n}} $;
  			
  			\quad If $ U_{n+1}  \geq q$: attach a new active vertex to  $ \mathcal{V}_{n}$ in $\underline{\mathcal{T}_{n}} $;
  		\end{itemize}
  		\item[(B)] If ${\mathcal{T}_{n}}$  has no active vertices, set  $\mathcal{T}_{n+1}  \coloneqq \mathcal{T}_{n}$,
  		 $\sigma(n+1)\coloneqq\sigma(n)+1$
  		and  $\widetilde{\XX}_{\sigma(n)+1}\coloneqq2 \mathbbm{1}_{\{U_{n+1} \geq  r_{\sigma(n)+1} (\widetilde{\XX}_1, \ldots, \widetilde{\XX}_{\sigma(n)})\}}-1$. Otherwise,  choose an active vertex $\mathcal{W}_{n}$ of ${\mathcal{T}_{n}} $  uniformly at random, independently of all other choices.
 Set  $\widetilde{\XX}_{\sigma(n)+1}\coloneqq2 \mathbbm{1}_{\{U_{n+1} \geq  r_{\sigma(n)+1} (\widetilde{\XX}_1, \ldots, \widetilde{\XX}_{\sigma(n)})\}}-1$ and  perform the following actions:
  			
  			\quad If $  U_{n+1} < r_{\sigma(n)+1} (\widetilde{\XX}_1, \ldots, \widetilde{\XX}_{\sigma(n)})$:  freeze $ \mathcal{W}_{n}$ in $ \mathcal{T}_{n}$;
  			
  			\quad If $U_{n+1} \geq r_{\sigma(n)+1}(\widetilde{\XX}_1, \ldots, \widetilde{\XX}_{\sigma(n)})  $: attach a new active vertex to $ \mathcal{W}_{n}$  in $ \mathcal{T}_{n}$.
		\end{itemize}
  	\end{itemize}

  	Properties \ref{it:tree sandiwch2}, \ref{it:tree sandiwch} and \ref{it:non-decreasing trees} hold by construction. 
  	Also, observe that by (II), at any time $n\geq 1$ such that ${\mathcal{T}_{n}}$ still has at least one active vertex, $\sigma(n)$ represents the number of times an action (freezing or attachment) has modified $(\mathcal{T}_{m})_{0 \leq m \leq n}$. 
  	In particular, 
$(\mathcal{T}_{\sigma^{-1}(k)})_{k \geq 0}$ encodes the evolution of  $(\mathcal{T}_{n})_{n \geq 0}$ at steps when it changes and then remains constant after its number of active vertices reaches $0$.

  	We first check \ref{it:upper tree is Bienaymé}.
  	By construction,  $(\overline{\mathcal{T}_{n}} )_{ n \geq0}$ has the same law as $(\T_n(\overline{\XXb}))_{n\geq 0}$ with $\overline{\XX}_{n}\coloneqq2 \mathbbm{1}_{U_{n} \geq p}-1$ for $n \geq 1$. 
  	Since  $(\overline{\XX}_{n})_{n \geq1}$ are i.i.d. with $\P(\overline{\XX}_{1}=1)=1-p$,  by Theorem~2 in \cite{BBKK23+}, $\overline{\mathcal{T}_{\infty}}$ is a Bienaymé tree with offspring distribution $\mathsf{G}(p)$. 
  	Also observe that since $p>1/2$, the tree $\overline{\mathcal{T}_{\infty}}$ is almost surely finite and has no active vertices.
  	
  	Now let us establish \ref{it:middle tree has the right law}.  
  	First, the a.s.~limit $\sigma(n) \rightarrow\infty$ comes from the fact that  there exists $n \geq1$ such that $\overline{\mathcal{T}_{n}}$ has no active vertices.  Indeed,  $\sigma(n+1) =\sigma(n)$ can happen only when $\overline{\mathcal{T}_{n}}$  has at least one active vertex, and otherwise $\sigma(n+1) =\sigma(n)+1$.
  	We then show by induction on $k$ that $(\mathcal{T}_{\sigma^{-1}(i)},\widetilde{\XX}_{i})_{1 \leq i \leq k}$ and $(\mathcal{T}_{i}({\XXb}),\XX_{i})_{1 \leq i \leq k}$ have same law. 

  \emph{Base case.} 
  	Since $\underline{\mathcal{T}_{0}}, \mathcal{T}_{0}, \overline{\mathcal{T}_{0}}$ are all made of an active vertex, $ \mathcal{V}_{0}$ is that vertex, so we have $\sigma(1)=1$,  $\widetilde{\XX}_{1}=2 \mathbbm{1}_{U_{1} \geq \P(X_{1}=-1)}-1$. Thus, if $\tau_{0}$ is the tree made of a frozen vertex and $\tau_{1}$ is the tree made of two active vertices, we have $\P(( \mathcal{T}_{1},\widetilde{\XX}_{1})=(\tau_{0},-1))=\P((\mathcal{T}_{1}({\XXb}),\XX_{1})=(\tau_{0},-1))=\P(\XX_{1}=-1)$ and  $\P(( \mathcal{T}_{1},\widetilde{\XX}_{1})=(\tau_{1},1))=\P((\mathcal{T}_{1}({\XXb}),\XX_{1})=(\tau_{1},1))=\P(\XX_{1}=1)$ so that $( \mathcal{T}_{1},\widetilde{\XX}_{1})$ and $(\mathcal{T}_{1}({\XXb}),\XX_{1})$ have same law.
  	
  	\emph{Induction step.} Assume that $(\mathcal{T}_{\sigma^{-1}(i)},\widetilde{\XX}_{i})_{1 \leq i \leq k}$ and $(\mathcal{T}_{i}({\XXb}),\XX_{i})_{1 \leq i \leq k}$ have same law. 	Fix some sequence of trees $(\tau_{i})_{1 \leq i \leq k+1}$, 
  	some tree $\overline{\tau}_k$,
  	some sequence $(\X_{i})_{1 \leq i \leq k+1} \in \{-1,1\}^{k+1}$, 
  	some $c\in \{0,1\}$ 
  	and some integer $n\ge 1$, and let $E $ be the event
\begin{align*}
	E \coloneqq {
	\{C_{k}=c\} \cap
	\{\sigma^{-1}(k)=n \} \cap
	\{(\mathcal{T}_{\sigma^{-1}(i)},\widetilde{\XX}_{i})= (\tau_{i},\X_{i}) \textrm{ for } 1 \leq i \leq k \} \cap
	\{\overline{\mathcal{T}_{\sigma^{-1}(k)}}=\overline{\tau_{k}}\}.}
\end{align*}
 We {first} show that  	\begin{eqnarray}
  		&&\P \left( (\mathcal{T}_{\sigma^{-1}(k+1)},\widetilde{\XX}_{k+1})= (\tau_{k+1},\X_{k+1}) \mid E  \right)\notag
  		\\
  		&& \qquad  \qquad\qquad = \P \left( (\mathcal{T}_{k+1}({\XXb}),\XX_{k+1})= (\tau_{k+1},\X_{k+1}) \mid  (\mathcal{T}_{i}({\XXb}),\XX_{i})= (\tau_{i},\X_{i}) \textrm{ for } 1 \leq i \leq k\right) \label{eq:probas},
  	\end{eqnarray}
  provided that the events involved in the conditioning have positive probability. 
If \eqref{eq:probas} holds, it is then straightforward to check that
  	\begin{eqnarray*}
  		&&\P \left( (\mathcal{T}_{\sigma^{-1}(k+1)},\widetilde{\XX}_{k+1})= (\tau_{k+1},\X_{k+1}) \mid    (\mathcal{T}_{\sigma^{-1}(i)},\widetilde{\XX}_{i})= (\tau_{i},\X_{i}) \textrm{ for } 1 \leq i \leq k  \right)\notag
  		\\
  		&& \qquad  \qquad = \P \left( (\mathcal{T}_{k+1}({\XXb}),\XX_{k+1})= (\tau_{k+1},\X_{k+1}) \mid  (\mathcal{T}_{i}({\XXb}),\XX_{i})= (\tau_{i},\X_{i}) \textrm{ for } 1 \leq i \leq k\right) \notag,
  	\end{eqnarray*}
  	which in turn 
  	 {using the induction hypothesis}
  	 implies that $(\mathcal{T}_{\sigma^{-1}(i)},\widetilde{\XX}_{i})_{1 \leq i \leq k+1}$ and $(\mathcal{T}_{i}({\XXb}),\XX_{i})_{1 \leq i \leq k+1}$ have same law.

  	To establish \eqref{eq:probas},
  	we start with the case where $ \tau_{k}$ has no active vertices. 
  	Then the two probabilities in \eqref{eq:probas} are $0$ unless $\tau_{k+1}=\tau_{k}$.  
  	Also, by construction, on the event $E $, we have $\sigma^{-1}(k+1)=n+1$, $\mathcal{T}_{n+1}=\mathcal{T}_{n}$,  $\widetilde{\XX}_{n+1}=2 \mathbbm{1}_{U_{n+1} \geq  r_{k+1} (\X_1, \ldots, \X_{k})}-1$. 
  	Since $U_{n+1}$ is independent of $E $, it follows that
  	\begin{eqnarray*}
  		&&\P \left( (\mathcal{T}_{\sigma^{-1}(k+1)},\widetilde{\XX}_{k+1})= (\tau_{k},\X_{k+1}) \mid E  \right)\notag
  		\\
  		&& \qquad  \qquad = \P \left( 2 \mathbbm{1}_{U_{n+1} \geq  r_{k+1} (\X_1, \ldots, \X_{k})}-1=\X_{k+1}\right) =\Ppsq{\XX_{k+1}=\X_{k+1}}{\XX_1=\X_1, \ldots, \XX_{k}= \X_{k}},
  	\end{eqnarray*}
  	which is precisely equal to $ \P \left( (\mathcal{T}_{k{+1}}({\XXb}),\XX_{k+1})= (\tau_{k+1},\X_{k+1}) \mid  (\mathcal{T}_{i}({\XXb}),\XX_{i})= (\tau_{i},\X_{i}) \textrm{ for } 1 \leq i \leq k\right)$ by Algorithm \ref{algo1}.
  	
  	Now assume that $\tau_{k}$ has at least one active vertex. 
  	First,  if $c=1$ and $r_{k+1} ({\X}_1, \ldots, {\X}_{k}) \in  [p,q]$ (case (I)), observe that on the event $E $, {the tree} $\overline{\tau_{k}}$ has at least one active vertex (since by construction $\mathcal{A}_{\mathcal{T}_{n}} \subset  \mathcal{A}_{\overline{\mathcal{T}_{n}}}$ if $C_{n}=1$), so we are in step (I) (A). 
  	In particular, we have $\sigma^{-1}(k+1)= \min \{ i \geq n+1 : \mathcal{V}_{i} \in \mathcal{A}_{\tau_{k}}\}$ and $\widetilde{\XX}_{k+1}=2 \mathbbm{1}_{U_{ \sigma^{-1}(k+1)} \geq  r_{k+1} ({\X}_1, \ldots, {\X}_{k})}-1$. 
  	In addition, conditionally given $E $, by rejection sampling  $ \mathcal{V}_{\sigma^{-1}(k+1)}$ follows the uniform distribution on $ \mathcal{A}_{\tau_{k}}$ and $U_{\sigma^{-1}(k+1)} $ is a uniform random variable on $[0,1]$ independent of   $ \mathcal{V}_{\sigma^{-1}(k+1)}$. 
  	Thus, by step $(\gamma$), $\P ( (\mathcal{T}_{\sigma^{-1}(k+1)},\widetilde{\XX}_{k+1})= (\tau_{k+1},1) \mid E  )$ is the probability  that $\tau_{k+1}$ is obtained by attaching an active vertex to a random uniform active vertex of $\tau_{k}$ times the probability $\Ppsq{\XX_{k+1}=1}{\XX_1=\X_1, \ldots, \XX_{k}= \X_{k}}$, and $\P ( (\mathcal{T}_{\sigma^{-1}(k+1)},\widetilde{\XX}_{k+1})= (\tau_{k+1},-1) \mid E  )$ is the probability  that $\tau_{k+1}$ is obtained by freezing a random uniform active vertex of $\tau_{k}$ times  the probability $\Ppsq{\XX_{k+1}=-1}{\XX_1=\X_1, \ldots, \XX_{k}= \X_{k}}$. 
  	This is precisely \eqref{eq:probas}.

	Second, if $c=0$, or if $c=1$ and $r_{k+1} ({\X}_1, \ldots, {\X}_{k}) \not\in  [p,q]$, by step (II) (B), we have $\sigma^{-1}(k+1)=n+1$ and $\P ( (\mathcal{T}_{\sigma^{-1}(k+1)},\widetilde{\XX}_{k+1})= (\tau_{k+1},1) \mid E  )$ is the probability  that $\tau_{k+1}$ is obtained by attaching an active vertex to a random uniform active vertex of $\tau_{k}$ times the probability $\Ppsq{\XX_{k+1}=1}{\XX_1=\X_1, \ldots, \XX_{k}= \X_{k}}$, and $\P ( (\mathcal{T}_{\sigma^{-1}(k+1)},\widetilde{\XX}_{k+1})= (\tau_{k+1},-1) \mid E  )$ is the probability  that $\tau_{k+1}$ is obtained by freezing a random uniform active vertex of $\tau_{k}$ times  the probability $\Ppsq{\XX_{k+1}=-1}{\XX_1=\X_1, \ldots, \XX_{k}= \X_{k}}$, which is again \eqref{eq:probas}.
{This finishes the proof of the induction step, and hence that of \ref{it:middle tree has the right law}.}

Now \ref{it:lower tree is Bienaymé} is established in the same way as \ref{it:middle tree has the right law}, by constructing a sequence  $\pi(n) \rightarrow\infty$ and $(\underline{\mathcal{T}_{\pi^{-1}(k)}})_{k \geq 1}$ has the same law as $(\mathcal{T}_{k}(\underline{\XXb}))_{k \geq 1}$, where $\pi^{-1}(n)=\inf \{k \geq 0 : \pi(k) \geq n\}$ and  $(\underline{\XX}_{k})_{k \geq1}$ are i.i.d. with $\P(\underline{\XX}_{1}=1)=1-q$.
This finishes the proof of \ref{it:lem:couplage couplage}.
  	
  Now, \ref{it:lem:couplage queue hauteur} follows from \ref{it:lem:couplage couplage} and the fact that if $ \mathcal{T}$ is a Bienaymé tree with  offspring distribution $\mathsf{G}(r)$ with $r>1/2$, we have $\P(\Height(\mathcal{T}) > n)= \frac{1-s_{0}}{m^{n}-s_{0}} \cdot m^{n}$ with $m=\mathbb{E}[\mathsf{G}(r)]=1/r-1$ and $s_{0}=r/(1-r)$, see \cite[p. 9]{Har63}.
  \end{proof}

\section{Height of the infection tree}
\label{sec:height}

{In this section,}
we shall prove our main result, Theorem~\ref{thm:main}, assuming a limit theorem for the profile of the infection tree (Proposition~\ref{prop: profil faible}, stated in Section \ref{subsec:profile of the infection tree}).
We first gather some useful ingredients pertaining to the asymptotic behavior of the evolution of susceptible and infectious individuals (Section~\ref{subsec:fluid limit}) and analytic properties concerning the Lambert function (Section~\ref{ssec:crit}).

\subsection{Fluid limit}
\label{subsec:fluid limit}

The so-called \emph{fluid limit} of the processes $I^{n}$ and $H^{n}$ involves the  solution $g_\lambda$ of the ordinary differential equation $g_{\lambda}'(t)=-\frac{\lambda g_{\lambda}(t)}{1+\lambda g_{\lambda}(t)}$ with $g_\lambda(0)=1$. 
Recall that $W$ is the principal branch of the Lambert function, which satisfies $W(x) e^{W(x)}=x$ for $x \geq -\frac{1}{e}$. 
It is also the solution of the differential equation $W'(t)= \frac{W(t)}{t(1+W(t))}$ with $W'(0)=1$. 
This readily implies that
\begin{equation}
\label{eq:glambda}
g_{\lambda}(t)= \frac{1}{\lambda} W \left(  \lambda  e^{\lambda}e^{- \lambda t} \right), \qquad t \geq 0.
\end{equation}Set
\begin{align*} t_{\lambda} \coloneqq \inf \{t \geq 0: 2-2g_{\lambda}(t)-t=0 \}.
\end{align*} 
The fact that $t_{\lambda}$ is well defined comes e.g.~from the fact that $h(t)\coloneqq2-2g_\lambda(t)-t$ for $t \geq 0$ defines a concave function (this can be seen by differentiating) with $h'(0)=2 \lambda-1>0$ and $h(t) \rightarrow -\infty$ as $t \rightarrow \infty$ (since $W(0)=0$).

Recall that $\mathcal{B}$  is a Bernoulli random variable of parameter $1-\frac{1}{\lambda}$. 
In the proof of Theorem~24 in \cite{BBKK23+} the following is established for every $\delta \in (0,1)$:
\begin{eqnarray}
&& \left(
\left(\frac{I^n_{\lfloor nt \rfloor}}{n}: t \ge 0\right),\left(\frac{H^n_{\lfloor nt \rfloor}}{n} :  0 \leq t \leq t_{\lambda} \right) , {\mathbbm{1}_{\tau'_{n} \geq (1-\delta) t_{\lambda} n}}
\right) \notag \\
&& \qquad \qquad \qquad \qquad   \cvloi 
\left(
\left( \max(2-2g_{\lambda}(t)-t,0)  \mathcal{B} : t \geq 0\right),
( g_{\lambda}(t) \mathcal{B} : {0 \leq t \leq t_{\lambda}}), { \mathcal{B} }
\right),\label{eq:limflu}
\end{eqnarray}
 where the functional convergence is understood for the topology of uniform convergence on compact sets. In particular, $t_{\lambda}$ can be thought of as the extinction time of the fluid limit of $I^{n}$.

\subsection{The critical value of $\lambda$}
\label{ssec:crit}

Here we establish several analytical properties, including  the existence of $\lambda_{c}$ defined by \eqref{eq:lc}. 
 The proof of Proposition~\ref{prop:lc} is analytical and technical, and can be skipped at the first reading. 
 Recall from \eqref{eq:param} the definitions of $m_{\lambda}$ and $z_{\lambda}$:
\begin{align*} 
m_{\lambda}=-W(-\lambda e^{-\lambda}), \qquad z_{\lambda}=\inf \{t >0: f_{\lambda}(t)=0\}=1+W \left( -\frac{1}{e \lambda}\right).
\end{align*} 
 To simplify notation, for $x \geq 0$ we set
\[
h_{\lambda}(x)\coloneqq \frac{ f_{\lambda}(x)}{-\log m_{\lambda}}.
\]

\begin{proposition}
\label{prop:lc}
The following assertions hold.
\begin{enumerate}[label=(\roman*)]
\item \label{it:prop:lc:existence critical lambda}
 There exists a unique $\lambda \in \intervalleoo{1}{\infty}$ that satisfies $m_{\lambda}=e^{-z_{\lambda}}$, which we denote by $\lambda_c$.
In addition, $\lambda < \lambda_{c}$ implies  $m_{\lambda}>e^{-z_{\lambda}}$ and $\lambda>\lambda_{c}$ implies $ m_{\lambda}<e^{-z_{\lambda}}$.
\item\label{it:prop:lc:value kappa} We have
\[
 \sup_{0 \leq s \leq z_{\lambda}}  \left( \frac{\lambda}{\lambda-1} e^{s}+ h_{\lambda}(s) \right)= \begin{cases}
    \frac{\lambda}{(\lambda-1)m_{\lambda}}+  {h_{\lambda}(-\log m_{\lambda})}  & \textrm{if } \lambda \leq \lambda_{c},\\
\frac{\lambda}{\lambda-1} e^{z_{\lambda}} &  \textrm{if } \lambda \geq \lambda_{c}. 
\end{cases}
  \]
 \end{enumerate}
\end{proposition}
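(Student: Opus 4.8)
The plan is to treat the two assertions separately, with the second being an almost purely computational consequence of the first.

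For part (i): I would set $\phi(\lambda) \coloneqq m_\lambda - e^{-z_\lambda}$ on $(1,\infty)$ and show it has a unique zero, with the stated sign behaviour. Using the explicit forms $m_\lambda = -W(-\lambda e^{-\lambda})$ and $z_\lambda = 1 + W(-1/(e\lambda))$, both terms are explicit functions of $\lambda$ through the principal branch $W$. First I would check the two endpoints: as $\lambda \downarrow 1$, $m_\lambda \to -W(-1/e) = 1$ while $z_\lambda \to 1 + W(-1/e) = 0$, so $e^{-z_\lambda} \to 1$; so $\phi(\lambda) \to 0$ and one needs the sign of $\phi$ near $1$ from a local expansion (I expect $\phi > 0$ just above $1$, consistent with ``$\lambda < \lambda_c \Rightarrow m_\lambda > e^{-z_\lambda}$''). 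As $\lambda \to \infty$, $m_\lambda = -W(-\lambda e^{-\lambda}) \to 0$ (since $-\lambda e^{-\lambda} \to 0^-$ and $W(0)=0$), whereas $z_\lambda = 1 + W(-1/(e\lambda)) \to 1$, so $e^{-z_\lambda} \to e^{-1} > 0$; hence $\phi(\lambda) < 0$ for large $\lambda$. So a zero exists by continuity. For uniqueness I would show $\phi$ is strictly decreasing, or at least that it crosses zero only once: differentiate $m_\lambda$ and $z_\lambda$ using $W'(x) = W(x)/(x(1+W(x)))$, which gives $m_\lambda' = -\frac{m_\lambda(1-\lambda)}{1-m_\lambda}\cdot\frac{1}{\lambda} \cdot(\text{something})$ — more cleanly, from $m_\lambda e^{-m_\lambda} \cdot(\dots)$; this is exactly the kind of monotonicity estimate the paper defers to Section~\ref{sec:lambert}, so I would invoke the Lambert-function bounds proved there. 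The main obstacle here is precisely establishing monotonicity (or log-convexity/concavity) cleanly enough to get uniqueness rather than just existence; the $\lambda \downarrow 1$ analysis is delicate because both sides degenerate to the same limit and one must compare next-order terms.

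For part (ii): write $\psi(s) \coloneqq \gamma e^s + h_\lambda(s)$ with $\gamma = \lambda/(\lambda-1)$ on $[0, z_\lambda]$. Since $h_\lambda(s) = f_\lambda(s)/(-\log m_\lambda)$ and $f_\lambda'(s) = -\gamma s e^s$, we get $\psi'(s) = \gamma e^s - \frac{\gamma s e^s}{-\log m_\lambda} = \gamma e^s\left(1 - \frac{s}{-\log m_\lambda}\right)$. Thus $\psi'(s) = 0$ iff $s = -\log m_\lambda$, $\psi' > 0$ for $s < -\log m_\lambda$ and $\psi' < 0$ for $s > -\log m_\lambda$: $\psi$ is unimodal with interior maximum at $s^* = -\log m_\lambda$ (when $s^*$ lies in $(0, z_\lambda)$) and is increasing throughout $[0,z_\lambda]$ otherwise. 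The position of $s^*$ relative to $z_\lambda$ is governed exactly by part (i): $s^* \le z_\lambda \iff -\log m_\lambda \le z_\lambda \iff m_\lambda \ge e^{-z_\lambda} \iff \lambda \le \lambda_c$. Hence for $\lambda \le \lambda_c$ the supremum is $\psi(-\log m_\lambda) = \gamma e^{-\log m_\lambda} + h_\lambda(-\log m_\lambda) = \frac{\gamma}{m_\lambda} + h_\lambda(-\log m_\lambda) = \frac{\lambda}{(\lambda-1)m_\lambda} + h_\lambda(-\log m_\lambda)$, and for $\lambda \ge \lambda_c$ the supremum is attained at the right endpoint $\psi(z_\lambda) = \gamma e^{z_\lambda} + h_\lambda(z_\lambda) = \gamma e^{z_\lambda} + 0$ (using $f_\lambda(z_\lambda) = 0$ by definition of $z_\lambda$), i.e. $\frac{\lambda}{\lambda-1}e^{z_\lambda}$. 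The boundary case $\lambda = \lambda_c$ is consistent since then $s^* = z_\lambda$ and both formulas agree (and $f_\lambda(z_\lambda) = f_\lambda(-\log m_\lambda) = 0$).

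So the only genuinely hard work is part (i)'s uniqueness/sign statement, which reduces to monotonicity estimates for combinations of $W(-\lambda e^{-\lambda})$ and $W(-1/(e\lambda))$; I would handle the small-$\lambda$ degeneracy by a careful first-order Taylor expansion in $\lambda - 1$ (or equivalently in the small argument of $W$ near $-1/e$, where $W$ has a square-root singularity, so expansions go in powers of $\sqrt{\lambda-1}$), and the rest by the Lambert bounds of Section~\ref{sec:lambert}; part (ii) then follows immediately from the sign of $\psi'$.
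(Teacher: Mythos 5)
Your part (ii) is correct and is exactly the paper's argument: the derivative computation showing that $s\mapsto \frac{\lambda}{\lambda-1}e^{s}+h_{\lambda}(s)$ increases up to $s=-\log m_{\lambda}$ and decreases afterwards, the use of part (i) to locate $-\log m_{\lambda}$ relative to $z_{\lambda}$, and the identity $h_{\lambda}(z_{\lambda})=0$ all match the paper.

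Part (i), however, contains a genuine gap, and your leading suggestion for closing it would fail. You propose to show that $\phi(\lambda)=m_{\lambda}-e^{-z_{\lambda}}$ is strictly decreasing. It is not: $\phi(1)=0$ (both $m_{\lambda}$ and $e^{-z_{\lambda}}$ tend to $1$ as $\lambda\downarrow 1$), and by your own sign analysis $\phi>0$ just above $1$, so $\phi$ must first increase away from $0$ before eventually becoming negative. Your fallback ("at least that it crosses zero only once") is precisely the statement to be proved, and you offer no mechanism for it; the endpoint analysis only yields existence. The paper's device is to work instead with $u(\lambda)=\frac{1}{m_{\lambda}}-e^{z_{\lambda}}$ (whose zeros and signs correspond to those of $-\phi$) and to prove that $u$ is \emph{convex} on $\intervallefo{1}{\infty}$: the second derivative of $-e^{z_{\lambda}}$ is shown nonnegative by an explicit computation (its numerator is a quadratic in $W(-\frac{1}{e\lambda})$ with negative discriminant), and the second derivative of $\frac{1}{m_{\lambda}}$ is shown nonnegative using the sharp lower bound $W(-\lambda e^{-\lambda})\ge (\lambda-1)\sqrt{2-2\lambda+\lambda^{2}}-2+2\lambda-\lambda^{2}$ of Lemma~\ref{lem:Lambert}\ref{it:lem:Lambert minoration W(lambda e lambda)} --- the standard bound from the literature is explicitly not strong enough here. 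Combined with $u(1)=0$, $u'(1)=-\infty$ (from the square-root expansion of $W$ at $-\frac{1}{e}$, which you correctly anticipate) and $u(\lambda)\to\infty$, convexity forces exactly one sign change, giving both uniqueness and the stated dichotomy. Without an argument of this kind --- convexity of a suitable transform of $\phi$, or some other single-crossing principle --- your part (i) remains an outline of what must be shown rather than a proof.
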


We will use the following lower bound on the Lambert function.

\begin{lemma}
\label{lem:Lambert}
The following assertions hold.
\begin{enumerate}[label=(\roman*)]
\item \label{it:lem:Lambert minoration W(x)}  For every $-1/e \leq x \leq 0$ we have
\begin{align*} 
W(x) \geq -1+\sqrt{2e}\sqrt{x+\frac{1}{e}}- \frac{2}{3}e \left( x+ \frac{1}{e}\right).
\end{align*} 
\item \label{it:lem:Lambert minoration W(lambda e lambda)} For every $\lambda \geq 1$,
\begin{align*} W(-\lambda e^{-\lambda }) \geq (\lambda -1) \sqrt{2-2\lambda +\lambda ^{2}}-2+2\lambda -\lambda ^{2}.\end{align*} 
\end{enumerate}

\end{lemma}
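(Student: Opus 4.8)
The plan is to reduce each part to a one-variable inequality settled by elementary calculus, exploiting throughout that $t\mapsto te^t$ is increasing on $[-1,\infty)$ with inverse $W$, and that $t\mapsto te^{-t}$ is increasing on $(0,1)$.

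For \ref{it:lem:Lambert minoration W(x)} I would put $p\coloneqq\sqrt{2ex+2}\in[0,\sqrt 2]$ (as $x$ ranges over $[-1/e,0]$) and $w\coloneqq-1+p-\tfrac{p^2}{3}$; since $p(1-\tfrac{p}{3})\ge 0$ for $0\le p\le 3$ we have $w\ge-1$, and the asserted bound is precisely $W(x)\ge w$. As $W$ is the inverse of the increasing map $t\mapsto te^t$ on $[-1,\infty)$, this is equivalent to $we^w\le x$; multiplying by $2e$ and using $2ex+2=p^2$ it becomes $\phi(p)\le 0$, where $\phi(p)\coloneqq 2we^{w+1}-p^2+2$. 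Here $\phi(0)=0$ (at $p=0$ one has $w=-1$, $we^{w+1}=-1$), and a direct computation using $w'=1-\tfrac{2p}{3}$ and $1+w=p(1-\tfrac{p}{3})$ gives
\[
\phi'(p)=2p\Bigl[(1-\tfrac{2p}{3})(1-\tfrac{p}{3})\,e^{w+1}-1\Bigr].
\]
It then suffices to show the bracket is $\le 0$ on $(0,\sqrt 2]$, i.e.\ $(1-\tfrac{2p}{3})(1-\tfrac{p}{3})\le e^{-(w+1)}$. Since $w+1=p-\tfrac{p^2}{3}\le p$, it is enough to prove $(1-\tfrac{2p}{3})(1-\tfrac{p}{3})\le e^{-p}$, which follows by multiplying the bounds $1-\tfrac{2p}{3}\le e^{-2p/3}$ and $1-\tfrac{p}{3}\le e^{-p/3}$ (both left-hand factors being $\ge 0$ since $p\le\sqrt 2<\tfrac{3}{2}$). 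Hence $\phi$ is nonincreasing on $[0,\sqrt2]$, so $\phi\le\phi(0)=0$.

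For \ref{it:lem:Lambert minoration W(lambda e lambda)} the case $\lambda=1$ is immediate since $W(-1/e)=-1$ equals the right-hand side, so I would assume $\lambda>1$ and write $a\coloneqq\lambda-1>0$, $s\coloneqq\sqrt{a^2+1}$, noting the identity $s-a=1/(s+a)$. Using it, the claimed inequality rewrites (after multiplying by $-1$, and since $2-2\lambda+\lambda^2=s^2$) as $m_\lambda\le \tfrac{s}{s+a}$, where $m_\lambda\coloneqq-W(-\lambda e^{-\lambda})\in(0,1)$ and $\tfrac{s}{s+a}\in(\tfrac{1}{2},1)$. Now $W(x)e^{W(x)}=x$ at $x=-\lambda e^{-\lambda}$ yields $m_\lambda e^{-m_\lambda}=\lambda e^{-\lambda}$, and since $t\mapsto te^{-t}$ is increasing on $(0,1)$, the bound $m_\lambda\le\tfrac{s}{s+a}$ is equivalent to $\tfrac{s}{s+a}e^{-s/(s+a)}\ge \lambda e^{-\lambda}=(1+a)e^{-(1+a)}$; taking logarithms (with $\varphi(t)\coloneqq\log t-t$) this is $G(a)\coloneqq\varphi\bigl(\tfrac{s}{s+a}\bigr)-\varphi(1+a)\ge 0$. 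Finally $G(0)=0$, and differentiating (using $\tfrac{d}{da}\tfrac{s}{s+a}=-\tfrac{1}{s(s+a)^2}$ and $\varphi'(\tfrac{s}{s+a})=\tfrac{a}{s}$) gives
\[
G'(a)=a\left(\frac{1}{1+a}-\frac{1}{(a^2+1)(s+a)^2}\right)\ge 0,
\]
because $(a^2+1)(s+a)^2\ge (s+a)^2=2a^2+2as+1\ge 1+a$ for $a\ge 0$. Thus $G\ge 0$, which is the claim.

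The computations here are routine; what matters is the reformulation in each part. In \ref{it:lem:Lambert minoration W(x)} the useful move is to trade the inequality for $W$ against the inequality $we^w\le x$; in \ref{it:lem:Lambert minoration W(lambda e lambda)} it is to observe that $m_\lambda$ solves $me^{-m}=\lambda e^{-\lambda}$ and that the algebraic identity $s-a=1/(s+a)$ recasts the stated bound into the transparent form $m_\lambda\le s/(s+a)$, reducing everything to a monotonicity statement for $\varphi(t)=\log t-t$. One can alternatively deduce \ref{it:lem:Lambert minoration W(lambda e lambda)} from \ref{it:lem:Lambert minoration W(x)} applied at $x=-\lambda e^{-\lambda}\in[-1/e,0]$, at the price of one more one-variable estimate. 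I expect the only real care needed to be the bookkeeping with the branch of $W$ and with the intervals on which $t\mapsto te^t$ and $t\mapsto te^{-t}$ are monotone.
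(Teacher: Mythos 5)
Your proposal is correct in both parts, and part (ii) in particular takes a genuinely different route from the paper. For (i) you perform the same key reduction as the paper (push the inequality through the increasing map $t\mapsto te^{t}$ on $[-1,\infty)$ so that it becomes $we^{w}\le x$), but your execution is cleaner: the substitution $p=\sqrt{2ex+2}$ turns everything into $\phi(p)\le 0$ with $\phi(0)=0$, and the sign of $\phi'$ follows from the two elementary bounds $1-\tfrac{2p}{3}\le e^{-2p/3}$ and $1-\tfrac{p}{3}\le e^{-p/3}$ (both factors nonnegative since $p\le\sqrt2<\tfrac32$), whereas the paper needs a three-case analysis in the variable $u=\sqrt{2ey}$ with a further change of variables. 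For (ii) the paper deduces the bound from (i) applied at $x=-\lambda e^{-\lambda}$, which costs it the auxiliary Taylor-coefficient estimates \eqref{eq:input}--\eqref{eq:input2} and a two-step case analysis in $h=\lambda-1$; you instead prove (ii) directly by recognizing that, with $a=\lambda-1$, $s=\sqrt{a^2+1}$ and $s-a=1/(s+a)$, the claim is exactly $m_\lambda\le s/(s+a)$, and then exploiting the defining relation $m_\lambda e^{-m_\lambda}=\lambda e^{-\lambda}$ together with the monotonicity of $t\mapsto te^{-t}$ on $(0,1)$, reducing everything to $G'(a)\ge 0$, i.e.\ $(a^2+1)(s+a)^2\ge 1+a$, which is immediate. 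I checked the computations ($\phi'$, $\tfrac{d}{da}\tfrac{s}{s+a}=-\tfrac{1}{s(s+a)^2}$, $\varphi'(\tfrac{s}{s+a})=\tfrac as$, and the base cases $\phi(0)=0$, $G(0)=0$) and they are all right; your version of (ii) is arguably more conceptual and avoids the delicate series bookkeeping of the appendix, at the price of not reusing (i).
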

{In Lemma~\ref{lem:Lambert}}, the right-hand side of \ref{it:lem:Lambert minoration W(x)} corresponds to the first three terms of the asymptotic expansion of $W$ at $-\frac{1}{e}$ (see e.g.~\cite[Eq.~(4.22)]{CGHJK96}). 
The bound 
\ref{it:lem:Lambert minoration W(x)} is also better than the bound $W(x) \geq \sqrt{ex+1}-1$ obtained in \cite[Theorem~2.2]{RS20} in the vicinity of $-\frac{1}{e}$, see Figure~\ref{fig:bounds}. 
The bound of \cite[Theorem~2.2]{RS20} is not good enough for the proof of Proposition~\ref{prop:lc}. 
The proof of Lemma~\ref{lem:Lambert} is quite technical and is deferred to the appendix.

\begin{figure}
\centering
\includegraphics[scale=0.4]{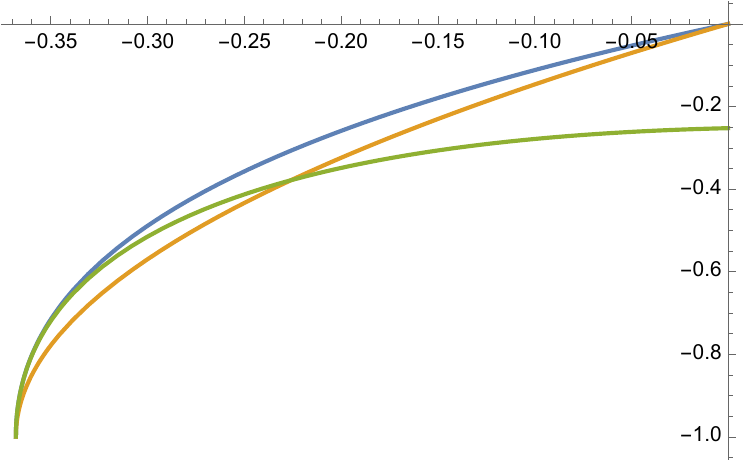}%
\caption{\label{fig:bounds}In blue the Lambert function, in orange the lower bound of \cite[Theorem~2.2]{RS20} and in green the lower bound of Lemma~\ref{lem:Lambert}.}
\end{figure}

\begin{proof}[Proof of Proposition~\ref{prop:lc}]
We start with the proof of \ref{it:prop:lc:existence critical lambda}. 
Consider the function $u$ defined on $\intervallefo{1}{\infty}$ by 
\begin{align*}
u(\lambda) \coloneqq \frac{1}{m_{\lambda}}-e^{z_{\lambda}} \underset{\eqref{eq:param}}{=} \frac{1}{- W(-\lambda e^{-\lambda})} - e^{1+ W(-\frac{1}{e\lambda})}.  
\end{align*}
We start by showing that $u$ is convex on $\intervallefo{1}{\infty}$.
First  we compute 
\begin{align*} 
	\frac{\mathrm{d^{2}}}{\mathrm{d} \lambda^{2}}  \left( - e^{z_{\lambda}} \right) = \frac{2+3 W \left( -\frac{1}{e \lambda}\right)+2 W \left( -\frac{1}{e \lambda}\right)^{2}}{\lambda^{3} \left(1+W \left( -\frac{1}{e \lambda}\right)\right)^{3}}.
\end{align*} 
The denominator is positive for all $\lambda>1$ and the discriminant of the polynomial $2+3X+2X^2$ is negative so the numerator is always non-negative, hence the second derivative that is considered here is always non-negative.  
Now, we write
\begin{align*} 
	\frac{\mathrm{d^{2}}}{\mathrm{d} \lambda^{2}}  \frac{1}{m_{\lambda}}
	=
	\frac{e^{\lambda+W(-\lambda e^{-\lambda }})} {\lambda ^{3} \left(1+W(-\lambda e^{-\lambda }) \right)^{3}} 
	\left(\left(W(-\lambda e^{-\lambda })+2-2 \lambda + \lambda ^{2}\right)^{2}-(\lambda -1)^{2}(2-2\lambda +\lambda ^{2})\right),
\end{align*} 
and we can check that the RHS is non-negative for any $\lambda>1$ using  Lemma~\ref{lem:Lambert}\ref{it:lem:Lambert minoration W(lambda e lambda)}.
Combining the two previous displays, we get that $\frac{\mathrm{d^{2}}}{\mathrm{d} \lambda^{2}} u(\lambda) \geq 0$ for all $\lambda>1$, so that $u$ is convex. 

Now, from the fact that $W(\frac{-1}{e})=-1$ and $W(0) = 0$ we can check that 
\begin{equation}\label{eq:values u 0 and infinity}
	u(1)=0 \qquad \text{and} \qquad u(\lambda) \underset{\lambda \rightarrow \infty}{\rightarrow} \infty.
\end{equation}
Also, from the expansion $W(x)=-1+\sqrt{2} \sqrt{1+e x} + o(1+ex)$ as $x \rightarrow -1/e$ (see e.g.~\cite[Eq.~(4.22)]{CGHJK96}) which yields $u(1+h)=-\sqrt{2h}+o(\sqrt{h})$ as $h \rightarrow 0$, we get that $u'(1)=-\infty$. 
This combined with \eqref{eq:values u 0 and infinity} and the fact that $u$ is convex ensures that there exists a unique $\lambda=\lambda_c$ that satisfies $u(\lambda)=0$, and which is so that $\lambda <\lambda_c$ implies $u(\lambda)<0$ and $\lambda >\lambda_c$ implies $u(\lambda)>0$. 
This easily implies \ref{it:prop:lc:existence critical lambda}.

We now turn to the proof of \ref{it:prop:lc:value kappa}. 
Observe that 
\[
\frac{\mathrm{d}}{\mathrm{d}s}  \left( \frac{\lambda}{\lambda-1} e^{s}+ h_{\lambda}(s)\right)=\frac{\lambda e^{s} (s+\log m_{\lambda})}{({\lambda-1}) \log m_{\lambda}},
\]
and that $m_\lambda<1$. 
This ensures
 that the function $s \mapsto \frac{\lambda}{\lambda-1} e^{s}+ h_{\lambda}(s)$ is increasing on $[0,-\log m_{\lambda}]$ and decreasing on $[-\log m_{\lambda},\infty)$,
 so that its supremum over the interval $\intervalleff{0}{z_\lambda}$ is  either attained for $s= -\log m_{\lambda}$ in the case where $z_\lambda \geq -\log m_{\lambda}$, or for $s=z_\lambda$ in the case where $z_\lambda \leq -\log m_{\lambda}$.
Hence
\[
 \sup_{0 \leq s \leq z_{\lambda}}  \left( \frac{\lambda}{\lambda-1} e^{s}+ h_{\lambda}(s) \right)= \begin{cases}
 \left(   \frac{\lambda}{(\lambda-1)m_{\lambda}}+  {h_{\lambda}(-\log m_{\lambda})} \right) & \textrm{if } (-\log m_{\lambda}) \leq z_{\lambda},\\
\frac{\lambda}{\lambda-1} e^{z_{\lambda}} &  \textrm{if } (-\log m_{\lambda}) \geq z_{\lambda}. 
\end{cases}
 \]
 and the conclusion follows by \ref{it:prop:lc:existence critical lambda}.
\end{proof}

Observe that the proof of Proposition~\ref{prop:lc} shows that when  $\lambda<\lambda_{c}$,
\begin{equation}
\label{eq:inegl}
\frac{\lambda}{(\lambda-1)m_{\lambda}}+  {h_{\lambda}(-\log m_{\lambda})}\geq \frac{\lambda}{\lambda-1} e^{z_{\lambda}}.
\end{equation}
Indeed, when $\lambda<\lambda_{c}$ we have $ -\log m_{\lambda} \leq z_{\lambda}$, and since  $s \mapsto \frac{\lambda}{\lambda-1} e^{s}+ h_{\lambda}(s)$ is decreasing on {$\intervallefo{-\log m_{\lambda}}{\infty}$}, we have
\[
\frac{\lambda}{(\lambda-1)m_{\lambda}}+  {h_{\lambda}(-\log m_{\lambda})} \leq\frac{\lambda}{(\lambda-1)}e^{z_{\lambda}}+  {h_{\lambda}( z_{\lambda })}= \frac{\lambda}{\lambda-1} e^{z_{\lambda}},
\]
where we have used the fact that $h_{\lambda}(z_{\lambda})=0$.

\subsection{Profile of the infection tree}\label{subsec:profile of the infection tree}
An important tool in the proof of Theorem~\ref{thm:main} will be a limit theorem for the profile of the infection tree.
For $n \geq1$ and $k,h \geq0$, we let
\begin{align*}
\mathbb{A}_{k}^{n}(h)\coloneqq\#\{\text{active vertices at height $h$ at time $k$ of $\mathcal{T}^{n}_{k}$}\} \qquad \textrm{and} \qquad 	\bL_k^n(h)\coloneqq\frac{\mathbb{A}_{k}^{n}(h)}{I_k^n}
\end{align*}
be respectively the \emph{active profile} of the tree $ \mathcal{T}^{n}_{k}$ and its normalized version. For all $a,b \ge 0$, we write $\mathbb{A}^n_k([a,b]) = \sum_{a \le h \le b} \mathbb{A}^n_k(h)$ and $\mathbb{L}^n_k([a,b]) = \sum_{a \le h \le b} \mathbb{L}^n_k(h)$. 
We also set $\mathbb{A}^n_k([a,\infty]) = \sum_{a \le h } \mathbb{A}^n_k(h)$ and $\mathbb{L}^n_k([a,\infty]) = \sum_{a \le h } \mathbb{L}^n_k(h)$.

Recall from \eqref{eq:param} the notation \begin{align*} f_{\lambda}(z)=  1+ \frac{\lambda}{1-\lambda} (e^{z}-1-z e^{z}),\end{align*} 
and  $z_{\lambda}=\inf \{t >0: f_{\lambda}(t)=0\}$. 
{The following proposition ensures a rough control on the profile that will be sufficient to prove our main result. A stronger local version is stated in Section~\ref{sec:profile} (Theorem~\ref{thm:profil}).}

\begin{proposition}
\label{prop: profil faible}
	Let $\lambda>1$. 
	Assume that $\lambda_n \sim \lambda/n$ as $n \to \infty$. 
	Fix $t \in (0,t_{\lambda})$. 
	Set $\gamma=\lambda/(\lambda-1)$. 
	Then, for all $0<x< z_\lambda$ and $y \in (x,\infty]$ the following convergence holds in probability as $n \rightarrow\infty$:
		\[\ind{\tau'_{n} \geq  \lfloor n t\rfloor}\cdot \left(
	\frac{\log \mathbb{A}^{n}_{\lfloor nt \rfloor} ([\gamma e^x \log n, \gamma e^y \log n])}{\log n} - f_{\lambda}(x) \right)   \quad \mathop{\longrightarrow}^{(\P)}_{n \rightarrow \infty} \quad 0.
	\]
\end{proposition}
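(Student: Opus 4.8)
The plan is to reduce the statement about the profile of the infection tree to a statement about the profile of a uniform attachment tree with freezing, via the coupling in Lemma~\ref{lem:couplage}, and then to extract the rough first-order asymptotics $n^{f_\lambda(x)}$ from a more precise local statement about that profile (Theorem~\ref{thm:profil}). More concretely, recall from \eqref{eq couplage freezing} that $(\mathcal{T}^n_k)_{0\le k\le \tau_n}$ has the same law as $(\mathcal{T}_k(\XXb^n))_{0\le k\le \tau'_n}$, where the driving sequence $\XXb^n$ is read off from the Markov chain $(H^n_k,I^n_k)$. On the event $\{\tau'_n \ge \lfloor nt\rfloor\}$ and for $t < t_\lambda$, the fluid limit \eqref{eq:limflu} tells us that $H^n_{\lfloor ns\rfloor}/n \to g_\lambda(s)$ uniformly on $[0,t]$, so that the conditional freezing probabilities $r_k(\X_1,\dots,\X_{k-1}) = 1/(1+\lambda_n H^n_{k-1})$ are, up to $o(1)$ fluctuations, pinned in a compact subinterval of $(1/2,1)$ throughout the first $\lfloor nt\rfloor$ steps — this is precisely the kind of control needed to invoke Lemma~\ref{lem:couplage}\ref{it:lem:couplage couplage} (applied on a slowly growing time window, or in the refined localized form proven in Section~\ref{sec:profile}). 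This sandwiches the tree $\mathcal{T}^n_{\lfloor nt\rfloor}$ between Bienaymé forests whose parameters are close to the ``instantaneous'' parameter $g_\lambda$-profile, which is what ultimately produces the exponent $f_\lambda$.

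The first concrete step is therefore to establish a deterministic (counting) identity/heuristic for the expected active profile. Writing $\mathbb{L}^n_k(h) = \mathbb{A}^n_k(h)/I^n_k$ for the normalized active profile, one checks that at a step the expected increment of $\mathbb{A}^n_k(h)$ is governed by: with probability $\approx \lambda g_\lambda(s)/(1+\lambda g_\lambda(s))$ a uniformly chosen active vertex produces a new active vertex one level deeper, and with probability $\approx 1/(1+\lambda g_\lambda(s))$ a uniformly chosen active vertex freezes. This gives a transport-type recursion for $\mathbb{L}^n_k(\cdot)$ which, after the change of variables $h = \gamma e^x \log n$ with $\gamma = \lambda/(\lambda-1)$ and $k = \lfloor ns\rfloor$, yields in the $n\to\infty$ limit a first-order PDE whose characteristics are exactly the curves along which the exponent evolves, and whose solution is $n^{f_\lambda(x)}$ (up to lower-order corrections). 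The value $z_\lambda = \inf\{t>0: f_\lambda(t)=0\}$ appears as the height at which the active profile exponent hits zero, i.e.\ beyond which there are typically no active vertices. Since the statement only asks for $\log \mathbb{A}^n_{\lfloor nt\rfloor}([\gamma e^x\log n,\gamma e^y\log n])/\log n \to f_\lambda(x)$ in probability — not the Edgeworth-type precision — it suffices to have matching upper and lower bounds of the form $n^{f_\lambda(x)\pm\varepsilon}$ for the sum over the window.

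The lower bound $\mathbb{A}^n_{\lfloor nt\rfloor}([\gamma e^x\log n,\gamma e^y\log n]) \ge n^{f_\lambda(x)-\varepsilon}$ with high probability is the place where a second-moment or martingale argument enters: one introduces the Biggins-type martingale $\sum_{v \text{ active}} e^{\theta \,\haut(v)}/W_k(\theta)$ associated with the attachment-with-freezing dynamics (indexed by a complex parameter $\theta$ in a suitable domain), shows it converges in $L^p$ for $\theta$ in that domain by the standard many-to-one / branching-random-walk technique referenced in the introduction, and then reads off the profile by Fourier inversion. The upper bound $\mathbb{A}^n_{\lfloor nt\rfloor}([\gamma e^x\log n,\infty]) \le n^{f_\lambda(x)+\varepsilon}$ is softer: it follows from a first-moment estimate, comparing with the dominating Bienaymé forest $\overline{\mathcal{T}}$ of Lemma~\ref{lem:couplage} and summing the tail bounds of Lemma~\ref{lem:couplage}\ref{it:lem:couplage queue hauteur}. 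Finally, one removes the conditioning on the exact form of $\XXb^n$ versus the idealized one using the couplings — since the statement carries the indicator $\ind{\tau'_n \ge \lfloor nt\rfloor}$, on the complementary event the quantity is simply $0$ and there is nothing to prove.

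The main obstacle, as the authors themselves flag, is that the sequence $(\mathcal{T}^n_{\lfloor nt\rfloor})_{n\ge1}$ is not a growing family: each $n$ lives on its own probability space, driven by its own Markov chain $(H^n,I^n)$, so one cannot directly apply the off-the-shelf profile theorems of \cite{KMS17}. The real work is thus in making the coupling of Lemma~\ref{lem:couplage} quantitative enough that the slowly-varying-in-time freezing rate $1/(1+\lambda_n H^n_{k})$ can be treated as locally constant on scales where the martingale analysis applies, and in patching these local comparisons together over the whole time interval $[0,t]$; this is what Theorem~\ref{thm:profil} accomplishes, and Proposition~\ref{prop: profil faible} is then its immediate corollary after summing the local profile over the height window $[\gamma e^x\log n,\gamma e^y\log n]$ and noting that the dominant contribution comes from heights near $\gamma e^x\log n$ because $f_\lambda$ is decreasing.
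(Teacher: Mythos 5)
Your overall skeleton for the hard part is the paper's: a Biggins-type martingale for the Laplace transform of the active profile, $L^p$ convergence for complex parameters, Fourier inversion to get the local profile (Theorem~\ref{thm:profil}), and then Proposition~\ref{prop: profil faible} as a corollary by taking the single height $\gamma e^x\log n$ for the lower bound and a tilted first-moment (Markov/Chernoff) bound for the upper bound on the window. That last reduction is exactly how the paper concludes, using the exact product formula $\E_n[\mathcal{L}(h,\T^n_k)]=\E_n[\mathcal{L}(h,\T^n_{J^n})]\cdot C^n_k(h)$ together with the identity $-F(\gamma e^x)=f_\lambda(x)-1$.

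There is, however, a concrete wrong step in your first and third paragraphs: you propose to control the profile of $\mathcal{T}^n_{\lfloor nt\rfloor}$ itself by sandwiching it between Bienaym\'e forests via Lemma~\ref{lem:couplage}, and to get the upper bound by summing the geometric tail bounds of Lemma~\ref{lem:couplage}(ii). This cannot work here. First, Lemma~\ref{lem:couplage} requires the freezing probabilities $r_k=1/(1+\lambda_n H^n_{k-1})$ to lie in $(1/2,1)$, which forces the comparison trees to be \emph{subcritical} (hence finite) Bienaym\'e trees; but for $t<t_\lambda$ the fluid limit gives $r_k\approx 1/(1+\lambda g_\lambda(s))<1/2$ throughout $[0,\lfloor nt\rfloor]$ (not in $(1/2,1)$ as you assert), and $\mathcal{T}^n_{\lfloor nt\rfloor}$ has $\Theta(n)$ active vertices, so no such sandwich exists. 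Second, and more structurally, the uniform-attachment dynamics selects a uniform active vertex among \emph{all} active vertices of the tree at each step, so the subtrees above distinct vertices do not evolve independently on this timescale and the tree is not comparable to a Bienaym\'e forest; that comparison is legitimate only for the dangling subtrees grown after time $(1-\delta)t_\lambda n$, when the process is near extinction (Lemma~\ref{lem:dangling}). The correct upper bound is instead the direct computation $\E_n\bigl[\mathbb{L}^n_{\lfloor nt\rfloor}([\theta_1\log n,\theta_2\log n])\bigr]\le e^{-h\theta_1\log n}\,\E_n[\mathcal{L}(h,\T^n_{\lfloor nt\rfloor})]=e^{-F(\theta_1)\log n+O_\P(1)}$ followed by Markov's inequality; no coupling with Bienaym\'e trees is needed or possible at this stage. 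Relatedly, your "transport PDE" derivation of $f_\lambda$ is only a heuristic; the exponent actually enters through the Legendre transform of $\eta(z)=\gamma(e^z-1)$ in the saddle-point/Fourier step.
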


This is the main input to establish Theorem~\ref{thm:main}: taking  Proposition~\ref{prop: profil faible}   for granted, we shall now  see how this implies Theorem~\ref{thm:main}.

\subsection{Height of the dangling trees}

The idea is to control separately on the one hand the height of the infection tree at a time after the early stages of the epidemic and before the late stages of the epidemic (when, at the same time, a positive fraction of infectious individuals and a positive fraction of healthy individuals remain), and on the other hand the heights of the {outgrowths from all the active vertices of the tree, which contain all the vertices that joined the tree after that time.}

The height of the infection tree after the early stages of the epidemic and before the late stages of the epidemic  is given by the following convergence, where we recall that $\gamma = \lambda/(\lambda-1)$.
\begin{lemma}
\label{lem:limHbefore} 
For every $\delta \in (0,1)$ we have
\[
 \left(\frac{\Height( \mathcal{T}_{\lfloor (1-\delta) t _{\lambda} n \rfloor}(\XXb^ {n} ))}{\log n}, {\mathbbm{1}_{\tau'_{n} \geq  \lfloor (1-\delta) t_{\lambda} n  \rfloor}} \right)  \quad \mathop{\longrightarrow}^{(d)}_{n \rightarrow \infty} \quad   \left(\gamma e^{z_{\lambda}} \mathcal{B}, {\mathcal{B}}\right).
\]
\end{lemma}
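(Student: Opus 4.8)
The plan is to combine Proposition~\ref{prop: profil faible} (the rough profile estimate) with the coupling of Lemma~\ref{lem:couplage} to pin down the height of $\mathcal{T}_{\lfloor(1-\delta)t_\lambda n\rfloor}(\XXb^n)$ from above and below, and to use the fluid limit \eqref{eq:limflu} to handle the Bernoulli dichotomy. First, on the event $\{\tau'_n < \lfloor(1-\delta)t_\lambda n\rfloor\}$ the epidemic has already died out (this happens with probability $\to \frac{1}{\lambda}$ by \eqref{eq:limflu}), and there the tree is finite with $O_\P(1)$ height, which contributes the atom at $0$ and matches the factor $\mathcal{B}$; so from now on I work on the complementary event, whose probability tends to $1-\frac1\lambda$. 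On this event I must show $\Height(\mathcal{T}_{\lfloor(1-\delta)t_\lambda n\rfloor}(\XXb^n))/\log n \to \gamma e^{z_\lambda}$ in probability. Set $k_n = \lfloor(1-\delta)t_\lambda n\rfloor$.

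\textbf{Lower bound.} For any $x < z_\lambda$, Proposition~\ref{prop: profil faible} (applied with $t = (1-\delta)t_\lambda$, which lies in $(0,t_\lambda)$, and with $y=\infty$) gives $\mathbb{A}^n_{k_n}([\gamma e^x\log n,\infty]) \geq n^{f_\lambda(x)-o(1)} \to \infty$ with high probability, since $f_\lambda(x)>0$ for $x<z_\lambda$. In particular there is at least one active vertex at height $\geq \gamma e^x\log n$, so $\liminf \Height(\mathcal{T}_{k_n}(\XXb^n))/\log n \geq \gamma e^x$ with high probability; letting $x\uparrow z_\lambda$ gives the lower bound $\gamma e^{z_\lambda}$.

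\textbf{Upper bound.} This is the more delicate direction. I would fix $x$ slightly larger than $z_\lambda$ and show that with high probability there is no vertex (active or frozen) at height $\geq \gamma e^x\log n$. The point is that every vertex of $\mathcal{T}_{k_n}(\XXb^n)$ lies in the subtree dangling from some active vertex of $\mathcal{T}_{\lfloor nt\rfloor}(\XXb^n)$ for a slightly earlier time $t<(1-\delta)t_\lambda$; one controls the base profile at the earlier time by Proposition~\ref{prop: profil faible} (there are at most $n^{f_\lambda(s)+o(1)}$ active vertices at height $\gamma e^s\log n$, summably over a grid of $s\in(0,z_\lambda)$, plus a negligible contribution from heights above $\gamma e^{z_\lambda-\e}\log n$ using that the total number of vertices is $O_\P(n)$), and then the dangling subtrees grown between time $\lfloor nt\rfloor$ and $k_n$ are dominated — via Lemma~\ref{lem:couplage}\ref{it:lem:couplage couplage} with $p$ chosen just below $1/(1+\lambda)$ so that the transition probabilities $r_k$ stay in $[p,q]$ during the relevant window (this requires $H^n$ to stay close to $g_\lambda(t)n$, guaranteed by \eqref{eq:limflu}) — by i.i.d.\ Bienaymé trees with geometric offspring of mean $m<1/m_\lambda\cdot$something, whose height has exponential tail by Lemma~\ref{lem:couplage}\ref{it:lem:couplage queue hauteur}. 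A union bound over the $\leq n^{f_\lambda(s)+o(1)}$ bases at each height $\gamma e^s\log n$ against the tail $C((1/p)-1)^h$ shows the extra height gained is at most $\frac{f_\lambda(s)}{-\log((1/p)-1)}\log n$, and $\sup_s(\gamma e^s + \frac{f_\lambda(s)}{-\log((1/p)-1)})$ can be made arbitrarily close to $\gamma e^{z_\lambda}$ by taking $t\to(1-\delta)t_\lambda$ (so the dangling window shrinks and $(1/p)-1\to\lambda$... wait) — more precisely by letting the window $[(1-\delta)t_\lambda - \eta, (1-\delta)t_\lambda]$ shrink, i.e.\ $\eta\to 0$, the dominating geometric offspring mean $\to 0$, so $\frac{1}{-\log((1/p)-1)}\to 0$ and the supremum converges to $\gamma e^{z_\lambda}$, using that $f_\lambda$ is bounded on $[0,z_\lambda]$ and Proposition~\ref{prop:lc}\ref{it:prop:lc:value kappa} for the bookkeeping.

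\textbf{Main obstacle.} The crux is the upper bound: one needs a uniform-in-height argument that simultaneously (a) controls the base profile of $\mathcal{T}_{\lfloor nt\rfloor}(\XXb^n)$ at \emph{all} heights up to and beyond $\gamma e^{z_\lambda}\log n$ — Proposition~\ref{prop: profil faible} only gives the profile at heights $\gamma e^x\log n$ for $x<z_\lambda$, so the contribution of active vertices at the very top, of height between $\gamma e^{z_\lambda-\e}\log n$ and $\gamma e^{z_\lambda}\log n$, must be bounded crudely (at most $O_\P(n)$ of them) — and (b) chooses the coupling parameter $p$ and the time window simultaneously so that the fluid approximation $H^n_{\lfloor ns\rfloor}\approx g_\lambda(s)n$ holds uniformly on the window, making $r_k$ fall in a narrow interval $[p,q]$ with $q-p\to 0$. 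Making these two approximations play together, and letting the window width $\to 0$ after $n\to\infty$, is where the bulk of the work lies; everything else is a union bound against an exponential tail.
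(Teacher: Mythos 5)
Your proposal takes a genuinely different route from the paper: the paper's proof of Lemma~\ref{lem:limHbefore} is a one-line reduction to Theorem~24(1)(b) of \cite{BBKK23+}, followed by the algebraic identification of the constant $\frac{1+c}{2c}u(c)$ there with $\gamma e^{z_\lambda}$ using the defining equation $e^{z_\lambda}(z_\lambda-1)=-1/\lambda$. Your lower bound via Proposition~\ref{prop: profil faible} is correct. The upper bound, however, contains a genuine gap.

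The gap is in the claim that ``the dominating geometric offspring mean $\to 0$'' as the window $[(1-\delta)t_\lambda-\eta,(1-\delta)t_\lambda]$ shrinks. Over that window the freezing probabilities $r_k=\frac{1}{1+\lambda_n H^n_k}$ concentrate near $\frac{1}{1+\lambda g_\lambda((1-\delta)t_\lambda)}$, so the admissible $p$ in Lemma~\ref{lem:couplage} satisfies $\frac1p-1\to\lambda g_\lambda((1-\delta)t_\lambda)$ as $\eta\to0$, which for small $\delta$ is close to $m_\lambda$ (Lemma~\ref{lem:mt}) and certainly not to $0$. (Your parenthetical choice ``$p$ just below $1/(1+\lambda)$'' is also inconsistent with Lemma~\ref{lem:couplage}, which requires $p>1/2$; $1/(1+\lambda)<1/2$ is the \emph{early}-time freezing probability, whereas near time $(1-\delta)t_\lambda$ the relevant value is $\approx\frac{1}{1+m_\lambda}>1/2$.) Consequently your union bound of $n^{f_\lambda(s)+o(1)}$ bases against the tail $C(\frac1p-1)^h$ yields the upper bound $\sup_{0\le s\le z_\lambda}\bigl(\gamma e^s+\frac{f_\lambda(s)}{-\log m_\lambda}\bigr)\log n=\kappa(\lambda)\log n$, and by \eqref{eq:inegl} together with Proposition~\ref{prop:lc} this strictly exceeds $\gamma e^{z_\lambda}\log n$ whenever $\lambda<\lambda_c$ — precisely the regime where the lemma's content matters. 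The bound is not tight because Lemma~\ref{lem:couplage}\ref{it:lem:couplage queue hauteur} controls the height of the \emph{completed} Bienaymé trees (that is what produces the order-$\log n$ outgrowths \emph{after} time $(1-\delta)t_\lambda n$ in Lemma~\ref{lem:dangling}); over a window of only $\eta t_\lambda n$ global steps each dangling tree is selected an $O_\P(1)$ number of times (a Poisson-type count with bounded mean), so its probability of reaching height $\alpha\log n$ within the window is of order $n^{-\alpha\log\log n(1+o(1))}$, superpolynomially small — a different tail estimate that you would need to prove. A cleaner way to close the upper bound is a direct first-moment computation on the number of vertices \emph{ever} active at height at least $\gamma e^x\log n$ for $x>z_\lambda$: Markov's inequality with the Laplace transform gives $\sum_{k\le n}C^n_k(x)e^{-x\gamma e^x\log n}=n^{1+\gamma(e^x-1)-x\gamma e^x+o(1)}=n^{f_\lambda(x)+o(1)}\to0$, with no dangling-tree decomposition needed.
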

\begin{proof}
The proof of Theorem~24(1)(b) in \cite{BBKK23+} shows that $(\Height(\T^{n}_{ \lfloor (1-\delta) t _{\lambda} n \rfloor} )/ (\log n),{\mathbbm{1}_{\tau'_{n} \geq  \lfloor (1-\delta) t_{\lambda} n  \rfloor}})$ converges in distribution to  $( \frac{1+c}{2c} u(c) \mathcal{B},{\mathcal{B}})$, where $u(c)$ is the unique solution of $u(c) (\log u(c)-1)= (c-1)/(c+1)$ with $c= (\lambda-1)/(\lambda+1)$.  
Observing that $ (1+c)/(2c)=\lambda/(\lambda-1)$ and $(c-1)/(c+1)=-1/\lambda$, we check that $u(c)=e^{z_{\lambda}}$ by showing that $e^{z_{\lambda}}$ is solution of $x( \log(x)-1)=-1/\lambda$.  
This readily comes from the fact that by definition
\begin{align*} 1+ \frac \lambda{\lambda-1}(e^{z_{\lambda}}-1-z_{\lambda} e^{z_{\lambda}})=0,
\end{align*} 
which implies $e^{z_{\lambda}}(z_{\lambda}-1)=-\frac{1}{\lambda}$.
This completes the proof.
\end{proof}

\uwave{\hspace{0.9\linewidth}}

\textbf{In the sequel, we assume that all the random variables depending on $n$ are defined on a same probability space.}

\raisebox{0.8em}{\uwave{\hspace{0.9\linewidth}}}

To control the trees grafted on $\T^{n}_{ \lfloor (1-\delta) t_{\lambda} n\rfloor} $ for fixed $\delta \in (0,1/2)$  when $\tau'_{n} \geq  \lfloor (1-\delta) t_{\lambda}  n \rfloor$, we denote by $ \mathcal{A}^{n}_{\delta}$ the set of all active vertices of  $ \mathcal{T}_{\lfloor (1-\delta) t _{\lambda} n \rfloor}(\XXb^ {n} )$ and observe that $\#  \mathcal{A}^{n}_{\delta}=I^n_{\lfloor (1-\delta) t _{\lambda} n \rfloor}$. 
For every $u \in  \mathcal{A}^{n}_{\delta}$, we denote by $T^{n}_{\delta}(u)$ the tree made of 
{the vertex $u$ together with all the descendants of $u$ in  $\mathcal{T}_{\tau'_{n}}(\XXb^n)$ that were added to the tree after time $\lfloor (1-\delta) t_{\lambda} n\rfloor$.}  

For $x,y \in \R \cup \{ {\pm} \infty\}$ with $x<y$, to simplify notation let
  \[ \mathcal{A}^{n}_{\delta}(x,y)\coloneqq \{u \in \mathcal{A}^{n}_{\delta}:    \gamma e^{x} \log n   \leq {\haut(u)} \leq   \gamma e^{y} \log n\}, \quad \textrm{and} \quad  
  H^{n}_{\delta}(x,y)\coloneqq \max_{ u \in \mathcal{A}^{n}_{\delta}(x,y) } \Height(T^{n}_{\delta}(u))
  \]
  be the maximal height of a tree grafted on an active vertex of $ \mathcal{A}^{n}_{\delta}$ with height  belonging to $[\gamma e^x  \log (n), \gamma e^y  \log (n)]$. 
  Finally, to simplify notation, set
 \begin{align*}
  \mathcal{T}^{n,\delta}\coloneqq\mathcal{T}_{\lfloor (1-\delta) t _{\lambda} n \rfloor}(\XXb^ {n} ).  
\end{align*}  
  Recall from \eqref{eq:param} the definitions of $f_{\lambda}$ and of $m_{\lambda}$ and from \eqref{eq:glambda} the definition of $g_{\lambda}$.  Note that $m_{\lambda}<1$ (this comes from the explicit expression of $m_{\lambda}$). Also {observe that the event $ \{\tau'_{n} \geq  \lfloor (1-\delta) t_{\lambda} n  \rfloor\}$  is measurable with respect to $ \mathcal{T}^{n,\delta}$.} 
  
    \begin{lemma}
  \label{lem:dangling}
For every $x \in (0,z_{\lambda})$ and {$y \in (x,\infty]$}, for all $\e>0$, for every $\delta \in \intervalleoo{0}{1}$ small enough,
\begin{align*}
\mathbbm{1}_{\tau'_{n} \geq  \lfloor (1-\delta) t_{\lambda} n  \rfloor} \cdot \Ppsq{
	\left| \frac{1}{\log n}   H^{n}_{\delta}(x,y) -  \frac{ f_{\lambda}(x)}{-\log m_{\lambda}}  \right| \geq  \varepsilon
}{ \mathcal{T}^{n,\delta}}  \quad \mathop{\longrightarrow}^{(\P)}_{n \rightarrow \infty} \quad 0
\end{align*}
and
\begin{align*}
\mathbbm{1}_{\tau'_{n} \geq  \lfloor (1-\delta) t_{\lambda} n  \rfloor} \cdot \Ppsq{
	\left|  \frac{1}{\log n}   H^{n}_{\delta}(-\infty,x)  -   \frac{ 1}{-\log m_{\lambda}}  \right| \geq  \varepsilon
}{ \mathcal{T}^{n,\delta}}  \quad \mathop{\longrightarrow}^{(\P)}_{n \rightarrow \infty} \quad 0.
\end{align*}
  \end{lemma}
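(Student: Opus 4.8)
The strategy is to condition on the tree $\mathcal{T}^{n,\delta}$ (on the event $\{\tau'_{n} \geq \lfloor (1-\delta)t_{\lambda}n\rfloor\}$) and to understand the outgrowths $(T^{n}_{\delta}(u))_{u \in \mathcal{A}^{n}_{\delta}}$ as an approximately i.i.d.\ family of subcritical Bienaymé trees. The number of active vertices $I^{n}_{\lfloor(1-\delta)t_{\lambda}n\rfloor}$ is of order $g_{\lambda}((1-\delta)t_{\lambda})n$ by the fluid limit \eqref{eq:limflu}, while the number of remaining healthy individuals $H^{n}_{\lfloor(1-\delta)t_{\lambda}n\rfloor}$ is of order $g_{\lambda}((1-\delta)t_{\lambda})n$ as well (it equals the same fluid limit). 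The key observation is that during the remainder of the process, the transition probability $r_{k} = 1/(1+\lambda_{n}H^{n}_{k})$ of a freezing event stays close to $1/(1+\lambda g_{\lambda}((1-\delta)t_{\lambda}))$, and if $\delta$ is small this ratio is close to the extinction regime where the branching number $m = 1/r - 1$ is close to $m_{\lambda}$. More precisely, as $\delta \to 0$, $(1-\delta)t_{\lambda} \to t_{\lambda}$ and $g_{\lambda}(t_{\lambda}) = 1 - t_{\lambda}/2$; one should check (using $2-2g_{\lambda}(t_{\lambda})-t_{\lambda}=0$ together with the ODE and the explicit form of $g_{\lambda}$, $m_{\lambda}$) that the relevant geometric parameter of the dangling trees, evaluated as $\delta \downarrow 0$, is exactly $1/(1+m_{\lambda})$, i.e.\ their offspring mean tends to $m_{\lambda}$.

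The main technical device is Lemma~\ref{lem:couplage}: applied to the post-time-$\lfloor(1-\delta)t_{\lambda}n\rfloor$ evolution started from the $N = I^{n}_{\lfloor(1-\delta)t_{\lambda}n\rfloor}$ active vertices, with $p = p(\delta)$ slightly below and $q = q(\delta)$ slightly above the limiting freezing probability, and noting that on an event of probability tending to $1$ the freezing probabilities $r_{k}$ stay in $[p,q]$ throughout (because $H^{n}_{k}$ cannot increase and does not drop too fast on the relevant timescale — this needs a concentration estimate for $H^{n}$ on the time window $[(1-\delta)t_{\lambda}n, \tau'_{n}]$, which follows from the same martingale/fluid-limit arguments used for \eqref{eq:limflu}), we obtain a sandwich $\underline{\mathcal{T}}^{u} \subset T^{n}_{\delta}(u) \subset \overline{\mathcal{T}}^{u}$ with i.i.d.\ Bienaymé families of parameters $\mathsf{G}(q)$ and $\mathsf{G}(p)$. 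Then for the maximum of $\#\mathcal{A}^{n}_{\delta}(x,y)$ such heights, we use part~\ref{it:lem:couplage queue hauteur}: if $M$ is the number of trees, a standard first/second moment (or union bound plus Borel–Cantelli-type) argument gives $\Height$ of the forest $\sim \frac{\log M}{\log(1/p - 1)}$ up to $o(\log M)$, since $\P(\Height(\mathcal{T}) \geq h) \asymp ((1/p)-1)^{h}$. By Proposition~\ref{prop: profil faible}, conditionally on $\mathcal{T}^{n,\delta}$ and on the event in question, $\#\mathcal{A}^{n}_{\delta}(x,y) = n^{f_{\lambda}(x) + o_{\P}(1)}$ (using that the profile at time $\lfloor(1-\delta)t_{\lambda}n\rfloor$ concentrates $f_{\lambda}$ at $\gamma e^{x}\log n$; note $t = (1-\delta)t_{\lambda} < t_{\lambda}$ so the proposition applies), hence $\log \#\mathcal{A}^{n}_{\delta}(x,y) = f_{\lambda}(x)\log n (1+o_{\P}(1))$. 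Sending $p,q \to 1/(1+m_{\lambda})$ as $\delta \to 0$ then yields $\frac{1}{\log n}H^{n}_{\delta}(x,y) \to \frac{f_{\lambda}(x)}{-\log m_{\lambda}}$ in conditional probability, which is the first assertion. For the second assertion, $\mathcal{A}^{n}_{\delta}(-\infty,x)$ contains all active vertices of height at most $\gamma e^{x}\log n$; since $x < z_{\lambda}$, the profile result gives $\#\mathcal{A}^{n}_{\delta}(-\infty,x) = n^{f_{\lambda}(0) + o_{\P}(1)} = n^{1 + o_{\P}(1)}$ (the profile is dominated by heights near $0$ since $f_{\lambda}(0) = 1 > f_{\lambda}(s)$ for $s > 0$), hence $\log\#\mathcal{A}^{n}_{\delta}(-\infty,x) = \log n(1 + o_{\P}(1))$ and the same argument gives the limit $\frac{1}{-\log m_{\lambda}}$.

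I expect the main obstacle to be making the coupling of Lemma~\ref{lem:couplage} interact cleanly with the conditioning on $\mathcal{T}^{n,\delta}$ and, crucially, controlling that the freezing probabilities $r_{k}$ remain trapped in $[p(\delta), q(\delta)]$ for the \emph{entire} duration of the late epidemic: one must show that $H^{n}_{k}$ does not fluctuate outside a window of relative size $O(\delta)$ around $g_{\lambda}((1-\delta)t_{\lambda})n \approx n(1 - t_{\lambda}/2)$ on the time interval $k \in [\lfloor(1-\delta)t_{\lambda}n\rfloor, \tau'_{n}]$, uniformly. This requires a uniform fluid-limit / concentration statement for $H^{n}$ extended up to the (random) extinction time, which is exactly the kind of estimate proven in \cite{BBKK23+} and can be quoted or rederived via Doob-type inequalities for the associated martingales. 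A secondary technical point is the forest-height asymptotics: one needs that a forest of $M \to \infty$ i.i.d.\ trees with geometric tail exponent $\beta = (1/p)-1 < 1$ has height $(1+o(1))\frac{\log M}{-\log\beta}$, which is routine (upper bound: union bound over the $M$ trees and over heights $\geq (1+\e)\frac{\log M}{-\log\beta}$; lower bound: second moment, using that the events $\{\Height(\mathcal{T}^{i}) \geq h\}$ are independent) but must be stated with the right uniformity in $M$ to allow $M$ to be random and coupled.
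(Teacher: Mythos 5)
Your proposal is correct and follows essentially the same route as the paper: condition on $\mathcal{T}^{n,\delta}$, use the fluid limit together with the identity $m_{\lambda}=\lambda g_{\lambda}(t_{\lambda})$ (Lemma~\ref{lem:mt}) to trap the freezing probabilities in $[\frac{1}{1+m_\lambda+\eta},\frac{1}{1+m_\lambda-\eta}]$ on a high-probability event, sandwich the dangling trees between i.i.d.\ geometric Bienaymé forests via Lemma~\ref{lem:couplage}, and combine the tail bound of Lemma~\ref{lem:couplage}\ref{it:lem:couplage queue hauteur} with the count $\#\mathcal{A}^{n}_{T}(x,y)=n^{f_\lambda(x)+o_\P(1)}$ from Proposition~\ref{prop: profil faible} through the exact independence computation $1-(1-p_h)^{M}$. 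The two technical obstacles you flag (uniformity of the coupling over the conditioning on $\mathcal{T}^{n,\delta}$, and trapping $r_k$ up to extinction) are precisely what the paper resolves in Lemma~\ref{lem:eventE}, in the way you describe.
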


We will need the following relation between $m_{\lambda}$, $g_{\lambda}$ and $t_{\lambda}$.

\begin{lemma}
\label{lem:mt}
We have $m_{\lambda}=\lambda g(t_{\lambda})$.
\end{lemma}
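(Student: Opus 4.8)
The plan is to translate the identity $m_\lambda = \lambda g_\lambda(t_\lambda)$ into an elementary equation for the Lambert function: I will show that both $m_\lambda$ and $w \coloneqq \lambda g_\lambda(t_\lambda)$ are positive solutions of $x e^{-x} = \lambda e^{-\lambda}$, and then select the correct one among the (at most two) such solutions.

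First I would use the explicit formula \eqref{eq:glambda}, which gives $w = W(\lambda e^{\lambda} e^{-\lambda t_\lambda})$, so that the defining property $W(x)e^{W(x)} = x$ yields $w e^{w} = \lambda e^{\lambda} e^{-\lambda t_\lambda}$. Next, the equation $2 - 2 g_\lambda(t_\lambda) - t_\lambda = 0$ defining $t_\lambda$ rewrites as $t_\lambda = 2 - 2 w/\lambda$, hence $\lambda e^{\lambda} e^{-\lambda t_\lambda} = \lambda e^{-\lambda} e^{2w}$; plugging this into the previous identity and cancelling $e^{w}$ against $e^{2w}$ gives $w e^{-w} = \lambda e^{-\lambda}$. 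On the other hand, from $m_\lambda = -W(-\lambda e^{-\lambda})$ and the same defining property of $W$ applied at $x = -\lambda e^{-\lambda}$, one gets $m_\lambda e^{-m_\lambda} = \lambda e^{-\lambda}$. Thus $w$ and $m_\lambda$ both solve $x e^{-x} = \lambda e^{-\lambda}$ with $x > 0$ (positivity of $w$ being clear since $g_\lambda > 0$ on $\R_+$).

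Finally I would conclude by a monotonicity/branch argument. The function $x \mapsto x e^{-x}$ is strictly increasing on $(0,1)$ and strictly decreasing on $(1,\infty)$, so for $\lambda > 1$ the equation $x e^{-x} = \lambda e^{-\lambda}$ has exactly two positive roots: $m_\lambda \in (0,1)$ and $\lambda$. Since $g_\lambda$ is strictly decreasing on $\R_+$ (because $g_\lambda' = -\lambda g_\lambda/(1+\lambda g_\lambda) < 0$) with $g_\lambda(0) = 1$, and since $t_\lambda > 0$, we have $0 < w = \lambda g_\lambda(t_\lambda) < \lambda$, which rules out $w = \lambda$ and forces $w = m_\lambda$, i.e. the claimed identity. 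The only step that requires a bit of care is this last one — ensuring we land on the branch $m_\lambda \in (0,1)$ and not on the spurious root $x = \lambda$ — and it is handled by the strict monotonicity of $g_\lambda$ together with $t_\lambda > 0$; the rest is a routine computation with the Lambert function.
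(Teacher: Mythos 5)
Your proof is correct, and it takes a somewhat different route from the paper's. The paper's proof is a one-liner that invokes the functional identity $W\left(xe^{-2W(-x)}\right)=-W(-x)$ together with $g_\lambda(t_\lambda)=1-t_\lambda/2$ to read off $t_\lambda = 2+\frac{2}{\lambda}W(-\lambda e^{-\lambda})$ and hence the claim; the branch selection is entirely hidden inside that cited identity. You instead derive the scalar equation $we^{-w}=\lambda e^{-\lambda}$ from scratch (your algebra checks out: $we^{w}=\lambda e^{\lambda}e^{-\lambda t_\lambda}$ combined with $t_\lambda = 2-2w/\lambda$ gives exactly this) and then make the branch selection explicit via the unimodality of $x\mapsto xe^{-x}$ and the bound $0<w<\lambda$. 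This is in effect a self-contained proof of the precise instance of the Lambert identity the paper uses as a black box, and it has the merit of making visible the step where one must rule out the spurious root $x=\lambda$ — which, as you note, corresponds to the trivial root $t=0$ of $2-2g_\lambda(t)-t=0$ (so your appeal to $t_\lambda>0$ is exactly what distinguishes the extinction time from that trivial root). The paper's version is shorter; yours is more transparent about why the answer lands in $(0,1)$.
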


\begin{proof}
Using the identities $W \left( x e^{-2 W(-x)}\right)=-W(-x)$ for $-1/e \leq x \leq 0$ and $g_{\lambda}(t_{\lambda})=1-t_{\lambda}/2$, we readily get that 
\[
t_{\lambda}= 2+ \frac{2}{\lambda} W(-\lambda e^{-\lambda}) \qquad \textrm{and thus} \qquad m_{\lambda}=-W(-\lambda e^{-\lambda})=\lambda g_{\lambda}(t_{\lambda}).
\]
This completes the proof.
\end{proof}

Before proving Lemma~\ref{lem:dangling} we need to introduce some more notation. 
Take $\delta \in (0,1/2)$. 
Let $\mathbb{T}^{n,\delta}$ be the support of the random variable $ \mathcal{T}^{n,\delta}$ conditionally given $\{\tau'_{n} \geq  \lfloor (1-\delta) t_{\lambda} n  \rfloor\}$.  
Now fix $T \in {\mathbb{T}}^{n,\delta}$. 
Denote by $\P_{n,\delta,T}$  the conditional probability distribution $\P( \, \cdot  \mid \mathcal{T}^{n,\delta}=T)$. 
We set $\XX^{n,\delta}_{i}= X_{\lfloor (1-\delta) t_{\lambda} n  \rfloor+i}$ for $i \geq 1$ and $\XXb^{n,\delta}=(\XX^{n,\delta}_{i})_{i \geq 1}$. 
For all $k \in \N$, for all $\X_1, \ldots, \X_{k-1}\in \{\pm1\}$ 
such that $\P_{{n,\delta,T}}(\XX^{n,\delta}_1=\X_1, \ldots, \XX^{n,\delta}_{k-1}= \X_{k-1})>0$, 
  set 
\[
r^{n,\delta,T}_{k}(\X_1, \ldots, \X_{k-1})\coloneqq\Ppsqndt{\XX^{n,\delta}_{ k}=-1}{\XX^{n,\delta}_{ 1}=\X_1, \ldots, \XX^{n,\delta}_{ k-1}= \X_{k-1}}.
\]
Finally, for $\eta>0$ set
	\[
	\mathcal{E}^{n,\delta,T,\eta}\coloneqq \left\{ \forall k  \in \llbracket 1,\tau(\XXb^{n,\delta})-1 \rrbracket, \   \frac{1}{1+m_{\lambda} +\eta} \leq r^{n,\delta,T}_{k}(\XX^{n,\delta}_1, \ldots, \XX^{n,\delta}_{k-1}) \leq  \frac{1}{1+ m_{\lambda}  -\eta}  \right\}.  
\]

\begin{lemma}
\label{lem:eventE}
For every $\eta>0$ and for every $\delta \in (0,1/2)$ small enough, 
there exists a subset $\overline{\mathbb{T}}^{n,\delta} \subset \mathbb{T}^{n,\delta}$ such that
\begin{equation*}
\Ppsq{\mathcal{T}^{n,\delta} \in  \overline{\mathbb{T}}^{n,\delta}}{ \tau'_{n} \geq  \lfloor (1-\delta) t_{\lambda} n \rfloor }  \quad \mathop{\longrightarrow}_{n \rightarrow \infty} \quad 1 
 \qquad \textrm{and} \qquad 
 \min_{T \in \overline{\mathbb{T}}^{n,\delta}}  \P_{n,\delta,T}(\mathcal{E}^{n,\delta,T,\eta})  \quad \mathop{\longrightarrow}_{n \rightarrow \infty} \quad 1.
\end{equation*} 
\end{lemma}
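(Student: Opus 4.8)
The plan is to prove Lemma~\ref{lem:eventE} by showing that, conditionally on survival, the infection tree $\mathcal{T}^{n,\delta}$ typically has a number of susceptible and infectious individuals that tracks the fluid limit at time $(1-\delta)t_\lambda$, and that once this is the case the one-step recovery probabilities $r^{n,\delta,T}_k$ stay uniformly close to $\frac{1}{1+m_\lambda}$ for the whole remaining duration of the epidemic. The first step is to recall that by \eqref{eq:transitions} the conditional law $\P_{n,\delta,T}$ of the future sequence $\XXb^{n,\delta}$ is that of the increments of the Markov chain $(H^n,I^n)$ restarted from the state $(H^n_{\lfloor(1-\delta)t_\lambda n\rfloor}, I^n_{\lfloor(1-\delta)t_\lambda n\rfloor})$ recorded in $T$; in particular the one-step recovery probability at step $k$, given the past, is exactly $\frac{1}{1+\lambda_n h}$ where $h$ is the current number of susceptibles. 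So the condition defining $\mathcal{E}^{n,\delta,T,\eta}$ is equivalent to the requirement that $\lambda_n \cdot (\text{number of susceptibles})$ stays in the window $[m_\lambda-\eta, m_\lambda+\eta]$ throughout the rest of the epidemic.

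Next I would define $\overline{\mathbb{T}}^{n,\delta}$ to be the set of trees $T$ for which $\big|\lambda_n H^n_{\lfloor(1-\delta)t_\lambda n\rfloor} - m_\lambda \big| \le \eta/2$ (equivalently $\big| H^n_{\lfloor(1-\delta)t_\lambda n\rfloor}/n - g_\lambda((1-\delta)t_\lambda)\big|$ small, using $\lambda_n\sim\lambda/n$ and choosing $\delta$ small enough that $\lambda g_\lambda((1-\delta)t_\lambda)$ is within $\eta/4$ of $\lambda g_\lambda(t_\lambda)=m_\lambda$ — here I invoke Lemma~\ref{lem:mt} and continuity of $g_\lambda$). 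The first convergence in the statement, $\P(\mathcal{T}^{n,\delta}\in\overline{\mathbb{T}}^{n,\delta}\mid\tau'_n\ge\lfloor(1-\delta)t_\lambda n\rfloor)\to 1$, then follows directly from the fluid-limit convergence \eqref{eq:limflu}: on the survival event $\mathcal{B}=1$, $H^n_{\lfloor(1-\delta)t_\lambda n\rfloor}/n \to g_\lambda((1-\delta)t_\lambda)$ in probability, and $\P(\tau'_n\ge\lfloor(1-\delta)t_\lambda n\rfloor)\to\P(\mathcal{B}=1)=1-1/\lambda$, so the conditional probability converges to $1$.

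For the second convergence — that uniformly over $T\in\overline{\mathbb{T}}^{n,\delta}$ the event $\mathcal{E}^{n,\delta,T,\eta}$ has probability tending to $1$ — I would argue as follows. Under $\P_{n,\delta,T}$ the susceptible count $H^n$ is non-increasing; it starts within $\eta/2$ (after multiplying by $\lambda_n$) of $m_\lambda$, and it can only decrease, so the only way $\lambda_n H^n$ can leave $[m_\lambda-\eta, m_\lambda+\eta]$ before the epidemic ends is by dropping below $m_\lambda-\eta$. The number of susceptibles that ever get infected during the remaining $\tau'_n - \lfloor(1-\delta)t_\lambda n\rfloor$ steps is at most the total decrease of $H^n$; by the fluid limit this total decrease is $\approx (g_\lambda((1-\delta)t_\lambda) - g_\lambda(t_\lambda))\,n$, which is $o(\eta n/\lambda)$ once $\delta$ is small, so with high probability $\lambda_n H^n$ never drops by more than $\eta/2$ and thus stays in the window. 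Making this uniform over $T\in\overline{\mathbb{T}}^{n,\delta}$ requires a uniform (over starting states in the relevant range) control on the remaining lifetime and total infection count of the restarted chain; this can be obtained from a uniform version of the fluid-limit estimate, or more robustly by a stochastic domination / submartingale argument bounding $\P_{n,\delta,T}(\text{remaining infections} \ge \eta n/(4\lambda))$ uniformly. \textbf{The main obstacle} is exactly this uniformity: the fluid limit \eqref{eq:limflu} as quoted is a statement about the chain started from $(n,1)$ and conditioned to survive, not about arbitrary restarts, so I would need to reprove the relevant one-sided estimate directly — e.g. by coupling the restarted susceptible process with a pure-death chain (each step an infection with probability at most $\frac{\lambda_n H^n_{\max}}{1+\lambda_n H^n_{\min}}$ bounded away from $1$) and controlling the number of infection steps before absorption via a comparison with a sum of geometric random variables, which gives the needed tail bound uniformly in the starting state provided $\lambda_n H^n$ is bounded above by $m_\lambda+\eta<1$ throughout.
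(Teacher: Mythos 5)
Your first half coincides with the paper's: you identify $r^{n,\delta,T}_k$ with $\frac{1}{1+\lambda_n H^n_{\cdot}}$, so that $\mathcal{E}^{n,\delta,T,\eta}$ becomes the event that $\lambda_n H^n$ stays in $[m_\lambda-\eta,m_\lambda+\eta]$, and you use Lemma~\ref{lem:mt}, continuity of $g_\lambda$ and the fluid limit exactly as the paper does. Where you diverge is in how the good set $\overline{\mathbb{T}}^{n,\delta}$ is produced. The paper never proves the uniform-in-$T$ estimate you flag as the main obstacle: it only needs the \emph{average} $\sum_T \P_{n,\delta,T}(\mathcal{E}^{n,\delta,T,\eta})\,\P(\mathcal{T}^{n,\delta}=T)/\P(\tau'_n\geq\cdot)$ to tend to $1$, and this average is, by the tower property (display \eqref{eq:inegH}), exactly the conditional probability for the \emph{original} chain started from $(n,1)$ that $\lambda_n H^n$ stays in the window — so the fluid limit \eqref{eq:limflu} applies with no restart issue. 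It then defines $\overline{\mathbb{T}}^{n,\delta}\coloneqq\{T:\P_{n,\delta,T}(\mathcal{E}^{n,\delta,T,\eta})\geq 1-\varepsilon_n\}$ for a suitable $\varepsilon_n\to 0$ and concludes by the elementary ``if a $[0,1]$-valued average tends to $1$, the quantity is near $1$ with high probability'' argument. This buys a very short proof at the cost of an implicit (non-constructive) good set; your approach buys an explicit good set at the cost of a genuinely harder uniform estimate.

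Your proposed route is completable, but as written it has a concrete gap: your $\overline{\mathbb{T}}^{n,\delta}$ constrains only $H^n_{\lfloor(1-\delta)t_\lambda n\rfloor}$, whereas the domination argument you sketch (total future infections $\lesssim$ total progeny of $I^n_{\lfloor(1-\delta)t_\lambda n\rfloor}$ independent subcritical geometric Galton--Watson trees with mean $\leq m_\lambda+\eta/2<1$) only gives $o(\eta n)$ further decrease of $H^n$ if the number of initially infectious individuals $I^n_{\lfloor(1-\delta)t_\lambda n\rfloor}$ is itself $O(\epsilon(\delta)n)$ with $\epsilon(\delta)\to 0$. You therefore must also put the constraint $|I^n_{\lfloor(1-\delta)t_\lambda n\rfloor}/n-(2-2g_\lambda((1-\delta)t_\lambda)-(1-\delta)t_\lambda)|\leq\epsilon$ into $\overline{\mathbb{T}}^{n,\delta}$ (this is again supplied by \eqref{eq:limflu}), and then prove a uniform concentration bound for the total progeny of the subcritical forest — essentially the coupling of Lemma~\ref{lem:couplage} together with a law of large numbers for i.i.d.\ total progenies. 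You should also note that the upper edge of the window is free, since $H^n$ is non-increasing, so only the lower edge needs the branching estimate; you say this correctly. With these additions your argument goes through, but it reproves machinery the paper defers to Lemma~\ref{lem:dangling} and avoids here entirely.
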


\begin{proof}
Fix $\eta>0$. By using Lemma~\ref{lem:mt} and the continuity of $g_{\lambda}$ at $t_{\lambda}$, choose $\delta \in (0,1/2)$ such that $\lambda g_{\lambda}((1-\delta)t_{\lambda}) \geq m_{\lambda}-\eta/2$.

Observe that for every $T\in \mathbb{T}^{n,\delta}$, under $\P_{n,\delta,T}$, we have
\begin{align*}
r^{n,\delta,T}_{k}(\XX^{n,\delta}_1, \ldots, \XX^{n,\delta}_{k-1}) 
&= \frac{1}{1+\lambda_n H^n_{\lfloor (1-\delta) t_{\lambda} n\rfloor+k-1}}.
\end{align*}
Thus
\begin{multline}
	\Ppsq{ \forall k \in \intervalleentier{1}{\tau(\XXb^{n,\delta})-1} ,\ \frac{1}{1+m_{\lambda} +\eta} \leq \frac{1}{1+\lambda_n H^n_{\lfloor (1-\delta) t_{\lambda} n\rfloor+k-1}} \leq \frac{1}{1+m_{\lambda} -\eta} }{\tau'_{n} \geq  \lfloor (1-\delta) t_{\lambda} n  \rfloor}\\
	= \frac{1}{\P(\tau'_{n} \geq  \lfloor (1-\delta) t_{\lambda} n  \rfloor)}\sum_{T\in \mathbb{T}^{n,\delta}} \Pndt{ \mathcal{E}^{n,\delta,T,\eta} } \Pp{\mathcal{T}^{n,\delta}=T}
	\label{eq:inegH}
\end{multline}
By the fluid limit result \eqref{eq:limflu} under the conditional probability $\P \left( \, \cdot \mid \tau'_{n} \geq  \lfloor (1-\delta) t_{\lambda} n  \rfloor \right)$ we have the convergence
$$
\left(  \frac{H^n_{\lfloor (1-\delta) t_{\lambda} n\rfloor+\lfloor nt \rfloor }}{n} : 0 \leq t \leq \delta t_{\lambda} \right)  \quad \mathop{\longrightarrow}^{(\P)}_{n \rightarrow \infty} \quad g_{\lambda}((1-\delta) t_{\lambda} + t : 0 \leq t \leq \delta t_{\lambda}),
$$
so by our choice of $\delta$, the LHS of \eqref{eq:inegH} tends to $1$.

Now, for a fixed $\varepsilon >0$, consider the set 
\begin{align*}
	\widetilde{\mathbb{T}}^{n,\delta,\varepsilon} 
	= \enstq{T \in \mathbb{T}^{n,\delta}}{ 	\P_{n,\delta,T}(\mathcal{E}^{n,\delta,T,\eta}) \geq 1-\varepsilon}
\end{align*}
and write
\begin{align*}
	 \sum_{T\in \mathbb{T}^{n,\delta} }  \Ppp{n,\delta,T}{\mathcal{E}^{n,\delta,T,\eta} } \cdot \Pp{\mathcal{T}^{n,\delta}=T} 
	&\leq   \Pp{\mathcal{T}^{n,\delta}\in \widetilde{\mathbb{T}}^{n,\delta,\varepsilon} } + (1-\varepsilon) \cdot \Pp{\mathcal{T}^{n,\delta}\in \mathbb{T}^{n,\delta} \setminus \widetilde{\mathbb{T}}^{n,\delta,\varepsilon}}\\
	& = \Pp{\mathcal{T}^{n,\delta}\in \mathbb{T}^{n,\delta}} -  \varepsilon \Pp{\mathcal{T}^{n,\delta}\in \mathbb{T}^{n,\delta} \setminus \widetilde{\mathbb{T}}^{n,\delta,\varepsilon}}\\
	&=\P(\tau'_{n} \geq  \lfloor (1-\delta) t_{\lambda} n  \rfloor) -  \varepsilon \Pp{\mathcal{T}^{n,\delta}\in \mathbb{T}^{n,\delta} \setminus \widetilde{\mathbb{T}}^{n,\delta,\varepsilon}}.
\end{align*}
We have already seen that the quantity \eqref{eq:inegH} converges to $1$ as $n \rightarrow \infty$, and since $\P(\tau'_{n} \geq  \lfloor (1-\delta) t_{\lambda} n  \rfloor) \rightarrow 1-1/\lambda$, we conclude that the term $\P(\mathcal{T}^{n,\delta}\in \mathbb{T}^{n,\delta} \setminus \widetilde{\mathbb{T}}^{n,\delta,\varepsilon})$ has to go to $0$ for $\varepsilon>0$ fixed. 
We can then choose a sequence $\varepsilon_n \rightarrow 0$ so that  $\P(\mathcal{T}^{n,\delta}\in \mathbb{T}^{n,\delta} \setminus \widetilde{\mathbb{T}}^{n,\delta,\varepsilon_n}) \rightarrow 0$ as $n\rightarrow \infty$. 
This ensures that the choice $\overline{\mathbb{T}}^{n,\delta} \coloneqq \widetilde{\mathbb{T}}^{n,\delta,\varepsilon_n}$ satisfies the statement of the lemma.
\end{proof}

We are now ready to establish Lemma~\ref{lem:dangling}.

\begin{proof}[Proof of Lemma~\ref{lem:dangling}] Fix  $x \in (0,z_{\lambda})$, {$y \in (x,\infty]$} and $\varepsilon>0$. Let $\eta>0$ be such that
\begin{equation}
\label{eq:eta}
\frac{-\log(m_\lambda - \eta)}{-\log(m_\lambda)}(f_{\lambda}( x)-3 \varepsilon) \leq f_\lambda(x)-2\e \quad\textrm{ and } \quad   f_\lambda(x)+2\e \leq \frac{-\log(m_\lambda + \eta)}{-\log(m_\lambda)}(f_{\lambda}( x)+3 \varepsilon) .
\end{equation}
Take  $\delta \in (0,1/2)$  small enough so that the conclusion of Lemma~\ref{lem:eventE} holds with the subset $\overline{\mathbb{T}}^{n,\delta} \subset \mathbb{T}^{n,\delta}$.

For every tree $T$, recall that $ \mathcal{A}_{T}$ stands for the set of all active vertices of $T$, and define
 $$\mathcal{A}^{n}_{T}(x,y)= \{u \in \mathcal{A}_{T}:    \gamma e^{x} \log n   \leq {\haut(u)} \leq   \gamma e^{y} \log n\}.$$
By  Proposition~\ref{prop: profil faible} and Lemma~\ref{lem:eventE}, if we define
\[
\widehat{\mathbb{T}}^{n,\delta}= \left\{T \in \overline{\mathbb{T}}^{n,\delta}  :  n^{f_{\lambda}(x)-\varepsilon}  \leq \# \mathcal{A}^{n}_{T}(x,y) \leq n^{f_{\lambda}(x)+\varepsilon} \right\},
\]
then 
\begin{equation}
\label{eq:That}
 \Ppsq{\mathcal{T}^{n,\delta} \in  \widehat{\mathbb{T}}^{n,\delta} }{\tau'_{n} \geq  \lfloor (1-\delta) t_{\lambda} n } \quad \mathop{\longrightarrow}_{n \rightarrow \infty} \quad 1  
 \qquad \textrm{and} \qquad 
 \min_{T \in \widehat{\mathbb{T}}^{n,\delta}}  \P_{n,\delta,T}\left(\mathcal{E}^{n,\delta,T,\eta}\right)  \quad \mathop{\longrightarrow}_{n \rightarrow \infty} \quad 1.
\end{equation}

Now take $T \in \widehat{\mathbb{T}}^{n,\delta}$. 
Set $N\coloneqq \#\mathcal{A}_{T}$ and let $\mathcal{A}_{T}=\{u_1,u_2,\dots, u_{N}\}$ be the enumeration of the active vertices in their order of appearance in the tree.  
Under $\P_{n,\delta,T}$, for every $u \in  \mathcal{A}_{T}$, recall that we denote by $T^{n}_{\delta}(u)$ the tree made of  
the vertex $u$ together with all the descendants of $u$ in  $\mathcal{T}_{\tau'_{n}}(\XXb^n)$ that were added to the tree after time $\lfloor (1-\delta) t_{\lambda} n\rfloor$.  
Note that under $\P_{n,\delta,T}$ we have the following equality in distribution for forests
	\begin{align*}
		\left(T^{n}_{\delta}(u_i) :  \ i \in \{1,2,\dots ,N\}\right) \text{ has the same distribution as }	\mathcal{F}^N_\infty(\XXb^{n,\delta}), 
	\end{align*}
	as defined by Algorithm~\ref{algo1}. 
	Thus, by Lemma~\ref{lem:couplage},  under $\P_{n,\delta,T}$, we can couple $(T^{n}_{\delta}(u) : u \in \mathcal{A}_T)$ with  two families of independent Bienaymé trees
  $(\overline{T}^{n}_{\delta}(u) : u \in \mathcal{A}_{T} )$ and $(\underline{T}^{n}_{\delta}(u) : u \in  \mathcal{A}_{T} )$ with respective offspring distributions  $\mathsf{G}(\frac{1}{1+m_{\lambda} +\eta})$  and  $\mathsf{G}(\frac{1}{1+m_{\lambda} -\eta})$ 
    such that on the event $\mathcal{E}^{n,\delta,T,\eta}$,  we have $\underline{T}^{n}_{\delta}(u) \subset T^{n}_{\delta}(u) \subset \overline{T}^{n}_{\delta}(u)$ for every $u  \in \mathcal{A}_{T}$.

  For the first statement, we show that the convergence 
  \begin{equation}
  \label{eq:cvT}
\P_{n,\delta,T}\left(
	 \left| \frac{1}{\log n}   H^{n}_{\delta}(x,y) -  \frac{ f_{\lambda}(x)}{-\log m_{\lambda}}   \right| >   \frac{ 3\varepsilon}{-\log m_{\lambda}} 
\right)  \quad \mathop{\longrightarrow}_{n \rightarrow \infty} \quad \quad 0.
\end{equation}
holds uniformly in $T \in \widehat{\mathbb{T}}^{n,\delta}$, which implies the desired result.
 
Take   $T \in \widehat{\mathbb{T}}^{n,\delta}$.  
We first show the lower bound. 
By Lemma~\ref{lem:couplage}\ref{it:lem:couplage queue hauteur}, for every $u \in \mathcal{A}_{T}$,
\begin{align*}
\Pndt{\Height(\underline{T}^{n}_{\delta}(u)) \geq   \frac{f_{\lambda}(x)  - 3\varepsilon}{-\log m_{\lambda}} \cdot \log n}
\geq  \frac{1}{C}n^{- \frac{-\log(m_\lambda - \eta)}{-\log(m_\lambda)}(f_{\lambda}( x)-3 \varepsilon)} \geq  \frac{1}{n^{f_\lambda(x)-2\e}}
\end{align*}
for $n$ large enough (uniformly in $T \in \widehat{\mathbb{T}}^{n,\delta}$).
Then, using the fact that $n^{f_{\lambda}(x)-\varepsilon}  \leq \# \mathcal{A}^{n}_{T}(x,y)$ and that on the event $\mathcal{E}^{n,\delta,T,\eta}$ we have   $\Height(\underline{T}^{n}_{\delta}(u)) \leq \Height({T}^{n}_{\delta}(u)) $ for every $u \in \mathcal{A}_{T}$ , 
write for $n$ large enough
\begin{eqnarray*}
&& \Pndt{
	 \frac{1}{\log n}   H^{n}_{\delta}(x,y) \geq     \frac{f_{\lambda}(x)  - 3\varepsilon}{-\log m_{\lambda}} 
} \\
& &
\qquad\qquad \geq \Pndt{
	\left\lbrace \forall u \in A_{T}, \  \frac{1}{\log n}   \Height(\underline{T}^{n}_{\delta}(u)) \geq     \frac{f_{\lambda}(x)  - 3\varepsilon}{-\log m_{\lambda} } \right\rbrace  \cap 
	 \mathcal{E}^{n,\delta,T,\eta} } 
\\  
&& \qquad\qquad \geq \Pndt{
	 \forall u \in A_{T} , \ \frac{1}{\log n}   \Height(\underline{T}^{n}_{\delta}(u)) \geq     \frac{f_{\lambda}(x)  - 3\varepsilon}{-\log m_{\lambda} }} - \P_{n,\delta,T}((\mathcal{E}^{n,\delta,T,\eta})^{c})  \\
&&\qquad\qquad  \geq   1- \left(1-\frac{1}{n^{f_{\lambda}( x)-2\varepsilon}} \right)^{n^{f_{\lambda}(x)-\varepsilon}}- \P_{n,\delta,T}((\mathcal{E}^{n,\delta,T,\eta})^{c})
\end{eqnarray*}
which goes to $1$ uniformly in $T \in \widehat{\mathbb{T}}^{n,\delta}$ by \eqref{eq:That}.

We continue with the upper bound of the first statement. 
By Lemma~\ref{lem:couplage}\ref{it:lem:couplage queue hauteur}, for every $u \in \mathcal{A}_{T}$,
\begin{align*}
	\Pndt{\Height(\overline{T}^{n}_{\delta}(u)) \geq   \frac{f_{\lambda}(x)  + 3\varepsilon}{-\log m_{\lambda}} \cdot \log n
}
\le
C n^{- \frac{-\log(m_\lambda + \eta)}{-\log(m_\lambda)}(f_{\lambda}( x)+3 \varepsilon)} \leq \frac{1}{n^{f_\lambda(x)+2\e}}
\end{align*}
for $n$ large enough (uniformly in $T \in \widehat{\mathbb{T}}^{n,\delta}$).  Thus  using the fact that  $ \# \mathcal{A}^{n}_{T}(x,y) \leq n^{f_{\lambda}(x)+\varepsilon} $
 and that on the event $\mathcal{E}^{n,\delta,T,\eta}$ we have   $\Height(\underline{T}^{n}_{\delta}(u)) \leq \Height({T}^{n}_{\delta}(u)) $ for every $u \in \mathcal{A}_{T}$, for $n$ large enough
\begin{eqnarray*}
&& \Pndt{
	 \frac{1}{\log n}   H^{n}_{\delta}(x,y) \geq     \frac{f_{\lambda}(x)  + 3\varepsilon}{-\log m_{\lambda}} 
} \\
& &
\qquad\qquad  \leq \Pndt{
	 \left\lbrace\frac{1}{\log n}   H^{n}_{\delta}(x,y) \geq     \frac{f_{\lambda}(x)  + 3\varepsilon}{-\log m_{\lambda}} \right\rbrace
 \cap 
\mathcal{E}^{n,\delta,T,\eta}}
+\P_{n,\delta,T}((\mathcal{E}^{n,\delta,T,\eta})^{c})
\\  
& &
\qquad\qquad  \leq  \Pndt{
	   \left\lbrace 
	   \exists \in A_{T} , \  \frac{1}{\log n}   \Height(\underline{T}^{n}_{\delta}(u)) \geq     \frac{f_{\lambda}(x)  + 3\varepsilon}{-\log m_{\lambda} }  
	   \right\rbrace
	    \cap 
	   \mathcal{E}^{n,\delta,T,\eta}} +\P_{n,\delta,T}((\mathcal{E}^{n,\delta,T,\eta})^{c})
\\  
& &
\qquad\qquad  \leq  \Pndt{
	  \exists \in A_{T} , \ \frac{1}{\log n}   \Height(\underline{T}^{n}_{\delta}(u)) \geq     \frac{f_{\lambda}(x)  + 3\varepsilon}{-\log m_{\lambda} } } + 2\P_{n,\delta,T}((\mathcal{E}^{n,\delta,T,\eta})^{c})
\\  &&\qquad\qquad  \leq    1- \left(1-\frac{1}{n^{f_{\lambda}( x)+2\varepsilon}} \right)^{n^{f_{\lambda}(x)+\varepsilon}}+2\P_{n,\delta,T}((\mathcal{E}^{n,\delta,T,\eta})^{c})
\end{eqnarray*}
which goes to $0$ uniformly in $T \in \widehat{\mathbb{T}}^{n,\delta}$ by \eqref{eq:That}. 

  The second statement is proved in the same way, by using the fact that Proposition~\ref{prop: profil faible}  entails that for every $\varepsilon>0$ and every $\delta\in \intervalleoo{0}{1}$,
 we have
\[
\mathbb{P}
\left(
\textrm{if }\tau'_{n} \geq  \lfloor (1-\delta) t_{\lambda} n  \rfloor \textrm{ then }  \left| \frac{\# \mathcal{A}^{n}_{\delta}(-\infty,x)}{n}-1 \right| \leq \varepsilon 
\right)
\quad \mathop{\longrightarrow}_{n \rightarrow \infty} \quad 1.
\]
This completes the proof.
\end{proof}

   \subsection{Proof of Theorem~\ref{thm:main}}
   \label{ssec:proof}
   
   We are now ready to establish our main result.

\begin{proof}[Proof of Theorem~\ref{thm:main}.] 
First, we note that for every $\delta\in \intervalleoo{0}{1}$, the random variable $\Height(\T_{\tau'_n}(\XXb^n))\ind{\tau'_n < \lfloor(1-\delta) t_\lambda n\rfloor}$ converges in law as $n\to \infty$ to a finite random variable (see the proof of Theorem~24 in \cite{BBKK23+}). 
		Since we know from \eqref{eq:limflu} that $\ind{\tau'_{n} \geq  \lfloor (1-\delta) t_{\lambda} n \rfloor}$ converges in distribution towards the random variable $\mathcal{B}$ that appears in the statement of the theorem, it is enough to show that 
		for every $\varepsilon>0$, for every $\delta \in \intervalleoo{0}{1}$ small enough, 
\begin{equation}
\label{eq:toprove} 
\mathbb{P}\left(
\ind{\tau'_{n} \geq  \lfloor (1-\delta) t_{\lambda} n \rfloor}\left|
\frac{\mathsf{Height}(\mathcal{T}_{\tau'_{n}}(\XXb^n))}{\log n}   - \kappa(\lambda)
\right| \geq \varepsilon
\right)  \quad \mathop{\longrightarrow}_{n \rightarrow \infty} \quad 0
\end{equation}
with $\kappa(\lambda)=\gamma/m_{\lambda}+h_{\lambda}(-\log m_{\lambda})$ for $\lambda \leq \lambda_{c}$ and $\kappa(\lambda)=\gamma e^{z_{\lambda}}$ for $\lambda \geq \lambda_{c}$,
where we recall that $\gamma=\lambda/(\lambda-1)$. {To simplify notation, we let $E_{n}$ be the event $ \{\tau'_{n} \geq  \lfloor (1-\delta) t_{\lambda} n \rfloor\}$.}

Fix $\eta>0$. Set $N_{\eta}\coloneqq \lfloor 1/\eta \rfloor$, $x_{i}\coloneqq \eta i z_{\lambda}$ for $1 \leq i \leq N_{\eta}$, $x_{0}\coloneqq-\infty$ and $x_{N_{\eta}+1}\coloneqq\infty$.
For $0 \leq i \leq N_{\eta}$ set
\[
\mathsf{H}^{n}_{\delta}(i)= \max \enstq{\haut(u)+\Height(T^{n}_{\delta}(u))}{ u \in \mathcal{A}^{n}_{\delta}(x_{i},x_{i+1}) },
\]
where we recall the notation $\mathcal{A}^{n}_{\delta}(x,y)= \{u \in \mathcal{A}^{n}_{\delta}:    \gamma e^{x} \log n   \leq {\haut(u)} \leq   \gamma e^{y} \log n\}$.
Observe that
\[
\mathsf{Height}(\mathcal{T}_{\tau'_{n}}(\XXb^n))=\max \left(\Height( \mathcal{T}_{\lfloor (1-\delta) t _{\lambda} n \rfloor}(\XXb^ {n} )), \max_{0 \leq i \leq N_{\eta}} \mathsf{H}^{n}_{\delta}(i)\right).
\]
By \eqref{eq:inegl} we have $\kappa(\lambda) \geq \gamma e^{z_{\lambda}}$, so by Lemma~\ref{lem:limHbefore}, the convergence \eqref{eq:toprove} will follow if we establish that
{for every $\varepsilon>0$, {if $\delta\in \intervalleoo{0}{1}$ is chosen small enough, then}
\begin{equation}
\label{eq:cvvers0}
\mathbb{P}\left(
\mathbb{1}_{E_{n}}
\left|
\frac{1}{\log n}\max_{0 \leq i \leq N_{\eta}} \mathsf{H}^{n}_{\delta}(i) -\kappa(\lambda)
\right| \geq \varepsilon
\right)  \quad \mathop{\longrightarrow}_{n \rightarrow \infty} \quad 0.
\end{equation}}

 Fix $\varepsilon>0$ {and $\delta\in \intervalleoo{0}{1}$}.
For every $0 \leq i \leq N_{\eta}$, recalling the notation $h_{\lambda}(x)= { f_{\lambda}(x)}/(-\log m_{\lambda})$,
\begin{eqnarray*}
&& \Pp{
\mathbb{1}_{E_{n}}(
\gamma e^{\eta i z_{\lambda}}+ h_{\lambda}(\eta i z_{\lambda})  -\varepsilon) \leq  \mathbb{1}_{E_{n}}
 \frac{\mathsf{H}^{n}_{\delta}(i)}{\log n} \leq   \mathbb{1}_{E_{n}}
 (\gamma e^{(\eta (i+1) z_{\lambda}) \wedge z_{\lambda}}+ h_{\lambda}(\eta i z_{\lambda})+\varepsilon)
} \\
&& = \mathbb{E} \left[
\Ppsq{\mathbb{1}_{E_{n}}(
\gamma e^{\eta i z_{\lambda}}+ h_{\lambda}(\eta i z_{\lambda})  -\varepsilon) \leq  \mathbb{1}_{E_{n}}
 \frac{\mathsf{H}^{n}_{\delta}(i)}{\log n} \leq   \mathbb{1}_{E_{n}}
 (\gamma e^{(\eta (i+1) z_{\lambda}) \wedge z_{\lambda}}+ h_{\lambda}(\eta i z_{\lambda})+\varepsilon)}{\mathcal{T}^{n,\delta}} \right],
\end{eqnarray*}
which converges to $1$ as $n \rightarrow \infty$ by Lemma~\ref{lem:dangling} {for every  $\delta\in \intervalleoo{0}{1}$ small enough}; for $i=N_{\eta}$ we also use Lemma~\ref{lem:limHbefore} combined with the inequality
$$  \mathbb{1}_{E_{n}} {\mathsf{H}^{n}_{\delta}(N_{\eta})} \leq   \mathbb{1}_{E_{n}} \left( \Height( \mathcal{T}_{\lfloor (1-\delta) t _{\lambda} n \rfloor}(\XXb^ {n} ))+    H^{n}_{\delta}(\eta N_{\eta} z_{\lambda},\infty) \right).$$

But by continuity, observe that
\[
\max_{0 \leq i  \leq   \lfloor 1/\eta \rfloor} \left(\gamma e^{\eta i z_{\lambda}}+ h_{\lambda}(\eta i z_{\lambda})\right)  \quad \mathop{\longrightarrow}_{\eta \rightarrow 0} \quad \sup_{0 \leq s \leq z_{\lambda}}  \left(\gamma e^{s}+ h_{\lambda}(s) \right) 
 \]
and
\[
\max_{0 \leq i \leq   \lfloor 1/\eta \rfloor} \left(\gamma e^{(\eta (i+1) z_{\lambda}) \wedge z_{\lambda}}+ h_{\lambda}(\eta i z_{\lambda})\right)  \quad \mathop{\longrightarrow}_{\eta \rightarrow 0} \quad \sup_{0 \leq s \leq z_{\lambda}}  \left(\gamma e^{s}+ h_{\lambda}(s) \right). 
 \]
Thus, for a fixed $\e>0$, by taking first $\eta>0$ small enough and then  $\delta\in \intervalleoo{0}{1}$  small enough, we get 
 $$
\mathbb{P}\left(
\mathbb{1}_{E_{n}}
\left|
\frac{1}{\log n}\max_{0 \leq i \leq N_{\eta}} \mathsf{H}^{n}_{\delta}(i) - \sup_{0 \leq s \leq z_{\lambda}}  \left(\gamma e^{s}+ h_{\lambda}(s) \right)
\right| \geq \varepsilon
\right)  \quad \mathop{\longrightarrow}_{n \rightarrow \infty} \quad 0,
$$
and \eqref{eq:cvvers0} follows from Proposition~\ref{prop:lc}(ii).

\end{proof}

\section{Profile of the tree via Laplace transforms and martingales}
\label{sec:profile}
In this section we establish our main result concerning 
{the active profile}
of the infection tree,
{i.e.\ the function recording the number of active vertices at each height in the tree.}
Recall that $\mathbb{A}_{k}^{n}(h)$ denotes the number of active vertices at height $h$ at time $k$ of $\mathcal{T}^{n}_{k}$ and that 
 $\bL_k^n(h)={\mathbb{A}_{k}^{n}(h)}/{I_k^n}$ is its normalized version.

Our goal is to  establish the following result, which in particular implies Proposition~\ref{prop: profil faible} as we will later see.
\begin{theorem}
	\label{thm:profil}
	Let $\lambda >1$ and set $\gamma=\lambda/(\lambda-1)$.
	For $n\ge 1$, 
	we consider $(\T_k^n)_{k\geq 0}$ the evolution of the epidemic tree constructed with parameter
	$\lambda_n \coloneqq \lambda/n$. 
	 Fix $t \in (0,t_{\lambda})$.  As $ n\rightarrow \infty$, on the event $\{\tau'_{n} \geq  \lfloor n t\rfloor\}$, we have
	\begin{equation*}
		\mathbb{L}^n_ {\lfloor n t \rfloor}(\gamma e^{x} \log n ) = e^{(f_{\lambda}(x)-1) \log n - \frac{1}{2}\log \log n + O_\P(1)}
	\end{equation*}
	uniformly for $x$ in a compact set of $(0,z_{\lambda})$ when $ \gamma e^{x} \log n \in \N$.
\end{theorem}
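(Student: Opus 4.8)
The plan is to analyze the active profile of $\mathcal{T}^n_{\lfloor nt\rfloor}$ through its Laplace transform, following the martingale strategy developed by Biggins for branching random walks and adapted to growing random trees in \cite{KMS17,Sen21}. For a complex parameter $z$ in a suitable strip, one considers the ``partition function'' $W_k^n(z) \coloneqq \sum_{v \text{ active in } \mathcal{T}^n_k} e^{-z\,\haut(v)}$, appropriately renormalized by a deterministic factor to make it a (complex-valued) martingale in $k$ with respect to the natural filtration of the attachment-with-freezing process. The key point is that at each step the expected change in $W_k^n$ conditionally on the past is controlled by the number $I_k^n$ of active vertices and the deterministic renormalizing sequence chosen so that $\mathbb{E}[W_{k+1}^n(z)\mid \mathcal F_k] = W_k^n(z)$; here the presence of freezing (the $\X_n=-1$ steps) changes the ``branching'' rate compared to the recursive-tree case, which is exactly what produces the exponent $f_\lambda$ rather than the classical $e$.

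First I would set up the renormalized martingale precisely and compute, using the transition structure \eqref{eq:transitions} together with the fluid limit \eqref{eq:limflu} (which controls $H^n_k/n$ and hence the conditional freezing probabilities $r_k$ up to time $\lfloor nt\rfloor$ with $t<t_\lambda$), the deterministic prefactor. Its logarithm, after plugging in $k=\lfloor nt\rfloor$ and using $g_\lambda$, should match $(\text{something involving }z)\cdot \log n$; optimizing/identifying the saddle point $z\mapsto$ height via a Legendre-type transform is what converts the Laplace-transform exponent into $f_\lambda(x)$ at height $\gamma e^x\log n$. Second, following Biggins, I would prove that for $z$ in an appropriate open region $\mathcal U\subset\mathbb C$ the martingale is bounded in $L^p$ for some $p>1$ (via a Burkholder–Davis–Gundy or direct $L^2$/$L^p$ estimate on the martingale increments, again using the fluid-limit control of the rates), hence converges a.s.\ and in $L^p$ to a nonzero limit $W_\infty^n(z)$; uniform integrability plus analyticity in $z$ gives joint control, and $e^{O_\P(1)}$ captures the fluctuations of this limit. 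The extra subtlety flagged in the introduction — that $(\mathcal T^n_{\lfloor nt\rfloor})_n$ is \emph{not} a single growing family but a different process for each $n$ — I would handle exactly as the authors announce, by coupling: realize each $\mathcal T^n_{\lfloor nt\rfloor}$ inside (or alongside) an auxiliary genuine growing-tree process where the Biggins machinery applies cleanly, then transfer the $L^p$ bounds via the couplings, controlling the discrepancy using that $H^n_k\sim n g_\lambda(k/n)$ uniformly.

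Third, I would perform the Fourier/Laplace inversion to pass from the control on $W_k^n(iy)$ (for real $y$, i.e.\ on the critical vertical line) back to the profile $\mathbb A^n_{\lfloor nt\rfloor}(h)$ at the specified height $h=\gamma e^x\log n$. This is where the $-\tfrac12\log\log n$ term appears: it is the Gaussian-type normalization from a local central limit / Laplace-method evaluation of the inversion integral $\frac{1}{2\pi}\int e^{zh} W^n_{\lfloor nt\rfloor}(z)\,dz$ around the saddle point, the $\sqrt{\log n}$ coming from the width of the saddle (the second derivative in $z$ of the rate function scales like $\log n$). Uniformity in $x$ over a compact subset of $(0,z_\lambda)$ follows because the saddle point depends continuously on $x$ and stays in the interior of the good region $\mathcal U$, and because all the $L^p$ bounds are uniform there. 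Finally, restricting the statement to $\mathcal T^n_{\lfloor nt\rfloor}$ conditioned on $\{\tau'_n\ge \lfloor nt\rfloor\}$ (survival) is harmless since on that event the fluid limit holds with the stated deterministic profile $g_\lambda$.

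The main obstacle I expect is precisely the $L^p$-boundedness of the martingale \emph{uniformly in $n$}, combined with the inhomogeneity: unlike the classical growing-tree setting where one has a single fixed martingale, here the renormalizing sequence and the branching rates are time-dependent (through $H^n_{\lfloor nt\rfloor}/n\to g_\lambda(t)$) and $n$-dependent, so the coupling arguments must be carried out with enough care that the error terms are $o(\log n)$ in the exponent and do not destroy the sharp $\sqrt{\log n}$ correction; handling the region of the complex plane where the martingale is $L^p$-bounded — and checking it contains the relevant critical line and saddle points for all $x$ in the compact set — is the delicate technical heart, carried out in the remainder of Section~\ref{sec:profile}.
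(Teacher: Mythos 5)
Your overall strategy — quenched martingale renormalization of the Laplace transform of the active profile conditionally on the infection/recovery sequence, Biggins-style $L^p$ bounds on the increments, a coupling with an auxiliary homogeneous growing-tree process to handle the $n$-dependence, and a saddle-point Fourier inversion producing the $-\tfrac12\log\log n$ correction — is exactly the route the paper takes in Section~\ref{sec:profile}. One small imprecision: the normalizing factor $C^n_k(z)$ is not deterministic but $\XXb^n$-measurable (a product over the steps of $1+\tfrac{1}{S^n_i}(e^z-1)\ind{\XX^n_i=1}$), and the martingale property holds only conditionally on the whole sequence $\XXb^n$; this is what forces the technical work of Section~\ref{sous-section resultats techniques sur S} controlling $\sum_i \tfrac{1}{S^n_i}\ind{\XX^n_i=1} - \gamma\log k$ uniformly.

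There is, however, one genuine gap: you assert that the martingale ``converges a.s.\ and in $L^p$ to a nonzero limit'' and that $e^{O_\P(1)}$ ``captures the fluctuations of this limit,'' but $L^p$ convergence only gives $\E[M_\infty(h)]=1$, hence $\P(M_\infty(h)\neq 0)>0$ — it does not give the almost-sure strict positivity of $M_\infty(h)$ on the survival event, for every real $h$ corresponding to a height in the compact set. Without that positivity the two-sided bound in the theorem fails (the profile could be of strictly smaller order with positive probability, making the $O_\P(1)$ equal to $-\infty$ and destroying tightness). In the inhomogeneous, frozen setting this is not a routine fact: the paper devotes Section~\ref{sous-section limite non nulle} to it, proving first via a Kolmogorov $0$--$1$ law (conditionally on $S$, using the martingales $M^{(N)}_k$ that forget the first $N$ steps) that $\P(M_\infty \text{ has no zero in } (0,z_\lambda)\mid S)\in\{0,1\}$, and then showing this probability is positive by splitting the tree at a time when exactly two vertices are active, proving via the time-dependent P\'olya urn with removals of Section~\ref{sec:polya} that both subtrees carry a positive fraction of the active mass, and using independence of the two sub-martingale limits to kill the (countably many) zeros of one with the other. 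You would need to supply an argument of this kind; the classical Biggins non-degeneracy criteria do not apply off the shelf here because the ``reproduction law'' is time-dependent and driven by the random environment $S$.
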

More precisely, this result actually holds with some form of uniformity in $\lambda$: the term $O_\P(1)$ in Theorem~\ref{thm:profil} denotes a random function $A_n(x, \lambda)$ with values in $\intervalleff{-\infty}{\infty}$ 
that is such that 
if we fix $t\in \intervalleoo{0}{\infty}$, a compact set $K$ and a compact interval $I\subset \intervalleoo{1}{\infty}$ so that $t\in \intervalleoo{0}{t_\lambda}$ and $K\subset \intervalleoo{0}{z_\lambda}$ for all $\lambda\in I$, then 
the family $ \left(\sup_{x\in K} |A_n(x, \lambda)| \ : \ n\geq 1, \lambda \in I\right)$
{satisfies a form of ``asymptotic tightness'', a rigorous definition of which can be found in Section~\ref{ssec:Oo} below.}

Let us describe our strategy to establish Theorem~\ref{thm:profil}.  
We first introduce, {for every $z\in \mathbb{C}$,
	\begin{align*}
		\cL(z,\T^n_k)\coloneqq\sum_{h=0}^{\infty} e^{hz}\bL_k^n(h)=\frac{1}{I^n_k} \sum_{\substack{u \in \T^n_k\\ \text{active}}} e^{z\haut(u)},
\end{align*}
the Laplace transform of the normalized active profile of the tree $\T^n_k$. 
Using the identification \eqref{eq couplage freezing} between the epidemic tree $\T^n_k$ and the uniform attachment tree with freezing $\T_k(\XXb^n)$, we show in Section~\ref{sous-section martingales} that, conditionally on the sequence $\XXb^n$ that tracks the order in which infections and recoveries take place during the epidemic, the conditional expectation $\Ecsq{\cL(z,\T_k(\XXb^n))}{\XXb^n}$ has a very tractable product form (see \eqref{eq:productform}). 
Moreover, for any fixed $z\in \mathbb{C}$, the
{quantity}
$\cL(z,\T_k(\XXb^n))$ divided by its expectation {(assuming it does not vanish)}
forms a martingale as $k$ grows.

Understanding the behaviour of $\cL(z,\T_k(\XXb^n))$ can hence be split into two parts: first, understanding the behaviour of its expectation $\Ecsq{\cL(z,\T_k(\XXb^n))}{\XXb^n}$, which is done in Section~\ref{sous-section contrôle des martingales}; second, showing that the ratio between $\cL(z,\T_k(\XXb^n))$ and its expectation, which we said above was a martingale, concentrates around some random function $M_\infty(z)$ when $k=\lfloor nt \rfloor$ and $n \rightarrow \infty$.
To this effect, we rely on the study of analogous quantities defined for the sequence $\XXb$ obtained as the limit of $\XXb^n$ as $n\rightarrow\infty$, and a coupling {between $\XXb$ and $\XXb^n$.}
This is done in Section~\ref{sous-section convergence des martingales}.
Some properties of the limiting function $z\mapsto M_\infty(z)$ are then studied in Section~\ref{sous-section limite non nulle}.
Last, in Section~\ref{ssec:endprofile}, we establish Theorem~\ref{thm:profil}
{by applying tools coming from}
Fourier analysis 
{to the function
 $z\mapsto \cL(z,\T_{\lfloor nt \rfloor}(\XXb^n))$}.
We also explain how to then obtain Proposition~\ref{prop: profil faible} from there.

Before tackling the study of these martingales, we need to lay down some background. 
We first introduce some {probabilistic} big-O ad little-o notation in Section~\ref{ssec:Oo}.
{Then, in Section~\ref{sous-section preliminaire couplage}, we provide} a coupled construction of the infection process {for different values of $n\geq 1$ and of the parameter $\lambda>1$}.
At some point, we will also need some technical results about the infection process, which we state and prove in Section~\ref{sous-section resultats techniques sur S}.

\uwave{\hspace{0.9\linewidth}}

\textbf{From now on, except in the proof Proposition~\ref{prop: profil faible}, we assume that for all $n\ge 1$, we have $\lambda_n = \lambda/n$ with $\lambda \in \intervalleoo{1}{\infty}$.}

\raisebox{0.8em}{\uwave{\hspace{0.9\linewidth}}}

\subsection{Probabilistic big-O and little-o notation}
\label{ssec:Oo}

Suppose that we have a family of random variables $(R(n;a_1,a_2,\dots ,a_k))$ with values in $\intervalleff{-\infty}{+\infty}$ indexed by $n$ and a finite number of parameters $a_1,a_2,\dots ,a_k$ (that can be integers, real numbers or complex numbers). 
We say that
\begin{align*}
	R(n;a_1,a_2,\dots ,a_k) = O_\P(1) \qquad \text{as $n\rightarrow\infty$,}
\end{align*} 
uniformly in $a_1 \in K_n^1$, \dots, $a_\ell \in K_n^\ell$,
weakly uniformly in $a_{\ell+1} \in K_n^{\ell+1}$, \dots, $a_k \in K_n^k$
if 
\begin{align}\label{eq:def OP(1)}
	\lim_{M\rightarrow \infty} \limsup_{n\rightarrow \infty} \sup_{a_{\ell+1} \in K_n^{\ell+1}, \dots ,a_k \in K_n^k} \Pp{\sup_{a_1 \in K_n^1, \dots, a_\ell \in K_n^\ell} |R(n;a_1,a_2,\dots ,a_k)| \geq M} = 0.
\end{align}
We similarly write $o_\P(1)$ instead of $O_\P(1)$ if for all $\e>0$ we have  
\begin{align}\label{eq:def oP(1)}
	\limsup_{n\rightarrow \infty} \sup_{a_{\ell+1} \in K_n^{\ell+1}, \dots ,a_k \in K_n^k} \Pp{\sup_{a_1 \in K_n^1, \dots, a_\ell \in K_n^\ell} |R(n;a_1,a_2,\dots ,a_k)| \geq \e} = 0.
\end{align}
Note that these definitions distinguish two types of uniformity: a (strong) uniformity \emph{in space} for the variables $a_1,\dots ,a_\ell$, and a weak uniformity \emph{in probability} for the variables $a_{\ell+1}, \dots, a_k$.

We adopt the following conventions:
\begin{itemize}
	\item If \eqref{eq:def oP(1)} holds and if the convergence $\sup_{a_1 \in K_n^1, \dots, a_\ell \in K_n^\ell} |R(n;a_1,a_2,\dots ,a_k)| \rightarrow 0$ as $n\rightarrow \infty$ also takes place almost surely, then we say that the $o_\P(1)$ is almost sure. 
	\item When dealing with deterministic quantities, we keep the same definition but we instead write $O(1)$ and $o(1)$ to emphasize that the quantity {at hand} is not random. 
	\item If $(B_n)_{n\geq 1}$ is a sequence of random variables, we will write $O_\P(B_n)$ and $o_\P(B_n)$ to mean $B_n \cdot O_\P(1)$ and $B_n \cdot o_\P(1)$, respectively. 
	\item 
	By writing ``on the event $E_n$, we have $R(n;a_1,\dots, a_k)=O_\P(1)$''  we will mean that we have $ \mathbbm{1}_{E_n} \cdot R(n;a_1,\dots, a_k)=O_\P(1)$.
\end{itemize}

\subsection{The infection process: a coupled construction}
\label{sous-section preliminaire couplage}
Recall from Section~\ref{ssec:SIR} the definition of the process $((H^n_k,I^n_k), \ k\geq 0)$.
Observe from dynamics of the system that for all $k\geq 0$ we have $n-k\leq H^n_k\leq n$, and $H^n_k=n - \sum_{i=1}^{k} \ind{I^n_i-I^n_{i-1}=1}$.
Also from the transition probabilities \eqref{eq:transitions}, we have 
\begin{align*}
	\frac{\lambda \cdot \left(1-\frac{k}{n}\right)}{1+\lambda \cdot \left(1-\frac{k}{n}\right)} \leq \Ppsq{I^n_{k+1}=I^n_k+1}{I^n_0,\dots , I^n_k}= \frac{\lambda \cdot \frac{H^n_k}{n}}{1+\lambda \cdot \frac{H^n_k}{n} } \leq \frac{\lambda}{1+\lambda}.
\end{align*} 
Observe that the left-  and right-hand side don't depend on the past. 
This motivates the following coupled construction:
starting from a sequence $(U_k)_{k\geq1}$ of i.i.d.\ uniform random variables on $[0,1]$ we define the sequences $\XXb=(\XX_k)_{k\geq 1}, \underline{\XXb}^n=(\underline{\XX}^n_k)_{k\geq 1}$ and $\XXb^n=(\XX^n_k)_{k\geq 1}$ and their associated walks $S, \underline{S}^n$ and $S^n$. 
We let $S_0=\underline{S}^n_0=S^n_0=1$ and then inductively for $k\geq 0$,
\begin{equation}\label{eq couplage}
	\left\lbrace 
	\begin{aligned}
		\XX_{k+1} &= S_{k+1} - S_k = 2 \cdot \mathbbm{1} \left\lbrace U_{k+1} \leq \frac{\lambda }{1+\lambda}\right\rbrace - 1\\
		\XX^n_{k+1} &= S^n_{k+1} - S^n_k = 2 \cdot \mathbbm{1}\left\lbrace U_{k+1} \leq \frac{\lambda \cdot \left(1-\frac{1}{n}\sum_{i=1}^{k} \ind{\XX_i^n=1}\right)}{1+\lambda \cdot \left(1-\frac{1}{n}\sum_{i=1}^{k} \ind{\XX_i^n=1}\right)}\right\rbrace -1\\
		\underline{\XX}^n_{k+1} &= \underline{S}^n_{k+1} - \underline{S}^n_k = 2 \cdot \mathbbm{1}\left\lbrace U_{k+1} \leq \frac{\lambda \cdot \left(1-\frac{k}{n}\right)}{1+\lambda \cdot \left(1-\frac{k}{n}\right)} \right\rbrace \cdot \ind{k\leq n} -1.\\
	\end{aligned}
	\right.
\end{equation}
Now by construction, we have $\underline{\XX}^n_{k} \leq \XX^n_{k} \leq \XX_{k}$ and hence $\underline{S}^n_{k} \leq S^n_{k} \leq S_{k}$ for all $k\geq 0$ and all $n\geq 1$.   

Also, it follows from the transitions \eqref{eq:transitions}   that  {$(S^n_{k\wedge \inf \{j\ge 0, \ S^n_j = 0\}})_{k\geq 0}$ has the same distribution as $(I^n_k)_{k\geq 0}$, and that in the coupling between these two processes, $\inf \{j\ge 0, \ S^n_j = 0\}$ corresponds to $\tau'_n$ the absorption time of $(H^n_k,I^n_k)_{k\ge 0}$. In the coupling \eqref{eq couplage freezing}, $\tau'_n$ corresponds to $\tau_n$ which is the number of steps made when the epidemic ceases. In what follows, to simplify notation using these couplings we shall identify $\tau_n$ and $\tau'_n$ with $\inf \{j\ge 0, \ S^n_j = 0\}$.}

{Most of the time, we keep the dependence in $\lambda$ implicit in the objects defined above. 
Whenever we need to make the dependence explicit we will write $\XX_k^n(\lambda)$, $\XX_k(\lambda), S(\lambda)$, etc. 
Note that all those objects are defined jointly for all $\lambda\in \intervalleoo{1}{\infty}$ in our coupled construction.
In particular, it is easy to check from \eqref{eq couplage} that the function $\lambda \mapsto S^n_{k}(\lambda)$ is non-decreasing for any $k$ and $n$: this can be shown by induction on $k$, just noting that if $S^n_k(\lambda_1) = S^n_k(\lambda_2)$, with $\lambda_1\leq \lambda_2$, then necessarily $\XX^n_{k+1}(\lambda_1) \leq \XX^n_{k+1}(\lambda_2)$.
}

\paragraph{A coupled construction of the trees.}
 For every $n \geq 1$ and $k \geq 0$ we set $\T^n_k \coloneqq \T_{k}(\XXb^n)$ and $\T_{k}\coloneqq \T_{k}(\XXb)$, as defined in Section~\ref{subsec:uniform attachment with freezing}. 
 It will be useful to couple the two sequences of trees $(\T^n_k)_{k\geq 0}$ and $(\T_k)_{k\geq 0}$, in such a way that the trees are the same until the two walks $S^n$ and $S$ start disagreeing. 
 This can be e.g.~achieved as follows. 
 Fix a sequence $(\widetilde{U}_k)_{k\geq 1}$ of i.i.d.\ uniform random variables on $[0,1]$, independent of the random variables $(U_k)_{k\geq 1}$. When building the trees 
 $(\T^n_k)_{k\geq 0}$ and $(\T_k)_{k\geq 0}$ using Algorithm \ref{algo1}, when needed, choose the  random active vertex $V_k$ sampled uniformly at random in $\mathcal{A}(T_{{k-1}})$ with $T_{k-1} \in \{\T^n_{k-1},\T_{k-1}\}$ as follows: 
 let  $v_1,v_2, \dots, v_{ \# \mathcal{A}(T_{k-1})}$ be the enumeration of the active vertices of $\mathcal{A}(T_{k-1})$ in their order of appearance, and choose $V_k$ by setting
\begin{align}\label{eq:coupling of the freezing and attachment vertices}
	V_k \coloneqq v_{I_k} \qquad \text{where}  \qquad I_k = \lceil    \widetilde{U}_k \cdot \#\mathcal{A}(T_{k-1}) \rceil.
\end{align}
Conditionally given $T_{k-1}$, {the random variable} $I_k$ is indeed uniform in $\intervalleentier{1}{\# \mathcal{A}(T_{k-1})}$.

In the sequel we assume that the two sequences $(\T^n_k)_{k\geq 0}$ and $(\T_k)_{k\geq 0}$ are built in this way, so that their evolution is the same until the two walks $S^n$ and $S$ start disagreeing.

\paragraph{Improved convergence results.}

Now that the processes $S, \underline{S}^n$ and $S^n$ are defined on the same probability space for all $n\geq 1$, we can improve some convergence results. 
Indeed, by \eqref{eq:limflu} (see also \cite[Eq. (33)]{BBKK23+})
we have the following fluid limit, where the convergence holds in distribution: for any $t\in \intervalleoo{0}{t_\lambda}$,
\begin{align}\label{eq:limite fluide en loi}
\left(\left(\frac{S^n_{\lfloor ns \rfloor}}{n}\right)_{s \geq 0} , \ind{\tau_n \geq tn}\right)  \quad \mathop{\longrightarrow}^{(d)}_{n \rightarrow \infty} \quad
\left((2- 2g_\lambda(s) -s)_{s \geq 0}, \mathcal{B}\right),
\end{align}
where $\mathcal{B}$ is a Bernoulli r.v.\ with parameter
$p=1-{1}/{\lambda}$, and where the first convergence holds for the topology of uniform convergence on compact sets.
Under the coupled construction, the above convergence will be improved as follows.
\begin{lemma}
For any fixed $t>0$, for any compact interval $I\subset \intervalleoo{1}{\infty}$ such that $t\in \intervalleoo{0}{t_\lambda}$ for all $ \lambda \in I$, for any $[t_1,t_2]\subset \R_+$, we have 
\begin{equation}\label{eq: limite fluide en proba}
	\ind{\tau_n > tn} = \ind{\forall i \ge 0, \ S_i >0}+ o_\P(1)\qquad \text{and} \qquad \frac{S^n_{\lfloor ns \rfloor}}{n}  =  (2- 2g_\lambda(s) -s)+ o_\P(1),
\end{equation}
where the $o_\P(1)$ are understood as $n\rightarrow \infty$, uniformly in $s\in \intervalleff{t_1}{t_2}$, weakly uniformly in $\lambda \in I$. 
\end{lemma}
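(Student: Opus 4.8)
The plan is to deduce both statements in \eqref{eq: limite fluide en proba} from the functional convergence in distribution \eqref{eq:limite fluide en loi}, upgraded to convergence in probability by exploiting the explicit coupled construction \eqref{eq couplage}. The key observation is that in the coupling, $S^n$ is sandwiched between $\underline{S}^n$ and $S$ (we have $\underline{S}^n_k \leq S^n_k \leq S_k$ for all $k$), and all three walks are built from the \emph{same} family $(U_k)_{k\geq 1}$, so it suffices to control $\underline{S}^n$ and $S$ and show they are asymptotically close. First I would handle the scaled walk. By the law of large numbers, $\frac{1}{n}\sum_{i=1}^{\lfloor ns\rfloor}\ind{\XX_i^n=1} \to \int_0^s (\text{some rate})$; more directly, one checks that $\underline{S}^n_{\lfloor ns\rfloor}/n$ converges a.s.\ (hence in probability) and uniformly on compacts to the deterministic function $2-2g_\lambda(s)-s$ on $[0, t_\lambda]$, because $\underline{S}^n$ is a sum of independent Bernoulli increments whose success probability is an explicit deterministic function of $k/n$, so a Glivenko–Cantelli / uniform LLN argument applies. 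Since the limit is deterministic, convergence in distribution is equivalent to convergence in probability, and by the sandwich $\underline{S}^n_{\lfloor ns\rfloor} \leq S^n_{\lfloor ns\rfloor} \leq S_{\lfloor ns\rfloor}$ together with the fact that $S^n_{\lfloor ns\rfloor}/n$ converges in distribution to the same limit by \eqref{eq:limite fluide en loi}, one concludes $S^n_{\lfloor ns\rfloor}/n = (2-2g_\lambda(s)-s) + o_\P(1)$ uniformly in $s \in [t_1,t_2]$.

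For the uniformity in $\lambda \in I$, I would use the monotonicity noted in the excerpt: $\lambda \mapsto S^n_k(\lambda)$ and $\lambda\mapsto \underline{S}^n_k(\lambda)$ are non-decreasing, and $\lambda \mapsto g_\lambda(s)$ is continuous (indeed the limiting map $\lambda \mapsto 2-2g_\lambda(s)-s$ is jointly continuous in $(s,\lambda)$). A standard chaining/monotonicity argument then lets one pass from pointwise control in $\lambda$ (for finitely many values forming a fine grid of the compact interval $I$) to uniform control over all of $I$: between two grid points $\lambda_j \leq \lambda \leq \lambda_{j+1}$, $\underline{S}^n_{\lfloor ns\rfloor}(\lambda_j) \leq \underline{S}^n_{\lfloor ns\rfloor}(\lambda) \leq \underline{S}^n_{\lfloor ns\rfloor}(\lambda_{j+1})$, and the deterministic limits at $\lambda_j$ and $\lambda_{j+1}$ differ by $O(\mathrm{mesh})$ uniformly in $s$; shrinking the mesh gives the weak uniformity in the sense of \eqref{eq:def oP(1)}.

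For the first assertion, $\ind{\tau_n > tn} = \ind{\forall i\geq 0,\ S_i>0} + o_\P(1)$: recall that under the coupling, the trees and walks $S^n$ and $S$ agree until the two walks first disagree, and more importantly $\tau_n = \inf\{j\geq 0 : S^n_j = 0\}$. On the event $\{\forall i, S_i>0\}$ (the walk $S$ survives forever, which for $\lambda>1$ has probability $1-1/\lambda$), I would argue that with probability tending to $1$ we have $\tau_n > tn$: indeed on this event $S_{\lfloor ns\rfloor}$ stays bounded below by a positive multiple of $n$ for $s$ up to near $t_\lambda > t$ — this requires knowing that the surviving walk $S$, which is a transient random walk drifting to $+\infty$, coupled appropriately forces $S^n$ (which tracks it until disagreement, and disagreement happens only once $S^n$ has dropped to order... ) to stay positive; more cleanly, one uses that on survival, $\min_{i\leq tn} S_i \to \infty$, and the coupling error $S_i - S^n_i$ is small relative to this minimum on the relevant range, so $S^n_i>0$ for all $i\leq tn$. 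Conversely, on $\{\exists i, S_i = 0\}$, the walk $S$ dies at some a.s.-finite time $\zeta$, and since $S^n_i = S_i$ for $i\leq \zeta$ (the walks agree until $S$ first drops below the level where the drifts start to differ — for small values $S^n$ and $S$ coincide since $H^n_k/n \approx 1$ only matters when $k \sim n$, whereas $\zeta$ is $O_\P(1)$), we get $\tau_n = \zeta \leq tn$ eventually, so $\ind{\tau_n>tn} = 0 = \ind{\forall i, S_i>0}$ with high probability.

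The main obstacle I anticipate is the \emph{lower bound on $\min_{i\leq tn} S^n_i$ on the survival event} — i.e., showing that survival of the limiting walk $S$ really does force $\tau_n > tn$ with high probability and that the coupling discrepancy does not spoil this. This is where one genuinely needs the fluid limit \eqref{eq:limite fluide en loi} (not just the pointwise coupling): it guarantees $S^n_{\lfloor ns\rfloor}/n \to 2-2g_\lambda(s)-s$, a function that is strictly positive on $(0,t_\lambda)$ and hence bounded below by a positive constant on $[t_1, t] \subset (0,t_\lambda)$, while near $s=0$ one handles the walk $S^n$ directly (it agrees with $S$, which survives). Combining "positive macroscopic value on $[t_1,t]$" with "survives on $[0,t_1]$ by agreeing with $S$" gives $\tau_n > tn$. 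The uniformity in $\lambda\in I$ for this part again follows from the monotonicity of $\lambda\mapsto S^n_k(\lambda)$ together with continuity of $\lambda \mapsto t_\lambda$ and of the limiting survival event structure; I would phrase the final step so that the exceptional probability is controlled uniformly over the finite $\lambda$-grid and then extended by monotonicity.
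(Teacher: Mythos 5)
Your treatment of the second identity is essentially the paper's: convergence in distribution to a deterministic limit upgrades to convergence in probability for each fixed $\lambda$, and a finite grid in $I$ combined with the monotonicity of $\lambda\mapsto S^n_{\lfloor ns\rfloor}(\lambda)$ and the continuity of $\lambda\mapsto g_\lambda$ gives the weak uniformity. One side remark is wrong, though: $\underline{S}^n_{\lfloor ns\rfloor}/n$ does \emph{not} converge to $2-2g_\lambda(s)-s$. Its increments use the pessimistic threshold $H^n_k\geq n-k$, so its fluid limit solves $\phi'(s)=\frac{\lambda(1-s)-1}{1+\lambda(1-s)}$, which is strictly below $2-2g_\lambda(s)-s$ for $s>0$ (since $g_\lambda(s)>1-s$). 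The sandwich $\underline{S}^n\leq S^n\leq S$ therefore cannot by itself identify the limit of $S^n/n$; only your fallback via \eqref{eq:limite fluide en loi} is valid, and that is what the paper uses.

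For the first identity your route is genuinely different from the paper's and has a real gap. The claim that on survival ``$\min_{i\leq tn}S_i\to\infty$'' is false: $\min_{i\leq tn}S_i$ decreases to $\min_{i\geq 0}S_i$, a finite random variable, while the coupling discrepancy $S_i-S^n_i$ is of order $i^2/n$ (Lemma~\ref{lem:control on the Si}), i.e.\ of order $n$ when $i\asymp n$; so ``discrepancy small relative to the minimum'' fails. Your ``cleaner'' version also leaves a hole in the intermediate regime: the fluid limit gives positivity of $S^n_{\lfloor ns\rfloor}$ only for $s$ bounded away from $0$ (the limit vanishes at $s=0$), while exact agreement $S^n_i=S_i$ holds with high probability only for $i=o(\sqrt n)$, since the per-step disagreement probability is of order $i/n$. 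Controlling $S^n$ on $[\sqrt n,\,t_1 n]$ requires an extra ingredient — e.g.\ comparison with the homogeneous walk $\underline{S}^{(\e)}$ of slightly smaller drift, whose survival probability can be made arbitrarily close to that of $S$ (this is exactly what the paper does inside the proof of Lemma~\ref{lem:control on the Si}). The paper's own proof of the present lemma avoids all pathwise analysis: since $\ind{\tau_n>tn}=\ind{\forall i\leq \lfloor nt\rfloor,\ S^n_i>0}\leq \ind{\forall i\leq \lfloor nt\rfloor,\ S_i>0}$, the $L^1$ distance between the two indicators equals the difference of their expectations, and both probabilities converge to $1-1/\lambda$ (one by \eqref{eq:limite fluide en loi}, the other by the survival probability of the homogeneous walk), so the difference is $o(1)$ and the indicators agree with high probability. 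If you want to keep a pathwise argument you must fill the intermediate-regime gap; otherwise the domination-plus-matching-expectations argument is both shorter and uniform in $\lambda$ with no extra work.
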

Note that $\ind{\forall i\geq 0, \ S_i>0}$ is a Bernoulli r.v.\ with parameter $1-\frac{1}{\lambda}$: it corresponds to $\mathcal{B}$ in the previous statement.
\begin{proof}
First note that, deterministically, $
	\ind{\tau_n > nt} = \ind{\forall i \in \intervalleentier{0}{\lfloor nt\rfloor }, \ S_i^n>0} \leq \ind{\forall i \in \intervalleentier{0}{\lfloor nt\rfloor }, \ S_i>0}$
and that 
$
\ind{\forall i \in \intervalleentier{0}{\lfloor nt\rfloor }, \ S_i>0} =  \ind{\forall i\geq 0, \ S_i>0} + o_\P(1)
$
 as $n\rightarrow \infty$, weakly uniformly in $\lambda \in I$.
This entails that,
\begin{align*}
	\Ec{\left\vert\ind{\forall i \in \intervalleentier{0}{\lfloor nt\rfloor }, \ S_i>0} - \ind{\forall i \in \intervalleentier{0}{\lfloor nt\rfloor }, \ S_i^n>0} \right\vert} 
	&= \Ec{\ind{\forall i \in \intervalleentier{0}{\lfloor nt\rfloor }, \ S_i>0} - \ind{\forall i \in \intervalleentier{0}{\lfloor nt\rfloor }, \ S_i^n>0} }\\
	&= \Pp{\forall i \in \intervalleentier{0}{\lfloor nt\rfloor }, \ S_i>0} - \Pp{\forall i \in \intervalleentier{0}{\lfloor nt\rfloor }, \ S_i^n>0} \\
	&=\Pp{\forall i \geq 0, \ S_i>0} + o(1) - \Pp{\mathcal{B}=1} + o(1)\\
	&= o(1),
\end{align*}
where the two $o(1)$ appearing on the penultimate line should be understood as $n\rightarrow \infty$, weakly uniformly in $\lambda\in I$. 
The fact that the second $o(1)$ indeed holds weakly uniformly in $\lambda \in I$ can be checked using the proof of \cite[Theorem~24]{BBKK23+}.  
We can then write 
\begin{align*}
	&\ind{\forall i\geq 0, \ S_i>0} -\ind{\forall i \in \intervalleentier{0}{\lfloor nt\rfloor }, \ S_i^n>0}  \\
	&= \left( \ind{\forall i\geq 0, \ S_i>0} - \ind{\forall i \in \intervalleentier{0}{\lfloor nt\rfloor }, \ S_i>0}\right)
	+\left(\ind{\forall i \in \intervalleentier{0}{\lfloor nt\rfloor }, \ S_i>0} - \ind{\forall i \in \intervalleentier{0}{\lfloor nt\rfloor }, \ S_i^n>0} \right)\\
	&= o_\P(1) + o_\P(1),
\end{align*}
thanks to the considerations above. This proves the first part of \eqref{eq: limite fluide en proba}.

Let us check the second part of \eqref{eq: limite fluide en proba}.  
Fix $\varepsilon>0$. 
Since $\lambda \mapsto g_\lambda$ is continuous from $(1,\infty)$ to the space of continuous functions on $\R_+$ equipped with the topology of uniform convergence on compact sets, we may find a subdivision $\lambda_{1} <\cdots < \lambda_{k}$  of $I$ such that for all $j \in \intervalleentier{1}{k-1}$, for all $\lambda \in [\lambda_{j}, \lambda_{j+1}]$, for all $s \in [t_1,t_2]$,
\[
\vert g_{\lambda_{j}}(s)- g_{\lambda}(s) \vert \le \e \qquad \text{and} \qquad \vert g_{\lambda_{j+1}}(s)-g_\lambda(s) \vert \le \e.
\]
In the coupled construction, the convergence \eqref{eq:limite fluide en loi} holds in probability for all $\lambda \in \{\lambda_{1}, \ldots, \lambda_{k}\}$. 
As a result, using the monotonicity of $\lambda \mapsto S^n_{\lfloor ns \rfloor}(\lambda)$, 
\begin{multline*}
\P
\Bigg(\forall j \in \intervalleentier{1}{k-1}, \ \forall\lambda \in [\lambda_{j}, \lambda_{j+1}] , \ \forall s \in [t_1, t_2],\\
 \ 2-2g_{\lambda_{j}}(s) -s -\e \le \frac{S^n_{\lfloor ns \rfloor}(\lambda)}{n} \le 2-2g_{\lambda_{j+1}}(s) - s + \e \Bigg)
\mathop{\longrightarrow}\limits_{n \to \infty} 1.
\end{multline*}
Thus
\begin{multline*}
	\P
	\Bigg( \forall\lambda \in I , \ \forall s \in [t_1, t_2]
	\ 2-2g_{\lambda}(s) -s -3\e \le \frac{S^n_{\lfloor ns \rfloor}(\lambda)}{n} \le 2-2g_{\lambda}(s) - s + 3\e \Bigg)
	\mathop{\longrightarrow}\limits_{n \to \infty} 1,
\end{multline*}
hence the second part of \eqref{eq: limite fluide en proba}.
\end{proof}

\subsection{Martingales associated with the profile of uniform attachment trees with freezing}
\label{sous-section martingales}
Fix  $\Xb=(\X_i)_{i \geq 1}\in \{-1,+1\}^\N$
with associated walk $\Sb=(s_i)_{i \geq 0}$. 
Let $k\ge 0$ be such that $s_0, \ldots,s_{k}>0$. 
Conditionally given $\T_k{(\Xb)}$, let 
$W_k$
be taken uniformly at random in the set of active vertices of $\T_k{(\Xb)}$ (independently from all the other random variables) and define for all $z \in \mathbb{C}$,
\begin{align*}
	\cL(z,\T_k{(\Xb)})\coloneqq\Ecsq{e^{z \haut({{W}}_k)}}{\T_k{(\Xb)}} = \frac{1}{s_k} \sum_{\substack{u \in \T_k{(\Xb)}\\ \text{active}}} e^{z\haut(u)}.
\end{align*}
A very useful property of this object is stated in the following lemma.
\begin{lemma}\label{lem:Mk is a martingale}
	For every $i\in \intervalleentier{0}{k-1}$, we have 
		\begin{align*}
			\Ecsq{\cL(z,\T_{i+1}{(\Xb)})}{\T_1{(\Xb)}, \dots, \T_i{(\Xb)}}= \cL(z,\T_i{(\Xb)})  \cdot \left(1 + \frac{1}{s_{i+1}} (e^{z}-1) \cdot \ind{\X_{i+1}=1}\right).
		\end{align*}
\end{lemma}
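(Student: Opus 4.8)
The plan is to condition on the state of the tree at step $i$ and analyze the single update that produces $\T_{i+1}(\Xb)$ from $\T_i(\Xb)$. Write $A_i \coloneqq \#\mathcal{A}(\T_i(\Xb)) = s_i$ for the number of active vertices (which equals the walk value since $s_0,\dots,s_k>0$), and recall that the update at step $i+1$ picks a uniformly random active vertex $V_{i+1}$ of $\T_i(\Xb)$. First I would split into the two cases according to the value of $\X_{i+1}$, which is deterministic given the sequence $\Xb$.

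In the freezing case $\X_{i+1}=-1$: the chosen vertex $V_{i+1}$ merely changes its label from $a$ to $f$, so the multiset of heights of active vertices loses exactly one element, namely $\haut(V_{i+1})$, and the number of active vertices drops from $s_i$ to $s_{i+1}=s_i-1$. Hence
\[
s_{i+1}\cL(z,\T_{i+1}(\Xb)) = s_i \cL(z,\T_i(\Xb)) - e^{z\haut(V_{i+1})}.
\]
Taking conditional expectation over the uniform choice of $V_{i+1}$ among the $s_i$ active vertices of $\T_i(\Xb)$, the expectation of $e^{z\haut(V_{i+1})}$ is exactly $\frac{1}{s_i}\sum_{u\text{ active}} e^{z\haut(u)} = \cL(z,\T_i(\Xb))$, so $\Ecsq{s_{i+1}\cL(z,\T_{i+1}(\Xb))}{\T_1(\Xb),\dots,\T_i(\Xb)} = s_i\cL(z,\T_i(\Xb)) - \cL(z,\T_i(\Xb)) = s_{i+1}\cL(z,\T_i(\Xb))$, which gives the claimed identity since in this case $\ind{\X_{i+1}=1}=0$.

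In the attachment case $\X_{i+1}=1$: a new active vertex is attached to $V_{i+1}$, so it has height $\haut(V_{i+1})+1$; the multiset of active heights keeps all its previous elements and gains one new element $\haut(V_{i+1})+1$, while $s_{i+1}=s_i+1$. Thus
\[
s_{i+1}\cL(z,\T_{i+1}(\Xb)) = s_i\cL(z,\T_i(\Xb)) + e^{z(\haut(V_{i+1})+1)} = s_i\cL(z,\T_i(\Xb)) + e^{z}e^{z\haut(V_{i+1})}.
\]
Again averaging over the uniform choice of $V_{i+1}$ replaces $e^{z\haut(V_{i+1})}$ by $\cL(z,\T_i(\Xb))$, giving $\Ecsq{s_{i+1}\cL(z,\T_{i+1}(\Xb))}{\cdots} = (s_i + e^z)\cL(z,\T_i(\Xb))$, hence $\Ecsq{\cL(z,\T_{i+1}(\Xb))}{\cdots} = \frac{s_i+e^z}{s_{i+1}}\cL(z,\T_i(\Xb)) = \left(1 + \frac{e^z-1}{s_{i+1}}\right)\cL(z,\T_i(\Xb))$, using $s_{i+1}=s_i+1$. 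This matches the stated formula with $\ind{\X_{i+1}=1}=1$.

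There is essentially no obstacle here: the only thing to be slightly careful about is that the random choice of $V_{i+1}$ is independent of $\T_1(\Xb),\dots,\T_i(\Xb)$ and uniform on the active vertices, so that the conditional expectation of $e^{z\haut(V_{i+1})}$ genuinely equals $\cL(z,\T_i(\Xb))$ by the very definition of $\cL$; and that the deterministic nature of $\X_{i+1}$ (given $\Xb$) lets us treat the two cases separately without any extra conditioning. Combining the two cases yields the lemma.
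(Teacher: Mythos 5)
Your proposal is correct and follows essentially the same route as the paper: the same case split on $\X_{i+1}$, the same one-step identity $s_{i+1}\cL(z,\T_{i+1}(\Xb)) = s_i\cL(z,\T_i(\Xb)) + \X_{i+1}e^{z\haut(V_{i+1})}e^{z\ind{\X_{i+1}=1}}$, and the same observation that averaging $e^{z\haut(V_{i+1})}$ over the uniform active vertex yields $\cL(z,\T_i(\Xb))$. No gaps.
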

In particular, since $\cL(z,\T_0(\Xb))=1$, this entails that the expectation of $\cL(z,\T_k{(\Xb)})$ has the product form
\begin{align*}
\Ec{\cL(z,\T_k{(\Xb)})} = \prod_{i=1}^{k}\left(1 + \frac{1}{s_{i}} (e^{z}-1) \cdot \ind{\X_{i}=1}\right).
\end{align*}
The content of the previous lemma hints at the fact that we can construct a martingale by dividing $\cL(z,\T_k{(\Xb)})$ by its expectation; 
we still need to be careful here because for some values of $z\in\mathbb{C}$ it is possible that some of the terms of this product vanish. 
To circumvent this problem we add a parameter $j\geq 1$: for any fixed $j\in \N$ set 
\begin{align*}
	C_k(z,\Xb,{j})\coloneqq \prod_{i={j+1}}^{k}\left(1 + \frac{1}{s_{i}} (e^{z}-1) \cdot \ind{\X_{i}=1}\right),
\end{align*}
and for every $z\in\mathbb{C}$ {and $\ell\in \intervalleentier{j}{k}$} for which $C_{{\ell}}(z,\Xb,{j})\neq 0$ we set 
	\begin{align*}
		M_{{\ell}}(z,\Xb,j)\coloneqq \frac{1}{C_{{\ell}}(z,\Xb,j)} \cL(z,\T_{{\ell}}(\Xb)).
\end{align*}
The considerations above
{and the product form of $C_{{\ell}}(z,\Xb,j)$}
 entail that if $C_{k}(z,\Xb,{j})\neq 0$ for some choice of $j,k$ and $z$ then the sequence $(M_{{\ell}}(z,\Xb,{j}))_{{j} \leq {\ell} \leq k}$ is a martingale for its canonical filtration. 

\begin{proof}[Proof of Lemma~\ref{lem:Mk is a martingale}]
	Set $\mathcal{F}_{i}\coloneqq \sigma(\T_{1}{(\Xb)}, \ldots, \T_{i}{(\Xb)})$ for $0 \leq i \leq k$. 
	Fix $i \in \{0,1, \ldots,k-1\}$. 
	Observe that when $\X_{i+1}=1$, the tree $\T_{i+1}{(\Xb)}$ is obtained from the tree $\T_i{(\Xb)}$ by adding a new vertex attached to a uniform random active vertex $V_i$ of $\T_i{(\Xb)}$ so we have
	\begin{align*}
		\cL(z,\T_{i+1}{(\Xb)})= \frac{s_i}{s_{i+1}} \cL(z,\T_i{(\Xb)}) + \frac{1}{s_{i+1}} e^{z (1+\haut(V_{i}))}.
	\end{align*}
	When $\X_{i+1}=-1$, the tree $\T_{i+1}{(\Xb)}$ is obtained from the tree $\T_i{(\Xb)}$ by freezing a uniform random active vertex $V_i$ of the tree so
	\begin{align*}
		\cL(z,\T_{i+1}{(\Xb)})= \frac{s_i}{s_{i+1}} \cL(z,\T_i{(\Xb)}) - \frac{1}{s_{i+1}} e^{z \haut(V_{i})}.
	\end{align*}
	All in all,  we have
	\begin{equation}
		\label{eq:L}\cL(z,\T_{i+1}{(\Xb)})= \frac{s_i}{s_{i+1}} \cL(z,\T_i{(\Xb)}) +  \frac{\X_{i+1}}{s_{i+1}} e^{z \haut(V_{i})} e^{z \ind{\X_{i+1}=1}}.
	\end{equation}Taking conditional expectations yields
	\begin{align*}
		\Ecsq{\cL(z,\T_{i+1}{(\Xb)})}{\mathcal{F}_i}&= \cL(z,\T_i{(\Xb)})  \cdot \left(\frac{s_i}{s_{i+1}} + \frac{\X_{i+1}}{s_{i+1}} e^{z \ind{\X_{i+1}=1}}\right)\\
		&= \cL(z,\T_i{(\Xb)})  \cdot \left(1 + \frac{1}{s_{i+1}} (e^{z}-1) \cdot \ind{\X_{i+1}=1}\right),
	\end{align*}
	which is the first statement of the lemma.
	The rest follows immediately.
\end{proof}

\paragraph{{The case of the epidemic tree.}} Recall from \eqref{eq couplage} the definitions of $S^{n}$ and $\XXb^{n}$, and the fact that $\T_k^n= \mathcal{T}_{k}(\XXb^{n})$. To simplify notation,  for every $n\ge 0$ we use  $\mathbb{E}_{n}$ for $\Ecsq{\, \cdot}{S^{n}}=\Ecsq{\, \cdot}{\XXb^{n}}$, 
where the randomness comes from the choice of the active vertices which are either frozen or to which is attached a new vertex {at each step}. 
Observe that {for a fixed $t>0$} the event $\{\forall i \in \intervalleentier{0}{\lfloor nt \rfloor}, \ S^n_i > 0\}$ is clearly  $\XXb^{n}$-measurable. 

{We introduce the sets 
	\begin{align}\label{eq:def domain E}
		\mathscr{E}=\mathscr{E}(\lambda) \coloneqq \enstq{z\in \mathbb C}{\Re(z)<z_\lambda} \qquad \text{and} \qquad \mathscr{E}'=\mathscr{E}'(\lambda) \coloneqq \enstq{z\in \mathbb C}{\Re(z)<2z_\lambda}.
	\end{align}}
For $\lambda>1$ and $t\in \intervalleoo{0}{t_\lambda}$ we also introduce 
	\begin{align*}
		J^n=J^n(\lambda,t) 
\coloneqq {\sup\enstq{j\in \intervalleentier{0}{\lfloor nt \rfloor}}{\frac{1}{S^n_i} \left( e^{2z_\lambda}+1 \right)\geq \frac{1}{2}}}
	\end{align*}
{with $J^{n}= \lfloor nt \rfloor +1$ by convention if the set that we consider is empty.}
Observe that $J^{n}$ is $\XXb^{n}$-measurable and that $J^{n} \leq \lfloor nt \rfloor +1$ by definition.
In addition, on the event $\{\forall i \in \intervalleentier{0}{\lfloor nt \rfloor}, \ S^n_i > 0\}$, for every $n \geq 0$, for every $k\in \intervalleentier{J^n}{\lfloor nt \rfloor}$ and $z \in {\mathscr{E}'}$,  we set
\begin{align*}
	C^n_k(z)\coloneqq C_k(z,\XXb^n,{J^n})
	= \prod_{i={J^n+1}}^{k} \left( 1+ \frac{1}{S^n_i} (e^z-1) \ind{\XX^n_i = 1} \right),
\end{align*}
so that $C^n_k(z)\neq 0$. 
Indeed, by the triangle inequality, for all $i \in \intervalleentier{J^n+1}{k}$ we have 
\begin{align*}
	\left\vert 1+ \frac{1}{S^n_i} (e^z-1) \ind{\XX^n_i = 1} \right\vert \geq 1 -   \frac{1}{S^n_i} (e^{\Re(z)}+1)>  1 -   \frac{1}{S^n_i} (e^{2z_\lambda}+1)>\frac{1}{2} >0 .
\end{align*}
	We can then define
\begin{align}\label{eq:definition M k n de z}
	M^n_k(z)\coloneqq M_k(z,\XXb^n,{J^n})=\frac{1}{C^n_k(z)} \mathcal{L}(z,\T^n_k).
\end{align}
Note that for any $z\in {\mathscr{E}'}$ we have
	\begin{equation}
		\Ecp{n}{\mathcal{L}(z,\T^n_k)} = \Ecp{n}{\mathcal{L}(z,\T^n_{J^n})} \cdot C_k^n(z). \label{eq:productform}
	\end{equation}
More generally, by Lemma~\ref{lem:Mk is a martingale}, under $\mathbb{E}_{n}$, the process $(M^{n}_{k}(z))_{J^{n} \leq k \leq \lfloor nt \rfloor}$ is a martingale for its canonical filtration.

\paragraph{The case of the local limit.} Similarly to the case of $S^n$, we introduce the analogous objects for the walk $S$. 
Recall from \eqref{eq couplage} the definitions of $S$ and $\XXb$, and the fact that $\T_k= \mathcal{T}_{k}(\XXb)$. 
We work on the event $\{\forall k \geq 0, \ S_k>0\}$. 
For $\lambda>1$, we introduce 
\begin{equation}
\label{eq:defJ}
	J=J(\lambda) 
	{\coloneqq \sup\enstq{j\geq 0}{\frac{1}{S_j} \left( e^{2z_\lambda}+1 \right)\geq {\frac{1}{2}}}.}
\end{equation}
Observe that $J<\infty$ almost surely by the strong law of large numbers.
For every $k\geq J$ and $z \in \mathscr{E}'$ we set
\begin{align}\label{eq:definition Ckz}
C_k(z)= C_k(z,\XXb,J)\coloneqq \prod_{i=J+1}^{k} \left( 1+ \frac{1}{S_i} (e^z-1) \ind{\XX_i = 1} \right),
\end{align}
so that $C_k(z)\neq 0$ by definition of $J$.  
We can then define for $k \geq J$
\begin{align}\label{eq:definition Mkz}
M_k(z)\coloneqq M_k(z,\XXb,J)=\frac{1}{C_k(z)} \mathcal{L}(z,\T_k),
\end{align}
so that, conditionally given $S$, the process $(M_{k}(z))_{k\ge J}$ is a martingale for its canonical filtration.

\subsection{Some technical estimates for $S^n$, $S$, $J^{n}$ and $J$}\label{sous-section resultats techniques sur S}
In order to estimate the quantities $C_k(z)$, $C^n_k(z)$, $J$ and $J^n$ that have been introduced in the previous section, we will rely on a few technical lemmas. We gather their statements here and prove them below, each in their own separate subsection.

In all the following lemmas, we fix $t>0$ a real number and  $I\subset \intervalleoo{1}{\infty}$ a compact interval such that $t\in \intervalleoo{0}{t_\lambda}$ for all $\lambda \in I$.

We start with a technical result.

\begin{lemma}\label{lem:tension de M lambda}
Let $M(\lambda)$ be defined as
\begin{align}\label{eq:definition of M lambda}
	M(\lambda) \coloneqq \sup_{i\geq 0} \left(\frac{i}{S_i(\lambda)}\ind{S_i(\lambda)>0}\right).
\end{align}
The family $(M(\lambda): \lambda \in I)$ is tight. 
\end{lemma}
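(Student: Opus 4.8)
The plan is to reduce the statement to a uniform lower-deviation estimate for the random walk $S(\lambda)$, and then to establish that estimate via an exponential martingale bound that is uniform in $\lambda \in I$. First I would fix the compact interval $I = [\lambda_{-}, \lambda_{+}] \subset \intervalleoo{1}{\infty}$. Since the increments $\XX_k(\lambda) = 2\cdot\mathbbm{1}\{U_{k+1}\le \lambda/(1+\lambda)\} - 1$ are, for each fixed $\lambda$, i.i.d.\ with mean $\frac{\lambda-1}{\lambda+1} =: \mu(\lambda) > 0$, and since $\lambda \mapsto \mu(\lambda)$ is continuous and increasing, we have $\mu(\lambda) \ge \mu(\lambda_{-}) =: \mu_{-} > 0$ for all $\lambda \in I$. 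By the monotonicity of $\lambda \mapsto S_i(\lambda)$ noted in the excerpt, we have $S_i(\lambda) \ge S_i(\lambda_{-})$ for all $\lambda \in I$ and all $i$ \emph{on the event where $S_i(\lambda_-)>0$}; but one must be a little careful because the indicator $\ind{S_i(\lambda)>0}$ also depends on $\lambda$. The cleanest route is to bound, for each $\lambda\in I$ and each $i\ge 0$,
\[
\frac{i}{S_i(\lambda)}\ind{S_i(\lambda)>0} \;\le\; \frac{i}{S_i(\lambda_{-})}\ind{S_i(\lambda_{-})>0} \;+\; i\cdot \ind{S_i(\lambda_{-})\le 0 <S_i(\lambda)},
\]
but since $S_i(\lambda)\ge S_i(\lambda_-)$ always, the event $\{S_i(\lambda_{-})\le 0 < S_i(\lambda)\}$ can only contribute when $S_i(\lambda_-)\in\{-1,0\}$, which happens only finitely often a.s.; alternatively, and more robustly, on the event $\{\forall j,\ S_j(\lambda)>0\}$ the walk $S(\lambda)$ never returns to $0$, so it suffices to control $\inf_{i\ge 0}(S_i(\lambda_-)/(i\vee 1))$ from below, uniformly.

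So the key step is: show that $M_{-} := \sup_{i\ge 0}\big(i/S_i(\lambda_{-})\big)\ind{S_i(\lambda_{-})>0}$ is a proper (a.s.\ finite) random variable with a tail bound, and that this single random variable dominates $M(\lambda)$ for all $\lambda\in I$ (up to the finite exceptional set above, which contributes a deterministic bounded correction since each such $i$ satisfies $i \le$ the last return time of $S(\lambda_-)$ to a neighborhood of $0$, which is a.s.\ finite). To control $M_{-}$: since $\mathbb{E}[\XX_1(\lambda_-)] = \mu_-$, the moment generating function $\varphi(\theta):=\mathbb{E}[e^{-\theta \XX_1(\lambda_-)}]$ is finite for all $\theta$ and satisfies $\varphi(\theta) < 1$ for $\theta>0$ small, say $\varphi(\theta_0) = e^{-c}$ with $c>0$. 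Then $e^{-\theta_0 S_i(\lambda_-)} e^{ci}$ is a non-negative martingale, so by Doob's maximal inequality, for any $a>0$,
\[
\Pp{\exists i\ge 0:\ S_i(\lambda_-) \le a i} \;\le\; \Pp{\sup_{i\ge 0} e^{-\theta_0 S_i(\lambda_-)+\theta_0 a i} e^{(c-\theta_0 a)i}\ge 1},
\]
and choosing $a < c/\theta_0$ makes $e^{(c-\theta_0 a)i}\ge 1$, so actually one argues directly: $e^{-\theta_0 S_i(\lambda_-) + \theta_0 a i + (c - \theta_0 a) i}$ is a supermartingale when $a\le c/\theta_0$... the standard, cleaner statement is that for $a$ slightly below $\mu_-$, $\Pp{\inf_{i\ge 1} S_i(\lambda_-)/i \le a}$ decays, and more quantitatively $\Pp{\exists i \ge 1: S_i(\lambda_-) \le \epsilon i}$ can be made small by choosing $\epsilon$ small, using a union bound combined with Cramér's exponential bound $\Pp{S_i(\lambda_-) \le \epsilon i} \le e^{-I_-(\epsilon) i}$ where $I_-(\epsilon) = \sup_{\theta>0}(\theta\epsilon - \log\varphi(-\theta)) \to I_-(0) = $ a positive constant as $\epsilon\to 0$. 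Summing the geometric-type series over $i$ gives $\Pp{M_{-} \ge M} \le \Pp{\exists i: S_i(\lambda_-) \le i/M} \le \sum_{i\ge 1} e^{-I_-(1/M) i} \to 0$ as $M\to\infty$, which proves $M_-<\infty$ a.s.\ and, being a single random variable, is automatically tight.

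Finally I would assemble the pieces: $M(\lambda) \le M_{-} + R$ for all $\lambda\in I$, where $R := \sup\{i : S_i(\lambda_-) \le 0\}$ (with $R=0$ if $S(\lambda_-)$ never hits $0$, i.e.\ on $\mathcal{B}=1$) accounts for the indices where $S_i(\lambda_-)$ is non-positive but $S_i(\lambda)$ could be positive; $R$ is a.s.\ finite because $S(\lambda_-)\to+\infty$ a.s.\ by the law of large numbers, hence $R$ is itself a tight random variable. Therefore $M(\lambda) \le M_- + R$ uniformly in $\lambda \in I$, and since the right-hand side is a fixed a.s.-finite random variable, the family $(M(\lambda):\lambda\in I)$ is tight. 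The main obstacle is the bookkeeping around the $\lambda$-dependent indicator $\ind{S_i(\lambda)>0}$: one must ensure the stochastic domination $S_i(\lambda)\ge S_i(\lambda_-)$ is leveraged correctly despite the indicator, which is why the decomposition into $M_-$ (controlled by the exponential martingale bound) plus a finite-range correction $R$ (controlled by the SLLN) is the right packaging.
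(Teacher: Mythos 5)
Your argument is correct in outline but follows a genuinely different route from the paper's. Both proofs start from the same reduction: by monotonicity of $\lambda \mapsto S_i(\lambda)$, everything is controlled by the walk at the left endpoint $\lambda_0=\min(I)$. The paper then stays entirely pathwise and deterministic: it observes that $\sup_{i}\left(\frac{i}{S_i(\lambda_0)}\ind{S_i(\lambda_0)>0}\right)$ is attained at some finite time $K$ (because $i/S_i(\lambda_0)\to\frac{\lambda_0+1}{\lambda_0-1}$ a.s.\ and the supremum strictly exceeds this limit), and it absorbs the troublesome indices $i$ with $S_i(\lambda_0)\le 0<S_i(\lambda)$ by noting that the $\pm1$-step walk must pass through the value $1$ at some later time $T\ge i$, whence $i\le T=T/S_T(\lambda_0)\le M(\lambda_0)$. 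This yields the exact domination $M(\lambda)\le M(\lambda_0)$ for all $\lambda\in I$, i.e.\ $\sup_{\lambda\in I}M(\lambda)=M(\lambda_0)$, with no large-deviation input. Your proof instead packages the exceptional indices into $R=\sup\{i:S_i(\lambda_-)\le 0\}$ (finite by the SLLN) and proves finiteness of $M_-$ via a Cram\'er bound; this buys an explicit exponential tail for the dominating variable, at the cost of more machinery. Both approaches are valid.

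Two corrections to your write-up. First, your aside that $\{S_i(\lambda_-)\le 0<S_i(\lambda)\}$ ``can only contribute when $S_i(\lambda_-)\in\{-1,0\}$'' is false: $S_i(\lambda)-S_i(\lambda_-)$ is unbounded (it grows by $2$ at every step where $U_k$ falls between the two thresholds), so $S_i(\lambda_-)$ can be arbitrarily negative on this event. Your $R$-based bookkeeping does not rely on this claim, so nothing breaks. Second, the chain $\Pp{M_-\ge M}\le\Pp{\exists i:\ S_i(\lambda_-)\le i/M}\le\sum_{i\ge1}e^{-I_-(1/M)i}\to 0$ does not work as written: dropping the constraint $S_i(\lambda_-)>0$ makes the middle probability at least $\Pp{S_1(\lambda_-)=0}>0$, and in any case $\sum_{i\ge1}e^{-I_-(1/M)i}$ converges to the positive constant $e^{-I_-(0)}/(1-e^{-I_-(0)})$ as $M\to\infty$, not to $0$. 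The fix is to keep the indicator: the event $0<S_i(\lambda_-)\le i/M$ forces $i\ge M$ because $S_i(\lambda_-)$ is a positive integer, so the relevant sum is $\sum_{i\ge M}e^{-I_-(1/M)i}\le e^{-I_-(1/M)M}/(1-e^{-I_-(1/M)})$, which does tend to $0$.
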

This enables us to prove the next lemma, which will be useful for controlling $C_k(z)$ as $k\rightarrow\infty$.
\begin{lemma}\label{lem:convergence sum 1/Sk - c log k towards rv}
	We have 
	\begin{align*}
		\ind{\forall i\in \intervalleentier{0}{k},\ S_i>0} \cdot \left(\sum_{i=1}^k\frac{1}{S_i} \ind{\XX_i=1}- \frac{\lambda}{\lambda-1} \log k \right) = Z(\lambda) + o_\P(1),
	\end{align*}
	as $k\rightarrow \infty$ weakly uniformly in $\lambda\in I$, where the family of random variables $(Z(\lambda) : \lambda\in I)$ is tight and the $o_\P(1)$ is almost sure.  
\end{lemma}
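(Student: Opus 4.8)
The target estimate controls the partial sum $\sum_{i=1}^k S_i^{-1}\ind{\XX_i=1}$ on the survival event, and the natural approach is to compare it with a deterministic logarithmic profile via the fluid limit for $S$. The plan is to split the sum at a large fixed threshold $A$: for $i\le A$ the contribution is a fixed (random but tight) quantity, so the whole difficulty lies in the tail $A<i\le k$. First I would recall that on $\{\forall i\ge 0,\ S_i>0\}$ the strong law of large numbers gives $S_i/i\to 2-2g_\lambda(1)-\cdots$ — more precisely, since $\XX$ is i.i.d.\ with $\P(\XX_1=1)=\lambda/(1+\lambda)$, we have $S_i/i\to \frac{\lambda-1}{\lambda+1}$ almost surely, and moreover this holds with the uniform control supplied by Lemma~\ref{lem:tension de M lambda} (the family $M(\lambda)=\sup_i (i/S_i)\ind{S_i>0}$ is tight on $I$). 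Combining $S_i\sim \frac{\lambda-1}{\lambda+1} i$ with $\ind{\XX_i=1}$ having conditional mean $\frac{\lambda}{\lambda+1}$, the increments of $\sum_{i\le k} S_i^{-1}\ind{\XX_i=1}$ behave on average like $\frac{\lambda}{\lambda+1}\cdot\frac{\lambda+1}{(\lambda-1)i}=\frac{\lambda}{\lambda-1}\cdot\frac1i$, which matches the claimed constant $\gamma=\frac{\lambda}{\lambda-1}$ in front of $\log k$.

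The rigorous implementation I would carry out is as follows. Write
\begin{align*}
\sum_{i=1}^k \frac{1}{S_i}\ind{\XX_i=1} - \gamma\log k
= \sum_{i=1}^k \left(\frac{1}{S_i}\ind{\XX_i=1} - \frac{\gamma}{i}\right) + \gamma\Big(\sum_{i=1}^k\frac1i - \log k\Big),
\end{align*}
where the second term converges deterministically to $\gamma$ times the Euler constant. For the first sum, on the survival event decompose $\frac1{S_i}\ind{\XX_i=1}-\frac\gamma i = D_i + R_i$ where $D_i\coloneqq \frac1{S_i}(\ind{\XX_i=1}-\frac{\lambda}{\lambda+1})$ is a martingale increment (for the filtration generated by the $\XX$'s, conditionally on survival one has to be slightly careful, but one can work with the unconditioned martingale and use that the survival event is in the tail $\sigma$-algebra up to a harmless conditioning argument — alternatively use that $\ind{\forall i\le k}-\ind{\forall i\ge 0}=o_\P(1)$ as in \eqref{eq: limite fluide en proba} to pass between the two events), and $R_i\coloneqq \frac{\lambda}{\lambda+1}\frac1{S_i}-\frac\gamma i = \frac{\lambda}{\lambda+1}\big(\frac1{S_i}-\frac{\lambda+1}{(\lambda-1)i}\big)$ is the ``bias'' term. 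For the martingale part, $\sum_{i\ge 1}\Var(D_i)\le \Ec{\sum_i S_i^{-2}}<\infty$ on survival (since $S_i\asymp i$ with tight constants, $S_i^{-2}$ is summable), so $\sum_{i=1}^k D_i$ converges a.s.\ and in $L^2$ to a finite random variable $Z_1(\lambda)$, and the tail $\sum_{i>A}$ is small uniformly in $k$; the tightness in $\lambda\in I$ follows from a uniform $L^2$ bound using Lemma~\ref{lem:tension de M lambda}. For the bias part $R_i$, one needs that $\sum_{i}\big|\frac1{S_i}-\frac{\lambda+1}{(\lambda-1)i}\big|<\infty$ a.s., i.e.\ that $S_i-\frac{\lambda-1}{\lambda+1}i$ does not grow faster than $o(i/\log i)$ — but actually $\frac1{S_i}-\frac{c}{i}=\frac{ci-S_i}{iS_i}$ and $|ci-S_i|$ is $O(\sqrt{i\log\log i})$ by the law of the iterated logarithm (or simply $o(i^{1/2+\e})$ by Burkholder), and $iS_i\asymp i^2$, so the summand is $O(i^{-3/2+\e})$, which is summable; this gives a finite limit $Z_2(\lambda)$ with tightness in $\lambda$ coming again from the uniform control of $S_i/i$.

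Putting these three pieces together yields the claimed identity with $Z(\lambda)=Z_1(\lambda)+Z_2(\lambda)+\gamma\cdot(\text{Euler constant})$, a tight family in $\lambda\in I$, and the remainder is an $o_\P(1)$ which is in fact almost sure (it is the tail $\sum_{i>k}$ of almost surely convergent series plus a deterministic $o(1)$); the weak uniformity in $\lambda\in I$ follows by a finite-subdivision argument exploiting monotonicity of $\lambda\mapsto S_i(\lambda)$, exactly as in the proof of the fluid limit \eqref{eq: limite fluide en proba}. The main obstacle I anticipate is the careful handling of the conditioning on the survival event $\{\forall i\ge 0,\ S_i>0\}$: the $\XX_i$'s are not a martingale difference sequence under this conditioning, so one should either (i) first establish everything for the unconditioned walk and then intersect with survival, using $\ind{\forall i\le k}-\ind{\forall i\ge 0}=o_\P(1)$ to replace the partial-survival indicator by the full one, or (ii) invoke an $h$-transform / Doob conditioning which changes the increment law but preserves the asymptotic constant $\lambda/(\lambda-1)$; option (i) is cleaner and is the route I would take, since on $\{\forall i\ge 0,\ S_i>0\}$ the walk $S$ still satisfies the SLLN with the same deterministic slope.
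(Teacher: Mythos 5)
Your overall strategy is the same as the paper's: split off the harmonic sum (Euler--Mascheroni constant), decompose the remainder into a centered-fluctuation piece and a bias piece $\frac{1}{S_i}-\frac{\lambda+1}{(\lambda-1)i}$, get the fluctuation piece by an $L^2$/two-series argument and the bias piece by pathwise summability using $M(\lambda)$ from Lemma~\ref{lem:tension de M lambda}, and handle the survival event by working with the unconditioned walk and using $\ind{\forall i\le k}=\ind{\forall i\ge 0}+o_\P(1)$.

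There is, however, a concrete flaw in your choice of decomposition. You declare $D_i=\frac{1}{S_i}\bigl(\ind{\XX_i=1}-\frac{\lambda}{\lambda+1}\bigr)$ to be a martingale increment, but $S_i=S_{i-1}+\XX_i$ is \emph{not} $\mathcal F_{i-1}$-measurable, so $\Ecsq{D_i}{\mathcal F_{i-1}}=p(1-p)\bigl(\frac{1}{S_{i-1}+1}-\frac{1}{S_{i-1}-1}\bigr)\neq 0$ (and $1/S_i$ is even undefined off the survival event when $S_i=0$). The compensator is $O(S_{i-1}^{-2})$, hence summable on survival, so the argument is patchable, but as written the step fails; moreover the presence of the random, possibly vanishing factor $1/S_i$ in the ``martingale'' term undercuts your own plan of first working with the unconditioned walk. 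The paper avoids both problems by centering with the \emph{deterministic} coefficient: it writes the fluctuation term as $\sum_i \frac{1}{i}\frac{\lambda+1}{\lambda-1}\bigl(\ind{\XX_i=1}-\frac{\lambda}{\lambda+1}\bigr)$, a sum of genuinely independent, centered, bounded variables with variances $O(i^{-2})$, and pushes all of the $1/S_i$ dependence into the bias term $\sum_i\bigl(\frac{1}{S_i}-\frac{\lambda+1}{(\lambda-1)i}\bigr)\ind{\XX_i=1}$, which is controlled pathwise (on the survival event only) via $M(\lambda)$ and a uniform-in-$\lambda$ Hoeffding bound $|S_i-i\frac{\lambda-1}{\lambda+1}|\le Y(\lambda)\,i^{3/4}$ giving a summable $O(i^{-5/4})$ bound; your LIL/Burkholder bound would also work, provided you make the uniformity in $\lambda\in I$ explicit, which the Hoeffding route does automatically. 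You should rearrange your decomposition accordingly.
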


We state in Lemma~\ref{lem:sum 1/Sni is gamma log k} below a somewhat similar statement for $S^n$, which instead will help us control the term $C^n_k(z)$, as $k,n\rightarrow \infty$.
The proof of Lemma~\ref{lem:sum 1/Sni is gamma log k} relies {on a technical result, Lemma~\ref{lem:control on the Si}, which we state first.}
\begin{lemma}\label{lem:control on the Si}
	We have 
	\begin{align}\label{eq:control 1/S^n_i}
		\ind{\forall i \in \intervalleentier{0}{\lfloor nt \rfloor}, \ S^n_i > 0} \cdot \frac{k}{S_k^n} =   O_\P(1)
		\qquad \textrm{and} \qquad 
		S_k - S^n_k = \left(1+\frac{k^2}{n}\right) \cdot O_\P(1),
	\end{align}
	as $n\rightarrow \infty$, uniformly in $k \in \intervalleentier{0}{\lfloor nt \rfloor}$, weakly uniformly in $\lambda \in I$. 
\end{lemma}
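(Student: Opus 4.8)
The plan is to prove the two estimates in Lemma~\ref{lem:control on the Si} essentially by elementary one-sided comparisons, exploiting the coupled construction \eqref{eq couplage} in which $\underline{S}^n_k \leq S^n_k \leq S_k$ holds deterministically for all $k$ and $n$, together with the fluid limit refinement \eqref{eq: limite fluide en proba}. First I would observe that on the event $\{\forall i\in\intervalleentier{0}{\lfloor nt\rfloor},\ S^n_i>0\}$ the fluid limit \eqref{eq: limite fluide en proba} gives $S^n_{\lfloor ns\rfloor}/n = (2-2g_\lambda(s)-s)+o_\P(1)$ uniformly for $s$ in a compact subinterval $[t_1,t_2]$ of $(0,t_\lambda)$, weakly uniformly in $\lambda\in I$, and that the limiting function $s\mapsto 2-2g_\lambda(s)-s$ is continuous, positive on $(0,t_\lambda)$ with value $0$ at $t_\lambda$; since we only need control up to time $\lfloor nt\rfloor$ with $t<t_\lambda$, we can pick $t<t'<t_\lambda$ and note that there is a constant $c=c(\lambda)>0$ with $2-2g_\lambda(s)-s\geq c$ on $[t',t']$... more precisely $\inf_{s\in[t'/2,t]}(2-2g_\lambda(s)-s)>0$, uniformly in $\lambda\in I$ by compactness and the continuity of $\lambda\mapsto g_\lambda$. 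For the small-time regime $k\leq nt'/2$ one instead uses that $S^n_k\geq k/\text{something}$; actually the cleanest route is: for $k$ of order $n$ we get $S^n_k\geq cn\geq (c/t)k$, hence $k/S^n_k$ is bounded; for $k$ small one argues directly that $S^n_k$ cannot be too small compared to $k$ using the lower bound $\underline{S}^n$ or a union bound on the walk staying near $0$. Let me reorganize.

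For the first estimate $\ind{\forall i\leq \lfloor nt\rfloor,\ S^n_i>0}\cdot k/S^n_k = O_\P(1)$ uniformly in $k\in\intervalleentier{0}{\lfloor nt\rfloor}$: I would split according to whether $k\leq \varepsilon_0 n$ or $k>\varepsilon_0 n$ for a small fixed $\varepsilon_0$. For $k>\varepsilon_0 n$, the fluid limit \eqref{eq: limite fluide en proba} applied on $[\varepsilon_0,t]$ gives, with probability tending to $1$ weakly uniformly in $\lambda\in I$, the bound $S^n_{k}\geq \tfrac12 n\inf_{s\in[\varepsilon_0,t]}(2-2g_\lambda(s)-s) =: \tfrac12 c_0 n$, so $k/S^n_k\leq t/(\tfrac12 c_0)$, a deterministic constant. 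For $k\leq \varepsilon_0 n$ one compares with the walk $S$: on the event considered, $S^n_k\geq \underline S^n_k$, and for $k\leq \varepsilon_0 n$ the increments of $\underline S^n$ are Bernoulli$(\tfrac{\lambda(1-k/n)}{1+\lambda(1-k/n)})$ which for $\varepsilon_0$ small enough have mean bounded below by some $p_0>\tfrac12$ (uniformly in $\lambda\in I$, since $\lambda>1$), so $\underline S^n_k - 1$ is a random walk with positive drift $2p_0-1>0$ conditioned... no, not conditioned; rather $\ind{\forall i\leq k,\ S^n_i>0}\cdot k/S^n_k\leq \ind{\forall i\leq k,\ \underline S^n_i\geq 1}\cdot k/\underline S^n_k$ is not quite monotone in the right direction either. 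The correct and clean statement: $\sup_{0\leq k\leq \varepsilon_0 n}\ k/\underline S^n_k$ is dominated by $\sup_{k\geq 0} k/\widehat S_k$ where $\widehat S$ is a random walk started at $1$ with i.i.d.\ $\{\pm1\}$ increments of mean $2p_0-1>0$ restricted to stay positive — and here I would invoke an analogue of Lemma~\ref{lem:tension de M lambda} (indeed Lemma~\ref{lem:tension de M lambda} is exactly this kind of statement for $S$, and by the domination $\underline S^n_k\geq$ a walk with drift $2p_0-1$ for $k\leq\varepsilon_0n$, the same tightness applies uniformly). This gives the $O_\P(1)$, uniformly in $k$ and weakly uniformly in $\lambda\in I$.

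For the second estimate $S_k - S^n_k = (1+k^2/n)\cdot O_\P(1)$ uniformly in $k\in\intervalleentier{0}{\lfloor nt\rfloor}$, weakly uniformly in $\lambda\in I$: the key is that $S_k - S^n_k = \sum_{i=1}^k(\XX_i - \XX^n_i)$, and from \eqref{eq couplage}, $\XX_i\neq\XX^n_i$ only when $U_i$ falls between the two thresholds $\tfrac{\lambda}{1+\lambda}$ and $\tfrac{\lambda(1-\frac1n\sum_{j<i}\ind{\XX^n_j=1})}{1+\lambda(1-\frac1n\sum_{j<i}\ind{\XX^n_j=1})}$, and in that case $\XX_i - \XX^n_i=2$; since $\XX_i\geq\XX^n_i$, the difference $D_k:=S_k-S^n_k$ is non-negative and non-decreasing in $k$. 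Conditionally on the past, $\P(\XX_i\neq\XX^n_i\mid\mathcal F_{i-1})$ equals the length of that threshold interval, which (since $x\mapsto\frac{x}{1+x}$ is $1$-Lipschitz) is at most $\lambda\cdot\tfrac1n\sum_{j<i}\ind{\XX_j=1}\leq \lambda i/n$. Hence $\E[D_k]\leq 2\sum_{i=1}^k \lambda i/n\leq \lambda k^2/n$, and more generally I would control the martingale $D_k - \E[D_k\mid\ldots]$ — actually it is easier to bound $D_k$ stochastically by $2$ times a sum of Bernoulli variables with parameters $\leq \lambda i/n$, whose sum has mean $\leq\lambda k(k+1)/(2n)$ and which by Markov's inequality is $(k^2/n+1)\cdot O_\P(1)$ uniformly in $k$ (the additive $1$ and the monotonicity of $D_k$ handle the small-$k$ and uniformity issues: $\sup_k D_k/(1+k^2/n)\leq$ something controlled by $D_{\lfloor nt\rfloor}/(\text{value at a dyadic scale})$, using a chaining/peeling argument over dyadic blocks of $k$). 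The weak uniformity in $\lambda\in I$ follows since all the bounds above are monotone in $\lambda$ and $I$ is compact, so it suffices to take the endpoints.

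The main obstacle I expect is organizing the uniformity in $k\in\intervalleentier{0}{\lfloor nt\rfloor}$ cleanly for both estimates simultaneously — in particular making sure the $O_\P(1)$ in the second estimate is genuinely uniform over all $k$ at once (not just for each fixed $k$), which requires a peeling argument over dyadic scales of $k$ combined with the monotonicity of $D_k$, and handling the crossover between the ``linear-in-$n$'' regime (where the fluid limit does the work) and the ``small-$k$'' regime (where one needs the drift comparison and a tightness statement in the spirit of Lemma~\ref{lem:tension de M lambda}). The weak uniformity in $\lambda$ is comparatively routine given the monotonicity $\lambda\mapsto S^n_k(\lambda)$ noted after \eqref{eq couplage} and the compactness of $I$.
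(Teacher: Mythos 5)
Your first estimate follows the paper's route almost exactly: fluid limit for $k\geq \e_0 n$, and comparison with the i.i.d.\ drifted walk (the paper's $\underline{S}^{(\e)}$, your $\widehat S$) for $k\leq \e_0 n$, with a tightness statement of the type of Lemma~\ref{lem:tension de M lambda}. The one point you flag but do not resolve is the real one: on the event $\{\forall i\leq \lfloor nt\rfloor,\ S^n_i>0\}\cap\{\exists i,\ \underline S^{(\e)}_i\leq 0\}$ the domination $k/S^n_k\leq k/\underline S^{(\e)}_k$ is useless, and ``restricting the comparison walk to stay positive'' is not an operation you are allowed to perform inside the coupling. The paper deals with this by showing that this residual event has probability $\frac{1}{\lambda-\e}-\frac{1}{\lambda}+o(1)$ (uniformly in $\lambda\in I$), which is then killed by taking $\e$ small \emph{before} letting $M\to\infty$ in the definition of $O_\P(1)$. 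You need some version of this computation; without it the small-$k$ regime is not closed.

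The second estimate has a genuine gap: Markov's inequality plus dyadic peeling does not give the claimed uniformity in $k$. Writing $D_k=S_k-S^n_k$ (or better, $S_k-\underline S^n_k$, which dominates it and is a sum of \emph{independent} Bernoulli variables with parameters $p^n_i=O(i/n)$, whereas your $\ind{\XX_i\neq\XX^n_i}$ are only conditionally Bernoulli), one has $\E[D_k]=O(k^2/n)$, and on a block $2^j\leq k<2^{j+1}$ Markov gives $\P\left(D_{2^{j+1}}\geq A(1+4^j/n)\right)\leq C\lambda\, 4^j/\bigl(An(1+4^j/n)\bigr)$. Summing over the blocks with $4^j\leq n$ this is $O(1/A)$, but over the $\Theta(\log n)$ blocks with $4^j>n$ each term is of order $1/A$, so the union bound is $\Theta((\log n)/A)$, which does not vanish for fixed $A$. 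This is exactly why the paper replaces Markov by a Chernoff bound on the exponential moment $\E[\exp(z(S_k-\underline S^n_k)/2)]\leq\exp(C(e^z-1)k^2/n)$, obtaining per-block probabilities $\exp(-c\ell)$ over blocks $\sqrt{n(\ell-1)}<k\leq\sqrt{n\ell}$ that are summable in $\ell$. Your architecture (monotonicity of $D_k$, blocks indexed by the scale of $k^2/n$, union bound) is right, but the first-moment concentration you invoke is quantitatively insufficient in the regime $\sqrt n\lesssim k\leq nt$; you must upgrade to an exponential-moment bound, which is where the independence of the increments of $S-\underline S^n$ (as opposed to $S-S^n$) pays off.
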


\begin{lemma}
	\label{lem:sum 1/Sni is gamma log k}
	We have 
	\begin{equation}\label{eq approximation somme un sur S}
		\ind{\forall i\in\intervalleentier{0}{\lfloor n t \rfloor},\ S^n_i>0} \cdot \left(\sum^{k}_{i=1} \frac{1}{S^n_i} \ind{\XX^n_i=1} - \frac{\lambda}{\lambda-1} \log k\right) = O_\P(1)
	\end{equation}
	as $n\rightarrow \infty$, uniformly in $k \in \intervalleentier{0}{\lfloor n t \rfloor }$, weakly uniformly in $\lambda \in I$. 
\end{lemma}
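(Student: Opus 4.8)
The plan is to combine the two inputs that precede it: the deterministic monotonicity bound $\underline{S}^n_k \le S^n_k \le S_k$ from the coupled construction \eqref{eq couplage}, the control $k/S^n_k = O_\P(1)$ and $S_k - S^n_k = (1+k^2/n)\cdot O_\P(1)$ from Lemma~\ref{lem:control on the Si}, and the ``clean'' statement for $S$ itself, Lemma~\ref{lem:convergence sum 1/Sk - c log k towards rv}. Write $\gamma = \lambda/(\lambda-1)$. First I would split the sum into two ranges, a ``small $k$'' range $k \le \log^2 n$ (say) and a ``bulk'' range $\log^2 n < k \le \lfloor nt\rfloor$, since near the origin the walk $S^n$ agrees with $S$ and the terms are $O(\log\log n)$, which is already $O_\P(1)$ after dividing appropriately — actually one has to be careful: for $k \le \log^2 n$ the quantity $\sum_{i=1}^k \frac{1}{S^n_i}\ind{\XX^n_i=1} - \gamma\log k$ is controlled directly since $S^n_i = S_i$ for $i \le n$ as long as $\sum_{j\le i}\ind{\XX_j^n=1}=\sum_{j \le i}\ind{\XX_j = 1}$, and more simply because for $i$ in this range $S^n_i \ge S_i - (1+ i^2/n)O_\P(1)$ and $i^2/n \to 0$. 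So on this range $\sum_{i=1}^k \frac{1}{S_i^n}\ind{\XX^n_i=1} - \sum_{i=1}^k \frac{1}{S_i}\ind{\XX_i=1}$ is $o_\P(1)$, and Lemma~\ref{lem:convergence sum 1/Sk - c log k towards rv} handles the $S$-sum.

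For the bulk range, the key estimate is to bound the difference
\[
\Delta^n_k \coloneqq \sum_{i=1}^k \frac{1}{S^n_i}\ind{\XX^n_i = 1} - \sum_{i=1}^k \frac{1}{S_i}\ind{\XX_i = 1}
\]
on the event $\{\forall i \le \lfloor nt\rfloor, \ S^n_i > 0\}$. I would split $\Delta^n_k$ into a part coming from indices where $\XX^n_i \ne \XX_i$ and a part coming from the discrepancy $\frac{1}{S^n_i} - \frac{1}{S_i}$ on indices where both increments are $+1$. For the second part, $\left|\frac{1}{S^n_i} - \frac{1}{S_i}\right| = \frac{S_i - S^n_i}{S^n_i S_i} \le \frac{(1+i^2/n)}{S^n_i S_i}\cdot O_\P(1)$; using $S_i \ge S^n_i \gtrsim i/(M \vee 1)$ from the first bound of Lemma~\ref{lem:control on the Si} (where $M$ is the $O_\P(1)$ constant) and $1+i^2/n \lesssim i^2/n$ for $i$ in the bulk, the summand is $\lesssim \frac{i^2/n}{i^2/M^2}\cdot O_\P(1) = \frac{M^2}{n}\cdot O_\P(1)$, so summing over $i \le \lfloor nt\rfloor$ gives a total $O_\P(1)$ — actually $O_\P(t)\cdot M^2 = O_\P(1)$. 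For the first part, the number of indices $i \le \lfloor nt\rfloor$ with $\XX^n_i \ne \XX_i$ is controlled by the fact that $S$ and $S^n$ first disagree at a time that, on the non-extinction event, is of order $n$; more precisely $\sum_{i\le \lfloor nt\rfloor}\ind{\XX^n_i\ne\XX_i} = \frac{1}{2}(\text{something})$ and each such index contributes at most $\frac{1}{S^n_i} + \frac{1}{S_i} \lesssim \frac{M}{i}$; one needs that the disagreement indices are not concentrated near $0$ (they are pushed to scale $n$ by the coupling, since $\XX^n_i = \XX_i$ whenever $U_{i} $ does not fall in the thin interval of width $O(\frac{1}{n}\sum_{j<i}\ind{\XX^n_j=1}) = O(i/n)$ between the two thresholds), so $\sum \ind{\XX^n_i\ne\XX_i}\frac{M}{i}$ is again $O_\P(1)$.

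The main obstacle I expect is this control of $\sum_{i \le \lfloor nt\rfloor}\ind{\XX^n_i \ne \XX_i}\cdot \frac{1}{i}$: one must show that although there can be $\Theta(n)$ disagreement indices, they are spread out with density $\lesssim i/n$ per index $i$, so the harmonic-weighted count stays bounded. Concretely, $\mathbb{P}(\XX^n_{i+1}\ne \XX_{i+1} \mid U_{i+1}) $ is nonzero only when $U_{i+1}$ lies in an interval whose length is $\lesssim \lambda^2 \cdot \frac{1}{n}\sum_{j\le i}\ind{\XX_j^n = 1} \le \lambda^2 i/n$; hence $\mathbb{E}\left[\sum_{i}\ind{\XX^n_i\ne\XX_i}\frac{1}{i}\right] \lesssim \sum_{i\le nt}\frac{1}{i}\cdot\frac{i}{n} = \frac{\lfloor nt\rfloor}{n} \lesssim t$, uniformly in $\lambda\in I$, which gives the bound in expectation and hence $O_\P(1)$ by Markov; the weak uniformity in $\lambda$ follows since all constants depend only on $\sup I$. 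Combining the two ranges and the two parts of $\Delta^n_k$, together with Lemma~\ref{lem:convergence sum 1/Sk - c log k towards rv} applied to the $S$-sum (whose $Z(\lambda)$ is tight, hence $O_\P(1)$, weakly uniformly in $\lambda$), yields \eqref{eq approximation somme un sur S}, uniformly in $k\in\intervalleentier{0}{\lfloor nt\rfloor}$ and weakly uniformly in $\lambda\in I$. $\qed$
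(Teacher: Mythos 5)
Your proposal is correct and follows essentially the same route as the paper: decompose the difference between the $S^n$-sum and the $S$-sum into a $\bigl(\tfrac{1}{S^n_i}-\tfrac{1}{S_i}\bigr)$ part controlled via Lemma~\ref{lem:control on the Si} (giving a $(\tfrac{1}{i^2}+\tfrac{1}{n})\cdot O_\P(1)$ summand) and an increment-disagreement part controlled by the Bernoulli bound $\P(\XX^n_i\neq\XX_i)\lesssim i/n$ (which the paper formalizes via the comparison walk $\underline{S}^n$), then invoke Lemma~\ref{lem:convergence sum 1/Sk - c log k towards rv} for the $S$-sum. The preliminary split into a small-$k$ range and a bulk range is redundant, since your bulk estimates already hold uniformly over all $i\le\lfloor nt\rfloor$, but this does not affect correctness.
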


Finally, we state a result that involves $J$ and $J^n$.
\begin{lemma}\label{lem:JnJ}
		The following assertions hold.
		\begin{enumerate}[label=(\roman*)]
		\item The family $(J(\lambda) {\ind{\forall k \geq 0 , \ S_k(\lambda)>0}} : \lambda \in I)$ is tight.
		\item We have 
		\begin{align*}
			J^n \cdot  \ind{\forall i\in \intervalleentier{0}{\lfloor nt \rfloor}, \ S^n_i>0} = J \cdot \ind{\forall i\geq 0, \  \ S_i>0} + o_\P(1),
		\end{align*}
		where the $o_\P(1)$ holds as $n\rightarrow \infty$, weakly uniformly in $\lambda\in I$. 
		\end{enumerate}
\end{lemma}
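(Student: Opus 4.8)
The plan is to deduce both assertions from the technical estimates on $S$ and $S^n$ established in Section~\ref{sous-section resultats techniques sur S}, using the coupled construction \eqref{eq couplage}. Let me recall the definitions: $J(\lambda) = \sup\{j\geq 0 : \frac{1}{S_j}(e^{2z_\lambda}+1)\geq \frac12\}$ and $J^n(\lambda,t) = \sup\{j\in\intervalleentier{0}{\lfloor nt\rfloor} : \frac{1}{S^n_i}(e^{2z_\lambda}+1)\geq \frac12\}$ (with conventions for empty sets). Equivalently, $J$ is the last time $i$ at which $S_i \leq 2(e^{2z_\lambda}+1)$, and similarly $J^n$ is the last such time in $\intervalleentier{0}{\lfloor nt\rfloor}$ (on the survival event). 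The idea is that since $S$ drifts to $+\infty$ on the survival event, $J$ is a.s. finite, and since $S^n$ and $S$ agree until the walks disagree — which, by Lemma~\ref{lem:control on the Si}, happens only after a time growing with $n$ — the values $J^n$ and $J$ coincide on the relevant event with high probability.

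\textbf{Assertion (i): tightness of $(J(\lambda)\ind{\forall k\geq 0, S_k(\lambda)>0} : \lambda\in I)$.} First I would note that $z_\lambda$ is a continuous function of $\lambda$ on the compact interval $I$, hence bounded, so $c_I \coloneqq \sup_{\lambda\in I} 2(e^{2z_\lambda}+1) < \infty$. On the event $\{\forall k\geq 0, S_k(\lambda)>0\}$, we have $J(\lambda) = \sup\{i\geq 0 : S_i(\lambda) \leq 2(e^{2z_\lambda}+1)\} \leq \sup\{i\geq 0 : S_i(\lambda) \leq c_I\}$, which is independent of $z_\lambda$. Now on the survival event, by Lemma~\ref{lem:tension de M lambda} the family $M(\lambda) = \sup_{i\geq 0} \frac{i}{S_i(\lambda)}\ind{S_i(\lambda)>0}$ is tight, and $S_i(\lambda) \leq c_I$ with $S_i(\lambda)>0$ forces $i \leq M(\lambda) \cdot S_i(\lambda) \leq c_I M(\lambda)$. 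Hence $J(\lambda)\ind{\forall k\geq 0, S_k(\lambda)>0} \leq c_I M(\lambda)$, and tightness follows from that of $(M(\lambda):\lambda\in I)$.

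\textbf{Assertion (ii): comparing $J^n$ and $J$.} The strategy is to show that on an event of probability tending to $1$ (weakly uniformly in $\lambda\in I$), we have $J^n\cdot \ind{\forall i\leq \lfloor nt\rfloor, S^n_i>0} = J\cdot\ind{\forall i\geq 0, S_i>0}$ exactly. Using the coupled construction, the walks $S$ and $S^n$ coincide up to the first disagreement time $D_n \coloneqq \inf\{k\geq 0 : S^n_k \neq S_k\}$. By Lemma~\ref{lem:control on the Si}, $S_k - S^n_k = (1+k^2/n)\cdot O_\P(1)$ uniformly in $k\in\intervalleentier{0}{\lfloor nt\rfloor}$, weakly uniformly in $\lambda\in I$; since the increments are $\pm1$, disagreement at time $k$ means $S_k - S^n_k \geq 1$, so this forces $D_n$ to be large: more precisely, for any fixed $A$, with probability tending to $1$ we have $S_k = S^n_k$ for all $k$ with $1+k^2/n \leq A$, i.e. for all $k \leq \sqrt{(A-1)n}$, which tends to infinity. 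On the other hand, by assertion (i) and the first part of \eqref{eq: limite fluide en proba} (which gives $\ind{\tau_n > tn} = \ind{\forall i\geq 0, S_i>0} + o_\P(1)$), the quantity $J\cdot\ind{\forall k\geq 0, S_k>0}$ is tight, hence with probability close to $1$ it is bounded by some constant $L = L(\varepsilon)$ independent of $n$ and of $\lambda\in I$. I would then argue: on the intersection of the events $\{J\ind{\forall k, S_k>0}\leq L\}$, $\{\ind{\tau_n>tn} = \ind{\forall k, S_k>0}\}$, $\{S_k = S^n_k \text{ for all } k\leq L'\}$ with $L' > L$ chosen so that $\sqrt{(A-1)n}\geq L'$ for $n$ large, and additionally $\{S_i > c_I \text{ for all } L < i \leq \lfloor nt\rfloor\}$ on the survival event — this last event also has probability $\to 1$ by the fluid limit \eqref{eq: limite fluide en proba}, since $2-2g_\lambda(s)-s$ is bounded away from $0$ on compacts of $(0,t_\lambda)$ and dominates any fixed constant near $s=t$ after rescaling — on all these events simultaneously, both $J^n$ (on $\{\forall i\leq\lfloor nt\rfloor, S^n_i>0\}$) and $J$ (on $\{\forall k, S_k>0\}$) are determined by the common portion of the walks below level $c_I$, which lies entirely in $\intervalleentier{0}{L}$, so they coincide; off the survival event both sides vanish.

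\textbf{Main obstacle.} The delicate point is handling the event $\{L < i \leq \lfloor nt\rfloor : S^n_i \leq c_I\}$ for $S^n$ — i.e. ruling out that the walk $S^n$ comes back down close to zero between time $L$ and time $\lfloor nt\rfloor$ without hitting zero. For $S$ this is immediate from the strong law of large numbers (the drift is strictly positive), but for $S^n$ the drift degrades as $k$ grows and we are only working up to time $\lfloor nt\rfloor$ with $t < t_\lambda$; the fluid limit says $S^n_{\lfloor ns\rfloor}/n \approx 2-2g_\lambda(s)-s$, which is bounded below by a positive constant on $[t_1,t]$ for any $t_1\in(0,t)$, so for $L < i \leq \lfloor n t_1\rfloor$ one needs a separate (easy) argument — e.g. comparison with the lower-bounding walk $\underline{S}^n$ and a union bound, or directly monotonicity of $\lambda\mapsto S^n_k(\lambda)$ together with the law-of-large-numbers behavior on the early portion where $H^n_k/n\approx 1$. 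I would isolate this as a short claim. Given the care already taken in Lemmas~\ref{lem:control on the Si} and~\ref{lem:sum 1/Sni is gamma log k}, the weak uniformity in $\lambda\in I$ propagates through all the above steps without extra difficulty, since $z_\lambda$ and $t_\lambda$ depend continuously on $\lambda$ and $I$ is compact.
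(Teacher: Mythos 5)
Your part (i) and the overall architecture of part (ii) match the paper's proof: bound $J$ on the survival event by a multiple of $M(\lambda)$ from Lemma~\ref{lem:tension de M lambda}, show $J^n$ is also $O_\P(1)$ on its survival event, and conclude by showing that the two walks (hence the two defining conditions) agree on a fixed window with probability tending to one, the remaining events being controlled by tightness.

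However, one deduction in part (ii) is wrong as written. From Lemma~\ref{lem:control on the Si} you have $S_k-S^n_k=(1+k^2/n)\cdot O_\P(1)$, i.e.\ with probability at least $1-\e$ there is a (possibly large) constant $M_\e$ such that $S_k-S^n_k\le M_\e(1+k^2/n)$ for all $k\le\lfloor nt\rfloor$. Since $1+k^2/n\ge 1$ for every $k$, this bound never forces $S_k-S^n_k=0$: on the range where $1+k^2/n\le A$ it only yields $S_k-S^n_k\le M_\e A$, not equality of the walks. So your claim that the first disagreement time exceeds $\sqrt{(A-1)n}$ with high probability does not follow from that lemma. The conclusion you need — agreement of $S$ and $S^n$ on any fixed window $\intervalleentier{0}{K}$ with probability $1-o(1)$, uniformly in $\lambda\in I$ — is true, but should be obtained directly from the coupled construction \eqref{eq couplage}: one has $\XX^n_{k+1}\neq\XX_{k+1}$ only when $U_{k+1}$ falls in an interval of length $O(k/n)$, so $\P(\exists\, k\le K:\ S_k\neq S^n_k)=O(K^2/n)\to 0$ for fixed $K$ (equivalently, $\E[S_K-\underline{S}^n_K]=O(K^2/n)$). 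This is exactly what the paper invokes when it writes that $S^n_k=S_k+o_\P(1)$ for each fixed $k$. With that substitution your argument goes through; note also that the ``main obstacle'' you isolate (ruling out that $S^n$ dips back below the threshold between a fixed time and $\lfloor nt\rfloor$ without hitting $0$) is already covered by the first estimate of Lemma~\ref{lem:control on the Si}, namely $\ind{\forall i\in\intervalleentier{0}{\lfloor nt\rfloor},\ S^n_i>0}\cdot k/S^n_k=O_\P(1)$, so no separate claim is needed there.
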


\subsubsection{Proof of Lemma~\ref{lem:tension de M lambda}}
\begin{proof}[Proof of Lemma~\ref{lem:tension de M lambda}]
Let $\lambda_0\coloneqq \min(I)$ be the minimum of the interval $I$. 
{
First, note that by properties of random walks, we have the almost sure convergence $\frac{i}{S_i(\lambda_0)} \rightarrow \frac{\lambda_0 +1}{\lambda_0-1}>0$ as $i\rightarrow \infty$, and also we know that almost surely the inequality $\frac{i}{S_i(\lambda_0)}  \leq \frac{\lambda_0 +1}{\lambda_0-1}$ does not hold simultaneously for all $i\geq 0$.
This ensures that $\sup_{i \geq 0} \left(\frac{i}{S_i(\lambda_0)}\ind{S_i(\lambda_0)>0}\right)$ is not attained for $i\rightarrow \infty$, so it has to be attained at a finite time. 
}
{Consequently, we may consider $K$ the smallest such time, so that
	\begin{align*}
		\sup_{i\geq 0} \left(\frac{i}{S_i(\lambda_0)}\ind{S_i(\lambda_0)>0}\right)= \frac{K}{S_K(\lambda_0)}\ind{S_K(\lambda_0)>0}.
	\end{align*}}
Note that necessarily $S_K(\lambda_0)>0$. 
Now, for any $i\geq 1$ and any $\lambda \in I$, thanks to the monotonicity in $\lambda$ we have $S_i(\lambda_0) \leq S_i(\lambda)$, so that
\begin{itemize}
	\item If $S_i(\lambda_0)\geq 1$ then it is immediate that 
	\begin{align*}
		\frac{i}{S_i(\lambda)}\ind{S_i(\lambda)>0} \leq \frac{i}{S_i(\lambda_0)}\ind{S_i(\lambda_0)>0} \leq \frac{K}{S_K(\lambda_0)}\ind{S_K(\lambda_0)>0}.
	\end{align*} 
	\item If $S_i(\lambda_0) \leq 0$, then since $\frac{S_i(\lambda_0)}{i} \rightarrow \frac{\lambda_0-1}{\lambda_0+1}>0$ as $i\rightarrow \infty$ and the random walk only moves by steps of $+1$ and $-1$, there exists a time $T\geq i$ such that $S_T(\lambda_0)=1$. 
	Then
	\begin{align*}
		\frac{i}{S_i(\lambda)}\ind{S_i(\lambda)>0} \leq i \leq T = \frac{T}{S_T(\lambda_0)}\ind{S_T(\lambda_0)>0} \leq \frac{K}{S_K(\lambda_0)}\ind{S_K(\lambda_0)>0}.
	\end{align*}  
\end{itemize}
The reasoning above ensures that for any $i\geq 0$ and $\lambda \in I$ we have 
\begin{align*}
	\frac{i}{S_i(\lambda)}\ind{S_i(\lambda)>0} \leq \frac{K}{S_K(\lambda_0)}\ind{S_K(\lambda_0)>0}=M(\lambda_0),
\end{align*}
so that $M(\lambda_0) = \sup_{\lambda \in I} M(\lambda)$,
which is in fact stronger than what is claimed by the lemma. 
\end{proof}

\subsubsection{Proof of Lemma~\ref{lem:convergence sum 1/Sk - c log k towards rv}}
\begin{proof}[Proof of Lemma~\ref{lem:convergence sum 1/Sk - c log k towards rv}]
	We write, {on the event $\{\forall i\in \intervalleentier{0}{k},\ S_i>0\}$, } with $\gamma^{\mathrm{EM}}$ the Euler-Mascheroni constant, 
	\begin{align*}
		&	\sum_{i=1}^k\frac{1}{S_i} \ind{\XX_i=1} - \frac{\lambda}{\lambda-1} \log k \\
		&=\sum_{i=1}^k  \left(\frac{1}{S_i} \ind{\XX_i=1} - \frac{\lambda}{\lambda-1}\cdot \frac{1}{i} \right) + \frac{\lambda}{\lambda-1}\cdot\gamma^{\mathrm{EM}} + o(1)\\
		&= \sum_{i=1}^k\left(\frac{1}{S_i} - \frac{1}{i}\cdot \frac{\lambda+1}{\lambda-1} \right)\cdot\ind{\XX_i=1} 
		+\sum_{i=1}^k\left(\frac{1}{i}\cdot \frac{\lambda+1}{\lambda-1}\ind{\XX_i=1} 
		- \frac{\lambda}{\lambda-1}\cdot \frac{1}{i} \right) + \frac{\lambda}{\lambda-1}\cdot\gamma^{\mathrm{EM}} + o(1),
	\end{align*}
where the $o(1)$ is uniform in $\lambda \in I$. 
We handle the two random terms on the RHS of the last display separately. 

First, note that the term $\sum_{i=1}^k\left(\frac{1}{i}\cdot \frac{\lambda+1}{\lambda-1}\ind{\XX_i=1} 
	- \frac{\lambda}{\lambda-1}\cdot \frac{1}{i} \right)$ is a sum of independent centered bounded random variables with second moments given by 
	\begin{align*}
		\Ec{ \left(\frac{1}{i} \frac{\lambda+1}{\lambda-1} \ind{\XX_i=1} - \frac{\lambda}{\lambda-1} \frac{1}{i}\right)^2} =\frac{1}{i^2} \left(\frac{\lambda+1}{\lambda-1}\right)^2 \left(\frac{\lambda}{\lambda+1}-\left(\frac{\lambda}{\lambda+1}\right)^2 \right) = O(i^{-2}), 
	\end{align*}
	as $i\rightarrow \infty$, uniformly in $\lambda \in I$. 
	This ensures that by defining
	\begin{align*}
		Z'(\lambda) \coloneqq \sum_{i=1}^\infty \left(\frac{1}{i} \frac{\lambda+1}{\lambda-1} \ind{\XX_i=1} - \frac{\lambda}{\lambda-1} \frac{1}{i}\right),
	\end{align*}
then $Z'(\lambda)$ is well-defined as an $L^2$ random variable, with $\Ec{(Z'(\lambda))^2}= O(\sum_i i^{-2})  = O(1)$ uniformly in $\lambda\in I$, so that $(Z'(\lambda))_{\lambda\in I}$ is tight. 
By considering the $L^2$ norm of the remainders we write 
\begin{align}
\sum_{i=1}^k\left(\frac{1}{i}\cdot \frac{\lambda+1}{\lambda-1}\ind{\XX_i=1} 
	- \frac{\lambda}{\lambda-1}\cdot \frac{1}{i} \right) 
	&= Z'(\lambda) -\sum_{i=k+1}^\infty \left(\frac{1}{i}\cdot \frac{\lambda+1}{\lambda-1}\ind{\XX_i=1} 
	- \frac{\lambda}{\lambda-1}\cdot \frac{1}{i} \right) \notag\\
	&= Z'(\lambda) + o_\P(1), \label{eq:convergence Z'}
\end{align}
as $k\rightarrow \infty$, weakly uniformly in $\lambda\in I$. 
Finally, by Kolmogorov's two series theorem, we get that the $o_\P(1)$ appearing in the last display holds almost surely.

Now, let us turn to the term $\sum_{i=1}^k\left(\frac{1}{S_i} - \frac{1}{i}\cdot \frac{\lambda+1}{\lambda-1} \right)\cdot\ind{\XX_i=1} $. 
First, consider the random variable
\begin{align}
	Y= Y(\lambda) \coloneqq \sup_{i\geq 1}\left( i^{-\frac{3}{4}}\cdot \Big|S_i - i \frac{\lambda -1}{\lambda +1}\Big|\right).
\end{align}
From a union-bound and Hoeffding's inequality, it is easy to check that 
\begin{align*}
	\Pp{Y\geq A} 
	=\Pp{\forall i\geq 1, \ \left\vert S_i - i\frac{\lambda-1}{\lambda +1}\right \vert  \leq A \cdot i^{3/4}} 
	&\geq 1 - \sum_{i=1}^{\infty} \Pp{\left\vert S_i - i \frac{\lambda-1}{\lambda +1}\right \vert  \geq A \cdot i^{3/4}}\\ 
	&\geq 1 -  \sum_{i=1}^{\infty}2\exp(-\frac{A^2}{2} \sqrt{i})
\end{align*}
which tends to $1$ as $A\rightarrow \infty$, uniformly in $\lambda \in I$, so that $(Y(\lambda))_{\lambda \in I}$ is tight. 
Recalling the definition of $M=M(\lambda)$ from Lemma~\ref{lem:tension de M lambda}, on the event $\{\forall i\geq 0, \ S_i >0\}$ we have $M=\sup_{i\geq 0}\left(\frac{i}{S_i}\right)$ so that 
\begin{align*}
	\left\vert \frac{1}{S_i} - \frac{1}{i}\cdot \frac{\lambda+1}{\lambda-1} \right\vert \leq  
	\frac{\lambda+1}{\lambda -1}
	\frac{i}{S_i}
	\frac{\left\vert S_i -i \frac{\lambda -1}{\lambda +1} \right\vert }{i^2} 
	\leq \frac{\lambda+1}{\lambda -1} \cdot M \cdot \frac{Y \cdot i^{3/4}}{i^2}
	\leq \frac{\lambda+1}{\lambda -1}\cdot M(\lambda) \cdot Y(\lambda) \cdot i^{-5/4},
\end{align*}
which is the general term of a convergent series.
Hence we have
\begin{align}\label{eq:convergence Z''}
	\ind{\forall i\geq 0, \ S_i >0}\cdot \sum_{i=1}^k\left(\frac{1}{S_i} - \frac{1}{i}\cdot \frac{\lambda+1}{\lambda-1} \right)\cdot\ind{\XX_i=1}  = Z''(\lambda) +o_\P(1) 
\end{align}
almost surely as $k\rightarrow \infty$, weakly uniformly in $\lambda\in I$, where
\begin{align*}
	Z''(\lambda)\coloneqq 	\ind{\forall i\geq 0, \ S_i >0}\cdot \sum_{i=1}^\infty \left(\frac{1}{S_i} - \frac{1}{i}\cdot \frac{\lambda+1}{\lambda-1} \right)\cdot\ind{\XX_i=1},
\end{align*}
and $|Z''(\lambda)| \leq \frac{\lambda +1}{\lambda - 1} \cdot M(\lambda) \cdot Y(\lambda)\cdot \zeta(5/4)$. 
It is immediate from the tightness of $(Y(\lambda))_{\lambda \in I}$ and $(M(\lambda))_{\lambda \in I}$ that $(Z''(\lambda))_{\lambda \in I}$ is tight. 

Setting $Z(\lambda) \coloneqq Z'(\lambda) + Z''(\lambda)$ and using together \eqref{eq:convergence Z'} and \eqref{eq:convergence Z''} and the fact that almost surely $\ind{\forall i\in \intervalleentier{0}{k},\ S_i>0}= \ind{\forall i\ge 0,\ S_i>0} + o_\P(1)$ yields the convergence result.
The tightness result follows from the tightness of $(Z'(\lambda))_{\lambda \in I}$ and  $(Z''(\lambda))_{\lambda \in I}$.
\end{proof}

\subsubsection{Proof of Lemma~\ref{lem:control on the Si}}
\begin{proof}[Proof of Lemma~\ref{lem:control on the Si}]
	From the convergence result \eqref{eq: limite fluide en proba} we know that for any small enough fixed $\e>0$, we have 
	\begin{align*}
	\ind{\forall i \in \intervalleentier{0}{\lfloor nt \rfloor}, \ S^n_i>0} \cdot  \min_{\e n \leq i \leq tn} \left(\frac{S^n_i}{n}\right)
	 = 
	 \ind{\forall i \geq 0, \ S_i>0} \cdot \inf_{\e \leq s \leq t}(2-2g_\lambda(s)- s) + o_{\P}(1)
	\end{align*}
where $\inf_{\e\leq s \leq t}(2-2g_\lambda(s)- s)>0$ so that  
	\begin{align*}
		\ind{\forall i \in \intervalleentier{0}{\lfloor nt \rfloor}, \ S^n_i>0} \cdot \max_{\e n \leq i \leq tn}\left( \frac{i}{S^n_i} \right) = O_\P(1).
	\end{align*}
	In the end we just need to control what happens when $0\leq i \leq \e n$.
	For that, we define on the same probability space yet another random walk $\underline{S}^{(\e)}$ as $\underline{S}^{(\e)}_k=1+ \sum_{i=1}^{k} \underline{\XX}^{(\e)}_i$ for all $k\ge 0$ where for $i\geq 1$
	 \[\underline{\XX}^{(\e)}_i= 2 \cdot \mathbbm{1}\left\lbrace U_i \leq \frac{\lambda \cdot \left(1-\e\right)}{1+\lambda \cdot \left(1-\e\right)}\right\rbrace -1.\]
	 From their construction, it is clear that for all $i\in \intervalleentier{0}{\lfloor \e n\rfloor}$ we have $\underline{S}^{(\e)}_i\leq \underline{S}^n_i \leq S^n_i$.
	 Also note that, provided that $\e>0$ is chosen small enough, by the law of large numbers,
	 \begin{align*}
	  \ind{\forall i \in \intervalleentier{0}{\lfloor nt \rfloor}, \ \underline{S}^{(\e)}_i >0} \cdot \max_{i \in \intervalleentier{0}{\lfloor n \e \rfloor}} \left(\frac{i}{\underline{S}^{(\e)}_i}\right)= O_\P(1).
	 \end{align*}
 In the end 
 \begin{multline}\label{eq:majoration ind fois max i/S}
 	\ind{\forall i \in \intervalleentier{0}{\lfloor nt \rfloor}, \ S^n_i > 0}\cdot \max_{i \in \intervalleentier{0}{\lfloor nt \rfloor}} \left(\frac{i}{S_i^n}\right)\\
 	\leq \ind{\forall i \in \intervalleentier{0}{\lfloor nt \rfloor}, \ S^n_i>0} \cdot \max_{i \in \intervalleentier{\lfloor n \e  \rfloor}{\lfloor nt \rfloor}} \left( \frac{i}{S^n_i}\right) 
 	+  \ind{\forall i \in \intervalleentier{0}{\lfloor nt \rfloor}, \ \underline{S}^{(\e)}_i >0}\cdot  \max_{i \in \intervalleentier{0}{\lfloor n \e \rfloor}} \left(\frac{i}{\underline{S}^{(\e)}_i}\right)\\
 	+   \mathbbm{1} \left\lbrace \forall i \in \intervalleentier{0}{\lfloor nt \rfloor}, \ S^n_i > 0
 	\right\rbrace \setminus 
 	\lbrace \forall i \in \intervalleentier{0}{\lfloor nt \rfloor}, \ \underline{S}^{(\e)}_i >0
 	\rbrace
 \cdot \max_{i \in \intervalleentier{0}{\lfloor nt \rfloor}} \left(\frac{i}{S_i^n}\right).
 \end{multline}
For a fixed and small enough $\e>0$ the two first terms are $O_\P(1)$. 
Let us control the last term. Since $S^n_i \le S_i$ and $\underline{S}_i^{(\e)}\le S_i$,
\begin{align*}
	\P&\left(\forall i \in \intervalleentier{0}{\lfloor nt\rfloor}, \ S^n_i >0 \text{ and } \exists i \in  \intervalleentier{0}{\lfloor nt\rfloor}, \ \underline{S}^{(\e)}_i \le0\right)\\
	&\le
	 \P\left(\forall i \in \intervalleentier{0}{\lfloor nt\rfloor}, \ S_i >0 \text{ and } \exists i \in  \intervalleentier{0}{\lfloor nt\rfloor}, \ \underline{S}^{(\e)}_i \le0\right)\\
	 &=\P\left(\forall i \in \intervalleentier{0}{\lfloor nt\rfloor}, \ S_i >0\right) - \P\left(\forall i \in \intervalleentier{0}{\lfloor nt\rfloor}, \ \underline{S}^{(\e)}_i >0\right)\\
	 &= 1- \frac{1}{\lambda} +o(1) - \left(1- \frac{1}{\lambda-\e}\right) + o(1)
\end{align*}
uniformly in $\lambda \in I$.
Thus,
\begin{align*}
 \limsup_{n\to \infty} \sup_{\lambda \in I} \P\left(\forall i \in \intervalleentier{0}{\lfloor nt\rfloor}, \ S^n_i >0 \text{ and } \exists i \in  \intervalleentier{0}{\lfloor nt\rfloor}, \ \underline{S}^{(\e)}_i \le0\right) \mathop{\longrightarrow}\limits_{\e \to 0} 0.
\end{align*}
From \eqref{eq:majoration ind fois max i/S} and what immediately follows, for any $A>0$, the quantity 
\begin{align*}
	\sup_{\lambda \in I}\Pp{\ind{\forall i \in \intervalleentier{0}{\lfloor nt \rfloor}, \ S^n_i > 0}\cdot \max_{i \in \intervalleentier{0}{\lfloor nt \rfloor}} \left(\frac{i}{S_i^n}\right) \geq A} 
\end{align*}
can be shown to be arbitrarily close to $0$ as $n\rightarrow \infty$ by first taking $\e$ small enough and then $n$ large enough.
This proves the first point of \eqref{eq:control 1/S^n_i}.
	
For the second point, 
note that for $i\in \intervalleentier{\lceil \frac{n}{4}\rceil }{\lfloor nt \rfloor }$, the result holds trivially using that $S_i\leq i$ a.s..
Now remark that $ (S_i-\underline{S}^n_i)/2$ is a sum of independent Bernoulli random variables with parameters $(p^n_i)_{i\geq 1}$ given by
\begin{align*}
	p^n_i=\frac{\lambda}{1+\lambda} - \frac{\lambda \cdot \left(1-\frac{i}{n}\right)}{1+\lambda \cdot \left(1-\frac{i}{n}\right)} = \frac{\lambda}{1+\lambda} \cdot \left(1 - \frac{1- \frac{i}{n}}{1-\frac{\lambda}{1+\lambda} \frac{i}{n}}\right)
	= \frac{\lambda}{(1+\lambda)^2} \frac{\frac{i}{n}}{1-\frac{\lambda}{1+\lambda} \frac{i}{n}} \leq \frac{2\lambda}{(1+\lambda)^2}\frac{i}{n},
\end{align*}
where for the last inequality, we assume that $i\leq \frac{n}{4}$. 
Hence for $z>0$ we have
\begin{align*}
	\Ec{\exp \left(z \cdot \frac{1}{2}\cdot (S_k-\underline{S}^n_k)\right)} = \prod_{i=1}^{k} \left(1+ (e^z-1) \cdot p^n_i\right) &\leq \exp\left((e^z-1) \cdot \sum_{i=1}^{k} p^n_i\right)\\
	&\leq \exp\left(\frac{2\lambda}{(1+\lambda)^2}\cdot (e^z-1) \cdot \frac{k^2}{n}\right).
\end{align*}

{
We now fix an integer $\ell\geq 1$ and consider values of $k$ so that we have $\ell-1 < \frac{k^2}{n} \leq \ell$, or equivalently $\sqrt{n(\ell-1)} < k \leq \sqrt{n \ell}$.
Since $k\mapsto (S_k-\underline{S}^n_k)$ is non-increasing, its maximum over an interval is always attained at its rightmost point.}
Using first this monotonicity argument and then a Chernoff bound with $z=1$, we get
\begin{align*}
	\Pp{\sup_{\sqrt{n(\ell-1)} < k \leq \sqrt{n \ell} }\frac{1}{2}\cdot (S_k-\underline{S}^n_k) \geq x\ell} 
	&=\Pp{\frac{1}{2}\cdot (S_{\lfloor\sqrt{n \ell}\rfloor}-\underline{S}^n_{\lfloor\sqrt{n \ell}\rfloor}) \geq x\ell}\\ 
	&\leq \exp\left(-x \ell + \frac{2\lambda}{1+\lambda}\cdot (e-1) \cdot \ell \right)\\
	&\leq \exp\left(\left(\frac{2\lambda}{1+\lambda}\cdot (e-1)-x\right) \cdot \ell \right).
\end{align*}
If $x>0$ is large enough, then the last expression is summable in $\ell$ so we can use a union bound over all $\ell \leq \frac{n}{16}$ so that 
\begin{align*}
	 \Pp{\sup_{0\leq i \leq nt}\left(\frac{S_i - \underline{S}^n_i}{2\left(1+\frac{i^2}{n}\right)}\right) \geq x} 
	 &\leq \sum_{\ell=1}^{\lfloor\frac{n}{16}\rfloor}
	 \Pp{\sup_{\sqrt{n(\ell-1)} < k \leq \sqrt{n \ell} }\frac{1}{2}\cdot (S_k-\underline{S}^n_k) \geq x\ell}\\
	 &\leq \sum_{\ell=1}^{\lfloor\frac{n}{16}\rfloor}
	 \exp\left(\left(\frac{2\lambda}{1+\lambda}\cdot (e-1)-x\right) \cdot \ell \right)\\
	 &\leq \frac{\exp\left(\frac{2\lambda}{1+\lambda}\cdot (e-1)-x \right)}{1 - \exp\left(\frac{2\lambda}{1+\lambda}\cdot (e-1)-x\right)},
\end{align*}
where 
the first inequality comes from a union-bound, the second comes from the previous display and the last is obtained by summing the obtained geometric series. 
Now, this last quantity goes to $0$ as $x\rightarrow \infty$, uniformly in $\lambda\in I$.
\end{proof}

\subsubsection{Proof of Lemma~\ref{lem:sum 1/Sni is gamma log k}}
\begin{proof}[Proof of Lemma~\ref{lem:sum 1/Sni is gamma log k}]
	We write
	\begin{align*}
		&\sum^{k}_{i=1} \frac{1}{S^n_i} \ind{\XX^n_i=1} - \frac{\lambda}{\lambda-1} \log k\\ 
		&= \sum^{k}_{i=1} \left(\frac{1}{S^n_i}-\frac{1}{S_i} \right)\ind{\XX^n_i=1} 
		+ \sum^{k}_{i=1} \frac{1}{S_i} \left(\ind{\XX^n_i=1} - \ind{\XX_i=1}\right)
		+ \left(\sum_{i=1}^k\frac{1}{S_k} \ind{\XX_k=1}- \frac{\lambda}{\lambda-1} \log k\right).
	\end{align*}
	Note that the last term is already taken care of by the previous lemma. Since $S_i \ge S^n_i$ for all $i\ge 0$ by the coupling we get
	\begin{align*}
		\ind{\forall i\in \intervalleentier{0}{\lfloor nt \rfloor}, \ S^n_i>0}\sum^{k}_{i=1} \frac{1}{S_i} \left\vert \ind{\XX^n_i=1} - \ind{\XX_i=1}\right\vert 
		&\le 
		\ind{\forall i\in \intervalleentier{0}{\lfloor nt \rfloor}, \ S_i>0}\sum^{k}_{i=1} \frac{1}{S_i} \left\vert \ind{\XX^n_i=1} - \ind{\XX_i=1}\right\vert \\
		&\leq M \cdot \sum^{k}_{i=1} \frac{1}{i} \left\vert \ind{\XX^n_i=1} - \ind{\XX_i=1}\right\vert,
	\end{align*}
where $M=M(\lambda)$ is defined in Lemma~\ref{lem:tension de M lambda}, and is $O_\P(1)$ thanks to that lemma. 
	Now just note that for any $i$ we have $\left\vert \ind{\XX^n_i=1} - \ind{\XX_i=1}\right\vert \leq \left\vert \ind{\underline{\XX}^n_i=1} - \ind{\XX_i=1}\right\vert$ and the latter is a Bernoulli random variable with parameter 
	\begin{align*}
		p^n_i=\frac{\lambda }{1+\lambda} - \frac{\lambda \cdot \left(1-\frac{i}{n}\right)}{1+\lambda \cdot \left(1-\frac{i}{n}\right)} =i \cdot O\left(\frac{1}{n}\right)
	\end{align*}
	uniformly in $i \in \intervalleentier{1}{\lfloor nt \rfloor} $. 
	This ensures that the expectation of the sum $\sum^{k}_{i=1} \frac{1}{i} \left\vert \ind{\XX^n_i=1} - \ind{\XX_i=1}\right\vert$ is $O(\frac{k}{n})=O(1)$ for $k\in \intervalleentier{1}{\lfloor nt \rfloor}$, so that the sum itself is indeed $O_{\P}(1)$.
	
	Last, we need to take care of the first term. 
	On the event $\{\forall i \in \intervalleentier{0}{\lfloor nt \rfloor}, \ S^n_i>0\}$, we have 
	\begin{align*}
		0\leq \left(\frac{1}{S^n_i}-\frac{1}{S_i} \right)\ind{\XX^n_i=1}  \leq  \frac{1}{S^n_i}-\frac{1}{S_i} = \frac{S_i - S^n_i}{S_i \cdot S^n_i}. 
	\end{align*}

From \eqref{eq:control 1/S^n_i} in Lemma~\ref{lem:control on the Si}, 
noting that $S_k\geq S^n_k$ by construction we have 
	\begin{align*}
		\frac{\ind{\forall i \in \intervalleentier{0}{\lfloor nt \rfloor}, \ S^n_i > 0}}{S_k}\leq \frac{\ind{\forall i \in \intervalleentier{0}{\lfloor nt \rfloor}, \ S^n_i > 0}}{S^n_k}= \frac{O_{\P}(1)}{k}
	\end{align*}
	so that using Lemma~\ref{lem:control on the Si} again we get
	\begin{equation}\label{eq majoration 1 sur S moins 1 sur S n}
	\ind{\forall i \in \intervalleentier{0}{\lfloor nt \rfloor}, \ S^n_i > 0} \cdot 	\frac{1}{S_k} \cdot \frac{1}{S^n_k} \cdot \left(S_k - S^n_k\right) \leq \frac{O_{\P}(1)^2}{k^2} \cdot \left(1+\frac{k^2}{n}\right) \cdot O_\P(1) = \left(\frac{1}{k^2} + \frac{1}{n} \right) \cdot O_\P(1)
	\end{equation}
uniformly in $k \in \intervalleentier{1}{\lfloor nt \rfloor}$, weakly uniformly in $\lambda \in I$, so that in the end $\ind{\forall i \in \intervalleentier{0}{\lfloor nt \rfloor}, \ S^n_i > 0} \cdot \sum_{i=1}^{\lfloor nt \rfloor} \left(\frac{1}{S^n_i}-\frac{1}{S_i} \right)\ind{\XX^n_i=1} = O_\P(1)$, which finishes the proof of Lemma~\ref{lem:sum 1/Sni is gamma log k}.
\end{proof}

\subsubsection{Proof of Lemma~\ref{lem:JnJ}}
\begin{proof}[Proof of Lemma~\ref{lem:JnJ}]
For the first assertion, by Lemma~\ref{lem:tension de M lambda} we have for every $i \geq 1$, 
$$
 \ind{S_i(\lambda)>0} \frac{1}{S_i(\lambda)}  \leq \frac{M(\lambda)}{i}.
$$
It follows that for $i> 2 M(\lambda)(e^{2 z_{\lambda}}+1)$ we have $\ind{S_i(\lambda)>0} {S_i(\lambda)^{-1}} (e^{2 z_{\lambda}}+1) <{{1}/{2}}$ so that 
$J(\lambda) {\ind{\forall k \geq 0 , \ S_k(\lambda)>0}} \leq 2 M(\lambda)(e^{2 z_{\lambda}}+1) +1$, which entails the desired tightness.

For (ii), we first check that  ${J^n \cdot  \ind{\forall i\in \intervalleentier{0}{\lfloor nt \rfloor}, \ S^n_i>0}} = O_\P(1)$, weakly uniformly in $\lambda \in I$.
Recall from the coupled construction \eqref{eq couplage} that for all $k\ge 0$, we have $S^n_k \le S_k$, 
and from \eqref{eq: limite fluide en proba} that $\ind{\forall i \in \intervalleentier{0}{\lfloor nt \rfloor}, \ S^n_i >0} = \ind{\forall i\ge 0, \ S_i >0} + o_\P(1)$. 
Using \eqref{eq majoration 1 sur S moins 1 sur S n}, we have 
\begin{align}
\frac{1}{S^n_k} \left(e^{2z_\lambda} +1 \right) \ind{\forall i \in \intervalleentier{0}{\lfloor nt \rfloor}, \ S^n_i >0}&\le \frac{1}{S_k} \left(e^{2z_\lambda} +1 \right) \ind{\forall i \in \intervalleentier{0}{\lfloor nt \rfloor}, \ S^n_i >0} + \left( \frac{1}{k^2} + \frac{1}{n}\right) O_\P(1)\notag \\
&=  \frac{1}{S_k} \left(e^{2z_\lambda} +1 \right)\left( \ind{\forall i\ge 0, \ S_i >0} + o_\P(1) \right)+ \left( \frac{1}{k^2} + \frac{1}{n}\right) O_\P(1), \label{eq:4.8a}
\end{align}
{ as $n\rightarrow \infty$, uniformly in $k \in \intervalleentier{1}{\lfloor nt \rfloor}$, weakly uniformly in $\lambda \in I$.} 
As a result, by definition of $J^n$ and by the strong law of large numbers, using the coupling of Section~\ref{sous-section preliminaire couplage}, we see that $J^n = O_\P(1)$, weakly uniformly in $\lambda \in I$.

Besides, using the coupling of Section~\ref{sous-section preliminaire couplage}, 
for any fixed $k\geq 0$, we have $S^n_k=S_k+o_\P(1)$ almost surely as $n\rightarrow \infty$, weakly uniformly in $\lambda \in I$. 
As a consequence, 
for any fixed $K\geq 1$ we have 
$$
\frac{1}{S^n_k} \left(e^{2z_\lambda} +1 \right) \ind{\forall i \in \intervalleentier{0}{\lfloor nt \rfloor}, \ S^n_i >0} = \left( 1 + o_\P(1) \right)\frac{1}{S_k}\left(e^{2z_\lambda} +1 \right) \ind{\forall i \in \intervalleentier{0}{\lfloor nt \rfloor}, \ S^n_i >0},
$$
as $n\rightarrow \infty$, uniformly in $k\in \intervalleentier{0}{K}$, weakly uniformly in $\lambda \in I$. 
Finally, let $K\ge 1$. 
The above equality implies that
\begin{equation}
\label{eq:4.8b}J^n \ind{\forall i \in \intervalleentier{0}{\lfloor nt \rfloor}, \ S^n_i >0} \ind{J^n \le K \text{ and } J \le K} = J \ind{\forall i \in \intervalleentier{0}{\lfloor nt \rfloor}, \ S^n_i >0} \ind{J^n \le K \text{ and } J \le K} + o_\P(1).
\end{equation}
{Using \eqref{eq: limite fluide en proba} again together with} 
the fact that $(J(\lambda) {\ind{\forall i \ge 0, \ S_i(\lambda) >0}})_{\lambda \in I}$ is tight , we get
\begin{equation}\label{eq:J n proche de J avec K}
J^n \ind{\forall i \in \intervalleentier{0}{\lfloor nt \rfloor}, \ S^n_i >0} \ind{J^n \le K \text{ and } J \le K} = J \ind{\forall i \ge 0, \ S_i >0} \ind{J^n \le K \text{ and } J \le K} + o_\P(1).
\end{equation}
Thus, for all $K \ge 1$,
\begin{align*}
	&\sup_{\lambda \in I}\P\left( \left\vert J^n \ind{\forall i \in \intervalleentier{0}{\lfloor nt \rfloor}, \ S^n_i >0} -J \ind{\forall i \ge 0, \ S_i >0}  \right\vert \ge \e \right)\\
	&\le
	\sup_{\lambda \in I} \P\left(J^n > K \text{ or } J >K\right)+ \sup_{\lambda \in I}\P\left( \left\vert J^n \ind{\forall i \in \intervalleentier{0}{\lfloor nt \rfloor}, \ S^n_i >0} -J \ind{\forall i \ge 0, \ S_i >0}  \right\vert \ge \e , \ J^n \le K \text{ and } J \le K \right).
\end{align*}
This entails the desired result, since the second term goes to zero as $n\to \infty$ by \eqref{eq:J n proche de J avec K} and the first term can be made arbitrarily small by taking $K$ large enough and using the fact that $J^n = O_\P(1)$  and that $(J(\lambda))_{\lambda \in I}$ is tight.
\end{proof}

\subsection{Control of the martingales $(M^n_k(z))$}\label{sous-section contrôle des martingales}
We give here some estimates on the martingales $M^n_k(z)$, which will be useful for the convergence result given in the next subsection.
{In all this subsection,
	we fix $I\subset \intervalleoo{1}{\infty}$ a compact interval and $t\in \intervalleoo{0}{\infty}$ such that $t\in \intervalleoo{0}{t_\lambda}$ for all $\lambda \in I$. Recalling the definition of $J^{n}$,  on the event $\{\forall i\in \intervalleentier{0}{\lfloor nt \rfloor},\ S^n_i>0\}$,  for every $ k \in \llbracket J^n, \lfloor nt \rfloor \rrbracket$ and for every $z\in \mathscr{E}'$ we have $C^n_k(z) \neq 0$, so that $M^n_k(z)$ is well-defined.}
\begin{lemma}\label{lemme majoration moment martingale}
	Fix a compact $K\subset \mathbb C$ such that $K\subset \mathscr{E}(\lambda)$ for all $\lambda \in I$. For {$p\in \intervalleof{1}{2}$}, for all $n \ge 0$, on the event $\{\forall i\in \intervalleentier{0}{\lfloor nt \rfloor},\ S^n_i>0\}$
	we have 
	\begin{align*} \ind{k\geq J^n} \E_n \left[{\left\vert M^n_k(z)\right\vert^p}\right]
	\le 
	 {\ind{k\geq J^n}} \frac{C^n_k(p \Re z)}{\vert C^n_k(z) \vert^p} {\cdot O_\P(1)},
	\end{align*} 
as $n\rightarrow\infty$, {uniformly in $z\in K$, and in $k\in \intervalleentier{0}{\lfloor nt \rfloor}$,} weakly uniformly in $\lambda\in I$.
\end{lemma}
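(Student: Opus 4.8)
The plan is to run a standard Burkholder--Davis--Gundy / von~Bahr--Esseen type argument on the martingale $(M^n_k(z))_{J^n \le k \le \lfloor nt\rfloor}$ under $\P_n$, using the explicit one-step increments coming from \eqref{eq:L} in Lemma~\ref{lem:Mk is a martingale}. Fix $p \in \intervalleof{1}{2}$. Writing $N_k \coloneqq \ind{\forall i\in\intervalleentier{0}{\lfloor nt\rfloor}, S_i^n>0}\cdot \ind{k\geq J^n}\cdot M^n_k(z)$, I would compute from \eqref{eq:L} that
\[
M^n_{k+1}(z) - M^n_k(z) = \frac{1}{C^n_{k+1}(z)}\left(\cL(z,\T^n_{k+1}) - \left(1+\tfrac{1}{S^n_{k+1}}(e^z-1)\ind{\XX^n_{k+1}=1}\right)\cL(z,\T^n_k)\right),
\]
and the numerator is, by \eqref{eq:L}, equal to $\frac{\XX^n_{k+1}}{S^n_{k+1}}e^{z\,\haut(V_k)}e^{z\ind{\XX^n_{k+1}=1}} - \frac{1}{S^n_{k+1}}(e^z-1)\ind{\XX^n_{k+1}=1}\cL(z,\T^n_k)$, which is $\XXb^n$-measurably bounded in modulus by $\frac{2}{S^n_{k+1}}e^{\Re z\,\Height(\T^n_k)}\cdot\big(1+|\cL(z,\T^n_k)|\big)$. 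The key point is that $\Height(\T^n_k)\le k \le \lfloor nt\rfloor$ so $e^{\Re z\,\Height(\T^n_k)}$ can only grow polynomially (indeed $\Re z < z_\lambda$ so... this crude bound is wasteful; better: bound $|\cL|\le e^{\Re z\,\Height}$ directly only on the tail and use the product structure). I would instead use the cleaner route: the Laplace transform $\cL(z,\T^n_k)$ is itself, after dividing by $\Ecp{n}{\cL(z,\T^n_k)}$, a nonnegative-type quantity controlled by $\cL(p\Re z, \cdot)$ via $|\cL(z,T)|\le \cL(\Re z, T) \le \cL(p\Re z, T)^{1/p}\cdot(\text{const})$ when $p\Re z$ stays in the right half-plane.

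Concretely, the main step I would carry out is to control $\E_n[|M^n_k(z)|^p]$ directly by interpolation. Since $M^n_k(z)$ is a $\P_n$-martingale, $|M^n_k(z)|^p$ is a submartingale for $p\ge 1$, but that goes the wrong way; instead I would use the von~Bahr--Esseen inequality: for a martingale with increments $D_i$ and $p\in\intervalleof{1}{2}$,
\[
\E_n\big[|M^n_k(z) - M^n_{J^n}(z)|^p\big] \le 2\sum_{i=J^n+1}^{k}\E_n\big[|D_i|^p\big],
\]
and then bound each $\E_n[|D_i|^p]$. From the increment formula, $|D_i|\le \frac{2}{S^n_i}e^{\Re z\,\haut(V_{i-1})}\cdot e^{(\Re z)_+}+\frac{|e^z-1|}{S^n_i}|\cL(z,\T^n_{i-1})|$, and $\E_n[e^{p\Re z\,\haut(V_{i-1})}] = \cL(p\Re z,\T^n_{i-1})$ since $V_{i-1}$ is uniform among active vertices. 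So $\E_n[|D_i|^p] \le \frac{C}{(S^n_i)^p}\big(\cL(p\Re z,\T^n_{i-1}) + |\cL(z,\T^n_{i-1})|^p\big)$, and using $|\cL(z,T)|^p\le \cL(\Re z, T)^p\le \cL(p\Re z,T)$ (by Jensen on the probability measure $e^{(h-\Height)\Re z}\bL(h)/\cL(\Re z,T)$... actually $|\cL(z,T)|\le \cL(\Re z,T)$ and then $\cL(\Re z,T)^p\le \cL(p\Re z,T)\cdot \cL(0,T)^{p-1} = \cL(p\Re z,T)$ by Jensen since $\cL(0,T)=1$), we get $\E_n[|D_i|^p]\le \frac{C'}{(S^n_i)^p}\cL(p\Re z,\T^n_{i-1})$. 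Taking $\E_n$ once more and using the product form \eqref{eq:productform}, $\E_n[\cL(p\Re z,\T^n_{i-1})] = \E_n[\cL(p\Re z,\T^n_{J^n})]\,C^n_{i-1}(p\Re z)$, with $\cL(p\Re z,\T^n_{J^n})\le e^{p\Re z\cdot J^n}$ and $J^n = O_\P(1)$ weakly uniformly in $\lambda\in I$ by Lemma~\ref{lem:JnJ}. Then $\E_n[|M^n_k(z)-M^n_{J^n}(z)|^p]\le C''\cdot O_\P(1)\cdot\sum_{i=J^n+1}^{k}\frac{C^n_{i-1}(p\Re z)}{(S^n_i)^p}$, and one checks the ratio $\frac{C^n_{i-1}(p\Re z)}{C^n_k(p\Re z)}$ decays (since $p\Re z < p z_\lambda$ and the factors $1+\frac{1}{S^n_i}(e^{p\Re z}-1)$, noting $e^{p\Re z}-1$ could be positive, are $\le \exp(\frac{e^{p\Re z}-1}{S^n_i})$, so $\sum_i \frac{C^n_{i-1}(p\Re z)}{(S^n_i)^p}\le C^n_k(p\Re z)\cdot\sum_i\frac{1}{(S^n_i)^p}\cdot e^{-\sum_{j\le i}\cdots}$, a convergent series controlled using $S^n_i\gtrsim i$ from Lemma~\ref{lem:control on the Si}). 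Adding back the $|M^n_{J^n}(z)|^p$ term (also bounded by $e^{p\Re z\cdot J^n}\cdot\frac{1}{|C^n_{J^n}(z)|^p}$ with $C^n_{J^n}(z)=1$) gives $\E_n[|M^n_k(z)|^p]\le \frac{C^n_k(p\Re z)}{|C^n_k(z)|^p}\cdot O_\P(1)$.

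For the uniformity in $z\in K$ I would note all the above bounds are via $\Re z$ and $|e^z-1|\le e^{|\Re z|}+1$, both uniformly bounded on the compact $K$; the only genuinely $z$-dependent denominators $|C^n_k(z)|^p$ and $|C^n_{J^n}(z)|=1$ appear explicitly in the claimed right-hand side, and we need $\prod|1+\frac{1}{S^n_i}(e^z-1)\ind{\cdots}|$ not to be too small — but this is exactly why $J^n$ is defined so that $\frac{1}{S^n_i}(e^{2z_\lambda}+1)<\frac12$ for $i>J^n$, giving $|1+\frac{1}{S^n_i}(e^z-1)\ind{\cdots}|>\frac12$ as recorded in the paragraph defining $M^n_k(z)$; so no extra smallness enters beyond what is already carried in the statement. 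The main obstacle I expect is the bookkeeping of the two nested conditional expectations together with the weak-uniformity-in-$\lambda$ $O_\P(1)$'s coming from $J^n$, $\sup_i i/S^n_i$, and ensuring the series $\sum_{i} (S^n_i)^{-p}\,C^n_{i-1}(p\Re z)/C^n_k(p\Re z)$ is summable uniformly — this is where Lemma~\ref{lem:control on the Si} and the precise definition of $\mathscr{E}$ versus $\mathscr{E}'$ (so that $p\Re z$ stays $<2z_\lambda$ for $p\le 2$ and $\Re z < z_\lambda$) are essential and must be invoked carefully.
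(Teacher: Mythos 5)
Your plan would eventually prove the lemma, but it takes a much heavier route than the paper, and — strikingly — you write down the paper's entire proof in passing and then abandon it. The paper's argument is just this: since $\cL(z,\T^n_k)=\Ecsq{e^{z\haut(V^n_k)}}{\T^n_k}$, Jensen gives $\vert \cL(z,\T^n_k)\vert^p \le \Ecsq{e^{p\Re(z)\haut(V^n_k)}}{\T^n_k}=\cL(p\Re z,\T^n_k)$ pointwise, hence $\E_n[\vert M^n_k(z)\vert^p]\le \vert C^n_k(z)\vert^{-p}\,\E_n[\cL(p\Re z,\T^n_k)] = \frac{C^n_k(p\Re z)}{\vert C^n_k(z)\vert^p}\,\E_n[\cL(p\Re z,\T^n_{J^n})]$ by the product form \eqref{eq:productform}, and the last factor is $O_\P(1)$ because $J^n=O_\P(1)$ (Lemma~\ref{lem:JnJ}). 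This is exactly the "cleaner route" you mention ($\vert\cL(z,T)\vert\le\cL(\Re z,T)\le \cL(p\Re z,T)^{1/p}$) before switching to von Bahr--Esseen; no martingale increment analysis is needed for this particular lemma. The increment machinery you set up is essentially what the paper deploys only later, in Proposition~\ref{proposition majoration increments Mn} and Corollary~\ref{corollaire M nt proche de M An} — and note that Proposition~\ref{proposition majoration increments Mn} itself \emph{uses} the present lemma to bound the increments, so building the lemma on the increment bounds risks circularity unless you rederive them from scratch as you sketch.

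If you do insist on the von Bahr--Esseen route, there is a bookkeeping slip that matters: your bound on $\vert D_i\vert$ drops the factor $1/\vert C^n_i(z)\vert$ coming from the prefactor $1/C^n_{i+1}(z)$ in the increment, so the series whose summability you propose to check, $\sum_i (S^n_i)^{-p}\,C^n_{i-1}(p\Re z)/C^n_k(p\Re z)$, is not the right one. The correct comparison is of $\sum_i (S^n_i)^{-p}\,C^n_{i-1}(p\Re z)/\vert C^n_i(z)\vert^p$ against $C^n_k(p\Re z)/\vert C^n_k(z)\vert^p$; by Lemma~\ref{lem:asymptotics Cnkz} the $i$-th summand behaves like $i^{-p+\beta}$ with $\beta=\gamma(e^{p\Re z}-1-p(\Re(e^z)-1))\ge 0$, and one concludes via $\sum_{i\le k} i^{-p+\beta}\lesssim k^{(\beta-p+1)\vee 0}\le k^\beta$ since $p>1$. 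This works for all $z\in K\subset\mathscr{E}$ (including $\Re z\le 0$), but it is precisely the extra analysis that the direct Jensen argument makes unnecessary.
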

\begin{proof}
		Fix $k\in \intervalleentier{0}{\lfloor nt \rfloor}$.  We work on the event $\{\forall i \in \intervalleentier{0}{\lfloor nt \rfloor}, \ S^n_i > 0\}\cap \{ k \geq J_{n}\}$.
		By definition we have 
\(
	\mathcal{L}(z,\T_k^n) = \Ecp{n}{e^{z \haut(U^n_k)}},
\)	
where conditionally on $\T_k^n$, the vertex ${V^n_k}$ is sampled uniformly at random among the active vertices of $\T_k^n$.
	By Jensen's inequality, we have
	\begin{align*}
		\E_n \left[{\left\vert M^n_k(z)\right\vert^p}\right] =
		\frac{\E_n \left[{\left\vert \mathcal{L}(z,\T_k^n)\right\vert^p}\right]}{\vert C^n_k(z) \vert^p}
		= 	\frac{\E_n \left[{\left\vert \Ecsq{e^{z \haut({V^n_k})}}{\T_k^n} \right\vert^p}\right]}{\vert C^n_k(z) \vert^p}
		&\leq \frac{\E_n\left[{\Ecsq{\left\vert e^{z \haut({V^n_k})} \right\vert^p}{\T_k^n} }\right]}{\vert C^n_k(z) \vert^p} \\
		&=\frac{\E_n\left[{\Ecsq{e^{ p \Re (z) \haut({V^n_k})}}{\T_k^n} }\right]}{\vert C^n_k(z) \vert^p}\\
		&= \frac{C^n_k(p \Re z)}{\vert C^n_k(z) \vert^p} \cdot {\Ecp{n}{\mathcal{L}(p \Re z,\T^n_{J^n})}},
	\end{align*}
where in the last equality, we have used \eqref{eq:productform}.
Now, since by Lemma~\ref{lem:JnJ}(ii) on the event $\{\forall i \in \intervalleentier{0}{\lfloor nt \rfloor}, \ S^n_i > 0\}$ we have $J^n=O_\P(1)$ as $n\rightarrow\infty$ weakly uniformly in $\lambda\in I$, then the same is true for  $\Ecp{n}{\mathcal{L}(p \Re z,\T^n_{J^n})}$. 
	This completes the proof.
\end{proof}

\begin{lemma}
	\label{lem:asymptotics Cnkz}
Fix a compact $K\subset \{z \in \mathbb{C}, \ \Re z>0\}$ such that $K\subset \mathscr{E}(\lambda)$ for all $\lambda \in I$.
On the event $\{\forall i \in \intervalleentier{0}{\lfloor nt \rfloor}, \ S^n_i > 0\}$, 
 we have
	\begin{align*} 
	{\ind{k\geq J^n}}C^n_k(z) = {\ind{k\geq J^n}}\exp\left( \gamma (e^z -1) \log k + O_{\P}(1)\right),
	\end{align*} 
as $n\rightarrow \infty$, uniformly in $z \in K$ and in $k\in \intervalleentier{0}{\lfloor nt \rfloor}$, weakly uniformly in $\lambda \in I$. 

Similarly, for compact $K\subset \{z \in \mathbb{C}, \ \Re z>0\}$ such that $K\subset \mathscr{E}'(\lambda)$ for all $\lambda \in I$,
on the event $\{\forall i \in \intervalleentier{0}{\lfloor nt \rfloor}, \ S^n_i > 0\}$,
for every {$p\in \intervalleof{1}{2}$}, 
	\begin{align*} 
	{\ind{k\geq J^n}}\frac{C^n_k(p \Re z)}{\vert C^n_k(z) \vert^p} ={\ind{k\geq J^n}}\exp \left(
	\gamma\left( e^{p \Re z} -1 -p (\Re(e^z) -1)\right) \log k + O_{\P}(1)
	\right),
	\end{align*}
as $n\rightarrow \infty$, 	uniformly in $z \in K$ and in $k\in \intervalleentier{0}{\lfloor nt \rfloor}$, {weakly uniformly in $\lambda \in I$}. 
\end{lemma}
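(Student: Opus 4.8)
The plan is to pass to logarithms and reduce everything to the sum estimates of Section~\ref{sous-section resultats techniques sur S}. Throughout I would work on the event $G_n \coloneqq \{\forall i \in \intervalleentier{0}{\lfloor nt \rfloor}, \ S^n_i > 0\} \cap \{k \geq J^n\}$ and set $w_i(z) \coloneqq \frac{1}{S^n_i}(e^z-1)\ind{\XX^n_i=1}$ for $J^n < i \leq k$. The definition of $J^n$ together with the bound $\Re z < 2z_\lambda$, valid for $z\in K$ uniformly over $\lambda\in I$ (since $\lambda\mapsto z_\lambda$ is continuous, hence bounded away from $0$ on the compact $I$), gives $|w_i(z)| \leq \frac{1}{S^n_i}(e^{\Re z}+1) < \frac{1}{2}$ for $i>J^n$. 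Hence each factor $1+w_i(z)$ lies in the disk of radius $\frac12$ about $1$, the principal branch of $\log(1+w_i(z))$ is well defined, and
\[
C^n_k(z) = \prod_{i=J^n+1}^{k}\bigl(1+w_i(z)\bigr) = \exp\!\left( \sum_{i=J^n+1}^{k} \log\bigl(1+w_i(z)\bigr) \right)
\]
holds as an exact identity (this sidesteps any branch issue when taking the logarithm of the product). Using $|\log(1+w)-w| \leq |w|^2$ for $|w|\leq \frac12$, I would split
\[
\sum_{i=J^n+1}^{k} \log\bigl(1+w_i(z)\bigr) = (e^z-1) \sum_{i=J^n+1}^{k} \frac{\ind{\XX^n_i=1}}{S^n_i} + R^n_k(z), \qquad |R^n_k(z)| \leq \Bigl(\sup_{z'\in K}|e^{z'}-1|\Bigr)^2 \sum_{i=J^n+1}^{k} \frac{1}{(S^n_i)^2}.
\]

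Next I would control the two sums. For the main term, Lemma~\ref{lem:sum 1/Sni is gamma log k} gives $\sum_{i=1}^{k}\frac{\ind{\XX^n_i=1}}{S^n_i} = \gamma\log k + O_\P(1)$ (recall $\gamma=\lambda/(\lambda-1)$), uniformly in $k\in\intervalleentier{1}{\lfloor nt\rfloor}$ and weakly uniformly in $\lambda\in I$; removing the initial block $\sum_{i=1}^{J^n}\frac{\ind{\XX^n_i=1}}{S^n_i}$, whose modulus is at most $J^n$ on $G_n$ (there $S^n_i\ge1$) and $J^n=O_\P(1)$ by Lemma~\ref{lem:JnJ}(ii), yields $\sum_{i=J^n+1}^{k}\frac{\ind{\XX^n_i=1}}{S^n_i}=\gamma\log k+O_\P(1)$; multiplying by the bounded factor $e^z-1$ gives $\gamma(e^z-1)\log k + O_\P(1)$, uniformly in $z\in K$ and $k$, weakly uniformly in $\lambda$. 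For the remainder, Lemma~\ref{lem:control on the Si} provides an $O_\P(1)$ random variable $D_n$ (weakly uniform in $\lambda$) with $S^n_i\ge i/D_n$ for all $i\in\intervalleentier{1}{\lfloor nt\rfloor}$ on $G_n$, so $\sum_{i=J^n+1}^{k}(S^n_i)^{-2}\le D_n^2\sum_{i\ge1}i^{-2}=O_\P(1)$ and therefore $R^n_k(z)=O_\P(1)$ uniformly in $z\in K$ and $k$. Combining, $\sum_{i=J^n+1}^k\log(1+w_i(z))=\gamma(e^z-1)\log k+O_\P(1)$, and exponentiating gives the first assertion — the same computation, using only $\Re z<2z_\lambda$, covers the case $K\subset\mathscr{E}'(\lambda)$ as well.

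For the second assertion I would apply the first one to two arguments and track real parts. The identity above gives $C^n_k(z)=\exp(\gamma(e^z-1)\log k+O_\P(1))$; since $\log k$ is real, $|C^n_k(z)|=\exp(\gamma(\Re(e^z)-1)\log k+O_\P(1))$, hence $|C^n_k(z)|^p=\exp(p\gamma(\Re(e^z)-1)\log k+O_\P(1))$ for $p\in\intervalleof{1}{2}$; applying the first assertion to the real argument $p\Re z$ gives $C^n_k(p\Re z)=\exp(\gamma(e^{p\Re z}-1)\log k+O_\P(1))$; dividing produces
\[
\frac{C^n_k(p\Re z)}{|C^n_k(z)|^p}=\exp\!\left(\gamma\bigl(e^{p\Re z}-1-p(\Re(e^z)-1)\bigr)\log k+O_\P(1)\right),
\]
uniformly in $z\in K$, $k\in\intervalleentier{0}{\lfloor nt\rfloor}$, weakly uniformly in $\lambda\in I$. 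The one point that needs genuine care — more than any single estimate, each of which is either elementary or already proved in Section~\ref{sous-section resultats techniques sur S} — is propagating the two-tiered uniformity (strong in $z\in K$ and in $k$, weak in $\lambda\in I$) cleanly through all of these steps, and keeping honest the exact exponential representation of the complex product $C^n_k(z)$.
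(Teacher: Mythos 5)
Your proof is correct and follows essentially the same route as the paper's: write $C^n_k(z)$ exactly as the exponential of a sum of logarithms, use $|\log(1+w)-w|\le|w|^2$ together with $\sum_i (S^n_i)^{-2}=O_\P(1)$ (Lemma~\ref{lem:control on the Si}) for the remainder, and Lemma~\ref{lem:sum 1/Sni is gamma log k} plus $J^n=O_\P(1)$ for the main term. The only (harmless) divergence is in the second assertion, which you deduce by taking moduli of the first assertion and applying it to the real argument $p\Re z$, whereas the paper redoes the product computation for the quotient directly; both rest on the same estimates, noting that for a positive real argument each factor of $C^n_k$ is $\ge 1$ so the expansion of the logarithm needs no smallness condition.
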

\begin{proof}
We work on the event $\{\forall i \in \intervalleentier{0}{\lfloor nt \rfloor}, \ S^n_i > 0\}$. 
Fix $k$ such that $J^n \leq  k \leq nt$ and write
	\begin{align*}
		C^n_k(z) = \prod_{i={J^n{+1}}}^k \left(1+ \frac{1}{S^n_i} (e^z-1) \ind{\XX^n_i= 1}\right)
		& = \exp\left(\sum_{i=J^n{+1}}^k \log\left(1+\frac{1}{S^n_i} \ind{\XX^n_i=1} (e^z-1)\right)\right)\\
		&=\exp\left(\sum_{i=J^n{+1}}^k \frac{1}{S^n_i} \ind{\XX^n_i=1} (e^z-1) + O_{\P}(1)\right),
	\end{align*}
{where $\log(1+w)\coloneqq\sum_{\ell \ge 1} (-1)^{\ell+1} w^\ell/\ell$ for any $w \in \mathbb{C}$ with $|w|<1$ (note that here by definition of $J^n$ we have $\left\vert \frac{1}{S^n_i} \ind{\XX^n_i=1} (e^z-1) \right\vert <\frac{1}{2}$ for all $i\geq J^n$).}
The second equality above is obtained from the fact that $|\log(1+w)-w|\leq |w|^2$ for all $w$ with $|w|<{\frac{1}{2}}$ together with the fact that,  by \eqref{eq:control 1/S^n_i}, we have 
	\begin{align}\label{eq:sum inverse square Si is O of 1}
		\sum_{i=J^n{+1}}^{ \lfloor t n\rfloor} \frac{1}{(S^n_i)^2} = O_{\P}(1)
	\end{align}
	as $n \to \infty$, {weakly uniformly in $\lambda \in I$}.
	Moreover
	\begin{align*}
		\frac{C^n_k(p \Re z)}{\vert C^n_k(z) \vert^p} &=\prod_{i=J^n{+1}}^{k} \left(\left(1 + \frac{1}{S^n_{i}} (e^{p \Re z}-1) \cdot \ind{\XX^n_i=1}\right)\cdot
		\left\vert 1 + \frac{1}{S^n_{i}} (e^{z}-1) \cdot \ind{\XX^n_i=1}\right\vert^{-p}\right) \\
		&=
		\exp \left(
		\sum_{i=J^n{+1}}^{k} \left( \log \left(1 + \frac{1}{S^n_{i}} (e^{p \Re z}-1) \cdot \ind{\XX^n_i=1}\right)
		-p\log \left\vert 1 + \frac{1}{S^n_{i}} (e^{z}-1) \cdot \ind{\XX^n_i=1}\right\vert
		\right)
		\right) \\
		&=\exp \left(
		\sum_{i=J^n{+1}}^{k} \left( \frac{1}{S^n_{i}} (e^{p \Re z}-1) \cdot \ind{\XX^n_i=1}
		-p\Re\left( \frac{1}{S^n_{i}} (e^{z}-1) \cdot \ind{\XX^n_i=1}\right)
		\right)
		+O_{\P}(1)
		\right), \\
		&=
		\exp \left(
		\left(  e^{p \Re z}-1
		-p\Re\left(  e^{z}-1 \right)
		\right)\cdot 
		\sum_{i=J^n{+1}}^{k} \frac{1}{S^n_{i}} \ind{\XX^n_i=1} 
		+O_{\P}(1)
		\right)
	\end{align*}
	using \eqref{eq:sum inverse square Si is O of 1}  again to get the third equality.
	We conclude using Lemma~\ref{lem:sum 1/Sni is gamma log k}.
	\end{proof}

\begin{proposition}\label{proposition majoration increments Mn}
	Let $K \subset \{ z \in \mathbb{C}, \ \Re z>0\}$ be a compact set
	{such that $K\subset \mathscr{E}(\lambda)$ for all $\lambda \in I$.} 
	For every {$p\in \intervalleof{1}{2}$}, on the event $\{\forall i \in \intervalleentier{0}{\lfloor nt \rfloor}, \ S^n_i > 0\}$,  we have
	\begin{align*} 
	{\ind{k\geq J^n}}\E_n \left[ \lvert M^n_{k+1 }(z)- M^n_k(z)\rvert^p \right] \le \exp \left(
	\left( -p+ \gamma\left( e^{p \Re z} -1 -p (\Re(e^z) -1)\right)
	\right) \log k + O_\P(1)
	\right)
	\end{align*} 
where the $O_\P(1)$ holds as $n\rightarrow \infty$, uniformly in $z \in K$ and $k\in \intervalleentier{0}{\lfloor nt \rfloor}$, weakly uniformly in $\lambda \in I$. 
\end{proposition}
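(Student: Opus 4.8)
The plan is to bound the increment $M^n_{k+1}(z)-M^n_k(z)$ explicitly using the recursive formula \eqref{eq:L} for $\cL(z,\T^n_{k+1})$ in terms of $\cL(z,\T^n_k)$, then take conditional $p$-th moments and invoke the asymptotics of Lemma~\ref{lem:asymptotics Cnkz}. First I would write, on the event $\{\forall i \le \lfloor nt\rfloor, \ S^n_i>0\} \cap \{k\ge J^n\}$,
\[
M^n_{k+1}(z)-M^n_k(z) = \frac{\cL(z,\T^n_{k+1})}{C^n_{k+1}(z)} - \frac{\cL(z,\T^n_k)}{C^n_k(z)}.
\]
Using the identity $C^n_{k+1}(z) = C^n_k(z)\bigl(1+\frac{1}{S^n_{k+1}}(e^z-1)\ind{\XX^n_{k+1}=1}\bigr)$ together with \eqref{eq:L}, namely $\cL(z,\T^n_{k+1}) = \frac{S^n_k}{S^n_{k+1}}\cL(z,\T^n_k) + \frac{\XX^n_{k+1}}{S^n_{k+1}} e^{z\haut(V_k)} e^{z\ind{\XX^n_{k+1}=1}}$, a direct computation shows that the increment equals $\frac{1}{C^n_{k+1}(z)}$ times a term of the form $\frac{1}{S^n_{k+1}} e^{z\haut(V_k)} \cdot (\text{bounded factor}) - \cL(z,\T^n_k)\cdot (\text{something of size } O(1/S^n_{k+1}))$. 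The key point is that both contributions carry a prefactor $1/S^n_{k+1} = O_\P(1/k)$ by Lemma~\ref{lem:control on the Si}, and after dividing by $C^n_{k+1}(z)$ and recalling $C^n_{k+1}(z) = C^n_k(z)\cdot(1+O(1/S^n_{k+1}))$, we get $|M^n_{k+1}(z)-M^n_k(z)| \le \frac{O_\P(1)}{k}\cdot \frac{1}{|C^n_k(z)|}\bigl(|\cL(\cdot)| + \text{bounded}\bigr)$, where the ``bounded'' piece comes from $e^{\Re(z)\haut(V_k)} \le \cL(\Re z, \T^n_k) \cdot S^n_k$ being controllable, or more cleanly by writing the increment directly as a martingale difference and using the representation in terms of $\Ecsq{\cdot}{\T^n_k}$.

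The cleanest route is probably: the martingale difference $M^n_{k+1}(z)-M^n_k(z)$, conditionally on $\mathcal F_k = \sigma(\T^n_1,\dots,\T^n_k)$ and on $\XXb^n$, is $\frac{1}{C^n_{k+1}(z)}$ times $\bigl(\cL(z,\T^n_{k+1}) - \Ecsq{\cL(z,\T^n_{k+1})}{\mathcal F_k}\bigr)$ by Lemma~\ref{lem:Mk is a martingale}, and from \eqref{eq:L} this centered quantity is $\frac{\XX^n_{k+1}}{S^n_{k+1}} e^{z\ind{\XX^n_{k+1}=1}}\bigl(e^{z\haut(V_k)} - \Ecsq{e^{z\haut(V_k)}}{\mathcal F_k}\bigr)$. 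Taking $p$-th moments, using $\vert e^{z\haut(V_k)}\vert^p = e^{p\Re z \haut(V_k)}$ and the bound $\Ecsq{e^{p\Re z\haut(V_k)}}{\mathcal F_k} = \cL(p\Re z,\T^n_k)$, together with $|e^{z\ind{\XX^n_{k+1}=1}}|^p \le e^{p\Re z} + 1 = O(1)$ and $1/(S^n_{k+1})^p = O_\P(1)/k^p$, yields
\[
\ind{k\ge J^n}\E_n\bigl[|M^n_{k+1}(z)-M^n_k(z)|^p\bigr] \le \frac{O_\P(1)}{k^p}\cdot \frac{\Ecp{n}{\cL(p\Re z,\T^n_k)}}{|C^n_{k+1}(z)|^p}.
\]
Since $\Ecp{n}{\cL(p\Re z,\T^n_k)} = \Ecp{n}{\cL(p\Re z,\T^n_{J^n})}\cdot C^n_k(p\Re z)$ by \eqref{eq:productform}, with the prefactor $\Ecp{n}{\cL(p\Re z,\T^n_{J^n})} = O_\P(1)$ (as $J^n = O_\P(1)$ by Lemma~\ref{lem:JnJ}(ii)), and since $|C^n_{k+1}(z)|^p = |C^n_k(z)|^p\cdot(1+O(1/S^n_{k+1}))^p = |C^n_k(z)|^p\cdot e^{O_\P(1/k)}$, we are left with $\frac{O_\P(1)}{k^p}\cdot\frac{C^n_k(p\Re z)}{|C^n_k(z)|^p}$. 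Applying the second estimate of Lemma~\ref{lem:asymptotics Cnkz} (noting $p\Re z = \Re(pz)$, and being careful that $pz$ stays in $\mathscr E'(\lambda)$, which holds because $p\le 2$ and $\Re z < z_\lambda$ give $p\Re z < 2z_\lambda$) converts $\frac{C^n_k(p\Re z)}{|C^n_k(z)|^p}$ into $\exp\bigl(\gamma(e^{p\Re z}-1-p(\Re(e^z)-1))\log k + O_\P(1)\bigr)$, and absorbing $k^{-p} = e^{-p\log k}$ gives exactly the claimed bound.

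The main obstacle will be bookkeeping the uniformity: every intermediate $O_\P(1)$ must be uniform in $z\in K$ and $k\in\intervalleentier{0}{\lfloor nt\rfloor}$ and weakly uniform in $\lambda\in I$, which forces one to quote Lemmas~\ref{lem:control on the Si}, \ref{lem:asymptotics Cnkz} and \ref{lem:JnJ} in their uniform forms and to check that the compactness of $K$ makes $e^{p\Re z}$, $|e^z-1|$, etc.\ bounded uniformly. A secondary subtlety is verifying $pz \in \mathscr E'(\lambda)$ so that $C^n_k(p\Re z)$ and the manipulations of Lemma~\ref{lem:asymptotics Cnkz} are legitimate — this is where the hypothesis $K\subset\mathscr E(\lambda)$ (i.e.\ $\Re z < z_\lambda$) combined with $p\le 2$ is used. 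The algebra of extracting the $1/S^n_{k+1}$ factor from the centered increment is routine once \eqref{eq:L} and Lemma~\ref{lem:Mk is a martingale} are in hand.
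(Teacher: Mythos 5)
Your proof is correct, and it reaches the same final estimate through the same key ingredients (the product form \eqref{eq:productform}, the bound $\Ecp{n}{\cL(p\Re z,\T^n_{J^n})}=O_\P(1)$ via Lemma~\ref{lem:JnJ}, the control $1/S^n_{k+1}=O_\P(1)/k$ from Lemma~\ref{lem:control on the Si}, and finally Lemma~\ref{lem:asymptotics Cnkz}), but the algebraic decomposition you use is genuinely different from, and cleaner than, the paper's. The paper expands $M^n_{k+1}(z)-M^n_k(z)$ by brute force into three terms — two of them proportional to $M^n_k(z)$ and one carrying $e^{z\haut(V^n_k)}/C^n_{k+1}(z)$ — and must then invoke Lemma~\ref{lemme majoration moment martingale} to control $\E_n[|M^n_k(z)|^p]$ for the first two pieces before recombining everything into $O_\P(1)\,k^{-p}\,C^n_k(p\Re z)/|C^n_k(z)|^p$. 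Your identity
\begin{equation*}
M^n_{k+1}(z)-M^n_k(z)=\frac{1}{C^n_{k+1}(z)}\Bigl(\cL(z,\T^n_{k+1})-\Ecsq{\cL(z,\T^n_{k+1})}{\mathcal F_k}\Bigr)
=\frac{1}{C^n_{k+1}(z)}\cdot\frac{\XX^n_{k+1}}{S^n_{k+1}}e^{z\ind{\XX^n_{k+1}=1}}\Bigl(e^{z\haut(V^n_k)}-\cL(z,\T^n_k)\Bigr),
\end{equation*}
which follows from Lemma~\ref{lem:Mk is a martingale} and the fact that $C^n_{k+1}(z)/C^n_k(z)$ equals exactly the conditional-expectation factor, makes the $M^n_k(z)$ contributions cancel identically, so the full strength of Lemma~\ref{lemme majoration moment martingale} is not needed for the increment itself (only its tail observation that $J^n=O_\P(1)$ gives $\Ecp{n}{\cL(p\Re z,\T^n_{J^n})}=O_\P(1)$). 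Your checks of the two subtleties — that $p\Re z<2z_\lambda$ so the second part of Lemma~\ref{lem:asymptotics Cnkz} applies, and that $|C^n_{k+1}(z)|\geq |C^n_k(z)|/2$ for $k\geq J^n$ — are exactly the points the paper also has to verify. The only thing to spell out in a full write-up is the elementary bound $\E_n[|e^{z\haut(V^n_k)}-\cL(z,\T^n_k)|^p]\leq 2^p\,\Ecp{n}{\cL(p\Re z,\T^n_k)}$ (via $|a-b|^p\leq 2^{p-1}(|a|^p+|b|^p)$ and Jensen), together with the uniformity bookkeeping you already flagged.
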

\begin{proof}
	We work on the event $\{\forall i \in \intervalleentier{0}{\lfloor nt \rfloor}, \ S^n_i > 0\}$. For $k\in \intervalleentier{0}{\lfloor nt \rfloor}$, using \eqref{eq:L} 
we get that 
\begin{align*}
\cL(z,\T^n_{k+1})= \frac{S^n_k}{S^n_{k+1}} \cL(z,\T^n_k) +  \frac{\XX^n_{k+1}}{S^n_{k+1}} e^{z \haut(V^n_{k})} e^{z \ind{\XX^n_{k+1}=1}}.
\end{align*}
where $V^n_k$ denotes the independent uniform active vertex of $ \T^{n}_{k}$ {chosen at step $k+1$}.
Now, if $k\in \intervalleentier{J^n}{\lfloor nt \rfloor}$, from the definition \eqref{eq:definition M k n de z} of $M^n_k(z)$ we then get 
	\begin{align} 
	M^n_{k+1}&(z) - M^n_k(z)  
	= { \frac{1}{C^n_{k+1}(z)} \mathcal{L}(z,\T^n_{k+1}) - \frac{1}{C^n_{k}(z)} \mathcal{L}(z,\T^n_{k})} \notag \\
	&= \left(1- \frac{\XX^n_{k+1}}{S^n_{k+1}} \right)
	\left( \frac{C^n_{k}(z)}{C^n_{k+1}(z)} -1 \right)M^n_k(z)-\frac{\XX^n_{k+1}}{S^n_{k+1}} M^n_k(z)
	+ \frac{\XX^n_{k+1}}{S^n_{k+1}} \left( (e^z-1) \ind{\XX^n_{k+1} = 1} +1\right) \frac{e^{z \haut(V^n_k)}}{C^n_{k+1}(z)}.\label{eq M k plus 1 moins M k}
	\end{align} 
	Using that $C^n_{k+1}(z) = (1+(1/S^n_{k+1})(e^z-1)\ind{\XX^n_{k+1}=1}) C^n_k(z)$ and Lemma~\ref{lemme majoration moment martingale}, we see that uniformly in $z\in K$, uniformly in $k\in \intervalleentier{0}{\lfloor nt \rfloor}$,
	\begin{equation}\label{eq premier terme increment M k}
		{\ind{k\geq J^n}}\E_n\left[ \left\vert  \left(1- \frac{\XX^n_{k+1}}{S^n_{k+1}} \right)
		\left( \frac{C^n_{k}(z)}{C^n_{k+1}(z)} -1 \right)M^n_k(z)\right\vert^p\right]= O_\P\left(\frac{1}{(S^n_{k+1})^p}\frac{C^n_k(p \Re (z))}{\left\vert C^n_{k}(z) \right\vert^p} \right).
	\end{equation}
	Again by Lemma~\ref{lemme majoration moment martingale}, uniformly in $z\in K$, uniformly in $k\in \intervalleentier{0}{\lfloor nt \rfloor}$, we have
	\begin{equation}\label{eq deuxieme terme increment M k}
		{\ind{k\geq J^n}}\E_n\left[ \left\vert \frac{\XX^n_{k+1}}{S^n_{k+1}} M^n_k(z) \right\vert^p\right]= O_\P\left(\frac{1}{(S^n_{k+1})^p}\frac{C^n_k(p \Re (z))}{\left\vert C^n_{k}(z) \right\vert^p} \right).
	\end{equation}
	For $k \geq J^{n}$, the identity  $C^n_{k+1}(z) = (1+(1/S^n_{k+1})(e^z-1)\ind{\XX^n_{k+1}=1}) C^n_k(z)$ combined with the fact that $\left\vert \frac{1}{S^n_{k+1}} \ind{\XX^n_{k+1}=1} (e^z-1) \right\vert <\frac{1}{2}$  implies that $|C^n_{k+1}(z)| \geq |C^n_{k}(z)|/2$. Besides, using the identity 
	\[\E_{n}\left[\left\vert e^{z \haut(V^n_k)} \right\vert^p\right]=\E_{n}[ e^{p\mathrm{Re}(z) \haut(V^n_k)} ]= C^{n}_{k}(p\mathrm{Re}(z)) \cdot {\Ecp{n}{\cL(p\mathrm{Re}(z),\T^n_{J^n})}}
	\]
and the fact that $\Ecp{n}{\mathcal{L}(p \Re z,\T^n_{J^n})}=O_{\P}(1)$ which is obtained in the end  of the proof of Lemma \ref{lemme majoration moment martingale},
	 we get uniformly in $z\in K$, uniformly in $k\in \intervalleentier{0}{\lfloor nt \rfloor}$,
	\begin{equation}\label{eq troisieme terme increment M k}
		{\ind{k\geq J^n}}\E_n\left[ \left\vert\frac{\XX^n_{k+1}}{S^n_{k+1}} \left( (e^z-1) \ind{\XX^n_{k+1} = 1} +1\right) \frac{e^{z \haut(V^n_k)}}{C^n_{k+1}(z)} \right\vert^p\right]= O_\P\left(\frac{1}{(S^n_{k+1})^p}\frac{C^n_k(p \Re (z))}{\left\vert C^n_{k}(z) \right\vert^p} \right).
	\end{equation}
Combining \eqref{eq M k plus 1 moins M k}, \eqref{eq premier terme increment M k}, \eqref{eq deuxieme terme increment M k} and \eqref{eq troisieme terme increment M k}, we deduce that uniformly in $z\in K$, uniformly in $k\in \intervalleentier{0}{\lfloor nt \rfloor}$,
	\begin{align*}
	{\ind{k\geq J^n}}\E_n\left[\left\vert M^n_{k+1}(z) - M^n_k(z) \right\vert^p\right]
	= O_\P\left(\frac{1}{(S^n_{k+1})^p}\frac{C^n_k(p \Re (z))}{\left\vert C^n_{k}(z) \right\vert^p} \right).
	\end{align*}
	Hence, by \eqref{eq:control 1/S^n_i},
	\begin{align*} 
		{\ind{k\geq J^n}}\Epc{n}{\lvert M^n_{k+1}(z)- M^n_k(z) \rvert^p}
		= O_\P\left(\frac{1}{k^p} \frac{C^n_k(p \Re (z))}{\left\vert C^n_{k}(z) \right\vert^p}\right).
	\end{align*} 
	The conclusion follows from Lemma~\ref{lem:asymptotics Cnkz}.
\end{proof}

Recall from \eqref{eq:param} the definitions of $z_{\lambda}$ and {$f_\lambda$}.

\begin{corollary}\label{corollaire M nt proche de M An}
Fix $z\in \intervalleoo{0}{\infty}$ such that $z\in \intervalleoo{0}{z_\lambda}$ for all $\lambda \in I$.
Let $(K_n)$  be a sequence of compact subsets of $\mathbb{C}$ such that 
$\mathrm{diam}(K_n) \to 0$ as $n\to \infty$ 
and such that {for every $n \geq 1$ we have $K_n \subset \mathscr{E}(\lambda)$ for all $\lambda \in I$} and $z \in K_n$. 
Let $(A_n)$ be a sequence of integers such that $A_n \to \infty$ and $A_n \le nt$ {for all $n\geq 1$}. 
Then there exists {$p\in \intervalleof{1}{2}$} such that on the event $\{\forall i \in \intervalleentier{0}{\lfloor nt \rfloor}, \ S^n_i > 0\} \cap \{J^{n} \leq A_{n}\} $,
	\begin{align*} 
\Epc{n}{\vert M^n_{\lfloor nt \rfloor } (z_n) -M^n_{A_n}(z_n) \vert^p}	=o_\P(1),
	\end{align*} 
where the $o_\P(1)$ holds as $n\rightarrow \infty$, uniformly in $z_n \in K_n$ and weakly uniformly in $\lambda \in I$. 
\end{corollary}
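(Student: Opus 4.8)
The plan is to use the martingale property together with the increment bound from Proposition~\ref{proposition majoration increments Mn}, summing the $L^p$-increments from $A_n$ up to $\lfloor nt \rfloor$. Since, conditionally on $S^n$, the process $(M^n_k(z))_{J^n \le k \le \lfloor nt \rfloor}$ is a martingale, for $p \in \intervalleof{1}{2}$ the increments $M^n_{k+1}(z)-M^n_k(z)$ are martingale differences, and by the Burkholder--Davis--Gundy inequality (or more simply the von~Bahr--Esseen inequality, which for $p\in\intervalleoo{1}{2}$ gives $\E_n[|\sum_k D_k|^p] \le 2\sum_k \E_n[|D_k|^p]$ for martingale differences $D_k$), one gets
\begin{align*}
\E_n\left[\left\vert M^n_{\lfloor nt\rfloor}(z) - M^n_{A_n}(z)\right\vert^p\right] \le 2 \sum_{k=A_n}^{\lfloor nt \rfloor - 1} \E_n\left[\left\vert M^n_{k+1}(z)-M^n_k(z)\right\vert^p\right].
\end{align*}
First I would fix the exponent $p$: by Proposition~\ref{proposition majoration increments Mn} each summand is, on the relevant event, at most $\exp\big( (-p + \gamma(e^{p\Re z}-1-p(\Re(e^z)-1)))\log k + O_\P(1)\big) = k^{\alpha_p(z)} e^{O_\P(1)}$ where $\alpha_p(z) = -p + \gamma(e^{p\Re z}-1) - p\gamma(\Re(e^z)-1)$. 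The key analytic point is that $\alpha_1(z) = -1 + \gamma e^{\Re z} - \gamma - \gamma(\Re(e^z)-1)$; for real $z$ this is $-1+\gamma(e^z - 1) - \gamma(e^z-1) \cdot$ — more carefully, at $p=1$ one has $\alpha_1(z) = -1 + \gamma(e^{\Re z}-1) - \gamma(e^{\Re z}-1) = -1$ when $z$ is real, but actually one should compute: $\alpha_1(z)$ equals $-1$ exactly when $z$ is real since then $\Re(e^z) = e^z = e^{\Re z}$. So $\sum_k k^{\alpha_1(z)} = \sum_k k^{-1}$ diverges only logarithmically, which is not quite enough; the trick, exactly as in \cite{KMS17,Big92}, is that for $z \in \intervalleoo{0}{z_\lambda}$ one has $\alpha_p(z) < -1$ for some $p$ slightly larger than $1$. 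Indeed $f_\lambda(z) > 0$ precisely on $\intervalleoo{0}{z_\lambda}$, and one checks that $\frac{\partial}{\partial p}\big|_{p=1} \alpha_p(z) = \gamma \Re z \, e^{\Re z} - (1 + \gamma(\Re(e^z)-1))$, which for real $z$ simplifies, via $1 + \gamma(e^z - 1 - ze^z) = f_\lambda(z)$, to $\gamma z e^z - 1 - \gamma(e^z - 1) = -(f_\lambda(z))$... so $\frac{\partial}{\partial p}\big|_{p=1}\alpha_p(z) = -f_\lambda(z) < 0$ on $\intervalleoo{0}{z_\lambda}$. Hence for $p > 1$ close enough to $1$ (depending on the compact interval in $\intervalleoo{0}{z_\lambda}$ containing all limit points of $(z_n)$, and on $I$), we have $\alpha_p(z) \le -1 - \rho$ for some $\rho > 0$, uniformly.

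With such a $p$ fixed, the sum $\sum_{k=A_n}^{\lfloor nt\rfloor-1} k^{\alpha_p(z_n)} e^{O_\P(1)}$ is bounded by $e^{O_\P(1)} \sum_{k \ge A_n} k^{-1-\rho} = e^{O_\P(1)} \cdot O(A_n^{-\rho})$, which tends to $0$ since $A_n \to \infty$; the $e^{O_\P(1)}$ is harmless because it multiplies a deterministic sequence going to $0$, and the $O_\P(1)$ in Proposition~\ref{proposition majoration increments Mn} is uniform in $k \in \intervalleentier{0}{\lfloor nt\rfloor}$, uniform in $z$ in the relevant compact, and weakly uniform in $\lambda\in I$. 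One has to be slightly careful that $\mathrm{diam}(K_n)\to 0$ and $z\in K_n$ means for $n$ large $K_n$ is contained in any fixed neighbourhood of $z$; so one picks a compact neighbourhood $\widetilde K$ of $z$ with $\widetilde K \subset \mathscr{E}(\lambda)$ and $\widetilde K \subset \{\Re > 0\}$ for all $\lambda\in I$ and $\sup_{w\in\widetilde K, \lambda\in I}\alpha_p(w;\lambda) \le -1-\rho$ (possible by continuity and the derivative computation above, shrinking $\widetilde K$ and taking $p$ close to $1$), and then $K_n\subset \widetilde K$ for $n$ large. The condition $J^n \le A_n$ ensures the martingale $(M^n_k)_{k\ge A_n}$ is well defined past $A_n$ so that the telescoping/BDG step is legitimate on the event in question.

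The main obstacle is the analytic verification that $\alpha_p(z) < -1$ for some admissible $p$, i.e.\ extracting the strict inequality from $f_\lambda(z) > 0$ and transferring it with the required double uniformity (uniformly in $z_n \in K_n$ and weakly uniformly in $\lambda\in I$). This amounts to checking that $p\mapsto \alpha_p(z;\lambda)$, which equals $-1$ at $p=1$ for real $z$ and has negative $p$-derivative there equal to $-f_\lambda(z)$, can be pushed to a uniform bound $\le -1-\rho$ over the compact parameter ranges; continuity of $(z,\lambda,p)\mapsto \alpha_p(z;\lambda)$ and compactness do the rest, but one must be attentive that $z_n$ is complex (only converging to a real point) so one uses $\Re z_n$ and the estimate must allow a small imaginary part — this is where the margin from choosing $\widetilde K$ small comes in. Everything else is a routine application of the von~Bahr--Esseen / BDG inequality and summation of a convergent $p$-series, combined with the fact that a product of $e^{O_\P(1)}$ with a deterministic null sequence is $o_\P(1)$ with the same uniformity.
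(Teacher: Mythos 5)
Your proposal is correct and follows essentially the same route as the paper: the paper also bounds $\E_n[|M^n_{\lfloor nt\rfloor}(z_n)-M^n_{A_n}(z_n)|^p]$ by the sum of $p$-th moments of martingale increments (citing Biggins' Lemma~1, which plays the role of your von~Bahr--Esseen step), applies Proposition~\ref{proposition majoration increments Mn}, and then uses the expansion $-p+\gamma(e^{p\Re z}-1-p(\Re(e^z)-1)) = -1-(p-1)(f_\lambda(z)+o(1))$ as $p\to 1$ — equivalent to your derivative computation $\partial_p\alpha_p(z)|_{p=1}=-f_\lambda(z)$ — together with $f_\lambda(z)>0$ and compactness to fix $p$ with exponent $\le -1-\eta$ uniformly, and concludes by summing the convergent series over $k\ge A_n$.
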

\begin{proof}
	By Lemma~1 of \cite{Big92} (see also Lemma~A.2 of \cite{Sen21}), on the event $\{\forall i \in \intervalleentier{0}{\lfloor nt \rfloor}, \ S^n_i > 0\} {\cap \{J^n \leq A_n\}}$ we have
	\begin{align*} 
	\Ecp{n}{\vert M^n_{\lfloor nt \rfloor } (z_n) -M^n_{A_n}(z_n)\vert^p}
	\le 2^p \sum_{k=A_n}^{\lfloor nt \rfloor-1} 
	\Ecp{n}{\vert M^n_{k+1}(z_n) - M^n_{k}(z_n) \vert^p},
	\end{align*} 
	We then apply the above Proposition~\ref{proposition majoration increments Mn}
to bound all the terms of the sum appearing in the last display. 
Now, note that for $z \in \R$ the expression appearing in the display of Proposition~\ref{proposition majoration increments Mn} can be written as
		\begin{align}\label{eq asymptotique p tend vers un}
			-p+ \gamma\left( e^{p \Re z} -1 -p (\Re(e^z) -1)\right)
			&= -1 - \left(p-1+ \gamma (p(e^z-1)-e^{pz}+1)\right)  \notag \\
		   &\underset{p \rightarrow 1}{=} -1 -(p-1)(1+\gamma(e^z -1 -ze^z) + o(1)) \notag \\
		   &\underset{p \rightarrow 1}{=}-1 - (p-1) (f_\lambda(z)+o(1)).
	\end{align}
	Observe that $f_{\lambda}(z)>0$ since $z \in \intervalleoo{0}{z_\lambda}$. Thus if we fix {$p\in \intervalleof{1}{2}$} close to $1$ the above expression is uniformly bounded above by $-1-\eta$, for some $\eta>0$, for $z_n\in K_n$ with $n$ sufficiently large, and $\lambda \in I$.
	This entails that we have $$\sum_{k=A_n}^{\lfloor nt \rfloor-1}\Epc{n}{\vert M^n_{k+1}(z_n) - M^n_{k}(z_n) \vert^p}={O_\P}\left(\sum_{k=A_n}^{\lfloor nt \rfloor-1} k^{-1-\eta}\right),$$ {as $n\rightarrow \infty$,} uniformly in $z_{n} \in K_{n}$, {weakly uniformly in $\lambda \in I$} and the desired result follows.
\end{proof}
The last lemma of this subsection shows that $M^n_k(z)$ is close to $M^n_k(z')$ when $z$ and $z'$ are close.
\begin{lemma}\label{lemme z n proche de z}
Let $z \in (0,\infty )$ and $I\subset \intervalleoo{1}{\infty}$ a compact interval such that
	$z\in \intervalleoo{0}{z_\lambda}$ for all $\lambda \in I$.
	Let $(A_n)$ be a sequence of integers such that $A_n \to \infty$. Let also $(K_n)_{n \geq 1}$  be a sequence of compact subsets of $\mathbb{C}$ such that $\mathrm{diam}(K_n) =  o(1/\log A_n)$ and such that $z \in K_n$ for every $n \geq1$. 
	There exists {$p\in \intervalleof{1}{2}$} such that, as $n\rightarrow \infty$, uniformly in $z_n \in K_n$, weakly uniformly in $\lambda \in I$, on the event $\{\forall i \in \intervalleentier{0}{\lfloor nt \rfloor}, \ S^n_i > 0\} {\cap \{J^n \leq A_n\}}$,
		\begin{align*}
{\ind{z_n \in \mathscr{E}}}\Epc{n}{\left\vert M^n_{A_n}(z_n)-M^n_{A_n}(z) \right\vert^p}= o_\P(1).
	\end{align*}
\end{lemma}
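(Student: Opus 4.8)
The plan is to mimic the proof of Corollary~\ref{corollaire M nt proche de M An}, but now controlling the difference of two martingales evaluated at nearby points rather than the increments of a single martingale in $k$. First I would write, using the definition \eqref{eq:definition M k n de z} of $M^n_k(z)$, the telescoping identity
\begin{align*}
M^n_{A_n}(z_n)-M^n_{A_n}(z) = \sum_{k=J^n}^{A_n-1} \left( \left(M^n_{k+1}(z_n)-M^n_{k+1}(z)\right) - \left(M^n_{k}(z_n)-M^n_{k}(z)\right)\right),
\end{align*}
valid on $\{\forall i\le \lfloor nt\rfloor, \ S^n_i>0\}\cap\{J^n\le A_n\}$ since $M^n_{J^n}(z_n)=M^n_{J^n}(z)=\mathcal{L}(\cdot,\T^n_{J^n})$ is actually equal at the two points only up to the value $\mathcal L(z_n,\T^n_{J^n})$ versus $\mathcal L(z,\T^n_{J^n})$ — more precisely I would treat the base term $M^n_{J^n}(z_n)-M^n_{J^n}(z)=\mathcal L(z_n,\T^n_{J^n})-\mathcal L(z,\T^n_{J^n})$ separately, bounding it by $\mathrm{diam}(K_n)\cdot \haut_{\max}(\T^n_{J^n})\cdot e^{O(1)\haut_{\max}}$, which is $o_\P(1)$ because $J^n=O_\P(1)$ by Lemma~\ref{lem:JnJ}(ii) so $\T^n_{J^n}$ has bounded size. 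Then, as in the proof of Corollary~\ref{corollaire M nt proche de M An}, I would invoke Lemma~1 of \cite{Big92} (the $L^p$ bound for sums of martingale increments, valid here since $(M^n_k(z_n)-M^n_k(z))_{k\ge J^n}$ is a martingale under $\mathbb E_n$ as a difference of two martingales) to get
\begin{align*}
\Epc{n}{\left\vert M^n_{A_n}(z_n)-M^n_{A_n}(z)\right\vert^p} \le 2^p\, \Epc{n}{\left\vert \mathcal L(z_n,\T^n_{J^n})-\mathcal L(z,\T^n_{J^n})\right\vert^p} + 2^p\sum_{k=J^n}^{A_n-1}\Epc{n}{\left\vert D^n_{k+1} - D^n_k\right\vert^p},
\end{align*}
where $D^n_k\coloneqq M^n_k(z_n)-M^n_k(z)$.

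The core of the argument is then to bound each increment $\Epc{n}{\lvert D^n_{k+1}-D^n_k\rvert^p}$. I would redo the computation of \eqref{eq M k plus 1 moins M k} from the proof of Proposition~\ref{proposition majoration increments Mn} for each of $z_n$ and $z$, subtract, and group terms. The new feature compared to Proposition~\ref{proposition majoration increments Mn} is that each term carries a factor that is a \emph{difference} evaluated at $z_n$ and $z$, of the form $e^{z_n\haut(V^n_k)}/C^n_{k+1}(z_n) - e^{z\haut(V^n_k)}/C^n_{k+1}(z)$ or $C^n_k(z_n)/C^n_{k+1}(z_n)-C^n_k(z)/C^n_{k+1}(z)$ or $(M^n_k(z_n)-M^n_k(z))$. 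For the first type, since $\haut(V^n_k)=O(\log k)$ on the relevant event (active vertices at time $k\le nt$ have height $O_\P(\log n)$, and more crudely $\le k$) and since $z\mapsto C^n_{k+1}(z)$ is Lipschitz on $K_n$ with constant controlled via Lemma~\ref{lem:asymptotics Cnkz}, the difference is $O(\mathrm{diam}(K_n)\log k)\cdot e^{O(\log k)}$ times the corresponding single-point bound; with $\mathrm{diam}(K_n)=o(1/\log A_n)$ this extra factor is $o(1)\cdot$polynomial, which I would absorb. The key point is that the exponent $-1-(p-1)(f_\lambda(z)+o(1))$ appearing in \eqref{eq asymptotique p tend vers un} is strictly less than $-1$ when $p>1$ is close to $1$ and $z\in(0,z_\lambda)$ (using $f_\lambda(z)>0$), so the extra $\mathrm{diam}(K_n)\cdot(\log k)^{O(1)}$ factor does not destroy summability, and $\sum_{k\ge 1} (\log k)^{O(1)} k^{-1-\eta}$ converges. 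For the $M^n_k(z_n)-M^n_k(z)$ type term I would either iterate (Grönwall-type) or simply bound $\lvert M^n_k(z_n)-M^n_k(z)\rvert$ crudely by $\lvert M^n_k(z_n)\rvert+\lvert M^n_k(z)\rvert$ and apply Lemma~\ref{lemme majoration moment martingale} together with Lemma~\ref{lem:asymptotics Cnkz}, picking up the same favourable exponent.

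The main obstacle I anticipate is bookkeeping the Lipschitz estimates on $z\mapsto C^n_k(z)$ and $z\mapsto e^{z\haut(V^n_k)}$ uniformly in $k\in\intervalleentier{J^n}{A_n}$ while keeping track of how the $\mathrm{diam}(K_n)=o(1/\log A_n)$ hypothesis exactly cancels the worst-case polynomial-in-$\log k$ blow-up; one has to be careful that the products $C^n_k(z_n)$ and $C^n_k(z)$ differ multiplicatively by a factor $\exp(O(\mathrm{diam}(K_n))\sum_{i\le k} 1/S^n_i)=\exp(O(\mathrm{diam}(K_n)\log k))=e^{o(1)}$ uniformly in $k\le A_n$ (using Lemma~\ref{lem:sum 1/Sni is gamma log k}), so ratios like $C^n_k(z_n)/C^n_k(z)$ stay bounded, and the genuine smallness comes from $\mathrm{diam}(K_n)\log A_n\to 0$ multiplying the single-point bounds. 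Once this is organised, choosing $p\in\intervalleof{1}{2}$ sufficiently close to $1$ (so that $-p+\gamma(e^{p\Re z}-1-p(\Re(e^z)-1))<-1-\eta$ uniformly for $z_n\in K_n$, $n$ large, $\lambda\in I$, exactly as in Corollary~\ref{corollaire M nt proche de M An}) yields $\sum_{k=J^n}^{A_n-1}\Epc{n}{\lvert D^n_{k+1}-D^n_k\rvert^p}=o_\P(1)$ uniformly in $z_n\in K_n$, weakly uniformly in $\lambda\in I$, and combined with the base-term estimate this gives the claim.
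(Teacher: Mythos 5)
Your route --- telescoping $D^n_k \coloneqq M^n_k(z_n)-M^n_k(z)$ in $k$ and applying Biggins' inequality to the martingale of differences --- is genuinely different from the paper's, which never telescopes: it works directly at time $A_n$, writing $M^n_{A_n}(z_n)-M^n_{A_n}(z)$ as $\frac{1}{C^n_{A_n}(z)}\left(\mathcal{L}(z,\T^n_{A_n})-\mathcal{L}(z_n,\T^n_{A_n})\right)$ plus $\frac{C^n_{A_n}(z)-C^n_{A_n}(z_n)}{C^n_{A_n}(z)}M^n_{A_n}(z_n)$, and controlling the Laplace-transform difference by H\"older's inequality together with the moment estimate $\Epc{n}{\haut(V^n_{A_n})^{pq'}}=(1+o_\P(1))(\gamma\log A_n)^{pq'}$. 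As written, however, your argument has two genuine gaps. The first is that the ``crude'' alternative you offer for the self-referential term is dead on arrival: the increment $D^n_{k+1}-D^n_k$ contains the term $-\frac{\XX^n_{k+1}}{S^n_{k+1}}D^n_k$, which carries no $\mathrm{diam}(K_n)$ factor, and bounding $|D^n_k|\le|M^n_k(z_n)|+|M^n_k(z)|$ reproduces exactly the single-point increment bound $k^{-1-\eta+o(1)}e^{O_\P(1)}$ of Proposition~\ref{proposition majoration increments Mn}. Summing this from $k=J^n$ to $A_n-1$ gives only $O_\P(1)$, not $o_\P(1)$, because $J^n$ is $O_\P(1)$ and does not tend to infinity --- in contrast with Corollary~\ref{corollaire M nt proche de M An}, where the sum starts at $A_n\to\infty$ and therefore vanishes. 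Only the Gr\"onwall-type iteration can work (seeded by $\Epc{n}{|D^n_{J^n}|^p}=o_\P(1)$, which is fine since $J^n=O_\P(1)$), so presenting the two options as interchangeable hides the fact that one of them fails and the other is not actually set up.

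The second gap is the claim that $\haut(V^n_k)=O(\log k)$ ``on the relevant event'': this is false as a deterministic statement, since an active vertex at time $k$ can have height up to $k$, so the pointwise Lipschitz bound $|e^{z_n\haut(V^n_k)}-e^{z\haut(V^n_k)}|=O(\mathrm{diam}(K_n)\log k)\,e^{O(\log k)}$ is not available. The height is only concentrated around $\gamma\log k$ in a moment sense, and this is precisely where the paper has to work: it splits on the event $\{|z_n-z|\haut(V^n_{A_n})\ge 1\}$ and its complement, then combines H\"older, Markov and Cauchy--Schwarz with the $pq'$-moment of the height and the hypothesis $\mathrm{diam}(K_n)=o(1/\log A_n)$. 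In your scheme this probabilistic treatment would have to be carried out for every $k\le A_n$ inside the sum of increments. The overall strategy is probably salvageable, but these two steps are the substance of the lemma and neither is supplied.
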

\begin{proof}
We work on the event $\{\forall i \in \intervalleentier{0}{\lfloor nt \rfloor}, \ S^n_i > 0\}{\cap \{J^n \leq A_n\}}$.
In this proof, any term $o_\P(1)$ and $O_\P(1)$ should be understood as $n\rightarrow \infty$, uniformly in $z_n \in K_n$, weakly uniformly in $\lambda \in I$.
Recall from \eqref{eq:definition M k n de z} the fact that by definition, 
we have $M^n_{A_n}(z) = \frac{1}{C^n_{A_n}(z)} \cdot \mathcal{L}(z, \T^n_{A_n})$, {defined for any $z\in \mathscr{E}$, so in particular for $z\in\intervalleoo{0}{z_\lambda}$}. 
{If $z_n\in \mathscr{E}$, which happens for $n$ large enough,} one can then write
\begin{align*}
	M^n_{A_n}(z_n)-M^n_{A_n}(z)
	&= \frac{1}{C^n_{A_n}(z_n)} \cdot \mathcal{L}(z_n, \T^n_{A_n}) - \frac{1}{C^n_{A_n}(z)} \cdot \mathcal{L}(z, \T^n_{A_n})\\
	&= \frac{1}{C^n_{A_n}(z)} \cdot \left(\mathcal{L}(z_n, \T^n_{A_n}) - \mathcal{L}(z, \T^n_{A_n})\right) + \left(\frac{1}{C^n_{A_n}(z_n)} - \frac{1}{C^n_{A_n}(z)}\right) \cdot \mathcal{L}(z_n, \T^n_{A_n})\\
	&= \frac{1}{C^n_{A_n}(z)} \cdot \left(\mathcal{L}(z, \T^n_{A_n}) - \mathcal{L}(z_n, \T^n_{A_n})\right) + \left(\frac{C^n_{A_n}(z)-C^n_{A_n}(z_n)}{C^n_{A_n}(z)}\right) \cdot M^n_{A_n}(z_n).
\end{align*}
Fix some value {$p\in \intervalleof{1}{2}$}.
Using the last display, the inequality $\vert x+y \vert^p \le 2^p\cdot (\vert x\vert^p+\vert y \vert^p)$, and taking the expectation under $\P_n$ we get that 
\begin{multline}\label{eq: En split into 2 terms}
	\Epc{n}{\left\vert M^n_{A_n}(z_n)-M^n_{A_n}(z) \right\vert^p} 
	\leq 2^p \cdot \bigg( \frac{1}{\vert C^n_{A_n}(z) \vert^p} \cdot \Ecp{n}{\vert \mathcal{L}(z, \T^n_{A_n}) - \mathcal{L}(z_n, \T^n_{A_n})\vert^p}\\
	 + \frac{\vert C^n_{A_n}(z_n)-C^n_{A_n}(z) \vert^p}{\vert C^n_{A_n}(z)\vert^p} \cdot \Ecp{n}{\vert M^n_{A_n}(z_n)\vert^p} \bigg).
\end{multline}
We now handle the two terms appearing on the RHS of the last display separately.

\textbf{First term. } For the first term, recall that $\mathcal{L}(z, \T^n_{A_n})=\Epcsq{n}{e^{z\haut(V^n_{A_n})}}{\T^n_{A_n}}$ where the vertex $V^n_{A_n}$ is chosen uniformly among the active vertices of $\T^{n}_{A_{n}}$, conditionally on $\T^{n}_{A_{n}}$ (and similarly for $\mathcal{L}(z_n, \T^n_{A_n})$). 
Hence we can write
\begin{align*}
		\frac{1}{\vert C^n_{A_n}(z)\vert^p}
		\E_n \left[\vert \mathcal{L}(z_n, \T^n_{A_n})-\mathcal{L}(z , \T^n_{A_n})\vert^p \right]&=\frac{1}{\vert C^n_{A_n}(z) \vert^p}\E_n\left[\left\vert\Epcsq{n}{ 
			e^{z_n \haut (V^n_{A_n})}- e^{z \haut (V^n_{A_n})}
		}{\T^n_{A_n}} \right\vert^p \right]\\
		& \leq \frac{1}{\vert C^n_{A_n}(z) \vert^p}\E_n\left[\left\vert 
		e^{z_n \haut (V^n_{A_n})}- e^{z \haut (V^n_{A_n})}
		\right\vert^p\right]
\end{align*}
where we use Jensen's inequality to get the second line.
We then rewrite the RHS as
\begin{align*} 
	\frac{1}{\vert C^n_{A_n}(z) \vert^p} \E_n\left[\left\vert 
	e^{z_n \haut (V^n_{A_n})}- e^{z \haut (V^n_{A_n})}
	\right\vert^p\right]
	=
	\E_n\left[\frac{e^{pz\haut(V^n_{A_n})}}{\vert C^n_{A_n}(z) \vert^p} \cdot \left\vert e^{(z_n-z)\haut(V^n_{A_n})}-1
	\right\vert^{p}\right].
\end{align*} 
By Hölder's inequality, we have for all $q>1$,
\begin{align}\label{eq:holder ineq} 
	\E_n \left[
	\frac{e^{pz \haut(V^n_{A_n})}}{\vert C^n_{A_n}(z)\vert^p}\left\vert e^{(z_n-z)\haut(V^n_{A_n})}-1
	\right\vert^{p}
	\right]
	\le 
	\left(\E_n \left[\frac{e^{pqz \haut(V^n_{A_n})}}{\vert C^n_{A_n}(z)\vert^{pq}}
	\right]\right)^{\frac{1}{q}}
	\left(
	\E_n\left[\left\vert
	e^{(z_n-z) \haut(V^n_{A_n})} -1
	\right\vert^{pq'}
	\right]
	\right)^{\frac{1}{q'}}
\end{align} 
where $q'$ is chosen so that $\frac{1}{q}+ \frac{1}{q'}=1$. 
By taking $p,q$ close enough to $1$, one obtains that 
\begin{align}\label{eq:first factor is O1} 
	\E_n \left[\frac{e^{pqz \haut(V^n_{A_n})}}{\vert C^n_{A_n}(z)\vert^{pq}}
	\right] =\frac{C^n_{A_n}(pqz)}{\vert C^n_{A_n}(z)\vert^{pq}}= O_\P(1)
\end{align} 
thanks to Lemma~\ref{lem:asymptotics Cnkz} and to \eqref{eq asymptotique p tend vers un}, so we just need to study the second factor appearing on the RHS of \eqref{eq:holder ineq}. 

For that, first note that $\vert e^w - 1\vert^{q'} \le 2^{q'} e^{q' \vert \Re (w)\vert}$ for all $w \in \mathbb{C}$ 
and that for all $w \in \mathbb{C}$ such that $\vert w \vert \le 1$, we have $\vert e^w -1 \vert \le e \vert w \vert$, so that using the latter inequality on the event where it is possible and the former otherwise we get
\begin{align} 
	\E_n\left[\left\vert
	e^{(z_n-z) \haut(V^n_{A_n})} -1
	\right\vert^{pq'}
	\right]
	\le &2^{pq'} \E_n\left[e^{pq'\vert \Re(z_n)-z \vert \haut(V^n_{A_n})}
	\ind{\vert z_n -z \vert \haut(V^n_{A_n}) \ge 1 }\right] \label{eq premiere somme z n proche de z} \\
	&+ e^{pq'} \E_n \left[  \vert z_n -z \vert^{pq'} \haut(V^n_{A_n})^{pq'} \ind{\vert z_n -z \vert \haut(V^n_{A_n}) \le 1}
	\right].\label{eq deuxieme somme z n proche de z}
\end{align} 
Moreover, by  Theorem~3 (1) of \cite{BBKK23+}, we know that
\begin{equation*}
	\E_n \left[ \haut(V^n_{A_n})^{pq'}\right] = (1+ o_\P(1)) \left(  \sum_{i=1}^{A_n} \frac{1}{S_i^n} \ind{\XX^n_i=1}\right)^{pq'}  = (1+ o_\P(1)) \left(\frac{\lambda}{\lambda-1}\log A_n\right)^{p q'}
\end{equation*}
as $n\rightarrow \infty$, weakly uniformly in $\lambda \in I$,
where the second equality comes from Lemma~\ref{lem:sum 1/Sni is gamma log k} (the uniformity in $\lambda$ is a consequence of the bounds in the proof of Theorem~3 (1) of \cite{BBKK23+}). 
Therefore, taking the fact that $\mathrm{diam}(K_n) = o(1/ \log A_n)$ into account, we deduce that the second term
\eqref{eq deuxieme somme z n proche de z} is $o_\P(1)$.
But using Markov's inequality, the above identities also imply that
\begin{align*}
\Ppp{n}{\vert z_n -z \vert \haut(V^n_{A_n} )\ge 1} \leq \vert z_n -z \vert^{pq'} \Ecp{n}{\haut(V^n_{A_n} )^{pq'}} = o_\P(1).
\end{align*}
Now using the Cauchy-Schwarz inequality on the term \eqref{eq premiere somme z n proche de z} and then the above display, we get
\begin{align*}
\Ecp{n}{e^{pq'\vert \Re(z_n)-z \vert \haut(V^n_{A_n})}
\ind{\vert z_n -z \vert \haut(V^n_{A_n}) \ge 1 }}
&\leq \Ecp{n}{\left(e^{pq'\vert \Re(z_n)-z \vert \haut(V^n_{A_n})}\right)^2}^\frac{1}{2} \cdot 
	\Ppp{n}{\vert z_n -z \vert \haut(V^n_{A_n}) \ge 1 }^{\frac{1}{2}}\\
& =C^n_{A_n}\left(2pq' \vert \Re(z_n)-z \vert \right)^{\frac{1}{2}} \cdot o_\P(1).
\end{align*}
Using \eqref{eq approximation somme un sur S} one obtains that
\begin{align*}
C^n_{A_n}\left(2pq' \vert \Re(z_n)-z \vert \right)
	&=\prod_{i=J^n{+1}}^{A_n} \left(1+ \frac{1}{S^n_i} \left(e^{2pq' \vert \Re(z_n)-z \vert }-1\right)\ind{\XX^n_i=1} \right) \\
	&\le \exp\left(\sum_{i=1}^{A_n} \frac{1}{S^n_i}\left(e^{2pq' \vert \Re(z_n)-z \vert }-1\right)\ind{\XX^n_i=1} 
	\right) \\
	&= \exp \left( (\gamma \log (A_n)+O_\P(1)) \cdot \left(e^{2pq' \vert \Re(z_n)-z \vert }-1\right) \right) \\
	&= 1+ o_\P(1),
\end{align*}
where the last line follows from the fact that $\mathrm{diam}(K_n) =  o(1/\log A_n)$. 
This entails that the term \eqref{eq premiere somme z n proche de z} is $o_\P(1)$. 
Putting together \eqref{eq:holder ineq}, \eqref{eq:first factor is O1}, and then the fact that the two terms  \eqref{eq premiere somme z n proche de z} and \eqref{eq deuxieme somme z n proche de z} are $o_\P(1)$ we have proved that if $p$ and $q$ are chosen sufficiently close to $1$ then 
\begin{align}\label{eq:first term of En}
	\frac{1}{\vert C^n_{A_n}(z)\vert^p}
	\E_n \left[\vert \mathcal{L}(z_n, \T^n_{A_n})-\mathcal{L}(z , \T^n_{A_n})\vert^p \right]&= o_\P(1).
\end{align}

\textbf{Second term. } Now we focus on the second term appearing in \eqref{eq: En split into 2 terms}. 
We first use the fact that $C^n_{A_n}(z)=\Ecp{n}{\mathcal{L}(z , \T^n_{A_n})}$ and similarly for $z_n$,  and then use Jensen's inequality to get 
\begin{align}\label{eq laplace en zn proche de z 2}
	\frac{1}{\vert C^n_{A_n}(z)\vert^p} \vert C^n_{A_n}(z_n)-C^n_{A_n}(z) \vert^p 
	&= \frac{1}{\vert C^n_{A_n}(z)\vert^p}\cdot \left\vert \Ecp{n}{\mathcal{L}(z_n, \T^n_{A_n})-\mathcal{L}(z , \T^n_{A_n})} \right\vert^p \notag \\
	&\leq
	\frac{1}{\vert C^n_{A_n}(z)\vert^p}
	\E_n \left[\vert \mathcal{L}(z_n, \T^n_{A_n})-\mathcal{L}(z , \T^n_{A_n})\vert^p \right]
	=  o_\P(1),
\end{align}
where the last equality comes from \eqref{eq:first term of En}.
Last, we can use Lemma~\ref{lemme majoration moment martingale}, Lemma~\ref{lem:asymptotics Cnkz} and \eqref{eq asymptotique p tend vers un} to show that for {$p\in \intervalleof{1}{2}$} small enough we have $\E_n[\vert M^n_{A_n}(z_n)\vert^p]=O_\P(1)$. 

\textbf{Conclusion. } Plugging the results proved above back into \eqref{eq: En split into 2 terms}, we get that for {$p\in \intervalleof{1}{2}$} sufficiently small, on the event $\{\forall i \in \intervalleentier{0}{\lfloor nt \rfloor}, \ S^n_i > 0\}$, we have   
\begin{align*}
	\Epc{n}{\left\vert M^n_{A_n}(z_n)-M^n_{A_n}(z) \right\vert^p} \leq 2^p\cdot \left( o_\P(1) + o_\P(1) O_\P(1)\right) = o_\P(1),
\end{align*}
as $n\rightarrow \infty$, uniformly in $z_n \in K_n$, weakly uniformly in $\lambda\in I$. 
This is what we wanted to prove.
\end{proof}
\subsection{Convergence of $(M^n_{\lfloor nt \rfloor}(z))_n$ via the martingales $(M_k(z))_k$}
\label{sous-section convergence des martingales}
The goal of this section is to prove (a quantitative version of) the convergence of $(M^n_{\lfloor nt \rfloor}(z))$ as $n\rightarrow \infty$ for suitable complex parameters $z$. 
Roughly {speaking}, this is double limit problem: we want to take the limit of $M^n_{k}(z)$ as both $n$ and $k$ go to infinity together. 
We split the problem into two parts. 
First we study the process $(M_k(z))_k$ defined in \eqref{eq:definition Mkz}, which is in some sense the limit of $(M^n_k(z))_k$ as $n\rightarrow \infty$. Relying on the fact that this process is a martingale, we prove that $M_k(z) \rightarrow M_\infty(z)$ as $k\rightarrow \infty$, see Proposition~\ref{proposition cv unif martingale} and Proposition~\ref{prop cv lp martingale} below. 
Second, we then argue that when $n$ is large, for some range of values of $k$, the quantities $M^n_k(z)$ and $M_k(z)$ are close together, thus proving that $M^n_{\lfloor nt \rfloor}(z) \rightarrow M_\infty(z)$ as $n\rightarrow \infty$. This is the content of Theorem~\ref{th: convergence vers M infini}.

In this subsection, we fix  a compact interval $I\subset \intervalleoo{1}{\infty}$ and $t\in \intervalleoo{0}{\infty}$ such that $t\in \intervalleoo{0}{t_\lambda}$ for all $\lambda \in I$.
Recall from \eqref{eq:defJ} the definition of $J$ and  from \eqref{eq:definition Ckz} and \eqref{eq:definition Mkz}, the fact that on the event $\{\forall k\geq 0,\ S_k>0\}$, for all $k\geq J$ and $z\in \mathscr{E}'$ we have  $C_k(z) \neq 0$ so that $M_k(z)$ is well-defined. 
\begin{lemma}\label{lemme majoration C}
	There exists a random analytic function $c_\lambda(z)$ such that,  
	{for any compact complex domain $K$ that satisfies $K\subset \mathscr{E}(\lambda)$ for all $\lambda\in I$,}
	on the event $\{\forall i\geq 0, \ S_i>0 \}{\cap \{k\geq J\}}$, 
	we have 
	\begin{align*} 
		C_k(z)= \exp\left( \gamma (e^z-1) \log k + c_\lambda(z) + o_{\P}(1)\right)
	\end{align*} 
where the $o_\P(1)$ holds almost surely as $k\to \infty$, uniformly in $z\in K$ weakly uniformly in $\lambda \in I$.
\end{lemma}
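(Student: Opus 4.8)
The plan is to mimic the proof of Lemma~\ref{lem:asymptotics Cnkz}, but now for the walk $S$ rather than $S^n$, and to extract an honest almost-sure limit for the remainder term (which will define $c_\lambda(z)$), rather than just an $O_\P(1)$. First I would work on the event $\{\forall i\geq 0,\ S_i>0\}\cap\{k\geq J\}$ and write, using the definition \eqref{eq:definition Ckz},
\begin{align*}
C_k(z)=\prod_{i=J+1}^{k}\left(1+\frac{1}{S_i}(e^z-1)\ind{\XX_i=1}\right)
=\exp\left(\sum_{i=J+1}^{k}\log\left(1+\frac{1}{S_i}(e^z-1)\ind{\XX_i=1}\right)\right),
\end{align*}
where the complex logarithm is defined by its power series, legitimate here since by definition of $J$ we have $\lvert\frac{1}{S_i}(e^z-1)\ind{\XX_i=1}\rvert<\frac12$ for all $i\geq J$ and all $z\in\mathscr{E}'$. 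Using $\lvert\log(1+w)-w\rvert\leq\lvert w\rvert^2$ for $\lvert w\rvert<\frac12$ together with the almost-sure bound $\sum_{i\geq 1}\frac{1}{S_i^2}\ind{\forall j,\ S_j>0}<\infty$ (which follows from Lemma~\ref{lem:tension de M lambda}, since $\frac{1}{S_i}\leq\frac{M(\lambda)}{i}$ on that event, with tightness in $\lambda\in I$), the series $\sum_{i=J+1}^{\infty}\left(\log(1+\frac{1}{S_i}(e^z-1)\ind{\XX_i=1})-\frac{1}{S_i}(e^z-1)\ind{\XX_i=1}\right)$ converges absolutely and uniformly on $K$, almost surely, with tail $o(1)$ as $k\to\infty$; being a uniform limit of analytic functions it is analytic in $z$. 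It remains to handle the linear part $\sum_{i=J+1}^{k}\frac{1}{S_i}(e^z-1)\ind{\XX_i=1}=(e^z-1)\sum_{i=J+1}^{k}\frac{1}{S_i}\ind{\XX_i=1}$, and here I would invoke Lemma~\ref{lem:convergence sum 1/Sk - c log k towards rv}: on the relevant event, $\sum_{i=1}^{k}\frac{1}{S_i}\ind{\XX_i=1}=\gamma\log k+Z(\lambda)+o_\P(1)$ with the $o_\P(1)$ almost sure and $(Z(\lambda))_{\lambda\in I}$ tight, and subtracting off the finitely many terms $\sum_{i=1}^{J}\frac{1}{S_i}\ind{\XX_i=1}$ (a.s.\ finite, with tightness in $\lambda$ coming from Lemma~\ref{lem:JnJ}(i) and Lemma~\ref{lem:tension de M lambda}) only shifts the constant.

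Putting these together gives
\begin{align*}
C_k(z)=\exp\left(\gamma(e^z-1)\log k+c_\lambda(z)+o_\P(1)\right),
\end{align*}
where I would set
\begin{align*}
c_\lambda(z):=(e^z-1)\left(Z(\lambda)-\sum_{i=1}^{J}\frac{1}{S_i}\ind{\XX_i=1}\right)+\sum_{i=J+1}^{\infty}\left(\log\left(1+\tfrac{1}{S_i}(e^z-1)\ind{\XX_i=1}\right)-\tfrac{1}{S_i}(e^z-1)\ind{\XX_i=1}\right),
\end{align*}
and the $o_\P(1)$ holds almost surely, uniformly in $z\in K$, weakly uniformly in $\lambda\in I$. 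Analyticity of $c_\lambda$ follows since it is a finite expression in $z$ plus a locally uniform limit of analytic functions. Note we only need $K\subset\mathscr{E}(\lambda)$ here (the stronger $\mathscr{E}'$ would also work) because the estimate is driven purely by the bound $\lvert\frac{1}{S_i}(e^z-1)\rvert<\frac12$ for $i\geq J$, which is built into the definition of $J$ in \eqref{eq:defJ}.

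The only genuinely delicate point is bookkeeping the three kinds of uniformity simultaneously: uniformity in $z$ over the compact $K$, the almost-sure nature of the $o(1)$ in $k$, and the weak-in-probability uniformity in $\lambda\in I$. For the $z$-uniformity one uses that all the bounds ($\lvert\log(1+w)-w\rvert\leq\lvert w\rvert^2$, $\lvert e^z-1\rvert\leq$ const on $K$) are uniform on $K$; for the $\lambda$-uniformity one propagates the tightness of $M(\lambda)$, $Z(\lambda)$ and $J(\lambda)\ind{\forall k,\ S_k>0}$ through the estimates exactly as in the proofs of Lemmas~\ref{lem:convergence sum 1/Sk - c log k towards rv} and~\ref{lem:JnJ}; and the almost-sure statement comes directly from the almost-sure convergence already established in Lemma~\ref{lem:convergence sum 1/Sk - c log k towards rv} together with the a.s.\ absolute convergence of the correction series. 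This is the step I would expect to consume the most care, though it is conceptually routine given the technical lemmas of Section~\ref{sous-section resultats techniques sur S}.
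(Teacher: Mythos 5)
Your proposal is correct and follows essentially the same route as the paper's proof: the same split of $\log C_k(z)$ into the linear term (handled by Lemma~\ref{lem:convergence sum 1/Sk - c log k towards rv}) and the second-order correction series (controlled via $\lvert\log(1+w)-w\rvert\le\lvert w\rvert^2$ and the bound $1/S_i\le M(\lambda)/i$), with the same definition of $c_\lambda(z)$ and the same uniformity bookkeeping. Your explicit formula even carries the factor $(e^z-1)$ in front of $Z(\lambda)$, which is the correct form that the derivation yields.
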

\begin{proof}
	Recall {from \eqref{eq:definition Ckz} that by definition, on the event $\{\forall i\geq 0, \ S_i>0 \}$ we have}
	\begin{align*} 
		C_k(z) = \prod_{i=J{+1}}^k \left(1+\frac{1}{S_i} (e^z-1) \ind{\XX_i=1} \right).
	\end{align*} 
{As in the proof of Lemma~\ref{lem:asymptotics Cnkz}, we can write 
\begin{align*}
	C_k(z)&=\exp\left(\sum_{i=J{+1}}^k \log \left(1+\frac{1}{S_i} (e^z-1) \ind{\XX_i=1} \right) \right) \\
	&= \exp\left(
	(e^z-1) \sum_{i=J{+1}}^k
	\frac{1}{S_i}  \ind{\XX_i=1}
	+
	\sum_{i=J{+1}}^k
	\left(
	\log\left(1+ \frac{1}{S_i} (e^z -1) \ind{\XX_i=1} \right)
	-\frac{1}{S_i} (e^z-1) \ind{\XX_i=1}
	\right)
	\right).
\end{align*}
By Lemma~\ref{lem:convergence sum 1/Sk - c log k towards rv}, we can handle the first term in the exponential as 
\begin{align*}
	(e^z-1) \sum_{i=J{+1}}^k
	\frac{1}{S_i}  \ind{\XX_i=1} = (e^z-1) \cdot \frac{\lambda}{\lambda -1} \log k + \left( Z{(\lambda)} - (e^z-1) \sum_{i=1}^{{J}}
	\frac{1}{S_i}  \ind{\XX_i=1} \right) + o_\P(1),
\end{align*} 
where the $o_\P(1)$ is almost sure, uniformly in $z\in K$, and weakly uniformly in $\lambda \in I$.
The second term can be written
\begin{multline*}
\sum_{i=J{+1}}^\infty
	\left(
	\log\left(1+ \frac{1}{S_i} (e^z -1) \ind{\XX_i=1} \right)
	-\frac{1}{S_i} (e^z-1) \ind{\XX_i=1}
	\right)\\ - \sum_{i=k+1}^\infty
	\left(
	\log\left(1+ \frac{1}{S_i} (e^z -1) \ind{\XX_i=1} \right)
	-\frac{1}{S_i} (e^z-1) \ind{\XX_i=1}
	\right).
\end{multline*}
We can then use the inequality $\vert \log(1+w)-w\vert \leq \vert w \vert^2$ 
valid for all $w$ such that $\vert w \vert \leq \frac{1}{2}$, and so for all large enough $k$, to get 
	\begin{align*} 
	\left\vert\sum_{i=k+1}^\infty \left(
		\log\left(1+ \frac{1}{S_i} (e^z -1) \ind{\XX_i=1} \right)
		-\frac{1}{S_i} (e^z-1) \ind{\XX_i=1}\right)
		\right\vert 
		&\leq C\vert e^z -1\vert^2 \cdot \sum_{i=k+1}^\infty \frac{1}{(S_i)^2} \\ 
		&\underset{\eqref{eq:definition of M lambda}}{\leq}   C \vert e^z -1\vert^2 \cdot M \cdot \sum_{i=k+1}^\infty \frac{1}{i^2} = o_\P(1)
	\end{align*} 
	where the $o_\P(1)$ is almost sure, uniformly in $z\in K$, and weakly uniformly in $\lambda \in I$}. 

In the end, this ensures that the statement of {the} lemma holds with 
\begin{align*}
	c_\lambda(z)= \left( Z - (e^z-1) \sum_{i=1}^{{J}}
	\frac{1}{S_i}  \ind{\XX_i=1} \right) + \sum_{i=J{+1}}^\infty
	\left(
	\log\left(1+ \frac{1}{S_i} (e^z -1) \ind{\XX_i=1} \right)
	-\frac{1}{S_i} (e^z-1) \ind{\XX_i=1}
	\right).
\end{align*}
	Note that this function is analytic as it is a uniform limit of analytic functions.  
\end{proof}

Recall the notation $S=(S_{n})_{n \geq 0}$.

\begin{lemma}\label{lemme majoration M}
	Let {$p\in \intervalleof{1}{2}$}. We have, for every compact complex domain $K$ such that $K\subset \mathscr{E}(\lambda)$ for all $\lambda\in I$, 
	\begin{align*} 
		\ind{k\geq J}\ind{\forall i\geq 0, \ S_i>0}\cdot \Ecsq{\left\vert
			M_{2k}(z) - M_k(z) 
			\right\vert^p
		}{S} = O_\P\left(
		k^{\gamma(e^{p\Re z}-1-p(\Re(e^z)-1))-p+1+ o_\P(1)}
		\right)
	\end{align*} 
	and
	\begin{align*} 
		\ind{k\geq J} \ind{\forall i\geq 0, \ S_i>0}\cdot \Ecsq{\vert M_k(z) \vert^p}{S}
		= O_\P\left(
		k^{(\gamma(e^{p\Re z}-1-p(\Re(e^z)-1))-p+1)\vee 0 + o_\P(1)}
		\right)
	\end{align*} 
	where the $O_\P$ and $o_\P$ appearing above hold almost surely as $k\rightarrow \infty$, 
	uniformly in $z\in K$, weakly uniformly in $\lambda \in I$.
\end{lemma}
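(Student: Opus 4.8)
The strategy is the exact analogue, in the setting of the walk $S$, of what was done for $S^n$ in Section~\ref{sous-section contrôle des martingales}: first bound a single increment $M_{k+1}(z)-M_k(z)$ in $L^p$, then sum dyadically. First I would record the $L^p$-moment bound for the martingale itself, in the spirit of Lemma~\ref{lemme majoration moment martingale}: conditionally on $S$, since $\cL(z,\T_k)=\Ecsq{e^{z\haut(W_k)}}{\T_k}$ with $W_k$ uniform among the active vertices, Jensen gives
\begin{align*}
\Ecsq{|M_k(z)|^p}{S}\le \frac{1}{|C_k(z)|^p}\Ecsq{e^{p\Re(z)\haut(W_k)}}{S}=\frac{C_k(p\Re z)}{|C_k(z)|^p}\cdot \Ecsq{\cL(p\Re z,\T_J)}{S},
\end{align*}
and the last factor is finite a.s.\ and $O_\P(1)$ weakly uniformly in $\lambda\in I$ because $J=O_\P(1)$ by Lemma~\ref{lem:JnJ}(i). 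Combined with Lemma~\ref{lemme majoration C} (which controls $C_k(z)$ and $C_k(p\Re z)$ up to $o_\P(1)$ in the exponent), this yields
\[
\ind{k\ge J}\ind{\forall i\ge 0,\ S_i>0}\,\Ecsq{|M_k(z)|^p}{S}=O_\P\!\big(k^{\gamma(e^{p\Re z}-1-p(\Re(e^z)-1))+o_\P(1)}\big),
\]
i.e.\ the second displayed estimate but with exponent $\gamma(e^{p\Re z}-1-p(\Re(e^z)-1))$ in place of $(\gamma(\dots)-p+1)\vee 0$.

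Next I would bound one increment, following the proof of Proposition~\ref{proposition majoration increments Mn} line by line with $S^n,\XXb^n,C^n_k,M^n_k,J^n$ replaced by $S,\XXb,C_k,M_k,J$. Using \eqref{eq:L} and the definition \eqref{eq:definition Mkz} of $M_k(z)$, one writes $M_{k+1}(z)-M_k(z)$ as a sum of three terms, each of which, after taking $\Ecsq{\cdot}{S}$ and applying the single-step moment bound above together with $|C_{k+1}(z)|\ge|C_k(z)|/2$ (valid for $k\ge J$ by definition of $J$) and the estimate $S_k^{-1}\le M(\lambda)/k$ from Lemma~\ref{lem:tension de M lambda}, is
\[
O_\P\!\Big(\frac{1}{k^p}\,\frac{C_k(p\Re z)}{|C_k(z)|^p}\Big)=O_\P\!\big(k^{\gamma(e^{p\Re z}-1-p(\Re(e^z)-1))-p+o_\P(1)}\big),
\]
again using Lemma~\ref{lemme majoration C}. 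Here the term $\Ecsq{\cL(p\Re z,\T_J)}{S}=O_\P(1)$ enters exactly as at the end of the proof of Lemma~\ref{lemme majoration moment martingale}. Thus the increment exponent is $\gamma(e^{p\Re z}-1-p(\Re(e^z)-1))-p$.

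Finally, for the first displayed estimate I would sum over a dyadic block: by the same elementary inequality used in Corollary~\ref{corollaire M nt proche de M An} (Lemma~1 of \cite{Big92}),
\[
\Ecsq{|M_{2k}(z)-M_k(z)|^p}{S}\le 2^p\sum_{j=k}^{2k-1}\Ecsq{|M_{j+1}(z)-M_j(z)|^p}{S}=O_\P\!\big(k\cdot k^{\gamma(e^{p\Re z}-1-p(\Re(e^z)-1))-p+o_\P(1)}\big),
\]
which is the first claimed bound. For the second claimed bound, I would combine the crude moment estimate from the first paragraph (valid for all $k$) with a telescoping/chaining argument when the exponent is negative: when $\gamma(e^{p\Re z}-1-p(\Re(e^z)-1))-p+1<0$ the dyadic increments are summable, so $M_k(z)$ converges in $L^p(\,\cdot\mid S)$ and $\Ecsq{|M_k(z)|^p}{S}$ stays $O_\P(k^{o_\P(1)})$; when that exponent is $\ge 0$ one just keeps the bound $O_\P(k^{\gamma(\dots)-p+1+o_\P(1)})$ obtained by writing $M_k=M_J+\sum(M_{2^{j+1}\wedge k}-M_{2^j\wedge k})$ and summing the dyadic bounds, the dominant term being the last block. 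Taking the maximum of the two regimes gives the exponent $(\gamma(e^{p\Re z}-1-p(\Re(e^z)-1))-p+1)\vee 0$. Uniformity in $z\in K$ and weak uniformity in $\lambda\in I$ are inherited at every step from Lemmas~\ref{lem:tension de M lambda}, \ref{lem:convergence sum 1/Sk - c log k towards rv}, \ref{lem:JnJ} and \ref{lemme majoration C}, whose conclusions already carry exactly these uniformities; the only mild point to check is that the $o_\P(1)$ in the exponent coming from Lemma~\ref{lemme majoration C} can be absorbed uniformly over the compact $K$, which follows since $z\mapsto e^z$ is bounded on $K$. The main obstacle is purely bookkeeping: making sure the ``$+o_\P(1)$ in the exponent'' is handled consistently (it multiplies $\log k$, so it must be a genuine $o_\P(1)$ and not, say, an $O_\P(1/\log k)$ issue) and that the dyadic summation is organized so that the dominant block is identified correctly in each sign regime; there is no new probabilistic input beyond what Section~\ref{sous-section contrôle des martingales} already contains for $S^n$.
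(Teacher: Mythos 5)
Your proposal is correct and follows essentially the same route as the paper: bound a single increment $M_{k+1}(z)-M_k(z)$ in $L^p$ by transposing the computation of Proposition~\ref{proposition majoration increments Mn} to the walk $S$ (with $S_k^{-1}\le M(\lambda)/k$ replacing the $S^n$-estimate and $J=O_\P(1)$ giving $\Ecsq{\cL(p\Re z,\T_J)}{S}=O_\P(1)$), convert $C_k(p\Re z)/|C_k(z)|^p$ into the exponent $\gamma(e^{p\Re z}-1-p(\Re(e^z)-1))$ via Lemma~\ref{lemme majoration C}, and then sum via Biggins' inequality — the paper obtains the second bound by the single summation $\sum_{j=J}^{k}j^{\alpha}=O_\P(k^{(\alpha+1)\vee 0})$ rather than your dyadic two-regime chaining, but these are the same computation.
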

\begin{proof}
The same proof as the one of Proposition~\ref{proposition majoration increments Mn} goes through, using this time Lemma~\ref{lemme majoration C}: using exactly the same computations as in the proof of Proposition~\ref{proposition majoration increments Mn}, we see that
\[
\ind{k\geq J} \ind{\forall i\geq 0, \ S_i>0} \E\left[\left.\left\vert M_{k+1}(z)- M_k(z) \right\vert^p \right\vert S\right] = O_\P\left(\frac{1}{k^p} \frac{C_k(p\mathrm{Re}(z))}{\vert C_k(z)\vert^p}\right)
\]
almost surely as $k\to \infty$, uniformly in $z \in K$, weakly uniformly in $\lambda \in I$. By Lemma~\ref{lemme majoration C}, we deduce that
\begin{align*}
\ind{k\geq J} \ind{\forall i\geq 0, \ S_i>0} &\E\left[\left.\left\vert M_{k+1}(z)- M_k(z) \right\vert^p \right\vert S\right] \\
&= O_\P\left( \frac{1}{k^p} \exp(\gamma(e^{p\mathrm{Re}(z)} -1 -p(\mathrm{Re}(e^z)-1)) \log k + O_\P(1))\right)\\
&=O_\P\left(k^{\gamma(e^{p\mathrm{Re}(z)} -1 -p(\mathrm{Re}(e^z)-1))-p+o_\P(1)}\right)
\end{align*}
almost surely as $k\to \infty$, uniformly in $z \in K$, weakly uniformly in $\lambda \in I$. So, by Lemma~1 of \cite{Big92} (see also Lemma~A.2 of \cite{Sen21}),
\begin{align*}
	\ind{k\geq J}\ind{\forall i\geq 0, \ S_i>0}\cdot \Ecsq{\left\vert
		M_{2k}(z) - M_k(z) 
		\right\vert^p
	}{S}& \le \ind{k\geq J}\ind{\forall i\geq 0, \ S_i>0}2^p \sum_{j=k}^{2k-1} \Ecsq{\left\vert M_{j+1}(z)- M_j(z) \right\vert^p}{S}\\
	&=O_\P\left(k^{\gamma(e^{p\mathrm{Re}(z)} -1 -p(\mathrm{Re}(e^z)-1))-p+1+o_\P(1)}\right),
\end{align*}
and similarly
\begin{align*}
		\ind{k\geq J}\ind{\forall i\geq 0, \ S_i>0}\cdot \Ecsq{\left\vert
		M_{k}(z)
		\right\vert^p
	}{S}\le 	&\ind{k\geq J}\ind{\forall i\geq 0, \ S_i>0}\cdot \Ecsq{\left\vert
	M_J(z) 
	\right\vert^p
	}{S}\\
	&+ \ind{k\geq J}\ind{\forall i\geq 0, \ S_i>0}2^p \sum_{j=J}^{k-1} \Ecsq{\left\vert M_{j+1}(z)- M_j(z) \right\vert^p}{S}\\
	=&O_\P\left(
	k^{(\gamma(e^{p\Re z}-1-p(\Re(e^z)-1))-p+1)\vee 0 + o_\P(1)}
	\right),
\end{align*}
where the last line comes from the fact that $\sum_{j=J}^k j^\alpha = O_\P(k^{(\alpha+1)\vee 0})$ almost surely as $k \to \infty$ for all $\alpha \in \R$. This ends the proof.
\end{proof}
Let 
\begin{align*} 
	\mathcal{V}=\mathcal{V}(\lambda) \coloneqq	 \bigcup_{{p\in\intervalleof{1}{2}}} \enstq{ z\in \mathbb{C}}{  
	\Re z<z_\lambda \text{ and }
	\gamma(e^{p\Re z}-1-p(\Re(e^z)-1))-p+1<0
	}.
\end{align*} 
The proposition below will be useful for the next section.
\begin{proposition}\label{proposition cv unif martingale}
Fix $\lambda \in I$.
	On the event $\{\forall k \geq 0, \ S_k>0\}$,
	for every compact $K$ such that $K \subset \mathcal{V}(\lambda)$ we have
	\begin{align*} 
		\ind{k\geq J}  \ind{\forall i\geq 0, \ S_i>0}\cdot \left\vert
		M_k(z) - M_\infty(z)
		\right\vert
		= o_\P(1)
	\end{align*}
where the $o_\P$ holds almost surely as $k\rightarrow \infty$, uniformly in $z\in K$.
\end{proposition}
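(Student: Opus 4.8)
The strategy is to upgrade the $L^p$-bounds from Lemma~\ref{lemme majoration M} into an almost sure uniform statement using a standard dyadic (Biggins-type) argument. Fix $\lambda \in I$ and work on the event $\Omega_0 \coloneqq \{\forall k\ge 0,\ S_k>0\}$ (on the complementary event all the indicators vanish, so there is nothing to prove). Since $K$ is compact and $K\subset \mathcal{V}(\lambda)=\bigcup_{p\in\intervalleof{1}{2}} \{z : \Re z<z_\lambda,\ \gamma(e^{p\Re z}-1-p(\Re(e^z)-1))-p+1<0\}$, and the exponent $\gamma(e^{p\Re z}-1-p(\Re(e^z)-1))-p+1$ depends continuously on $(z,p)$, a compactness argument gives a single $p\in\intervalleof{1}{2}$ and some $\eta>0$ such that
\[
\gamma(e^{p\Re z}-1-p(\Re(e^z)-1))-p+1 \le -\eta \qquad \text{for all } z\in K.
\]
Then the first bound in Lemma~\ref{lemme majoration M} reads $\ind{k\ge J}\Ecsq{\sup_{z\in K}\lvert M_{2k}(z)-M_k(z)\rvert^p}{S} = O_\P(k^{-\eta+o_\P(1)})$ almost surely as $k\to\infty$ — here I am using that the $O_\P$ in that lemma is already uniform in $z\in K$, so the supremum over the compact $K$ can be brought inside (formally, cover $K$ by finitely many small balls and use that $z\mapsto M_k(z)$ is analytic, or invoke that the estimate is stated uniformly in $z\in K$, which by definition of the probabilistic $O_\P$ controls $\sup_{z\in K}$).

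Next, evaluating along the dyadic sequence $k=2^j$: for any fixed realization of $S$ in $\Omega_0$ (equivalently, conditionally on $S$), the estimate above gives $\Ecsq{\sup_{z\in K}\lvert M_{2^{j+1}}(z)-M_{2^j}(z)\rvert^p}{S} \le c_j \cdot 2^{-j\eta/2}$ for $j$ large, where $c_j$ is an almost-surely bounded sequence (absorbing the $o_\P(1)$ in the exponent and the $O_\P(1)$ constant). Hence $\sum_{j\ge 1}\Ecsq{\sup_{z\in K}\lvert M_{2^{j+1}}(z)-M_{2^j}(z)\rvert^p}{S}<\infty$ almost surely, so by monotone convergence $\sum_{j}\sup_{z\in K}\lvert M_{2^{j+1}}(z)-M_{2^j}(z)\rvert^p<\infty$ almost surely, and in particular $(M_{2^j}(\cdot))_{j\ge 1}$ is almost surely a Cauchy sequence in $C(K,\mathbb C)$. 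Call its uniform limit $M_\infty(\cdot)$ on $K$; since $M_{2^j}(\cdot)$ are analytic on $\mathscr{E}(\lambda)$ and $K$ can be taken to be a small closed disc around any point of $\mathcal V(\lambda)$, a diagonal/exhaustion argument over a countable family of such discs produces a single analytic random function $M_\infty$ on $\mathcal V(\lambda)$, independent of the choice of $K$. This also records that $M_{2^j}(z)\to M_\infty(z)$ uniformly on $K$, almost surely.

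Finally, to pass from the dyadic subsequence to the full sequence, I fill in the gaps: for $2^j\le k<2^{j+1}$ write $M_k(z)-M_\infty(z) = (M_k(z)-M_{2^j}(z)) + (M_{2^j}(z)-M_\infty(z))$; the second term is $o_\P(1)$ uniformly in $z\in K$ by the previous paragraph, and for the first term one controls $\sup_{z\in K}\lvert M_k(z)-M_{2^j}(z)\rvert$ by again summing increments $M_{m+1}-M_m$ over $2^j\le m<k$ and invoking the increment estimate from the proof of Lemma~\ref{lemme majoration M} (the bound $\Ecsq{\sup_{z\in K}\lvert M_{m+1}(z)-M_m(z)\rvert^p}{S}=O_\P(m^{-p+\gamma(\cdots)+o_\P(1)})=O_\P(m^{-1-\eta'+o_\P(1)})$ for a suitable $\eta'>0$, using $-p+\gamma(e^{p\Re z}-1-p(\Re(e^z)-1)) \le -1-\eta'$ which follows from the displayed inequality above and $p>1$); summing these over $m\in\intervalleentier{2^j}{2^{j+1}}$ and using Markov's inequality (conditionally on $S$) shows $\max_{2^j\le k<2^{j+1}}\sup_{z\in K}\lvert M_k(z)-M_{2^j}(z)\rvert\to 0$ almost surely along $j\to\infty$, hence $\sup_{z\in K}\lvert M_k(z)-M_\infty(z)\rvert \to 0$ almost surely as $k\to\infty$. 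Multiplying by $\ind{k\ge J}\ind{\forall i\ge 0,\ S_i>0}$ (which is eventually equal to $\ind{\forall i\ge 0,\ S_i>0}$ since $J<\infty$ a.s.) gives exactly the claimed $o_\P(1)$, understood almost surely. The main obstacle, and the only delicate point, is ensuring that all the $o_\P(1)$/$O_\P(1)$ terms in Lemma~\ref{lemme majoration M} — which a priori involve randomness in $S$ and a weak uniformity in $\lambda$ — can be handled conditionally on $S$ for the fixed $\lambda$ at hand so that the Borel–Cantelli / monotone-convergence step is legitimate; this is exactly why Lemma~\ref{lemme majoration M} was stated with the conditioning on $S$ and with the exponent correction written as an additive $o_\P(1)$.
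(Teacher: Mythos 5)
Your argument is essentially the paper's: the paper disposes of this proposition in one line by invoking Proposition~3.6 and Lemma~A.3 of \cite{Sen21}, and what you write out is precisely that Biggins-type scheme (dyadic increments, $L^p$ summability conditional on $S$, Borel--Cantelli, analyticity of the limit, filling the gaps between dyadic times). One step deserves a warning, though: you cannot ``invoke that the estimate is stated uniformly in $z\in K$'' to move the supremum inside the conditional expectation. Lemma~\ref{lemme majoration M} bounds $\Ecsq{\lvert M_{2k}(z)-M_k(z)\rvert^p}{S}$ pointwise with a constant valid for all $z\in K$; that is not the same as a bound on $\Ecsq{\sup_{z\in K}\lvert M_{2k}(z)-M_k(z)\rvert^p}{S}$, and a finite covering of $K$ alone does not bridge the gap either. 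The correct mechanism --- and the content of Lemma~A.3 of \cite{Sen21} that the paper leans on --- is to use analyticity via Cauchy's integral formula: write $\sup_{z\in K}\lvert M_{2k}(z)-M_k(z)\rvert$ as at most a constant times a contour integral of $\lvert M_{2k}(w)-M_k(w)\rvert$ over a slightly larger compact $K'\subset\mathcal V(\lambda)$, then apply Jensen and Fubini to reduce to the pointwise $L^p$ bounds on $K'$. With that substitution your first alternative goes through and the rest of the proof is sound.
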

\begin{proof}
The same proof as the one of Proposition~3.6 of \cite{Sen21} carries through, as a consequence of Lemma~A.3 of \cite{Sen21}  and Lemma~\ref{lemme majoration M}.
\end{proof}
Note that a similar result to Proposition~\ref{proposition cv unif martingale} with the $o_\P(1)$ holding weakly uniformly in $\lambda \in I$ can be obtained via a straightforward adaptation of Lemma~A.3 of \cite{Sen21}. We won't need the stronger result in this paper. 
Next, we state an $L^{p}$ martingale convergence result.
\begin{proposition}\label{prop cv lp martingale}
	For any compact subset {$K$ that satisfies $K\subset\mathcal{V}(\lambda)$ for all $\lambda\in I$}, there exists {$p\in \intervalleof{1}{2}$} such that 
	\begin{equation*}
		{\ind{k\geq J}\cdot }\ind{\forall i \ge 0, \ S_i >0}\E \left[ \left.\left\vert  M_k(z)- M_\infty(z)\right\vert ^p\right\vert S\right] = o_\P(1),
	\end{equation*}
{where the $o_\P(1)$ holds almost surely as $k \to \infty$, uniformly in $z \in K$, weakly uniformly in $\lambda \in I$.}
\end{proposition}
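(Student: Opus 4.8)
The plan is to follow the same strategy as Proposition~3.6 of \cite{Sen21}, now upgraded to an $L^p$ statement and keeping track of the weak uniformity in $\lambda$. Fix a compact set $K$ with $K\subset\mathcal{V}(\lambda)$ for all $\lambda\in I$. Since $\mathcal{V}$ is defined as a union over $p\in\intervalleof{1}{2}$ of open conditions that are monotone in $p$ (decreasing $p$ towards $1$ only helps, since the relevant exponent $\gamma(e^{p\Re z}-1-p(\Re(e^z)-1))-p+1$ tends, by \eqref{eq asymptotique p tend vers un}, to $-f_\lambda(\Re z)<0$ on $\{\Re z<z_\lambda\}$), one can first pick a single $p\in\intervalleof{1}{2}$, close enough to $1$, such that $K$ lies in the set cut out by the $p$-condition simultaneously for all $\lambda\in I$; moreover one may arrange that the exponent $\gamma(e^{p\Re z}-1-p(\Re(e^z)-1))-p+1$ is bounded above by some $-\eta<0$ uniformly in $z\in K$ and $\lambda\in I$. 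This is the step where the geometry of $\mathcal{V}$ is used.

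With $p$ fixed, work on the event $\{\forall i\ge 0,\ S_i>0\}\cap\{k\ge J\}$ and condition on $S$. The martingale $(M_\ell(z))_{\ell\ge J}$ converges almost surely and in $L^p(\cdot\mid S)$ along the dyadic subsequence by the estimate of Lemma~\ref{lemme majoration M}: indeed $\sum_{j\ge 0}\Ecsq{|M_{2^{j+1}k}(z)-M_{2^jk}(z)|^p}{S}=O_\P\big(\sum_j (2^jk)^{-\eta+o_\P(1)}\big)=O_\P(k^{-\eta/2})$ almost surely as $k\to\infty$, uniformly in $z\in K$, weakly uniformly in $\lambda\in I$ (here we use that the $o_\P(1)$ in the exponent can be absorbed for $k$ large). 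Summing the telescoping increments and using the triangle inequality in $L^p(\cdot\mid S)$ gives, for dyadic indices, $\Ecsq{|M_k(z)-M_\infty(z)|^p}{S}=o_\P(1)$ almost surely, uniformly in $z\in K$, weakly uniformly in $\lambda\in I$; this identifies the limit with the $M_\infty(z)$ of Proposition~\ref{proposition cv unif martingale}. To pass from dyadic $k$ to all $k$, for a general $k$ let $k_0=2^{\lfloor\log_2 k\rfloor}\le k<2k_0$ and bound $\Ecsq{|M_k(z)-M_\infty(z)|^p}{S}\le 2^p\big(\Ecsq{|M_k(z)-M_{k_0}(z)|^p}{S}+\Ecsq{|M_{k_0}(z)-M_\infty(z)|^p}{S}\big)$; the first term is controlled exactly as in Lemma~\ref{lemme majoration M} by $2^p\sum_{j=k_0}^{k-1}\Ecsq{|M_{j+1}(z)-M_j(z)|^p}{S}=O_\P(k_0^{-\eta+o_\P(1)})$, so both terms are $o_\P(1)$ with the claimed uniformities.

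The main obstacle is bookkeeping rather than a new idea: one must check that all the $O_\P$/$o_\P$ estimates coming out of Lemma~\ref{lemme majoration M} (which are \emph{almost sure} in $k$, uniform in $z$, weakly uniform in $\lambda$) can be summed and combined without losing the weak uniformity in $\lambda$, and that the random $o_\P(1)$ sitting in the exponent of Lemma~\ref{lemme majoration M} does not spoil summability — this is handled by the strict gap $-\eta<0$ secured in the first paragraph, which leaves room to absorb a small random perturbation of the exponent on the event (of probability tending to $1$ uniformly in $\lambda\in I$) where that perturbation is smaller than $\eta/2$. Once this is in place, the statement follows, and in fact the argument also re-proves the almost-sure uniform convergence of Proposition~\ref{proposition cv unif martingale} as a by-product via the Borel–Cantelli / maximal-inequality argument of Lemma~A.3 of \cite{Sen21}.
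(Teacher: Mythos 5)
Your proposal is correct and follows essentially the same route as the paper: fix $p\in\intervalleof{1}{2}$ close enough to $1$ that the exponent from Lemma~\ref{lemme majoration M} is uniformly negative on $K\times I$, sum the dyadic increment bounds, apply the Biggins-type $L^p$ inequality, and pass to the limit (the paper does this via Fatou's lemma). The only difference is that your dyadic-to-general-$k$ reduction is unnecessary, since anchoring the dyadic sequence $k,2k,4k,\dots$ at $k$ itself — as the paper does — already yields the bound for every $k$.
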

\begin{proof}
	By Lemma~\ref{lemme majoration M}, taking {$p\in \intervalleof{1}{2}$} small enough so that for all $z \in K$ {and for all $\lambda \in I$} we have
	$\gamma(e^{p\Re z}-1-p(\Re(e^z)-1))-p+1<0$, one gets that uniformly in $z \in K$, as $k \to \infty$,
	\begin{align} \label{eq:sum of increments over powers of 2}
		\ind{k\geq J}\cdot \sum_{j\ge 0} \ind{\forall i \ge 0,\ S_i >0} \Ecsq{ \vert M_{2^jk}(z)-M_{2^{j+1}k}(z)\vert^p}{ S} = o_\P(1),
	\end{align}
	where the $o_\P(1)$ holds almost surely.
	But, by Lemma~1 of \cite{Big92} (see also Lemma~A.2 of \cite{Sen21}), for all $\ell\ge k$,
	\begin{align*}
	{\ind{k\geq J}\cdot }\ind{\forall i \ge 0,\ S_i >0} \E \left[\left. \left\vert M_k(z) - M_{2^\ell k}(z)\right\vert ^p\right\vert S\right]\le 	{\ind{k\geq J}}\cdot 2^p\sum_{j =  0}^{\ell-1} \ind{\forall i \ge 0,\ S_i >0} \E\left[\left. \vert M_{2^jk}(z)-M_{2^{j+1}k}(z)\vert^p\right\vert S\right], 
	\end{align*}
	so that by Fatou's lemma,
	\begin{align*}
	{\ind{k\geq J}\cdot } \ind{\forall i \ge 0,\ S_i >0} \E \left[\left.\left\vert M_k(z) - M_{\infty}(z)\right\vert ^p\right\vert S\right]\le 	{\ind{k\geq J}}\cdot 2^p\sum_{j\ge 0} \ind{\forall i \ge 0,\ S_i >0} \E\left[\left. \vert M_{2^jk}(z)-M_{2^{j+1}k}(z)\vert^p\right\vert S\right], 
	\end{align*}
	This completes the proof thanks to \eqref{eq:sum of increments over powers of 2}. 
\end{proof}

Finally, we use the results of this section and the previous one to make a connection between $M^n_k(z)$ and $M_\infty(z)$. 
\begin{theorem}\label{th: convergence vers M infini}
	Let $t,z$ be positive real numbers such that  $t \in (0,t_\lambda)$ and $z \in (0,z_\lambda)$ for all $\lambda\in I$. 
	Let $(K_n)$  be a sequence of compact subsets of {$\mathscr{E}$} such that $\mathrm{diam}(K_n) \to 0$ as $n\to \infty$ and such that $z \in K_n$ for every $n \geq 1$. 
	Then there exists {$p\in \intervalleof{1}{2}$} such that 
	\begin{align*} 
	\ind{\lfloor nt \rfloor \geq J}\cdot  {\ind{\tau_n \geq \lfloor nt \rfloor} }\ind{\forall k\ge 0, \ S_k>0}\cdot \Ecsq{\vert M^n_{\lfloor nt \rfloor}(z_n)-M_\infty(z) \vert^p }{S^n,S}
	=o_\P(1),
	\end{align*}
{as $n\rightarrow \infty$, uniformly in $z_n \in K_n$, weakly uniformly in $\lambda \in I$.} 
\end{theorem}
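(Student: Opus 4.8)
## Proof plan for Theorem~\ref{th: convergence vers M infini}

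The plan is to split the difference $M^n_{\lfloor nt\rfloor}(z_n) - M_\infty(z)$ into three pieces by introducing an intermediate scale $A_n \to \infty$ with $A_n \le nt$, chosen to grow slowly enough (e.g.\ $A_n = \lceil \log n\rceil^{C}$ for a suitable constant, or any sequence with $\mathrm{diam}(K_n) = o(1/\log A_n)$): writing
\begin{align*}
M^n_{\lfloor nt\rfloor}(z_n) - M_\infty(z) = \bigl(M^n_{\lfloor nt\rfloor}(z_n) - M^n_{A_n}(z_n)\bigr) + \bigl(M^n_{A_n}(z_n) - M^n_{A_n}(z)\bigr) + \bigl(M^n_{A_n}(z) - M_\infty(z)\bigr),
\end{align*}
and using the elementary inequality $|a+b+c|^p \le 3^p(|a|^p+|b|^p+|c|^p)$ for $p\in(1,2]$ to reduce to bounding each term separately. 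The first term is controlled by Corollary~\ref{corollaire M nt proche de M An}: on the event $\{\forall i\in\intervalleentier{0}{\lfloor nt\rfloor},\ S^n_i>0\}\cap\{J^n\le A_n\}$ there is some $p\in(1,2]$ with $\E_n[|M^n_{\lfloor nt\rfloor}(z_n)-M^n_{A_n}(z_n)|^p] = o_\P(1)$, uniformly in $z_n\in K_n$ and weakly uniformly in $\lambda\in I$. The second term is handled by Lemma~\ref{lemme z n proche de z} (valid since $\mathrm{diam}(K_n)=o(1/\log A_n)$), again on $\{\forall i\le\lfloor nt\rfloor,\ S^n_i>0\}\cap\{J^n\le A_n\}$, giving $\E_n[|M^n_{A_n}(z_n)-M^n_{A_n}(z)|^p]=o_\P(1)$ for $p$ close to $1$. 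One takes the smaller of the two exponents $p$ produced by these two lemmas so that both estimates hold simultaneously.

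The third term $M^n_{A_n}(z) - M_\infty(z)$ is where the two-scale coupling does its job. I would further decompose it as $(M^n_{A_n}(z) - M_{A_n}(z)) + (M_{A_n}(z) - M_\infty(z))$. For the second of these, apply Proposition~\ref{prop cv lp martingale}: since $z\in(0,z_\lambda)$ for all $\lambda\in I$, one checks that $z$ lies in $\mathcal V(\lambda)$ for all $\lambda\in I$ (because $\gamma(e^{p\Re z}-1-p(\Re(e^z)-1))-p+1 = -(p-1)(f_\lambda(z)+o(1))<0$ for $p$ close to $1$, as in \eqref{eq asymptotique p tend vers un} and $f_\lambda(z)>0$), hence $\ind{A_n\ge J}\ind{\forall i\ge 0,\ S_i>0}\,\E[|M_{A_n}(z)-M_\infty(z)|^p\mid S] = o_\P(1)$ since $A_n\to\infty$. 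For the first of these, $M^n_{A_n}(z) - M_{A_n}(z)$, I would use the coupled construction of Section~\ref{sous-section preliminaire couplage}: the trees $(\T^n_k)$ and $(\T_k)$ coincide up to the first time the walks $S^n$ and $S$ disagree, which by \eqref{eq:control 1/S^n_i} / \eqref{eq: limite fluide en proba} happens after time $A_n$ with probability tending to one (weakly uniformly in $\lambda\in I$) provided $A_n = o(\sqrt n)$, since $S_k - S^n_k = (1+k^2/n)\cdot O_\P(1)$ forces the walks to agree on $\intervalleentier{0}{A_n}$ with high probability when $A_n^2/n\to 0$. On the event where $S^n_i = S_i$ for all $i\le A_n$ one has $\T^n_{A_n}=\T_{A_n}$, $J^n=J$, $C^n_{A_n}(z)=C_{A_n}(z)$ and therefore $M^n_{A_n}(z)=M_{A_n}(z)$ exactly; on the complementary event, whose probability is $o(1)$ weakly uniformly in $\lambda$, one controls the $p$-th conditional moment crudely using Lemma~\ref{lemme majoration moment martingale} together with Lemma~\ref{lem:asymptotics Cnkz}, Lemma~\ref{lemme majoration M}, and the fact that $A_n$ grows only logarithmically in $n$ (so the relevant $C$-factors are $A_n^{o(1)}$), and then a Hölder/Cauchy–Schwarz argument transfers the smallness of the probability of the bad event into smallness of the moment. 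One must be slightly careful that the conditioning is on $(S^n,S)$ rather than $S^n$ alone, but since $\E_n[\cdot] = \E[\cdot\mid S^n]$ and $S^n$ is a (measurable) function of $S$ and the extra uniform randomness in the construction, conditioning additionally on $S$ only refines the $\sigma$-algebra and all the moment bounds above remain valid.

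Finally, one assembles the pieces: on the event $\{\tau_n\ge\lfloor nt\rfloor\}\cap\{\forall k\ge 0,\ S_k>0\}\cap\{\lfloor nt\rfloor\ge J\}$, Lemma~\ref{lem:JnJ}(ii) ensures $J^n = J + o_\P(1)$, so $\ind{J^n\le A_n}\to 1$ weakly uniformly in $\lambda\in I$ (as $A_n\to\infty$ and $(J(\lambda)\ind{\forall k,\ S_k>0})_{\lambda\in I}$ is tight), and likewise $\ind{\tau_n\ge\lfloor nt\rfloor} = \ind{\forall i\ge 0,\ S_i>0} + o_\P(1)$ by \eqref{eq: limite fluide en proba}; hence the indicators in the statement may be freely inserted and the bad events absorbed into $o_\P(1)$ terms. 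Taking $p\in(1,2]$ to be the minimum of the finitely many exponents required by Corollary~\ref{corollaire M nt proche de M An}, Lemma~\ref{lemme z n proche de z}, Proposition~\ref{prop cv lp martingale} and the crude bound on the bad event, and using $|a+b+c|^p\le 3^p(|a|^p+|b|^p+|c|^p)$ once more, yields the claimed conclusion with all the stated uniformity. The main obstacle I anticipate is the bookkeeping in the third term: cleanly arguing that $M^n_{A_n}(z)=M_{A_n}(z)$ on the coupling event while simultaneously keeping the error on the complement $o_\P(1)$ \emph{weakly uniformly in $\lambda$} — this requires that $A_n$ grow slowly enough that $A_n^2/n\to 0$ (so the coupling event has probability $\to 1$) yet is compatible with the constraint $\mathrm{diam}(K_n)=o(1/\log A_n)$ coming from Lemma~\ref{lemme z n proche de z}, which is the delicate interplay of scales to get right.
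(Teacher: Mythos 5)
Your proposal is correct and follows essentially the same route as the paper: the same intermediate scale $A_n$ with $\log A_n = o(1/\mathrm{diam}(K_n))$, the same four-term decomposition handled by Corollary~\ref{corollaire M nt proche de M An}, Lemma~\ref{lemme z n proche de z}, the coupling, and Proposition~\ref{prop cv lp martingale}. The only (harmless) extra work is your Hölder argument on the complement of the coupling event: since that event is $(S^n,S)$-measurable and has probability $o(1)$, the indicator of its complement times the conditional expectation is automatically $o_\P(1)$ with no moment bound needed, which is how the paper concludes.
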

\begin{proof}
		Let $A_n$ be a sequence of integers with $A_n \rightarrow \infty$ 
		and $A_n\leq \lfloor nt \rfloor$ for all $n\geq 1$, 
		and $\log(A_n)= o(1/\mathrm{diam}(K_n))$. 
	On the event $E_n \coloneqq 
{\{J^n \leq A_n\}\cap \{J \leq A_n\}}
	\cap \{\tau_n \geq \lfloor nt \rfloor\} \cap \{\forall k\ge 0, \ S_k>0\}$, 
	for $z_n\in K_n$ we can write
	\begin{multline*}
		\Ecsq{\lvert M^n_{\lfloor nt \rfloor}(z_n)-M_\infty(z) \rvert^p}{S^n,S}\\ \leq 4^p \cdot 
		\bigg(
		\Ecsq{\vert M^n_{\lfloor nt \rfloor}(z_n)-M^n_{A_n}(z_n) \vert^p}{S^n,S}
		+
		\Ecsq{\vert M^n_{A_n}(z_n) - M^n_{A_n}(z) \vert^p}{S^n,S}\\
		+ 
		\Ecsq{\vert M^n_{A_n}(z) - M_{A_n}(z)  \vert^p}{S^n,S}
		+ 
		\Ecsq{\vert M_{A_n}(z) - M_\infty(z)  \vert^p}{S^n,S}
		\bigg).
	\end{multline*}
The LHS is the quantity that we want to show is $o_\P(1)$ on $E_n$ and the RHS is a sum of 4 terms. 
The first term is $o_\P(1)$ on $E_n$  by Corollary~\ref{corollaire M nt proche de M An}, 
the second term is $o_\P(1)$  on $E_n$  by Lemma~\ref{lemme z n proche de z}
and the fourth term is $o_\P(1)$ on $E_n$  thanks to Proposition~\ref{prop cv lp martingale}.
It remains to show that the third term in $o_\P(1)$ on $E_n$ as well.

First, on the event $E_n\cap \{J^n=J\}\cap \{(S^n_k)_{1\leq k \leq A_n} = (S_k)_{1\leq k \leq A_n}\}$, the terms $C^n_{A_n}(z)$ and $C_{A_n}(z)$ are equal and by the coupled construction \eqref{eq:coupling of the freezing and attachment vertices}, 
the trees $\T_{A_n}$ and $\T^n_{A_n}$ are identical, so $M^n_{A_n}(z)=M_{A_n}(z)$.
This entails that 
\begin{align}\label{eq:martingale difference is 0 on some event}
\mathbbm{1}_{E_n\cap \{J^n=J\}\cap \{(S^n_k)_{1\leq k \leq A_n} = (S_k)_{1\leq k \leq A_n}\}}\cdot \Ecsq{\vert M^n_{A_n}(z) - M_{A_n}(z)  \vert^p}{S^n,S}=0.
\end{align}

{Using the coupling of Section~\ref{sous-section preliminaire couplage},
since the convergence of $(S^n_k)_{k\ge 0}$ towards $(S_k)_{k\ge 0}$ for the product topology holds almost surely and uniformly in $\lambda \in I$ for any compact interval $I\subset (1,\infty)$, we may choose an integer sequence $(A_n)$ that grows slowly enough so that $(S^n_k)_{0\le k \le A_n}=(S_k)_{0\le k \le A_n}$ with probability $1-o(1)$, as $n\rightarrow \infty$, uniformly in $\lambda\in I$, so that $\mathbbm{1}_{\{(S^n_k)_{1\leq k \leq A_n} = (S_k)_{1\leq k \leq A_n}\}}={1-}o_\P(1)$. By Lemma~\ref{lem:JnJ}, we also have $\mathbbm{1}_{E_n \cap \{J^n = J\}}={1-}o_\P(1)$. This proves that
\begin{align*}
	\mathbbm{1}_{E_n\cap \{J^n=J\}\cap \{(S^n_k)_{1\leq k \leq A_n} = (S_k)_{1\leq k \leq A_n}\}} = \mathbbm{1}_{E_n} + o_\P(1).
\end{align*}
Combining this with \eqref{eq:martingale difference is 0 on some event} ensures that $\Ecsq{\vert M^n_{A_n}(z) - M_{A_n}(z)  \vert^p}{S^n,S}=o_\P(1)$ on the event $E_n$, which finishes the proof.
}\end{proof}

\subsection{The limiting function $z\mapsto M_{\infty}(z)$ does not vanish on the interval $\intervalleoo{0}{z_\lambda}$}
\label{sous-section limite non nulle}
In this subsection, 
we fix $\lambda\in \intervalleoo{1}{\infty}$ and
we prove that almost surely, $M_\infty(z)>0$ for all $ z \in (0,z_\lambda)$.
We first check that the number of zeros is countable.
\begin{lemma}\label{lemme limite martingale non nulle z fixe}
	On the event $\{\forall k \geq 0, \ S_k>0\}$, a.s.\@ for all $z \in (0,z_\lambda)$ we have $\Ppsq{M_\infty(z) \neq 0}{S}=1$. 
	In particular, we have $\Ppsq{\forall z\in (0,z_\lambda),\ M_\infty(z) =0}{S}=0$ so by analyticity of 
	the function $z\mapsto M_\infty(z)$,
	it almost surely has only a countable number of zeros on $(0,z_\lambda)$.
\end{lemma}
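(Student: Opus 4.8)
The plan is to fix $z \in (0,z_\lambda)$ and show $\Ppsq{M_\infty(z) \neq 0}{S}=1$ by a standard self-similarity/branching argument for the martingale limit, together with a zero-one type dichotomy. First I would observe that, because $z \in (0,z_\lambda)$, one may pick $p \in \intervalleof{1}{2}$ close enough to $1$ so that $z \in \mathcal{V}(\lambda)$; indeed by the expansion \eqref{eq asymptotique p tend vers un} the exponent $\gamma(e^{p\Re z}-1-p(\Re(e^z)-1))-p+1$ equals $-(p-1)(f_\lambda(z)+o(1))<0$ as $p\to 1$, using $f_\lambda(z)>0$. Hence Proposition~\ref{prop cv lp martingale} applies: conditionally on $S$, $M_k(z)\to M_\infty(z)$ in $L^p$, so in particular $\Ecsq{M_\infty(z)}{S}=M_J(z)$, and since $M_J(z) = \cL(z,\T_J)/C_J(z)$ is a strictly positive real number (it is an average of $e^{z\haut(u)}>0$ over active vertices of $\T_J$, divided by a positive quantity because $z$ is real and $z<z_\lambda$), we get $\Ecsq{M_\infty(z)}{S}>0$, so $\Ppsq{M_\infty(z)>0}{S}>0$.

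Next I would upgrade this positive probability to probability one. The idea is the classical one: after the time $J$, the tree $\T_k$ is built by uniform attachment with freezing driven by $\XXb$, and one can decompose the active vertices of $\T_J$ as finitely many "ancestors" each of which generates, by the branching property of Algorithm~\ref{algo1}, an independent copy of the same type of structure (restarted from a single active root, with the appropriate shifted sequence of $\pm1$ steps). More precisely, writing $q \coloneqq \Ppsq{M_\infty(z)=0}{S}$, one conditions on the first few steps of the construction: at the first attachment step after $J$, the active vertex population splits, and $M_\infty(z)$ can be written as a nonnegative combination (with positive deterministic — given $S$ — coefficients coming from the $C_k(z)$ normalisation and the heights) of independent random variables each having conditional law equal to a martingale limit of the same family but for a walk $S$ shifted in time. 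Because the shifted walk still stays positive and the relevant increments $1/S_i$ are summable in square, the same construction goes through and these pieces have the same "does it vanish" probability $q$ (this requires a short argument that the event $\{M_\infty(z)=0\}$ does not depend on the finitely many initial coordinates, i.e.\ a tail/$0$–$1$ flavour statement). Combining, $q = q^{r}$ for some integer $r\ge 2$ of active vertices involved, forcing $q\in\{0,1\}$; since we showed $q<1$ above, $q=0$.

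The assertion "a.s.\ for all $z\in(0,z_\lambda)$" then follows: for each fixed rational $z\in(0,z_\lambda)$ the event $\{M_\infty(z)=0\}$ is $\P(\,\cdot\mid S)$-null, hence so is the countable union over rationals; and on the complementary event of full probability, the analytic function $z\mapsto M_\infty(z)$ (analyticity comes from Lemma~\ref{lemme majoration C} and Proposition~\ref{proposition cv unif martingale}, which give $M_\infty$ as a locally uniform limit of the analytic functions $M_k$ on a complex neighbourhood of $(0,z_\lambda)$ contained in $\mathcal{V}(\lambda)$) is not identically zero, so its zero set in $(0,z_\lambda)$ is discrete, in particular countable. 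This gives the "in particular" clause. I expect the main obstacle to be the self-similar decomposition in the second paragraph: one must set up cleanly the branching structure of Algorithm~\ref{algo1} after time $J$ — identifying the independent sub-families, checking that each sub-family's martingale limit exists and has the same vanishing probability via a time-shift of the walk $S$, and justifying the tail-triviality of $\{M_\infty(z)=0\}$ so that the product relation $q=q^r$ is valid. Everything else (the $L^p$ convergence giving positivity of the mean, analyticity, the countability conclusion) is routine given the results already established in the excerpt.
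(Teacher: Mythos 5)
Your first step is exactly the paper's: by \eqref{eq asymptotique p tend vers un} one can take $p>1$ close enough to $1$ that $z\in\mathcal{V}(\lambda)$, Proposition~\ref{prop cv lp martingale} gives $L^p$ (hence $L^1$) convergence conditionally on $S$, so $\Ecsq{M_\infty(z)}{S}=M_J(z)>0$ and therefore $\Ppsq{M_\infty(z)=0}{S}<1$. The closing paragraph (countable union over rational $z$, analyticity, discreteness of the zero set) is also fine.

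The problem is the mechanism you use to upgrade ``$<1$'' to ``$=0$''. The relation $q=q^r$ does not hold as you set it up. First, conditionally on $S$ alone the subtrees above the $r$ active vertices are \emph{not} independent copies driven by shifts of $S$: their active-vertex counts evolve as a time-dependent P\'olya urn, and each subtree is driven by a time-changed walk $\widetilde S^{v}$ distinct from $S$, so its conditional vanishing probability is a function of $\widetilde S^{v}$, not the number $q=\Ppsq{M_\infty(z)=0}{S}$. Second, on $\{\forall k,\ S_k>0\}$ an individual subtree may still go extinct (lose all its active vertices), in which case its contribution vanishes for a trivial reason; so the per-piece vanishing probability is strictly larger than the ``martingale limit is zero'' probability, and the product identity fails. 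Note also that if you grant the tail-triviality you invoke in passing, the branching decomposition becomes superfluous: that parenthetical \emph{is} the proof. This is what the paper does (following Lemma~3.10 of \cite{Sen21}): introduce, for $N\ge J$, the martingales $M^{(N)}_k(z)=\frac{1}{C_k(z)}\frac{1}{S_k}\sum_{u\,\mathrm{active}} e^{z\,d(u,\T_N)}$, observe the sandwich $(1\wedge e^z)^N M^{(N)}_k(z)\le M_k(z)\le (1\vee e^z)^N M^{(N)}_k(z)$, so that $\{M_\infty(z)=0\}$ lies in the tail $\sigma$-algebra of the attachment/freezing variables $\widetilde U_1,\widetilde U_2,\dots$, which are i.i.d.\ conditionally on $S$; Kolmogorov's $0$--$1$ law then gives $\Ppsq{M_\infty(z)=0}{S}\in\{0,1\}$, and the positive conditional mean forces the value $0$. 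You should replace the $q=q^r$ step by this argument (and make the tail-measurability precise via the $M^{(N)}$ martingales rather than asserting it).
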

\begin{proof}
	It follows from exactly the same proof as in the proof of Lemma~3.10 of \cite{Sen21} using Proposition~\ref{proposition cv unif martingale}, considering for all $N\ge {J}$ the martingale $(M^{(N)}_k(z))_{k\ge N}$ defined by
	\begin{equation}\label{eq martingale N premiers pas identifies}
		\forall k\ge N, \qquad M^{(N)}_k(z) = \frac{1}{C_k(z)}\frac{1}{S_k} \sum_{\substack{u \in \T_k \\ \text{active}}} e^{zd(u,\T_N)},
	\end{equation}
	where $\T_N$ is viewed as a subtree of $\T_k$ and applying Kolmogorov's 0-1 law conditionally on $S$.
\end{proof}
\begin{proposition}\label{prop: M infini ne s'annule pas}
	On the event $\{\forall k \geq 0, \ S_k>0\}$, the function $z\mapsto M_\infty(z)$ almost surely has no zero in $(0,z_\lambda)$.
\end{proposition}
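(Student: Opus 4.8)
The plan is to run a classical ``no-zeros for the limiting martingale'' argument, in the spirit of the one used for weighted recursive trees in \cite{Sen21} (Proposition~3.11 there), adapted to our setting where the underlying walk $S$ is random and we must work conditionally on $S$ on the event $\{\forall k\geq 0,\ S_k>0\}$. The starting observation is that, by Lemma~\ref{lemme limite martingale non nulle z fixe}, on that event the analytic (random) function $z\mapsto M_\infty(z)$ almost surely has at most countably many zeros in $\intervalleoo{0}{z_\lambda}$, and for each fixed $z$ it is almost surely nonzero. What we want to upgrade this to is: there is \emph{no} zero at all. The key structural tool is a decomposition of $M_\infty(z)$ according to the first few generations of the tree. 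Namely, fix a large integer level: for any $N\ge J$ one has the ``spinal'' decomposition
\begin{equation*}
	M_\infty(z) = \sum_{v\in \mathcal{A}(\T_N)} \frac{e^{z\haut(v)}}{C_N(z)\, S_N} \cdot \widehat{M}^{(v)}_\infty(z),
\end{equation*}
where, conditionally on $(\T_N,S)$, the random variables $(\widehat{M}^{(v)}_\infty(z))_{v\in\mathcal{A}(\T_N)}$ are the limiting martingales attached to the subtrees dangling from the active vertices of $\T_N$; these are built from the shifted sequence $\XXb$ and shifted walk $S$ after time $N$, using Algorithm~\ref{algo1}. Crucially, conditionally on $S$ the walk after time $N$ still visits only positive values, so each $\widehat{M}^{(v)}_\infty(z)$ is (up to the deterministic normalisation coming from $C_\bullet(z)$) a copy of the same object $M_\infty(z)$ associated with a forest, and by Lemma~\ref{lemme limite martingale non nulle z fixe} applied after the shift, each $\widehat M^{(v)}_\infty(z)$ is almost surely nonzero for fixed $z$.

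First I would fix a countable dense subset $D\subset \intervalleoo{0}{z_\lambda}$ and, using Lemma~\ref{lemme limite martingale non nulle z fixe} together with a union bound over $D$, work on the almost-sure event where $M_\infty(z)\neq 0$ and $\widehat M^{(v)}_\infty(z)\neq 0$ simultaneously for all $z\in D$, all $N\ge J$, and all $v\in\mathcal{A}(\T_N)$; this is a countable intersection of full-probability events, hence still of full probability (conditionally on $S$). Next, suppose for contradiction that on a positive-probability event there is a (random) zero $z_0\in\intervalleoo{0}{z_\lambda}$ of $z\mapsto M_\infty(z)$. Since the zeros are isolated (analyticity and the fact that $M_\infty$ is not identically zero, as $M_\infty(z)\to$ a nonzero limit at some point of $D$), we can find a small disc around $z_0$, with rational centre and radius, on whose boundary $M_\infty$ does not vanish, and such that $z_0$ is the only zero inside. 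Now apply the spinal decomposition above with $N$ large. The point is that the coefficients $e^{z\haut(v)}/(C_N(z)S_N)$ are entire and \emph{nowhere zero} in $z$, so the zero set of $M_\infty$ is exactly the common zero set of the ``weighted sum'' on the right. Using the convergence results of Section~\ref{sous-section convergence des martingales} — more precisely the uniform (in $z$ on compacts of $\mathcal{V}(\lambda)$, hence in particular on a neighbourhood of $z_0$) $L^p$ and a.s.\ convergence $M_k(z)\to M_\infty(z)$ from Proposition~\ref{proposition cv unif martingale} and Proposition~\ref{prop cv lp martingale}, together with Hurwitz's theorem — I would argue that a zero of $M_\infty$ near $z_0$ forces, for $N$ large, a near-cancellation in the finite sum that cannot persist: each $\widehat M^{(v)}_\infty(z_0)$ is nonzero and they are conditionally independent given $(\T_N,S)$, with a number of terms $S_N\to\infty$; a Borel--Cantelli / second-moment estimate on $\mathrm{Var}$ of the normalised sum (using the $L^p$ bounds of Lemma~\ref{lemme majoration M}) then shows the probability that the sum vanishes in a fixed disc tends to $0$ as $N\to\infty$, whence the contradiction.

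I expect the main obstacle to be making the Hurwitz/second-moment dichotomy quantitative and uniform: one must control, simultaneously for all discs with rational data inside $\intervalleoo{0}{z_\lambda}$, the event that the partial sum $M_N(z)$ (or its spinal refinement $M^{(N)}_k(z)$ from \eqref{eq martingale N premiers pas identifies}) has a zero in that disc, and then pass to the limit $N\to\infty$ using that $M_N\to M_\infty$ uniformly on the closed disc. The cleanest route is probably to imitate verbatim the argument of Proposition~3.11 in \cite{Sen21}: condition on $S$ throughout, observe that the proof there only uses (a) the spinal branching decomposition of the limiting martingale, (b) the almost-sure non-vanishing at each fixed $z$ (our Lemma~\ref{lemme limite martingale non nulle z fixe}), and (c) the uniform-on-compacts convergence of the prelimiting martingales with $L^p$ control (our Proposition~\ref{proposition cv unif martingale}, Proposition~\ref{prop cv lp martingale}, and Lemma~\ref{lemme majoration M}), all of which we have established — the only genuinely new point being the bookkeeping of the extra conditioning on $S$ and the restriction to the event $\{\forall k\ge 0,\ S_k>0\}$, which is harmless since everything in Section~\ref{sous-section convergence des martingales} was already stated on that event. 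Since $\intervalleoo{0}{z_\lambda}\subset \mathcal{V}(\lambda)$ is \emph{not} automatic, I would also double-check at the start that for every $z\in\intervalleoo{0}{z_\lambda}$ there exists $p\in\intervalleof{1}{2}$ with $\gamma(e^{p\Re z}-1-p(\Re(e^z)-1))-p+1<0$, i.e.\ $f_\lambda(z)>0$ by the computation \eqref{eq asymptotique p tend vers un}; this holds since $z<z_\lambda$, so the convergence results do apply on all of $\intervalleoo{0}{z_\lambda}$.
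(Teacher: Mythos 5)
Your overall strategy — decompose $M_\infty$ over the subtrees hanging from the active vertices at a finite time, then combine the countable-zeros statement of Lemma~\ref{lemme limite martingale non nulle z fixe} with conditional independence of the subtrees — is the right one, and it is essentially what the paper does. But the step where you actually derive the contradiction is not sound as written. The spinal sum $\sum_{v}e^{z\haut(v)}(C_N(z)S_N)^{-1}\widehat M^{(v)}_\infty(z)$ is \emph{equal} to $M_\infty(z)$ for every $N$, so the event that it vanishes somewhere in a fixed disc does not depend on $N$; the assertion that a second-moment bound "shows the probability that the sum vanishes in a fixed disc tends to $0$ as $N\to\infty$" therefore has no content. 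Hurwitz's theorem also points the wrong way: it says zeros of the limit are limits of zeros of the approximants, and the fact that each $M_N$ is strictly positive on $\intervalleoo{0}{z_\lambda}$ does not prevent $M_\infty$ from having a real zero (compare $z^2+1/N\to z^2$). The ingredient you are missing is \emph{positivity}: for real $z\in\intervalleoo{0}{z_\lambda}$ every term of the decomposition is non-negative and the weights are strictly positive, so $M_\infty(z)=0$ forces $\widehat M^{(v)}_\infty(z)=0$ for \emph{every} surviving subtree simultaneously. There is no cancellation to analyse. With two independent surviving subtrees one then concludes directly: the zero set of the first is a.s.\ countable, and the second a.s.\ avoids each of those countably many points.

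Two further gaps separate your sketch from a proof. First, to run the two-subtree argument you must guarantee, with positive conditional probability given $S$, that both subtrees survive and each carries asymptotically a positive fraction of the active vertices — otherwise the ratios $C^{v}_k(z)/C_k(z)$ need not converge to non-vanishing limits and the coefficients in the decomposition are not controlled. The paper obtains this from the time-dependent P\'olya urn with removals (Lemma~\ref{lem:convergence proportion urn} and Proposition~\ref{prop urnes}), applied at the stopping times $\sigma_j$ where $S$ hits $2$; your general level-$N$ decomposition, in which most of the $S_N$ subtrees eventually die out, hides rather than solves this issue. Second, the two-subtree argument only yields $\Ppsq{\mathcal{A}}{S}>0$; the upgrade to probability one is done in the paper via a Kolmogorov $0$--$1$ law for the tail $\sigma$-algebra of the attachment variables $\widetilde U_1,\widetilde U_2,\dots$ (using the martingales $M^{(N)}_k$ of \eqref{eq martingale N premiers pas identifies} to show $\mathcal{A}$ is a tail event). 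Your union bound over a countable dense set $D$ does not substitute for this: an analytic function can vanish at a point of $\intervalleoo{0}{z_\lambda}\setminus D$ while being nonzero on all of $D$.
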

\begin{proof}
	 Let $\mathcal{A}$ be the event 
	 \[\mathcal{A}  \coloneqq\{z\mapsto M_\infty(z) \text{ has no zero in }(0,z_\lambda)\},\] 
	 which is measurable since $z\mapsto M_\infty(z)$ is continuous. 
	The goal of the proof is hence to prove that on the event $\{\forall k\geq 0, \ S_k>0\}$ we have $\Ppsq{\mathcal{A}}{S} = 1$ almost surely.
	
First, we prove that $\Ppsq{\mathcal{A}}{S} \in \{0,1\}$ almost surely on the event $\{\forall k\geq 0, \ S_k>0\}$.
	Using the martingale introduced in \eqref{eq martingale N premiers pas identifies}, as in the proof of Lemma~3.10 in \cite{Sen21}, for all $N \ge {J}$, we have the inequality for all $k \ge 0$,
	\begin{align*}
		(1 \wedge e^z)^N M^{(N)}_k(z) \le M_k(z) \le (1 \vee e^z)^N M^{(N)}_k(z),
	\end{align*}
	so that by taking the limit when $k\to \infty$, one gets that the function $z\mapsto M_\infty(z)$ has a zero in $(0,z_\lambda)$ if and only if the function $z\mapsto \limsup_{k \to \infty} M^{(N)}_k(z)$ has a zero in $(0,z_\lambda)$. 
	But by construction of the martingale in \eqref{eq martingale N premiers pas identifies}, the function $z \mapsto \limsup_{k \to \infty}M_k^{(N)}(z)$ does not depend on the first $N$ steps of the construction of the tree. 
	So the event $\mathcal{A}$ belongs to the tail $\sigma$-algebra generated by the uniform random variables
	$\widetilde{U}_1, \widetilde{U}_2, \dots$ that we use in \eqref{eq:coupling of the freezing and attachment vertices} 
	 to determine which vertex is frozen or to which active vertex we attach a new one.
By the Kolomogorov $0$-$1$ law, this ensures that $\Ppsq{\mathcal{A}}{S} \in \{0,1\}$.

The rest of the proof is dedicated to proving that $\Ppsq{\mathcal{A}}{ S}>0$ almost surely on the event $\{\forall k\geq 0, \ S_k>0\}$.
Let
		\[\sigma_1\coloneqq \inf\enstq{k\geq 0}{S_k=2 \ \text{and} \ \forall i \in \intervalleentier{0}{k-1}, S_i>0}.\]
On the event $\{\sigma_1 <\infty\}$, we consider, for all $k\ge \sigma_1$, the subtrees $\T^{v(1)}_k$ and $\T^{w(1)}_k$ made of the descendants of the two active vertices $v(1),w(1)$ at time $\sigma_1$. 
	Let $S^{v(1)}_k$ and $S^{w(1)}_k$ be the number of active vertices in $\T^{v(1)}_k$ and $\T^{w(1)}_k$. 
From the dynamics of the construction, conditionally on $S$, the sequence $(S^{v(1)}_k, S^{w(1)}_k)_{k\geq \sigma_1}$ evolves as the number of blue and red balls in \emph{time-dependent P\'olya urn with removals} with starting composition $(1,1)$ and replacement sequence $(\XX_k)_{k\geq \sigma_1 +1}$, as defined in Section~\ref{sec:polya}. 
Note that thanks to Lemma~\ref{lem:convergence proportion urn} below, on the event $\{\forall k\geq 0, \ S_k>0\}$, we have the almost sure convergences $S^{v(1)}_k /(S^{v(1)}_k+S^{w(1)}_k) \underset{k \rightarrow \infty }{\rightarrow} Z_1 $ as well as 
	\begin{align}\label{eq:convergence towards limiting proportion urn}
		\frac{1}{k} \sum_{i=\sigma_1}^{k-1} \ind{S^{v(1)}_{i+1} - S^{v(1)}_i \neq 0}  \quad \mathop{\longrightarrow}^{\textrm{a.s.}}_{k \rightarrow \infty} \quad  Z_1 
		\qquad \text{and} \qquad 
		\frac{1}{k} \sum_{i=\sigma_1}^{k-1} \ind{S^{w(1)}_{i+1} - S^{w(1)}_i \neq 0}  \quad \mathop{\longrightarrow}^{\textrm{a.s.}}_{k \rightarrow \infty} \quad 1-Z_1.
	\end{align}
for some random variable $Z_1$. 

Now, on the event  
$\{\sigma_1 <\infty\}\cap \{\forall k> \sigma_1, \ S_k \geq 2\} $
we almost surely have 
\[\forall k\geq \sigma_1, \ S_k \geq 2 \qquad \text{and} \qquad \sum_{k=\sigma_1}^{\infty} \frac{\XX_k^2}{S_k^2}<\infty\] so that by Proposition~\ref{prop urnes}, 
we have
$\Ppsq{Z_1\in \intervalleoo{0}{1}}{S}>0$.
	For all $k \ge \sigma_1$, for all $z \in \mathbb{C}$, let
	\begin{align*} 
	&C^{v(1)}_k(z) \coloneqq\prod_{i=\sigma_1+1}^{k} \left(1+\frac{1}{S^{v(1)}_i}\ind{S^{v(1)}_i-S^{v(1)}_{i-1}=1}(e^z-1)\right),\\
	&C_k^{w(1)}(z)\coloneqq\prod_{i=\sigma_1+1}^{k} \left(1+\frac{1}{S^{w(1)}_i}\ind{S^{w(1)}_i-S^{w(1)}_{i-1}=1}(e^z-1)\right).
	\end{align*} 
	Set 
	\[\mathcal{W}\coloneqq \enstq{ z \in \mathcal{V}}{ \mathrm{Im}(z) \in \intervalleff{-\frac{\pi}{4}}{ \frac{\pi}{4}}}.
	\]
	 Then, for all $z \in \mathcal{W}$, we have $\mathrm{Re}(e^z)> 0$ so that for all $\ell\ge 1$ we have
	$\mathrm{Re}\left(  1 + \frac{1}{\ell} (e^z -1)\right) >0$.
	In particular, for all $z \in \mathcal{W}$, we have $C^{v(1)}_k(z)\neq 0$ (resp.\@ $C_k^{w(1)}(z) \neq 0$) for all $k\ge \sigma_1$ such that $S^{v(1)}_k>0$ (resp.\@ $S^{w(1)}_k>0$) and we set
	\begin{align*} 
	M^{v(1)}_k(z) \coloneqq\frac{1}{C^{v(1)}_k(z)} \frac{1}{S^{v(1)}_k} \sum_{\substack{u \in \T^{v(1)}_k\\ \text{active}}} e^{z (\haut(u)-\haut(v(1)))}
		\text{ and }
	M^{w(1)}_k(z) \coloneqq\frac{1}{C^{w(1)}_k(z)} \frac{1}{S^{w(1)}_k} \sum_{\substack{u \in \T^{w(1)}_k\\ \text{active}}} e^{z (\haut(u)-\haut(w(1)))},
	\end{align*} 
	where the height $\haut$ is measured in $\T_k$. When $S^{v(1)}_k=0$ (resp.\@ $S^{w(1)}_k=0$), we set $M^{v(1)}_k(z)= 0$ (resp.\@ $M^{w(1)}_k(z)=0$). Then one can write for all $k\ge \sigma_1{\vee J}$,
	\begin{equation}\label{eq decomposition martingale}
		M_k(z)= \frac{1}{C_k(z)} \left(  e^{\haut(v(1))} C^{v(1)}_k(z) M^{v(1)}_k(z)+e^{\haut(w(1))} C^{w(1)}_k(z) M^{w(1)}_k(z)\right).
	\end{equation}
	Let  $\tau_{v(1)}(0)=\sigma_1$ and  $\tau_{w(1)}(0)=\sigma_1$, 
	and for $n\geq 1$, we define 
		\[\tau_{v(1)}(n)\coloneqq \inf\enstq{k>\tau_{v(1)}(n-1)}{S^{v(1)}_k \neq S^{v(1)}_{k-1}}
		\text{ and }
		\tau_{w(1)}(n)\coloneqq \inf\enstq{k>\tau_{w(1)}(n-1)}{S^{w(1)}_k \neq S^{w(1)}_{k-1}}
		\]
		to be the $n$-th time that the number of active vertices above $v(1)$ changes (resp. above $w(1)$), where by convention we set $\tau_{v(1)}(n)=\tau_{v(1)}(n-1)$ if the number of active vertices changes less than $n-1$ times.
	
Now let us reason under $\P$, that is, we do not condition on $S$. 
We can check that conditionally on the event $\{\sigma_1<\infty\}$, the {time-changed} sequences 
${\widetilde{S}^{v(1)}=(\widetilde{S}^{v(1)}_{n} : n\ge 0)} = (S^{v(1)}_{\tau_{v(1)}(n)} : n\ge 0)$ and ${\widetilde{S}^{w(1)}=(\widetilde{S}^{w(1)}_{n} : n\ge 0)} =(S^{w(1)}_{\tau_{w(1)}(n)} : n \ge 0)$ are independent random walks (stopped when they reach $0$) whose increments have the same law as the increments of $S$,

 On the event 
		\begin{align*}
		E_1 &\coloneqq \{\sigma_1 <\infty\} \cap \{\forall k> \sigma_1, \ S_k \geq 2\} \cap  \{Z_1 \in (0,1)\}\\
		 &{\subset \{\sigma_1 <\infty\}\cap \{\forall n\geq 0, \ \widetilde{S}^{v(1)}_{n}>0\} \cap \{\forall n\geq 0, \ \widetilde{S}^{w(1)}_{n}>0\}},
		 \end{align*}
the sequences of functions $(z\mapsto C^{v(1)}_{\tau_{v(1)}(n)}(z))_{n\geq 0}$ and $(z\mapsto C^{w(1)}_{\tau_{w(1)}(n)}(z))_{n\geq 0}$ almost surely satisfy the asymptotics of Lemma~\ref{lemme majoration C}. Using \eqref{eq:convergence towards limiting proportion urn}, which ensures that $\tau_{v(1)}(n)\sim n/Z_1$ and $\tau_{w(1)}(n)\sim n/(1-Z_1)$ a.s.~as $n \rightarrow \infty$,
we get that 
 the sequences of functions $(z\mapsto C^{v(1)}_k(z)/C_k(z))_{k\geq \sigma_1 }$ and $(z \mapsto C^{w(1)}_k(z)/C_k(z))_{k\geq \sigma_1 }$ converge a.s.\@ as $k \to \infty$, uniformly in $z\in K$ for any compact $K\subset \mathcal{W}$, to limiting functions that do not vanish on $\intervalleoo{0}{z_\lambda}$. 
	Moreover, by Proposition~\ref{proposition cv unif martingale}, the sequences of functions $(z\mapsto M^{v(1)}_k(z))_{k\geq \sigma_1}$ and $(z\mapsto M^{w(1)}_k(z))_{k\geq \sigma_1}$ also converge almost surely uniformly in $z$ on any compact subset of $\mathcal{W}$ to some random analytic functions $z\mapsto M^{v(1)}_\infty(z)$ and $z \mapsto M^{w(1)}_\infty(z)$.
	Thus, letting $k \to \infty$ in \eqref{eq decomposition martingale}, one can write on the event $E_1$
	\begin{align*} 
	\forall z \in (0,z_\lambda), \qquad 
	M_\infty(z) = A_1(z) M^{v(1)}_\infty(z) + B_1(z)M^{w(1)}_\infty(z),
	\end{align*} 
with some functions $A_1(z),B_1(z)$ that are measurable with respect to $S,S^{v(1)},S^{w(1)},\haut(v(1)),\haut(w(1))$ and so that for all $z \in (0,z_\lambda)$ we have $A_1(z),B_1(z)>0$.
%
Now, by Lemma~\ref{lemme limite martingale non nulle z fixe},  on the event $E_1$,  we have
\[\Ppsq{\forall z\in (0,z_\lambda),\ M^{v(1)}_\infty(z) =0}{\widetilde{S}^{v(1)}}=0\]
so the analytic function $z\mapsto M^{v(1)}_\infty(z)$ is non-zero almost surely so we can enumerate its zeros $(\zeta_n)_{n \geq 1}$  in $(0,z_\lambda)$ in a measurable way.
Now, by construction, on the event $E_1$, conditionally on $S$, $\widetilde{S}^{v(1)}$ and $\widetilde{S}^{w(1)}$, the function $z\mapsto M^{w(1)}_\infty(z)$ is independent of $z\mapsto M^{v(1)}_\infty(z)$. 
Moreover, the function $z\mapsto M^{w(1)}_\infty$ is independent of $S$ and $\widetilde{S}^{v(1)}$ conditionally on $\widetilde{S}^{w(1)}$.
Thus, on the event $E_1$, for any $n\geq 1$,
\begin{align*}\Ppsq{M^{w(1)}_\infty(\zeta_n) =0}{S, \widetilde{S}^{v(1)},\widetilde{S}^{w(1)},(z\mapsto M^{v(1)}_\infty(z))}
	&= \Ppsq{M^{w(1)}_\infty(\zeta_n) =0}{S,\widetilde{S}^{v(1)},\widetilde{S}^{w(1)} } \\
	&=\Ppsq{M^{w(1)}_\infty(\zeta_n) =0}{\widetilde{S}^{w(1)}}\\
	&=0. 
\end{align*}
where the last equality stems from Lemma~\ref{lemme limite martingale non nulle z fixe} again.
	Thus, on the event $E_1$, the function $M_\infty$ has almost surely no zero in $(0,z_\lambda)$.
	
	Similarly to the case $j=1$, for all $j\ge 2$, we set
	\begin{align*} 
	\sigma_j \coloneqq \inf \enstq{k > \sigma_{j-1}}{ \ S_k = 2 \ \text{and}  \ \forall i \in \intervalleentier{0}{k-1}, S_i >0},
	\end{align*} 
	where by convention $\inf \emptyset = \infty$. 
On the event $\{\sigma_j<\infty\}$,
	one can consider the subtrees made of the descendants of the two active vertices 
$v(j)$ and $w(j)$
	of $\T_{\sigma_j}$ and the numbers $S^{v(j)}_k$ and $S^{w(j)}_k$ of active vertices in these subtrees at time $k \ge \sigma_j$.
As in the case $j=1$, conditionally on $S$, on the event $\{\forall k \geq 0, \ S_k >0 \}\cap \{\sigma_j <\infty \}$, the process $(S^{v(j)}_k/(S^{v(j)}_k+S^{w(j)}_k))_{k\geq \sigma_j}$ is a bounded martingale so we let 
	 $Z_j$ be its almost sure limit
	 as $k \to \infty$. 
	By Proposition~\ref{prop urnes} again, on the event
$ \{\sigma_j <\infty \}\cap \{\forall k\geq \sigma_j, \ S_k\geq 2\}$
	we have a.s.
	\begin{equation}\label{eq Z j non trivial}
		\Ppsq{Z_j \in (0,1)}{ S}>0.
	\end{equation}
	Besides, since $S_k \to \infty$ almost surely, 
	\begin{equation}\label{eq somme des probas des sigma j}
		\Pp{\bigcup_{j=1}^\infty\left( \{ \sigma_j <\infty \}\cap \{\forall k\geq \sigma_j, \ S_k\geq 2\}\right)}= \Pp{\forall k \geq 0, \ S_k >0}.
	\end{equation}
	Moreover, by the same reasoning as for $j=1$, on the event 
\[E_j\coloneqq\{\sigma_j<\infty\} \cap \{\forall k\geq \sigma_j, \ S_k\geq 2\} \cap \{Z_j \in (0,1)\} = \{\forall k \geq 0, \ S_k >0 \} \cap \{\sigma_j<\infty\} \cap \{Z_j \in (0,1)\}\]
the function $z\mapsto M_\infty(z)$ has almost surely no zero in $(0,z_\lambda)$,
so the event $\mathcal{A}$ is realized. 
	This amounts to saying that $E_j \subset \mathcal A$. 
	
	We deduce, using properties of conditional expectation and the above remark, that for all $j\ge 1$,
	\begin{align*}
		\E\left[ \Ppsq{Z_j \in (0,1)}{S} \ind{\forall k \geq 0, \ S_k >0 } \ind{\sigma_j <\infty}\right] 
		&= \Pp{E_j}\\
		&= \Pp{\mathcal{A} \cap E_j}\\
		&= \E\left[ \Ppsq{\mathcal{A} \cap\{Z_j \in (0,1)\}}{ S} \ind{\forall k \geq 0, \ S_k >0 }  \ind{\sigma_j <\infty}\right].
	\end{align*}
Combined with the obvious relation $ \Ppsq{\mathcal{A} \cap\{Z_j \in (0,1)\}}{ S} \leq \Ppsq{Z_j \in (0,1)}{S}$, which holds almost surely on the event 
$\{\forall k\geq 0,\ S_k>0\} \cap \{\sigma_j <\infty\}$
for all $j \ge 1$, we get that almost surely
	\begin{align*} 
	\Ppsq{\mathcal{A} \cap \{Z_j \in (0,1)\}}{S}  \ind{\forall k \geq 0, \ S_k >0 } \ind{\sigma_j <\infty}= \Ppsq{Z_j\in (0,1)}{S} \ind{\forall k \geq 0, \ S_k >0 } \ind{\sigma_j <\infty}.
	\end{align*} 
This ensures that on the event $\{\forall k\geq 0,\ S_k>0\}$, for any $j\geq 1$ we have
	\begin{align*} 
	\Ppsq{\mathcal{A}}{S} 
	&\geq \mathbbm{1}_{\{ \sigma_j <\infty \}\cap \{\forall k\geq \sigma_j, \ S_k\geq 2\}} \Ppsq{\mathcal{A} \cap \{Z_j \in (0,1)\}}{S}\\
	&=
	\mathbbm{1}_{\{ \sigma_j <\infty \}\cap \{\forall k\geq \sigma_j, \ S_k\geq 2\}}
	\Ppsq{Z_j \in (0, 1)}{S}.
	\end{align*} 
	Using \eqref{eq Z j non trivial} and \eqref{eq somme des probas des sigma j} we conclude that 
the RHS of the last display is non-zero for at least one value of $j\geq 1$ so that
	$\Ppsq{\mathcal{A}}{S} >0$ almost surely. 
Since we already knew that $\Ppsq{\mathcal{A}}{S} \in \{0,1\}$, this ensures that $\Ppsq{\mathcal{A}}{S} =1$ almost surely, which is what we wanted to prove.
\end{proof}

\subsection{From the Laplace transform to the profile: proofs of Theorem~\ref{thm:profil} and Proposition~\ref{prop: profil faible}}	
\label{ssec:endprofile}
{In this subsection we finally prove Theorem~\ref{thm:profil}, the main result of Section \ref{sec:profile}, and then explain how it implies Proposition~\ref{prop: profil faible}.
}
The type of objects and arguments that we use in this section is very close to the theory of mod-$\phi$ convergence, exposed for example in the book \cite{FMN16}.
We shall borrow some notation from the latter reference.
Specifically:
\begin{enumerate}
\item {let} $\phi$ is the Poisson distribution with parameter $\gamma$;
\item {define} $\eta(z)\coloneqq\gamma (e^z -1)$ so that $\exp(\eta(z))=\int e^{zx} \phi(\dd x)$;
\item {denote} $F$ the Legendre transform of $\eta$ i.e.
\begin{align*}
	\forall \theta \in \R, \qquad \qquad
	F(\theta)= \sup_{h\in \mathbb R} (h\theta - \eta(h)).
\end{align*}
\end{enumerate}
Following  the convention of  \cite{FMN16} (see Section~2.2), for a fixed $\theta\in \mathbb R$ we denote by $h=h(\theta)$  the unique value that maximizes $h \mapsto h\theta - \eta(h)$; it is defined by the equation $\eta'(h)=\theta$.
This implies the identities
\begin{align*}
	F(\theta)=\theta h-\eta(h), \qquad F'(\theta)=h, \qquad F''(\theta)=h'(\theta)=\frac{1}{\eta''(h)}.
\end{align*}
In our case we have $\eta(z)=\gamma (e^z -1)$ and $h$ and $\theta$ are so that $\gamma e^h=\theta$, i.e.\ $h=\log(\theta/\gamma)$. 
Hence 
\begin{align*}
	F(\theta) = \log (\theta/\gamma)\theta - \gamma(\theta/\gamma -1) \quad \text{and} \quad F'(\theta)=\log(\theta/\gamma) \quad \text{and} \quad F''(\theta)=\frac{1}{\theta}.
\end{align*}
In particular, note that $F$ and the function $f_\lambda$ defined in \eqref{eq:param} are related by the identity
\begin{equation}\label{eq lien F f lambda}
\forall x >0, \qquad \qquad -F(\gamma e^x) = f_\lambda(x)-1.
\end{equation}
\begin{proof}[Proof of Theorem~\ref{thm:profil}]
{Let $I\subset (1,\infty)$ be a compact interval. All the $O_\P(1), o_\P(1),O(1),o(1)$ in this proof hold weakly uniformly in $\lambda \in I$.} Recall from Section~\ref{sous-section preliminaire couplage} our coupled construction and in particular the fact that for $n \geq 1$ and $k \geq 0$ we have $\T^n_k= \T_{k}(\XXb^n)$. 
We work on the event 
		\begin{align*}
			\{\forall i \in \intervalleentier{0}{\lfloor nt \rfloor}, \ S^n_i > 0\} \cap \{\forall i \geq 0, \ S_i > 0\} \cap \{J^n{\leq} \lfloor nt \rfloor\} \cap \{J{\leq} \lfloor nt \rfloor\},
		\end{align*}
	so that the quantities $M^n_{\lfloor nt \rfloor}(z)$ and $M_{\lfloor nt \rfloor}(z)$ are well-defined for all $z\in  \mathscr{E}$. 
	There is no loss of generality in doing this, since
	\begin{align*}
		\mathbbm{1}_{\{\forall i \in \intervalleentier{0}{{\lfloor nt \rfloor}}, \ S^n_i > 0\} \cap \{\forall i \geq 0, \ S_i > 0\} \cap \{J^n{\leq} {\lfloor nt \rfloor}\} \cap \{J{\leq} {\lfloor nt \rfloor}\}} = \ind{\forall i \in \intervalleentier{0}{{\lfloor nt \rfloor}}, \ S^n_i > 0} + o_\P(1).
	\end{align*}
by Lemma~\ref{lem:JnJ}. 

	Recall that
\begin{align*}
	\bL_{\lfloor nt \rfloor}^n(k)=\frac{\#\{\text{active vertices at height $k$ at time ${\lfloor nt \rfloor}$}\}}{S_{\lfloor nt \rfloor}^n}
\end{align*}
denotes the normalized active profile of the tree $\T_{\lfloor nt \rfloor}^n$.
Now we keep the notation introduced in Section~\ref{sous-section martingales} and write {for all $h,u \in \R$,}
\begin{align*}
	\mathcal{L}(h+iu, \T_{\lfloor nt \rfloor}^n) = \sum_{k=0}^{\infty} \bL_{\lfloor nt \rfloor}^n(k) \cdot  e^{k(h+iu)}.
\end{align*}
Since $\bL_{\lfloor nt \rfloor}^n(k) \cdot  e^{kh}$ is the $k$-th Fourier coefficient of the expansion of $\mathcal{L}(h+iu, \T_{\lfloor nt \rfloor}^n)$ we have 
\begin{align*}
	\bL_{\lfloor nt \rfloor}^n(k)  = \frac{1}{2\pi} \int_{-\pi}^{\pi} \mathcal{L}(h+iu, \T_{\lfloor nt \rfloor}^n) e^{-k(h+iu)} \dd u.
\end{align*}
Now, following Theorem~3.2.2 in \cite{FMN16}, let {$\theta>0$ such that $\theta\in (\eta'({0}), \eta'(z_\lambda)) = (\gamma, \gamma e^{z_\lambda})$ for all $\lambda \in I$} and $h$ defined by the equation $\eta'(h)=\gamma e^h=\theta$. 
Assume that $\theta \log n \in \mathbb{N}$. 
Then 
\begin{align*}
	\bL_{\lfloor nt \rfloor}^n(\theta \log n)  &= \frac{1}{2\pi} \int_{-\pi}^{\pi} \mathcal{L}(h+iu, \T_{\lfloor nt \rfloor}^n) e^{-(\theta \log n) (h+iu)} \dd u\\
	&= \frac{1}{2\pi} \int_{-\pi}^{\pi} M_{\lfloor nt \rfloor}^n(h+iu) C_{{\lfloor nt \rfloor}}^n(h+iu)  e^{-(\theta \log n) (h+iu)} \dd u\\
	&\! \! \! \! \! \underset{\text{Lem.~\ref{lem:asymptotics Cnkz}}}{=} \frac{1}{2\pi} \int_{-\pi}^{\pi}  M_{\lfloor nt \rfloor}^n(h+iu) e^{-(\theta \log n)\cdot (h+iu) + \eta(h+iu) \log n + O_\P(1)} \dd u,
\end{align*}
{where in the last equality we use the fact that $0<\mathrm{Re}(h+iu)=h<z_\lambda$ for all $\lambda \in I$ so that $h+iu \in \mathcal{E}(\lambda)\cap \{z \in \mathbb{C}, \ \mathrm{Re}(z)>0\}$ for all $\lambda \in I$ and $u \in [-\pi,\pi]$.}

Now focusing only on the term in the exponential, and using that $\theta h=F(\theta)+ \eta(h)$ and $\theta=\eta'(h)$ we get
\begin{align*}
-(\theta \log n)\cdot  (h+iu) + \eta(h+iu) \log n&= \log n\cdot (- \theta h -i\theta u + \eta(h+iu))\\
&= \log n\cdot (- (F(\theta) + \eta(h)) -i \eta'(h)u + \eta(h+iu))\\
&= -\log n\cdot F(\theta)+ \log n \cdot (\eta(h+iu)-  \eta(h) -i \eta'(h)u ).
\end{align*}
Putting things together we get
\begin{align}\label{eq:normalized profile expressed using integral}
	\bL_{\lfloor nt \rfloor}^n(\theta \log n)  &=  \frac{e^{-F(\theta) \log n + O_\P(1)} }{2\pi} \int_{-\pi}^{\pi}  M_{\lfloor nt \rfloor}^n(h+iu) e^{ \log n \cdot (\eta(h+iu)-  \eta(h) -i \eta'(h)u )} \dd u.
\end{align}
From there, we are going to split the integral $\int_{-\pi}^{\pi}$ in the last display into a main term $\int_{-\delta_n}^{\delta_n}$ and some error terms $\int_{\delta_n}^{u_0} + \int_{-u_0}^{-\delta_n}$ and $\int_{u_0}^{\pi} + \int_{-\pi}^{-u_0}$
for some $\delta_n \downarrow 0$ and $u_0\in \intervalleoo{0}{\pi}$ appropriately chosen.
\paragraph*{First part: the main term.}
We want to compute the asymptotics of the term
\begin{align}
	\int_{-\delta_n}^{\delta_n}
	&M^n_{\lfloor nt \rfloor}(h+iu)
	\exp \left(
	\log n \cdot (\eta(h+iu)-\eta(h)-i\eta'(h) u )
	\right)\mathrm{d}u \notag
\end{align}
for some appropriately chosen sequence $(\delta_n)$. 
For that, the first step
is to re-write the integral of the last display as 
\begin{align}
	&M_{\infty} (h) \int_{-\delta_n}^{\delta_n} \exp(\log n \cdot (\eta(h+iu)-\eta(h) - i \eta'(h) u)) \mathrm{d}u \label{eq:main part of main term}\\
	&\qquad + 
	\int_{-\delta_n}^{\delta_n} \left( M^n_{\lfloor nt \rfloor}(h+iu) - M_\infty(h)\right)
	\exp \left(\log n \cdot  (\eta(h+iu)-\eta(h)-i\eta'(h) u )\right)\mathrm{d}u 
	\label{eq:integral difference}
\end{align}
and handle the two terms \eqref{eq:main part of main term} and \eqref{eq:integral difference} separately.
We start with \eqref{eq:integral difference}.
First, let {$p\in \intervalleof{1}{2}$} be so that Theorem~\ref{th: convergence vers M infini} holds for $z=h$. We consider the $L^p$ norm of the random variable in \eqref{eq:integral difference}.
We first bound the modulus of the integral by the integral of the modulus of the integrand 
and then use Jensen's inequality (in the form of $\int f g \leq (\int g)^{\frac{p-1}{p}}\cdot \left(\int f^p g \right)^{\frac{1}{p}}$, valid for non-negative functions $f$ and $g$, with $g$ integrable),
\begin{align}
&\E_n\left[\left\vert\int_{-\delta_n}^{\delta_n} \left( M^n_{\lfloor nt \rfloor}(h+iu) - M_\infty(h)\right)
\exp \left(\log n \cdot (\eta(h+iu)-\eta(h)-i\eta'(h) u )\right)\mathrm{d}u \right\vert^p\right]\notag\\
&\le \E_n\left[\left(\int_{-\delta_n}^{\delta_n} \left| M^n_{\lfloor nt \rfloor}(h+iu) - M_\infty(h)\right|
\exp \left(\log n \cdot \Re(\eta(h+iu)-\eta(h)-i\eta'(h) u )\right)\mathrm{d}u \right)^p\right]\notag
\\
&\le \E_n \left[ \left( \int_{-\delta_n}^{\delta_n} \exp \left(\log n \cdot  \Re(\eta(h+iu)-\eta(h)-i\eta'(h) u )\right)\mathrm{d}u \right)^{p-1} \right. \notag\\
&\times \left.\left(\int_{-\delta_n}^{\delta_n} \left\vert M^n_{\lfloor nt \rfloor}(h+iu) - M_\infty(h)\right\vert^p
\exp \left(\log n \cdot \Re(\eta(h+iu)-\eta(h)-i\eta'(h) u )\right)\mathrm{d}u\right)\right]\notag\\
&\leq \sup_{u\in\intervalleff{-\delta_n}{\delta_n}} \E_n\left[  |M^n_{{\lfloor nt \rfloor}}(h+iu)-M_\infty(h) \vert^p \right] \cdot \left( \int_{-\delta_n}^{\delta_n} \exp \left(\log n \cdot \Re(\eta(h+iu)-\eta(h)-i\eta'(h) u )\right)\mathrm{d}u \right)^p \label{eq majoration ecart M h plus iu moins M h}
\end{align}
where for the last inequality, we first used Fubini and then upper-bounded the integrand uniformly. 
Note that 
\begin{align*}
	\E_n\left[  |M^n_{{\lfloor nt \rfloor}}(h+iu)-M_\infty(h) \vert^p \right] = o_\P(1)
\end{align*}

thanks to Theorem~\ref{th: convergence vers M infini}.

We should now understand the (deteministic) integral that appears in \eqref{eq majoration ecart M h plus iu moins M h}, as well as the very similar one that appears in \eqref{eq:main part of main term}.
We have $\eta(h + iu) -\eta(h) -i\eta'(h) u =-(u^2/2) \eta''(h) + O(u^3)$ and hence also $\Re(\eta(h + iu) -\eta(h) -i\eta'(h) u)= -(u^2/2) \eta''(h) + O(u^3)$ as $u\to 0$.
Therefore, by taking $\delta_n$ so that $\delta_n^{3} \cdot \log n \to 0$, we get
\begin{align}
\int_{-\delta_n}^{\delta_n} \exp\left(
	\log n \cdot (\eta(h + iu) -\eta(h) -i\eta'(h) u)
	\right) \mathrm{d}u
	&= 
	\int_{-\delta_n}^{\delta_n} \exp\left(
	 -\frac{u^2}{2} \cdot  \eta''(h) \cdot \log n + o(1)
	\right) \mathrm{d}u \notag \\
	&= (1+o(1))\cdot 
	\int_{-\delta_n}^{\delta_n} \exp\left(
	-\frac{u^2}{2} \cdot  \eta''(h) \cdot \log n
	\right) \mathrm{d}u,\label{eq DL eta}
\end{align}
and similarly 
\begin{equation}\label{eq DL partie reelle de eta}
\int_{-\delta_n}^{\delta_n} \exp \left(\log n \cdot \Re(\eta(h+iu)-\eta(h)-i\eta'(h) u )\right)\mathrm{d}u = (1+o(1))\cdot 
\int_{-\delta_n}^{\delta_n} \exp\left(
-\frac{u^2}{2} \cdot  \eta''(h) \cdot \log n
\right) \mathrm{d}u.
\end{equation}
Besides, using a change of variable $v=u\cdot \sqrt{\eta''(h)\cdot \log n}$ we obtain
\begin{align}
	\int_{-\delta_n}^{\delta_n} \exp\left(
	-\frac{u^2}{2} \cdot \eta''(h)\cdot \log n
	\right) \mathrm{d}u
	&= 
	\int_{-\delta_n \sqrt{\eta''(h) \log n}}^{\delta_n \sqrt{\eta''(h) \log n}} \exp\left(
	-\frac{v^2}{2}
	\right) \frac{\dd v}{\sqrt{\eta''(h) \log n}} \notag \\
	&=\frac{1}{\sqrt{\eta''(h) \log n}}
	\left(\int_{-\infty}^\infty \exp\left(-\frac{v^2}{2}\right)\dd v + o(1)\right)\notag\\
	&= (1+o(1))\cdot \sqrt{\frac{2\pi}{\eta''(h)\log n}} ,\label{eq asymptotique integrale}
\end{align}
where in the second equality we assume that we take $(\delta_n)$ so that $\delta_n \sqrt{\log n} \to \infty$. 
For the rest of the proof, we fix $\delta_n= (\log n)^{-5/12}$ so that the results above hold. 
Now putting everything together, we get
\begin{align}\label{eq: equivalent integrale bulk}
	&\int_{-\delta_n}^{\delta_n} M^n_{\lfloor nt \rfloor} (h+iu) \exp(\log n \cdot (\eta(h+iu)-\eta(h) - i \eta'(h) u)) \mathrm{d}u \notag\\
	&= M_{\infty} (h) \int_{-\delta_n}^{\delta_n} \exp(\log n \cdot (\eta(h+iu)-\eta(h) - i \eta'(h) u)) \mathrm{d}u \notag\\
	&\qquad+ 	\int_{-\delta_n}^{\delta_n} \left( M^n_{\lfloor nt \rfloor}(h+iu) - M_\infty(h)\right)
	\exp \left(\log n \cdot  (\eta(h+iu)-\eta(h)-i\eta'(h) u )\right)\mathrm{d}u \notag\\
	&=  M_\infty(h) \cdot (1+o(1))\cdot \sqrt{\frac{2\pi}{\eta''(h)\log n}} + o_\P\left(\frac{1}{\sqrt{\log n}}\right),
\end{align}
{where the first term in the last line comes from \eqref{eq DL eta} and \eqref{eq asymptotique integrale} while the second term comes from \eqref{eq majoration ecart M h plus iu moins M h}, \eqref{eq DL partie reelle de eta} and \eqref{eq asymptotique integrale},}
and $M_\infty(h)>0$ by Proposition~\ref{prop: M infini ne s'annule pas}.

\paragraph*{Second part: the error terms.}
Now we need to show that the term
\begin{align*}
	\int_{\delta_n}^\pi
	M^n_{\lfloor nt \rfloor}(h+iu)
	\exp\left(
	\log n \cdot
	( \eta(h+iu)-\eta(h) - i \eta'(h) u)
	\right)
	\mathrm{d}u
\end{align*}
and the symmetrical integral are negligible compared to the main term \eqref{eq: equivalent integrale bulk}.
We only deal with this first integral since the other term is handled similarly. We reason in expectation (conditional on $S^n$) and start by writing
\begin{align}\label{eq:majoration integrale delta pi}
		&\E_n \left[  \left \vert
		\int_{\delta_n}^\pi
		M^n_{\lfloor nt \rfloor}(h+iu)
		\exp\left(
		\log n \cdot
		( \eta(h+iu)-\eta(h) - i \eta'(h) u)
		\right)
		\mathrm{d}u
		\right\vert  \right]\notag\\
		&\le 
		\E_n \left[{\int_{\delta_n}^\pi
			\left\vert M^n_{\lfloor nt \rfloor}(h+iu)\right\vert
			\exp\left(
			\log n \cdot
			\Re( \eta(h+iu)-\eta(h) - i \eta'(h) u)
			\right)
			\mathrm{d}u}\right]\notag\\
		&\le {\int_{\delta_n}^\pi
			\E_n \left[\left\vert M^n_{\lfloor nt \rfloor}(h+iu)\right\vert \right] \cdot 
			\exp\left(
			\log n \cdot
			\Re( \eta(h+iu)-\eta(h) - i \eta'(h) u)
			\right)
			\mathrm{d}u}.
\end{align}
From Proposition~\ref{proposition majoration increments Mn}, it holds that for any compact $K \subset \{z \in \mathbb{C}, \ \Re z>0\}$ {such that $K \subset \mathcal{E}(\lambda)$ for all $\lambda \in I$, for all $p \in (1,2]$}, uniformly in $z\in K$ and $k\leq nt$, we have 
\begin{align*}
	\Ecp{n}{\lvert M^n_{k+1 }(z)- M^n_k(z)\rvert^p} \le \exp \left(
	\left( -p+ \gamma\left( e^{p \Re z} -1 -p (\Re(e^z) -1)\right)
	\right) \log k + O_\P(1)
	\right)
\end{align*}
so that, {taking $p>1$ close enough to $1$ so that} the quantity $1-p+ \gamma ( e^{p \Re z} -1 -p (\Re(e^z) -1))$ is strictly negative on $K$ {for all $\lambda \in I$}, using Lemma~A.2 of \cite{Sen21}, we have 
\begin{align*}
	\E_n\left[\lvert M^n_k(z)\rvert^p\right] {\leq \Ecp{n}{\lvert M^n_1(z)\rvert^p}+ 2^p \cdot \sum_{i=1}^{k-1} \Ecp{n}{\lvert M^n_{i+1}(z)- M^n_{i}(z)\rvert^p}} = O_\P(1),
\end{align*}
uniformly on $z\in K$. The inequality $h<z_\lambda$ {for all $\lambda \in I$} ensures that there exists $p\in \intervalleof{1}{2}$ such that {for all $\lambda \in I$, we have} $1-p+ \gamma\left(e^{ph} -1 -p (e^h -1)\right) <0$. 
By continuity, this ensures that for small enough $u\geq 0$, say smaller or equal than some $u_0 = u_0(h)>0$, we have, locally uniformly in $h<z-\lambda$ and $u\in\intervalleff{0}{u_0(h)}$,
\begin{align}\label{eq:expectation modulus martingale O(1)}
	\E_n\left[\lvert M^n_{\lfloor nt \rfloor}(h+iu)\rvert^p\right] = O_\P(1).
\end{align}
In general, without assuming anything on the sign of $1-p+ \gamma( e^{p \Re z} -1 -p (\Re(e^z) -1))$, we can also get the following for any {compact set $K\subset\{z \in \mathbb{C}, \ \mathrm{Re}(z)>0\}$ such that $K \subset \mathcal{E}(\lambda)$ for all $\lambda \in I$, uniformly in $z \in K$,}
\begin{align}\label{eq:expectation modulus martingale power}
	\E_n\left[\left\vert M^n_{\lfloor nt \rfloor}(z)\right\vert^p\right] 
	&\leq  {(1+\log \lfloor nt \rfloor)} \cdot e^{(\log n) \cdot 
	\left(1-p+ \gamma\left( e^{p \Re z} -1 -p (\Re(e^z) -1) \right)
	\right)\vee 0
	+O_\P(1)
	}.
\end{align}
This is done using the fact that for any $\alpha\in \R$ and $n \in \N$, we have the inequality
\begin{align}\label{eq:upper bound sum k alpha}
	\sum_{k=1}^{n-1} k^\alpha \leq 1+ n^{0\vee (\alpha+1)}\cdot  \log n \leq (1+\log n) \cdot  n^{0\vee (\alpha+1)}.
\end{align}
Indeed, for any $\alpha\in \R$ {and $n \in \N$,}
\begin{align*}
	\sum_{k=2}^{n-1} k^\alpha \leq \int_{1}^{n}x^\alpha \dd{x} \leq \left\lbrace \begin{aligned}
	& \frac{n^{\alpha +1} -1}{\alpha+1} = (\log n) \cdot \frac{\exp((\alpha +1) \log n) -1}{(\alpha+1)\log n} \leq (\log n )\cdot n^{\alpha +1} \quad &\text{if } \alpha > -1, \\
	& \log n \quad &\text{if } \alpha \leq -1,
	\end{aligned}
\right. 
\end{align*}
where we used the inequality $\frac{e^x-1}{x} \leq e^x$, valid for any $x>0$, for  $x=(\alpha+1) \log n$.
In what follows, we then split the integral $\int_{\delta_n}^\pi$ and deal with the term $\int_{\delta_n}^{u_0}$ using \eqref{eq:expectation modulus martingale O(1)} and then the term $\int_{u_0}^\pi$ using \eqref{eq:expectation modulus martingale power}. 
\paragraph*{First error term.}
Since $\Re( \eta(h+iu)-\eta(h) - i \eta'(h) u)=\gamma e^h (\cos u - 1) \leq -\gamma e^h \frac{u^2}{8}$ for $u\in \intervalleff{-\pi}{\pi}$ we have 
\begin{align*}
	&\int_{\delta_n}^{u_0} \E_n \left[\left\vert M^n_{\lfloor nt \rfloor}(h+iu)\right\vert\right]
	\exp\left(
	\log n \cdot
	\Re( \eta(h+iu)-\eta(h) - i \eta'(h) u)
	\right)
	\dd u\\
	&\le O_\P(1)\int_{\delta_n}^{u_0} \exp\left(\log n \cdot \gamma e^h (\cos u -1) \right)\dd u  \qquad \qquad \qquad \text{{by \eqref{eq:expectation modulus martingale O(1)}}}\\
	&\le O_\P(1)\int_{\delta_n}^{u_0}
	e^{-\log n \cdot \gamma e^h u^2/8}
	\dd u \\
	&=O_\P(1) \int_{\delta_n \sqrt{  \log n \cdot \gamma e^h/4}}^{u_0\sqrt{  \log n \cdot \gamma e^h/4}} e^{-v^2/2} \frac{\dd v}{\sqrt{  \log n \cdot \gamma e^h/4}} \\
	&=o_\P(1/\sqrt{\log n}),
\end{align*}
where the last line comes from the fact that $\delta_n \sqrt{\log n} \to \infty$, so this term is of smaller order than the main term (and similarly for the symmetric term).
\paragraph*{Second error term.}
Now we take care of the last term $\int_{u_0}^{\pi}$ using \eqref{eq:expectation modulus martingale power}. 
Using Jensen's inequality, and writing $z=h+iu$ we get, {uniformly in $u \in [u_0,\pi]$,}
\begin{align*}
	\E_n&\left[\left\vert M^n_{\lfloor nt \rfloor}(h+iu)\right\vert\right] \leq \E_n\left[\left\vert M^n_{\lfloor nt \rfloor}(h+iu)\right\vert^p\right]^{\frac{1}{p}}\\ 
	&\qquad \leq {(1+\log \lfloor nt \rfloor)}^{1/p}\cdot \exp\left(\log n \cdot 
	\left(
	\left(\frac{1-p}{p} + \frac{\gamma}{p}(e^{ph}-1)- \gamma(e^h\cos u - 1) \right)\vee 0 
	\right)
	+O_\P(1)
	\right).
\end{align*}
Recall that $\Re( \eta(h+iu)-\eta(h) - i \eta'(h) u)=\gamma e^h \cos u - \gamma e^h$, so, plugging this into the integrand of \eqref{eq:majoration integrale delta pi}, we get {uniformly in $u \in [u_0,\pi]$,}
\begin{align*}
	\E_n&\left[\left\vert M^n_{\lfloor nt \rfloor}(h+iu)\right\vert\right] 
	\exp\left(
	\log n \cdot
	\Re( \eta(h+iu)-\eta(h) - i \eta'(h) u)
	\right)\\
	\leq &{(1+\log \lfloor nt \rfloor)}^{1/p} \\
	&\cdot \exp\left(\log n \cdot
	\left(
	\left(\frac{1-p}{p} + \frac{\gamma}{p}(e^{ph}-1)- \gamma e^h \cos u + \gamma \right)\vee 0
	\right) + O_\P(1)+
	\log n \cdot(\gamma e^h\cos u - \gamma e^h) \right)\\
	 \leq &{(1+\log \lfloor nt \rfloor)}^{1/p} \exp\left(\log n \cdot
	\left(
	\left(\frac{1-p}{p} + \frac{\gamma}{p}(e^{ph}-1) + \gamma(1-e^h)\right)\vee \gamma e^h(\cos u - 1) 
	\right)
	+ O_\P(1)\right).
\end{align*}
Note that from our choice of $p$ and $h$ the expression $\frac{1-p}{p} + \frac{\gamma}{p}(e^{ph}-1) + \gamma(1-e^h) $ is negative and $u \mapsto \gamma e^h(\cos u - 1)$ is negative {for all $\lambda \in I$} and decreasing on $[u_0 ,\pi]$. Hence the expression in the last display is bounded above, uniformly in $u\in \intervalleff{u_0}{\pi}$ by some term
\begin{align*}
	n^{\beta +o_\P(1)} = o_\P\left(\frac{1}{\sqrt{\log n}}\right),
\end{align*}
where $\beta= \left(\frac{1-p}{p} + \frac{\gamma}{p}(e^{ph}-1) + \gamma(1-e^h)\right)\vee \gamma e^h(\cos u_0 - 1)<0 $.

\paragraph*{Conclusion.}
Thus, starting from \eqref{eq:normalized profile expressed using integral}, combining our results \eqref{eq: equivalent integrale bulk} for $\int_{-\delta_n}^{\delta_n}$ and the controls on the error terms $\int_{\delta_n}^{u_0}$ and $\int_{u_0}^\pi$ (and their symmetric {counterparts}), we get 
\begin{equation*}
\mathbb{L}^n_{\lfloor nt \rfloor}(\theta \log n ) = e^{-F(\theta) \log n - \frac{1}{2}\log \log n + O_\P(1)}.
\end{equation*}
This concludes the proof thanks to \eqref{eq lien F f lambda}.
\end{proof}

Now, let us prove Proposition~\ref{prop: profil faible} using Theorem~\ref{thm:profil}.
	We rely here on the coupled construction of the process of Section~\ref{sous-section preliminaire couplage}, so that the number of infected individuals in the infection process $(I^n_k)_{k\geq 0}$ is given here by $(S^n_{k\wedge \tau'_n})_{k\geq 0}$ where $\tau'_n=\inf\enstq{k\geq 0}{S^n_k=0}$, so that on the event $\{\tau'_n \geq k\}$ we have the equality  $I^n_k=S^n_k$.

\begin{proof}[Proof of Proposition~\ref{prop: profil faible}]
In the statement of the proposition, we have $\lambda_n \sim {\lambda}/{n}$ with a fixed $\lambda>1$. 
	We will use the previous results, which assume that $\lambda_n = {\lambda}/{n}$ but hold uniformly in $\lambda$ contained in a compact interval, by applying them for $\lambda =n \lambda_n$.  
	Note that for any {compact interval} $I$ containing $\lambda$ in its interior, we have $n\lambda_n \in I$, provided that $n$ is large enough.

	Let $t \in (0,t_\lambda)$, $x\in \intervalleoo{0}{z_\lambda}$ and $y \in (x,\infty]$. 
	We define $\theta_1,\theta_2$ as $\theta_1=\gamma e^x$ and  $\theta_2=\gamma e^y$. {Recall that we write $\mathbb{E}_{n}$ for $\Ecsq{\, \cdot}{S^{n}}=\Ecsq{\, \cdot}{\XXb^{n}}$.} 
Writing $h=h(\theta_{1})$,
  {on the event $\{\tau'_{n} \geq  \lfloor n t\rfloor \} \cap \{J_n \leq \lfloor nt \rfloor \}$ we have}
	\begin{align*}
		\E_n \left[\mathbb{L}^n_{\lfloor nt \rfloor}([\theta_1 \log n,\theta_2 \log n])\right]&\le
		\E_n\left[ \frac{1}{S^n_{\lfloor nt \rfloor}} \sum_{v \text{ active}} e^{h(\haut(v) - \theta_1  \log n)}\right]\\
		&\underset{\eqref{eq:productform}}{=} e^{-h\theta_1 \log n} \cdot {\Ecp{n}{\mathcal{L}(z,\T^n_{J^n})}} \cdot  C^n_{\lfloor nt \rfloor}(h)\\
		&\underset{Lem.~\ref{lem:asymptotics Cnkz}}{=}e^{(-h\theta_1 + \eta(h) ) \log n+ O_\P(1)} \\
		&= e^{(-F(\theta_1)-\eta(h)  + \eta(h))\log n + O_\P(1)} \\
		&= e^{-F(\theta_1) \log n  + O_\P(1)},
		\end{align*}
	so using Markov's inequality with the probability measure $\P_n$, we get that
	\begin{equation}
	\label{eq:ineg}\frac{\log \mathbb{L}^n_{\lfloor nt \rfloor} ([\theta_1 \log n,\theta_2 \log n])}{ \log n} {\leq} -F(\theta_1)+o_\P(1)
	\end{equation} as $n\rightarrow \infty$
	  {on the event $\{\tau'_{n} \geq  \lfloor n t\rfloor \} \cap \{J_n \leq \lfloor nt \rfloor \}$.
Because of Lemma~\ref{lem:JnJ}, we have $J_n\ind{\tau'_n \geq  \lfloor n t\rfloor}=O_\P(1)$ and so $\Pp{\{\tau'_{n} \geq  \lfloor n t\rfloor \}\cap \{J_n > \lfloor nt \rfloor\}} = o(1)$ as $n\rightarrow \infty$,  and so the inequality \eqref{eq:ineg} holds on the event $\{\tau'_{n} \geq  \lfloor n t\rfloor \}$.}
{A matching} lower bound follows from Theorem~\ref{thm:profil}.

Now using the fact that, by \eqref{eq:limflu}, on the event $\{\tau'_{n} \geq  \lfloor n t\rfloor\}$ we have $\log I^{n}_{\lfloor nt \rfloor}= \log n + O_{\P}(1)$, we get  
\begin{align*}
		\frac{\log \mathbb{A}^n_{\lfloor nt \rfloor} ([\theta_1 \log n,\theta_2 \log n])}{ \log n} 
		&= \frac{\log \mathbb{L}^n_{\lfloor nt \rfloor} ([\theta_1 \log n,\theta_2 \log n]) + \log I^{n}_{\lfloor nt \rfloor} }{ \log n}\\
		&= -F(\theta_1)+1 +o_\P(1)\\
		&= -F(\gamma e^ x )+1 +o_\P(1)\\
		&= f_\lambda (x) + o_\P(1),
\end{align*}
on the event $\{\tau'_{n} \geq  \lfloor n t\rfloor\}$, where the last line follows from \eqref{eq lien F f lambda}. 
This completes the proof.
\end{proof}

\section{Time dependent P\'olya urns with removals and application to frozen recursive trees}
\label{sec:polya}
Let $(\X_k)_{k\ge 1}$ be a (deterministic) sequence in $\mathbb{Z}_{\ge -1}$. Let $s_0\ge 2$. 
For all $k\ge 0$, let 
$
s_k=s_0+\X_1+\ldots +\X_k
$. We assume that for all $k\ge 0$, we have $s_k \ge {1}$.
We start with an urn with $b_0\ge1$ blue balls and $r_0\ge1$ red balls such that $r_0+b_0 = s_0$. 
At each step $k\ge1$, we draw a ball at random in the urn. 
If $\X_k\ge 0$, then we put the ball back in the urn together with $\X_k$ new balls of the same color. 
If $\X_k=-1$, then we remove the ball. 
In other words, if $(R_k)_{k\ge 0}$ denotes the number of red balls in the urn, the sequence $(R_k)_{k\ge 0}$ is a time-{inhomogeneous} Markov chain which evolves as follows: for all $k\ge 0$, conditionally on $R_0,\ldots, R_k$, we have
\begin{equation*} 
R_{k+1}=  R_k+\X_{k+1} \cdot B_{k+1} \quad \text{where} \quad B_{k+1} = 
\left\lbrace 
\begin{aligned}
&1 &\quad &\text{with probability} &  \frac{R_k}{s_k}&, \\
&0 &\quad &\text{with probability }&  1-\frac{R_k}{s_k}&.
\end{aligned}
\right.
\end{equation*} 
	We start with a simple convergence result.
\begin{lemma}\label{lem:convergence proportion urn}
We have the almost sure convergences 
\begin{align*}
	\frac{R_k}{s_k} \underset{k\rightarrow \infty}{\rightarrow} Z \qquad \text{and} \qquad \frac{1}{k}\sum_{i=1}^kB_i \underset{k\rightarrow \infty}{\rightarrow} Z,
\end{align*} 
for some random variable $Z$ with values in $\intervalleff{0}{1}$. 
\end{lemma}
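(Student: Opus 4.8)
The plan is to exhibit an explicit martingale, identify its almost sure limit with $Z$, and then derive the convergence of the empirical averages $\frac1k\sum_{i=1}^k B_i$ from that of $R_k/s_k$ via a second martingale argument. First I would check that $(R_k/s_k)_{k\ge 0}$ is a bounded martingale for its natural filtration $(\mathcal{F}_k)_{k\ge 0}$ with $\mathcal{F}_k=\sigma(B_1,\dots,B_k)$. Indeed, conditionally on $\mathcal{F}_k$ we have $\E[R_{k+1}\mid\mathcal{F}_k]=R_k+\X_{k+1}\cdot\frac{R_k}{s_k}=R_k\cdot\frac{s_k+\X_{k+1}}{s_k}=R_k\cdot\frac{s_{k+1}}{s_k}$, so that $\E\!\left[\frac{R_{k+1}}{s_{k+1}}\,\middle|\,\mathcal{F}_k\right]=\frac{R_k}{s_k}$. (Here one uses that $s_k\ge 1$ for all $k$, so the ratios are well defined, and that $0\le R_k\le s_k$ deterministically, so the martingale takes values in $[0,1]$.) By the martingale convergence theorem, $R_k/s_k$ converges almost surely (and in $L^p$ for every $p$) to some random variable $Z$ with values in $[0,1]$.

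Next I would handle the empirical average $\frac1k\sum_{i=1}^k B_i$. The natural idea is to write $B_{k+1}=\frac{R_k}{s_k}+\xi_{k+1}$ where $\xi_{k+1}\coloneqq B_{k+1}-\E[B_{k+1}\mid\mathcal{F}_k]$ is a bounded martingale difference sequence (each $|\xi_{k+1}|\le 1$). Then $\sum_{i=1}^k\xi_i$ is a martingale, and since $\sum_{i\ge 1}\frac{\E[\xi_i^2]}{i^2}\le\sum_{i\ge 1}\frac{1}{i^2}<\infty$, the martingale strong law of large numbers (Kronecker's lemma applied to $\sum_i \xi_i/i$, which converges a.s.\ by the $L^2$ bound) gives $\frac1k\sum_{i=1}^k\xi_i\to 0$ almost surely. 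It then remains to observe that $\frac1k\sum_{i=1}^k\frac{R_{i-1}}{s_{i-1}}\to Z$ almost surely, which follows from the Cesàro lemma since $R_{i-1}/s_{i-1}\to Z$ a.s. Combining the two gives $\frac1k\sum_{i=1}^k B_i\to Z$ almost surely, with the same $Z$ as for $R_k/s_k$.

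I do not expect any serious obstacle here: the statement is an elementary consequence of bounded martingale convergence plus a law-of-large-numbers argument for martingale differences, and all the quantities involved are bounded, so there are no integrability subtleties. The only mild point of care is to make sure the conventions from the setup (namely $s_k\ge 1$ for all $k$, and $b_0,r_0\ge 1$) are invoked so that the chain and all the ratios are well defined, and that one correctly keeps track of the index shift between $B_{k+1}$ and $R_k/s_k$ in the telescoping. If one wishes to also record it, the same argument applied to $(B_k)$ replaced by the indicator that a \emph{blue} ball is drawn shows that the blue proportion converges to $1-Z$, which is the form in which the lemma is used in the proof of Proposition~\ref{prop: M infini ne s'annule pas}.
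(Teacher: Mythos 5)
Your proof is correct. The first half (the martingale property of $R_k/s_k$ and bounded martingale convergence) is identical to the paper's argument. For the second convergence you take a genuinely different, and arguably cleaner, route: you decompose $B_{i} = \E[B_i\mid\mathcal F_{i-1}] + \xi_i$ and dispose of the martingale-difference part by the martingale strong law of large numbers ($\sum_i \xi_i/i$ is an $L^2$-bounded martingale by orthogonality of increments, hence converges a.s., and Kronecker's lemma gives $\frac1k\sum_{i\le k}\xi_i\to 0$), then conclude by Ces\`aro. The paper instead invokes the conditional (L\'evy) extension of the second Borel--Cantelli lemma on the event $\{\sum_k R_k/s_k=\infty\}$ and the conditional first Borel--Cantelli lemma on the complementary event $\{\sum_k R_k/s_k<\infty\}$ (where $Z=0$ and $\sum_k B_k<\infty$). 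Your approach avoids this case split entirely and treats both regimes uniformly; the paper's approach avoids introducing the auxiliary martingale $\sum_i\xi_i/i$ at the cost of the dichotomy. Both are standard and fully rigorous; there is no gap in your argument.
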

\begin{proof}
For any $k\ge 0$, we compute
	\begin{align*} 
		\Ecsq{\frac{R_{k+1}}{s_{k+1}}}{R_0,\ldots,R_k} =
		\frac{R_k +\X_k}{s_{k+1}} \cdot \frac{R_k}{s_k} + \frac{R_k}{s_{k+1}}\cdot \left(
		1 - \frac{R_k}{s_k}
		\right)
		= \frac{R_k}{s_k}.
	\end{align*}
This ensures that $(R_k/s_k)_{k\ge 0}$ is a martingale. By construction it only takes values in $\intervalleff{0}{1}$ so it converges a.s.\@ towards some random variable $Z$ with values in $\intervalleff{0}{1}$.

We now prove the second convergence.
On the event $\{\sum_{k\ge 0} R_k/s_k = \infty\}$, it follows from the third Borel-Cantelli lemma (see \cite[Theorem~4.5.5]{Dur19}) together with the fact that, by the previous convergence, we have 
\begin{align*}
	\frac{1}{k}\sum_{i=1}^k\Ecsq{B_i}{R_1,\dots ,R_{i-1}} = \frac{1}{k}\sum_{i=1}^k \frac{R_{i-1}}{s_{i-1}} \underset{k\rightarrow \infty}{\rightarrow} Z
\end{align*} 
almost surely by Ces\`aro convergence.
On the event $\{\sum_{k\ge 0} R_k/s_k <\infty\}$, the Borel-Cantelli lemma \cite[Theorem~4.3.4]{Dur19} ensures that $\sum_{k\ge 1} B_k <\infty$ so that the convergence also holds in this case.
\end{proof}

The following proposition is a generalization of Theorem~2 of \cite{Pem90}:
\begin{proposition}\label{prop urnes}
	The almost sure limit ${Z}$ of $(R_k/s_k)_{k\ge 0}$ is a Bernoulli random variable if and only if
	\begin{align*}
		\prod_{j=1}^\infty \left(1- \frac{\X_j^2}{s_j^2} \right)=0.
	\end{align*}
Equivalently, we have $\P(Z\in \intervalleoo{0}{1})>0$ if and only if
	\begin{align*} 
	\forall k\geq 0, \ s_k\geq 2 \qquad \text{and} \qquad \sum_{k\ge 0} \left(\frac{\X_k}{s_k}\right)^2 < \infty.
	\end{align*} 
\end{proposition}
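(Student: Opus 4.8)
The plan is to reduce the statement to the behavior of a suitable martingale and then invoke a classical zero--one type dichotomy for martingale limits. First I would observe that, exactly as in the proof of Lemma~\ref{lem:convergence proportion urn}, the process $(R_k/s_k)_{k\ge 0}$ is a bounded martingale, hence converges a.s.\ to a limit $Z\in\intervalleff{0}{1}$; the question is when this limit is $\{0,1\}$-valued. The key identity is a computation of the conditional variance of the increments: writing $M_k\coloneqq R_k/s_k$, one gets
\begin{align*}
\Ecsq{(M_{k+1}-M_k)^2}{R_0,\dots,R_k}= \frac{\X_{k+1}^2}{s_{k+1}^2}\cdot M_k(1-M_k),
\end{align*}
using that conditionally $M_{k+1}-M_k = \frac{\X_{k+1}}{s_{k+1}}(B_{k+1}-M_k)$ together with $\X_{k+1}\in\{-1\}\cup\Z_{\ge 0}$ and $s_{k+1}=s_k+\X_{k+1}$ (one checks $R_k+\X_{k+1}\ge 0$ so all quantities are well defined, and $M_k(1-M_k)\ge 0$).

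Next I would use the standard fact (this is precisely Theorem~2 of \cite{Pem90} in the classical P\'olya case, and the argument adapts verbatim) that for a $\intervalleff{0}{1}$-valued martingale with these increments, the product $\prod_j (1-\X_j^2/s_j^2)$ controls whether $Z$ concentrates on $\{0,1\}$: on the one hand, $\E[Z(1-Z)]$ can be related to $\sum_k \Ecsq{(M_{k+1}-M_k)^2}{\mathcal F_k}$ via the orthogonality of martingale increments, giving $\E[M_k(1-M_k)] = M_0(1-M_0) - \sum_{j=1}^{k}\E\big[\frac{\X_j^2}{s_j^2}M_{j-1}(1-M_{j-1})\big]$, so that $\E[M_{k-1}(1-M_{k-1})]\big(1-\tfrac{\X_k^2}{s_k^2}\big)\le \E[M_k(1-M_k)]$; iterating and comparing with $M_0(1-M_0)\prod_{j\le k}(1-\X_j^2/s_j^2)$ shows that $\E[Z(1-Z)]=0$ as soon as the infinite product vanishes. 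For the converse, if the product is strictly positive then $s_j\ge 2$ and $\X_j^2/s_j^2$ is summable; one then exhibits a genuine positive-probability event on which $Z\in(0,1)$, e.g.\ by a comparison/coupling argument showing that on such an event the martingale cannot have merged to a boundary point, or by a second-moment / conditional Borel--Cantelli argument that the fluctuations accumulated after any large time keep $Z$ away from $0$ and $1$ with positive probability. This is essentially Pemantle's argument, the only new feature being the possibility $\X_j=0$ (no draw affects the composition), which is harmless since those steps contribute a factor $1$ to the product and $0$ to the variance sum.

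Finally I would translate the product condition $\prod_j(1-\X_j^2/s_j^2)=0$ into the stated equivalent form. Since $0\le \X_j^2/s_j^2$, and since $\X_j\in\{-1\}\cup\Z_{\ge 0}$ with $s_j\ge 1$, the product is $0$ iff either some factor is $0$ (which, given $\X_j^2\le s_j^2$ always, happens exactly when $\X_j^2=s_j^2$, i.e.\ when $s_k=1$ for some $k$ with $\X_k=\pm1$, equivalently when it is \emph{not} the case that $s_k\ge 2$ for all $k$) or all factors are in $(0,1)$ and $\sum_j -\log(1-\X_j^2/s_j^2)=\infty$, which by $-\log(1-u)\asymp u$ for $u$ bounded away from $1$ is equivalent to $\sum_j \X_j^2/s_j^2=\infty$. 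Hence $Z$ is Bernoulli iff $\prod_j(1-\X_j^2/s_j^2)=0$, and $\P(Z\in(0,1))>0$ iff $s_k\ge 2$ for all $k$ and $\sum_k (\X_k/s_k)^2<\infty$, which is exactly the claim.

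The main obstacle is the converse direction: showing that square-summability of $\X_j/s_j$ (together with $s_j\ge 2$) forces $\P(Z\in(0,1))>0$ rather than merely permitting it. The clean way is to mimic Pemantle's coupling with a standard P\'olya urn on the subsequence of steps where $\X_j\ne 0$, using the square-summability to control the discrepancy, or alternatively to run the martingale from a late time $N$ (where $M_N$ is already close to an interior point on a positive-probability event) and use the bounded increments plus summable conditional variance to show the limit stays interior with positive probability via a Paley--Zygmund / Azuma-type estimate; getting the quantitative bound uniform enough to conclude is the delicate point, but it follows the template of \cite{Pem90} closely.
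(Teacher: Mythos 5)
Your approach --- controlling $\Ec{M_k(1-M_k)}$ for the bounded martingale $M_k=R_k/s_k$ via the conditional variance of its increments --- is in substance the same as the paper's, which works with $u_k=\Ec{M_k^2}$. But as written there are two problems. First, a directional slip: iterating your inequality $\Ec{M_{k-1}(1-M_{k-1})}\bigl(1-\X_k^2/s_k^2\bigr)\le \Ec{M_k(1-M_k)}$ yields the \emph{lower} bound $\Ec{M_k(1-M_k)}\ge M_0(1-M_0)\prod_{j\le k}(1-\X_j^2/s_j^2)$, which proves that a strictly positive product forces $\Ec{Z(1-Z)}>0$, i.e.\ exactly the direction you later call ``the main obstacle''; it does \emph{not} prove that a vanishing product forces $Z$ to be Bernoulli, as you claim. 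Second --- and this is the observation you miss --- the sequences $(\X_k)$ and $(s_k)$ are deterministic, so the factor $\X_k^2/s_k^2$ comes out of the expectation and your recursion is an exact \emph{equality}: $\Ec{M_k(1-M_k)}=\bigl(1-\X_k^2/s_k^2\bigr)\Ec{M_{k-1}(1-M_{k-1})}$. Hence, by bounded convergence,
\begin{align*}
\Ec{Z(1-Z)}=\frac{r_0(s_0-r_0)}{s_0^2}\prod_{j=1}^\infty\left(1-\frac{\X_j^2}{s_j^2}\right),
\end{align*}
and both directions of the first equivalence follow in one line, since $Z\in\intervalleff{0}{1}$ is Bernoulli iff $\Ec{Z(1-Z)}=0$. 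This identity is precisely what the paper derives (phrased through the explicit solution of the recursion for $\Ec{M_k^2}$), and it makes the entire second half of your proposal --- the Pemantle-style coupling, the conditional Borel--Cantelli and Paley--Zygmund estimates --- unnecessary.

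Your final paragraph translating the product condition into ``$s_k\ge 2$ for all $k$ and $\sum_k(\X_k/s_k)^2<\infty$'' is correct and worth keeping (a factor vanishes iff $\X_j=-1$ and $s_j=1$; since $s_0\ge 2$, the first visit of $(s_k)$ to $1$, if any, is such a step), and the paper itself does not spell this part out.
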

\begin{proof}
	The proof follows from the same ideas as in \cite{Pem90}. 
	Let {$Z$} be the almost sure limit of  the bounded martingale $(R_k/s_k)_{k\ge 0}$. 
	The idea is to observe that since $0 \leq {Z} \leq 1$ and $\mathbb{E}[{Z}]=r_{0}/s_{0}$, we have $\mathbb{E}[{Z}^{2}] \leq r_{0}/s_{0}$ with equality if and only if ${Z}$ is a Bernoulli random variable.  
	We compute
	\begin{align*}
		\Ecsq{\left(\frac{R_{k+1}}{s_{k+1}} \right)^2 }{R_0,\ldots ,R_k}
		&=
		\left(\frac{R_k +\X_{k+1}}{s_{k+1}}\right)^2\frac{R_k}{s_k} + \left(\frac{R_k}{s_{k+1}}\right)^2\left(
		1 - \frac{R_k}{s_k}
		\right)\\
		&=\left(
		\frac{R_{k}}{s_{k+1}}
		 \right)^2
		 + \frac{2\X_{k+1} R_k^2}{s_{k+1}^2 s_k}
		 + \frac{\X_{k+1}^2R_k}{s_{k+1}^2 s_k} \\
		 &=
		 \left(\frac{R_k}{s_k} \right)^2 - \frac{R_k^2 \X_{k+1}^2}{s_{k+1}^2 s_k^2} + \frac{R_k\X_{k+1}^2}{s_{k+1}^2 s_k}.
	\end{align*}
 So, if we set $u_k = \Ec{(R_k/s_k)^2}$ for all $k \ge 0$, then we have for all $k\ge 0$,
 \begin{align*} 
 u_{k+1} = \left(1-\frac{\X_{k+1}^2}{s_{k+1}^2}\right)u_k+ \frac{r_0 \X_{k+1}^2}{s_0 s_{k+1}^2}.
 \end{align*} 
 So one finds that for all $k\ge 0$,
 \begin{align*} 
 u_k = \frac{r_0}{s_0} + \left( \frac{r_0^2}{s_0^2} - \frac{r_0}{s_0} \right) \prod_{j=1}^k \left(1- \frac{\X_{j}^2}{s_j^2} \right).
 \end{align*} 
 So 
 \begin{align*} 
\mathbb{E}[{Z}^{2}]= \lim_{k \to \infty} \Ec{\left( \frac{R_k}{s_k} \right)^2 }
 =
  \frac{r_0}{s_0} + \left( \frac{r_0^2}{s_0^2} - \frac{r_0}{s_0} \right) \prod_{j=1}^\infty \left(1- \frac{\X_j^2}{s_j^2} \right).
 \end{align*} 
 This entails the desired result.
\end{proof}

\section{Appendix: bounds on the Lambert function}
\label{sec:lambert}

Here we prove Lemma~\ref{lem:Lambert}.

\begin{proof}[Proof of Lemma~\ref{lem:Lambert}(i).]
Using the identity $W(xe^{x})=x$ for $x \geq -1$, and since $-1+\sqrt{2e}\sqrt{x+\frac{1}{e}}- \frac{2}{3}e \left( x+ \frac{1}{e}\right) \geq -1$ for $-1/e \leq x \leq 0$, since $W$ is increasing, it is enough to show that
\begin{equation}
\label{eq:lambert1}
x \geq  \left(-1+\sqrt{2e}\sqrt{x+\frac{1}{e}}- \frac{2}{3}e \left( x+ \frac{1}{e}\right)\right) e^{-1+\sqrt{2e}\sqrt{x+\frac{1}{e}}- \frac{2}{3}e \left( x+ \frac{1}{e}\right)}
\end{equation}
for $-1/e \leq x \leq 0$.
 Setting $y=x+1/e$ with $y \geq 0$, this is equivalent to showing that
\begin{align*} f(y)=y- \frac{1}{e}- \left(-1+\sqrt{2ey}- \frac{2}{3}e y\right) e^{-1+\sqrt{2ey}- \frac{2}{3}e y} \geq 0\end{align*} 
for every $y \geq 0$. To this end, we show that $f$ is increasing, and since $f(0)=0$ this will entail the result.

We have
\begin{align*} f'(y)=1+ \frac{1}{9}e^{\sqrt{2ey}- \frac{2}{3}ey}  \left( -9+9 \sqrt{2ey}-4ey\right).\end{align*} 
Setting $u=\sqrt{2ey}$, we show that for $u \geq 0$
\begin{align*} g(u)=1+ \frac{1}{9} e^{u- \frac{1}{3}u^{2}} (-9+9u-2u^{2}) \geq 0.\end{align*} 

\emph{Step 1: $0 \leq u \leq 3/2$.} Observe that $u \mapsto u- \frac{1}{3}u^{2}$ is a one-to-one function from $[0,3/2]$ to $[0,3/4]$. Using the change of variable $x=u- \frac{1}{3}u^{2}$, we get that $g(u)$ is equal to
\begin{align*} 
h(x)=1-\frac{1}{6} e^x \left(-4 x+\sqrt{9-12 x}+3\right), \qquad 0 \leq x \leq 3/4.
\end{align*} 
To show that $h(x) \geq 0$ for $0 \leq x \leq 3/4$ we show that $h$ is increasing, and since $h(0)=0$ this will entail the result. We have
\begin{align*} h'(x)=\frac{e^x }{6 \sqrt{3-4 x}} \left(4 \sqrt{3} x-\sqrt{3} +   (4x+1) \sqrt{3-4 x}\right).\end{align*} 
It is a simple matter to  check that $4 \sqrt{3} x-\sqrt{3} +   (4x+1) \sqrt{3-4 x} \geq 0$ for $0 \leq x \leq 3/4$ (e.g.~by differentiating this function is increasiong on $[0,(2 \sqrt{5}+3)/12]$ and decreasing on $[(2 \sqrt{5}+3)/12,3/4]$).

\emph{Step 2: $3/2 \leq u \leq 3$.} For $3/2 \leq u \leq 3$, we have $-9+9u-2u^{2} \geq 0$, so $g(u) \geq 0$.

\emph{Step 3: $u \geq 3$.} Using the inequality $-9+9u-2u^{2} \geq 6 (u-u^{2}/3)$ valid for $u \geq 3$, we get
\begin{align*} g(u) \geq 1+ \frac{2}{3} e^{u- \frac{1}{3}u^{2}} (u- \frac{1}{3}u^{2}).\end{align*} 
The fact that $g(u) \geq 0$ then comes from the fact that $1+ \frac{2}{3} x e^{x} \geq 0$ for every $x \leq 0$ (this function attains its infimum at $x=-1$).
\end{proof}

To establish Lemma~\ref{lem:Lambert}(ii) we use the following bounds: for all $h\ge 0$,
\begin{equation}
\label{eq:input}
1 - \frac{1}{2}h^2 + \frac{1}{3} h^3 - \frac{1}{8}h^4 \leq e^{-h}(1+h)  \leq1- \frac{h^2}{2}  + \frac{h^3}{3} - \frac{h^4}{8}  + \frac{h^5}{30} - \frac{h^6}{144} + \frac{h^7}{840},   \end{equation}
which can be seen by using alternating series.
In particular, this implies that for $0 \leq h \leq 1$,
\begin{equation}
\label{eq:input2}\sqrt{2-2e^{-h}(1+h)} \geq h-  \frac{1}{3}  h^{2} +  \frac{5}{72} h^{3}-  \frac{11}{1080} h^{4}.
\end{equation}
Indeed, by \eqref{eq:input}, for all $h\ge 0$,
$$
2- 2e^{-h}(1+h) \ge h^2 - \frac{2h^3}{3} + \frac{h^4}{4} - \frac{h^5}{15} + \frac{h^6}{72} - \frac{h^7}{420} .
$$
Besides,
$$
\left(h-\frac{h^2}{3} + \frac{5h^3}{72} - \frac{11h^4}{1080}\right)^2 = h^2 - \frac{2h^3}{3} + \frac{h^4}{4}- \frac{h^5}{15} + \frac{301h^6}{25920} - \frac{11h^7}{7776} + \frac{121h^8}{1166400}.
$$
Therefore, to prove \eqref{eq:input2} it suffices to check that for all $h \in [0,1]$,
$$ \frac{1}{72} - \frac{h}{420}  -\left( \frac{301}{25920} - \frac{11h}{7776} + \frac{121h^2}{1166400}\right)\ge0,
$$
which is elementary since it is a polynomial of degree $2$: one root is on the left of $(0,1)$, one root is on the right.

\begin{proof}[Proof of Lemma~\ref{lem:Lambert}(ii).]
By Lemma~\ref{lem:Lambert}(i), it is enough to prove that for $\lambda \geq 1$ we have
\begin{align*} -1+\sqrt{2e}\sqrt{-\lambda e^{-\lambda }+\frac{1}{e}}- \frac{2}{3}e \left( -\lambda e^{-\lambda }+ \frac{1}{e}\right) \geq (\lambda -1) \sqrt{2-2\lambda +\lambda ^{2}}-2+2\lambda -\lambda ^{2}.
\end{align*} 
Setting $\lambda=1+h$, this is equivalent to showing that
\begin{equation}
\label{eq:equiv}
h^2-h \sqrt{h^2+1} +\frac{2}{3} e^{-h} (h+1)+\sqrt{2-2 e^{-h} (h+1)}-\frac{2}{3} \geq 0.
\end{equation}

\emph{Step 1: $h \in [0,1]$.} We first show that \eqref{eq:equiv} holds for $ h \in [0,1]$. To this end, using \eqref{eq:input}, \eqref{eq:input2} and the inequality $\sqrt{1+h^{2}} \leq 1+ \frac{1}{2} h^{2}$, we get  
\begin{align*} h^2-h \sqrt{h^2+1} +\frac{2}{3} e^{-h} (h+1)+\sqrt{2-2 e^{-h} (h+1)}-\frac{2}{3} \geq  \frac{h^{2}}{1080}(360-225h-101h^{2}).\end{align*} 
The roots of the later second order polynomial are $\frac{3}{202} \left( \pm \sqrt{21785}-75\right)$, which implies that the inequality \eqref{eq:equiv} holds on $[0,1]$.

\emph{Step 2: $h \geq 1$}. We now show that   \eqref{eq:equiv} holds for $h \geq 1$. Using the inequality $h^2-h \sqrt{h^2+1}  \geq -1/2$ and the change of variable $x=e^{-h} (h+1) \in (0,2/e]$, we get
\begin{align*} 
h^2-h \sqrt{h^2+1} +\frac{2}{3} e^{-h} (h+1)+\sqrt{2-2 e^{-h} (h+1)}-\frac{2}{3}
\geq 
- \frac{7}{6}+ \frac{2}{3}x+\sqrt{2-2x}.
\end{align*} 
By differentiating, it is a simple matter to see that the latter function is decreasing in $x$ on $(0,2/e]$, and for $x=2/e$ it is equal to a positive real number (approximately equal to $0.05$). This completes the proof.
\end{proof}

\bibliographystyle{abbrv}

\bibliography{bibli}

\begin{thebibliography}{10}

\bibitem{AB12}
H.~Andersson and T.~Britton.
\newblock {\em Stochastic epidemic models and their statistical analysis},
  volume 151.
\newblock Springer Science \& Business Media, 2012.

\bibitem{BBKK23+}
E.~Bellin, A.~Blanc-Renaudie, E.~Kammerer, and I.~Kortchemski.
\newblock Uniform attachment with freezing.
\newblock {\em Preprint available on arxiv, arxiv:arXiv:2308.00493}, 2023.

\bibitem{Big92}
J.~D. Biggins.
\newblock Uniform convergence of martingales in the branching random walk.
\newblock {\em Ann. Probab.}, 20(1):137--151, 1992.

\bibitem{BPBLST19}
T.~Britton, E.~Pardoux, F.~Ball, C.~Laredo, D.~Sirl, and V.~C. Tran.
\newblock {\em Stochastic epidemic models with inference}.
\newblock Springer, 2019.

\bibitem{CDJ01}
B.~Chauvin, M.~Drmota, and J.~Jabbour-Hattab.
\newblock The profile of binary search trees.
\newblock {\em Ann. Appl. Probab.}, 11(4):1042--1062, 2001.

\bibitem{CKMR05}
B.~Chauvin, T.~Klein, J.-F. Marckert, and A.~Rouault.
\newblock Martingales and profile of binary search trees.
\newblock {\em Electron. J. Probab.}, 10:420--435, 2005.
\newblock Id/No 12.

\bibitem{CSBV16}
G.~Chowell, L.~Sattenspiel, S.~Bansal, and C.~Viboud.
\newblock Mathematical models to characterize early epidemic growth: A review.
\newblock {\em Physics of life reviews}, 18:66--97, 2016.

\bibitem{CGHJK96}
R.~M. Corless, G.~H. Gonnet, D.~E. Hare, D.~J. Jeffrey, and D.~E. Knuth.
\newblock On the lambert w function.
\newblock {\em Advances in Computational mathematics}, 5:329--359, 1996.

\bibitem{Dur19}
R.~Durrett.
\newblock {\em Probability: theory and examples}, volume~49.
\newblock Cambridge university press, 2019.

\bibitem{FMN16}
V.~F\'eray, P.-L. M\'eliot, and A.~Nikeghbali.
\newblock {\em Mod-{$\Phi$} convergence}.
\newblock SpringerBriefs in Probability and Mathematical Statistics. Springer,
  Cham, 2016.
\newblock Normality zones and precise deviations.

\bibitem{Har63}
T.~E. Harris.
\newblock {\em The theory of branching processes}, volume~6.
\newblock Springer Berlin, 1963.

\bibitem{JLW14}
S.~Janson, M.~Luczak, and P.~Windridge.
\newblock Law of large numbers for the sir epidemic on a random graph with
  given degrees.
\newblock {\em Random Structures \& Algorithms}, 45(4):726--763, 2014.

\bibitem{KMS17}
Z.~Kabluchko, A.~Marynych, and H.~Sulzbach.
\newblock General {Edgeworth} expansions with applications to profiles of
  random trees.
\newblock {\em Ann. Appl. Probab.}, 27(6):3478--3524, 2017.

\bibitem{KFH06}
D.~Klinkenberg, C.~Fraser, and H.~Heesterbeek.
\newblock The effectiveness of contact tracing in emerging epidemics.
\newblock {\em PloS one}, 1(1):e12, 2006.

\bibitem{Pem90}
R.~Pemantle.
\newblock A time-dependent version of {P{\'o}lya}'s urn.
\newblock {\em J. Theor. Probab.}, 3(4):627--637, 1990.

\bibitem{RS20}
B.~Roig-Solvas and M.~Sznaier.
\newblock Novel tractable bounds on the lambert function with application to
  maximum determinant problems.
\newblock {\em arXiv preprint arXiv:2004.01115}, 2020.

\bibitem{Sen21}
D.~S{\'e}nizergues.
\newblock Geometry of weighted recursive and affine preferential attachment
  trees.
\newblock {\em Electronic Journal of Probability}, 26:1--56, 2021.

\end{thebibliography}

\end{document}